\renewcommand\appendix{\setcounter{secnumdepth}{-2}}
\newtheorem{thm}{Theorem}[section]
\newtheorem{lem}[thm]{Lemma}
\newtheorem{prop}[thm]{Proposition}
\newtheorem{cor}[thm]{Corollary}
\theoremstyle{remark}
\newtheorem{re}[thm]{Remark}
\theoremstyle{definition}
\newtheorem{defn}[thm]{Definition}
\newcommand{\ord}{\mathrm{ord}}
\newcommand{\Z}{{\mathbb Z}}
\newcommand{\Q}{{\mathbb Q}}
\newcommand{\cO}{{\mathcal O}}
\newcommand{\p}{{\mathfrak p}}
\newcommand{\pr}{\mathrm{pr}}
\def\rep{{\to\!\!\! -}}
\def\firstletterparse#1#2&{\def\strfirstletter{#1}\def\strotherletters{#2}}
\newcommand{\MakeSmallcaps}[1]{%
	\expandafter\firstletterparse#1&
	\expandafter\MakeUppercase\strfirstletter\textsc{\strotherletters}%
}
\numberwithin{equation}{section}
\begin{document}

	\author{Zilong He}
	\address{School of Computer Science and Technology, Dongguan University of Technology, Dongguan 523808, P.R. China}
	\email{zilonghe@connect.hku.hk}
	\author{Yong Hu}
	\address{Department of Mathematics,	Southern University of Science and Technology, Shenzhen 518055, P.R. China}
	\email{huy@sustech.edu.cn}
	
	\title[ ]{On $n$-universal quadratic forms over dyadic local fields}
	\thanks{ }
	\subjclass[2010]{11E08, 11E20, 11E95}
	\date{\today}
	\keywords{integral quadratic forms,  $ n $-universal quadratic forms, dyadic fields,  290-theorem}
	\begin{abstract}
		Let $ n \ge 2$ be an integer. We give necessary and sufficient conditions for an integral quadratic form over dyadic local fields to be $ n $-universal by using invariants from Beli's theory of bases of norm generators. Also, we provide a minimal set for testing $ n $-universal quadratic forms over dyadic local fields, as an  analogue of Bhargava and Hanke's 290-theorem (or Conway and Schneeberger's 15-theorem) on universal quadratic forms with integer coefficients.
	\end{abstract}
	\maketitle
	
	\section{Introduction}
	
	The term \emph{universal quadratic form} was coined by Dickson \cite{Dic29a} for indefinite quadratic forms over $\Z$ and extended to the positive definite case by Ross \cite{Ros32}. It means that the quadratic form under consideration represents all integers, or all positive integers if it is positive definite. Extending Ramanujan's work \cite{ramanujan_on_1917} in the case of diagonal quaternary forms, Dickson and his students made important contributions to the classification of universal quadratic forms over $\Z$ (see e.g. \cite{Dic27a}, \cite{Dic27b}, \cite{Dic29a}, \cite{Dic29b}, \cite{Ros32} and  \cite{will_determinatino_1948}). In 1993, Conway and Schneeberger proved a simple criterion for universality of classic forms (i.e. quadratic forms with integer matrix). Their theorem is now called the 15-theorem (see \cite{conway_15-theorem-2000}) because it shows that a positive definite classic quadratic form over $\Z$ is universal if and only if it represents every positive integer up to  $15$. The analogous result for arbitrary positive definite integral quadratic forms, known as the 290-theorem, is  proved later by Bhargava and Hanke \cite{bhargava_290-theorem-2005}.

	For any positive integer $n$, B. M. Kim, M.-H. Kim and S. Raghavan \cite{kim_2universal_1997} defined a positive definite classic quadratic form over $\Z$ to be $n$-universal if it represents all $n$-ary classic forms. With this definition, two theorems due to Mordell \cite{mordell_waring_1930} and Ko \cite{ko_on_1937} may be rephrased as asserting that the sum of $n+3$ squares is $n$-universal for $2\le n\le 5$. B. M. Kim, M.-H. Kim and B.-K. Oh \cite{kim_2universal_1999} completed the classification of $ 2 $-universal quinary classic quadratic forms and provided a criterion for $ 2 $-universality of classic quadratic forms analogous to the Conway-Schneeberger theorem. B.-K. Oh \cite{oh_universal_2000} further determined the minimal rank of $ n $-universal classic forms and found all $ n $-universal classic forms over $\Z$ of minimal rank for $ 6\le n\le 8 $.
	
	Representations of quadratic forms can be considered more generally over the ring of integers of a general number field or local field. In the recent papers \cite{xu_indefinite_2020} and \cite{hhx_indefinite_2021}, number fields over which the local-global principle for $n$-universality of indefinite quadratic forms holds are completely determined. A key step in the proofs has been a complete determination of  $n$-universal forms over non-dyadic local fields and some partial results in the dyadic case. For $n=1$, Beli's work  \cite{beli_universal_2020}   complements the analysis over dyadic fields in \cite[\S\,2]{xu_indefinite_2020}, and gives necessary and sufficient conditions for an integral quadratic form over a general dyadic field to be universal. His method builds upon the general theory of \emph{bases of norm generators} (BONGs), which he developed in his thesis \cite{beli_thesis_2001} (see also \cite{beli_integral_2003}, \cite{beli_representations_2006}, \cite{beli_Anew_2010}, \cite{beli_representations_2019}). Without using BONGs, the authors determined in \cite{HeHu1} all integral $n$-universal forms over any  unramified dyadic local field.
	
	\medskip
	
	In this paper, we prove necessary and sufficient conditions characterizing $n$-universal integral quadratic forms  over a general dyadic local field (Theorem \ref{thm:nuniversaldyadic}). Unlike in the other work \cite{HeHu1}, here we have to use Beli's theory of BONGs and our results are stated in terms of the invariants associated to BONGs. Due to the complexity of Jordan splitting structures, the representation theory of integral quadratic forms over general dyadic fields had remained uncompleted until Beli's work. So we feel that it will be right to use BONGs to obtain  the results in this paper.\footnote{Also based on the BONG theory, some useful results having close relations with ours have been obtained in section 4 of \cite{beli_universal_2020}, which had been updated after the first version of our preprint was available on \texttt{arXiv.org}.}
	
	In addition to the equivalent conditions, we also prove a Bhargava--Hanke (or Conway--Schneeberger) type theorem  (Theorem \ref{thm:nuniversaldyadic15theorem}). Namely, we provide a finite set of $n$-ary  forms such that an integral quadratic form is $n$-universal if and only if it represents all forms in that set, and we show that the set given is minimal for the $n$-universal property test. Indeed, lattices in the testing set are expressed explicitly in terms of Jordan splittings.
	
	\medskip
	
	If one only considers representations of classic forms, there is also the notion of \emph{classic $n$-universal forms} (see e.g. \cite[Definition\;1.4]{hhx_indefinite_2021}). In the unramified case, these forms have been classified in \cite{HeHu1}. For general dyadic fields, this is done in \cite{He22} by the first named author.
	
	\medskip
	
	\noindent {\bf Notation and terminology.} Throughout the paper, let $F$ be a fixed dyadic local field, i.e. a finite extension of the field $\Q_2$ of 2-adic numbers. Let $ \mathcal{O}_{F} $ be the ring of integers (or the valuation ring) of $ F $ and let $ \mathcal{O}_{F}^{\times}$ be its group of units. We write $ \mathfrak{p} $ for the unique maximal ideal of $\mathcal{O}_F$ and $ \pi \in\mathfrak{p}$ for a fixed prime element. Let $\ord: F\to \Z\cup\{\infty\}$ denote the normalized discrete valuation of $F$ and put $ e:=\ord(2)$. For a fractional or zero ideal $\mathfrak{a}$ of $F$, put  $\ord(\mathfrak{a})=\min\{\ord(\alpha)\,|\,\alpha\in \mathfrak{a}\}$.
	
	For any $a,b\in F^{\times}:=F\setminus\{0\}$, let $(a,b)_{\mathfrak{p}}$ denote the Hilbert symbol. For any $c\in F^{\times}$,  its \textit{quadratic defect} is defined by $ \mathfrak{d}(c):=\bigcap_{x\in F}(c-x^{2})\mathcal{O}_{F}$. The \textit{order of relative quadratic defect} is the function
	\[
	d\,:\; F^{\times}/F^{\times 2}\longrightarrow \mathbb{N}\cup \{\infty\}\,;\quad  c\longmapsto d(c):=\ord (c^{-1}\mathfrak{d}(c))\,.
	\]We fix a unit $\Delta:=1-4\rho $ with $\mathfrak{d}(\Delta)=4\mathcal{O}_{F} $ and $\rho\in\mathcal{O}_{F}^{\times}$ (cf. \cite[\S\;93, p.\hskip 0.1cm 251]{omeara_quadratic_1963}). Recall the following properties of the function $ d $ (cf. \cite[\S 1]{beli_integral_2003}, \cite[\S 1.1]{beli_universal_2020}):
	
	(1) The image of $d$ is $\{0,\,1,\,3,\,\cdots, 2e-1,\,2e,\,\infty\}$, and we have $ d(c)=\infty$ if and only if $c\in F^{\times 2} $, $ d(c)=2e$ if and only if $c\in \Delta F^{\times 2} $, and $ d(c)=0$ if and only if $\ord(c) $ is odd.
	
	(2) Domination principle: $ d(ab)\ge \min\{d(a),d(b)\} $.
	
	(3) If $ d(a)+d(b)>2e $, then $ (a,b)_{\mathfrak{p}}=1 $.

	To study quadratic forms, we adopt the geometric language of quadratic spaces and lattices from \cite{omeara_quadratic_1963}.  Unless otherwise stated, quadratic spaces and lattices in this paper are all assumed nondegenerate. The quadratic map associated to a quadratic space or lattice is usually denoted by $Q$. We call an $ \mathcal{O}_{F} $-lattice $ L $ \textit{integral} if $ Q(L):=\{Q(x):x\in L\}\subseteq \mathcal{O}_{F}$, or equivalently, if its \emph{norm} $ \mathfrak{n}L:=Q(L)\mathcal{O}_{F}$ is contained in $\mathcal{O}_F$.

	When a lattice $K$ is represented by another lattice $L$ (in the sense of \cite[p.220]{omeara_quadratic_1963}), we write $K\rep L$. Similarly for quadratic spaces. For a positive integer $n$, an integral $\mathcal{O}_{F}$-lattice  is called \textit{$ n $-universal} if it represents all integral $ \mathcal{O}_{F} $-lattices of rank $ n $. Similarly, a quadratic space over $ F $ is called \textit{$ n $-universal} if it represents all quadratic spaces of dimension $ n $ over $ F $.
	
	We write $ V\cong [a_{1},\ldots,a_{n}] $ (resp. $ L\cong \langle a_{1},\ldots,a_{n}\rangle $) if $ V=Fx_{1}\perp \ldots \perp Fx_{n} $ (resp. $ L=\mathcal{O}_{F}x_{1}\perp \ldots\perp \mathcal{O}_{F}x_{n} $) with  $ Q(x_{i})=a_{i} $. Following Beli's notation,  if $ x_{1},\ldots, x_{n} $ is a BONG for $L$ (cf. Definition\;\ref{defn:BONG})  with  $ Q(x_{i})=a_{i} $, we write $ L\cong \prec a_{1},\ldots, a_{n} \succ $.
	
	For any $\gamma\in F^{\times} $ and $ \xi, \eta\in F $, let $ \gamma A(\xi,\eta)$ denote the binary lattice represented by the matrix $\begin{pmatrix}
		\gamma\xi  &  \gamma  \\
		\gamma  & \gamma\eta
	\end{pmatrix} $ as in \cite[p.255]{omeara_quadratic_1963}. We also write $\mathbf{H}=2^{-1}A(0,\,0)$. The quadratic space it spans is the hyperbolic plane, denoted by $\mathbb{H}$. For any $n\in\mathbb{N}$, let $ \mathbb{H}^{n} $ and $ \mathbf{H}^{n}$ denote the orthogonal sum of $ n $ copies of $\mathbb{H} $ and $ \mathbf{H} $ respectively.
	
	For $ h,k\in\mathbb{Z} $, we  write $ [h,k]^{E} $ (resp. $ [h,k]^{O} $) for the set of all even (resp. odd) integers $ i $ such that $ h\le i\le k$.
	
	\medskip

	Our first main result is the following criterion for $ n $-universality.
	\begin{thm}\label{thm:nuniversaldyadic}
		Let  $ n\ge 2 $ be an integer and let $M$ be  an integral $ \mathcal{O}_{F} $-lattice. Suppose that  $M\cong \prec a_{1},\ldots,a_{m}\succ$  relative to some good BONG and put $ R_{i}=\ord(a_{i})$ for $ 1\le i\le m $.
		
		Then $ M $ is n-universal if and only if either
		\[
		m=n+2=4\,,\;  FM\cong \mathbb{H}^{2} \quad\text{ and }\quad R_{1}=R_{3}=R_{2}+2e=R_{4}+2e=0\,,
		\] or $ m\ge n+3 $ and the following conditions hold:
		
		\begin{itemize}
			\item[(i)] $ R_{i}=0 $ for $ i\in [1,n]^{O} $ and $ R_{i}=-2e $ for $ i\in [1,n]^{E} $.
			\item[(ii)] In case $ n $ is even, one has $ R_{n+1}=0 $ and the following conditions hold:
			
			\begin{enumerate}
				\item[(1)]  $R_{n+2}\in  [-2e,0]^{E}\cup\{1\} $;  and if $ R_{n+2}\in [2-2e,0]^{E} $, then the following conditions hold:
				\begin{enumerate}
					\item[(a)]  If $ R_{n+2}=2-2e $, then $ d(-a_{n+1}a_{n+2})=2e-1 $ or $ R_{n+3}\in \{0,1\} $.
					
					\item[(b)] If $ R_{n+2}\not=2-2e $, then $ d(-a_{j}a_{j+1})=1-R_{j+1} $ for some $ n+1\le j\le m-1 $.
				\end{enumerate}
				\item[(2)]  If $ R_{n+3}-R_{n+2}>2e $, then $ R_{n+2}=-2e $; and if moreover $n\ge 4$, or $ n=2 $ and $ d(a_{1}a_{2}a_{3}a_{4})=2e$, then $R_{n+3}=1$.	
			\end{enumerate}
			
			\item[(iii)] In case $ n $ is odd, one has:

			\begin{enumerate}
				\item[(1)]  $R_{n+1}\in [-2e,0]^{E}\cup \{1\} $; and if $ R_{n+1}\in [4-2e,0]^{E} $, then $ d(-a_{j}a_{j+1})=1-R_{j+1} $ for some $ n\le j\le m-1 $.
 
				\item[(2)]  Suppose $ R_{n+1}=1 $, or  $ R_{n+1}\not=-2e $ and $ R_{n+2}>1 $.
				
				\begin{enumerate}
					\item[(a)]  If $ R_{n+2}-R_{n+1} $ is even, then $ R_{n+3}+R_{n+2}-2R_{n+1}\le 2e-2 $ or
					
					 $ d(-a_{j}a_{j+1})\le 2e+R_{n+1}-R_{j+1}-1 $ for some $ n+2\le j\le m-1$.
					
					\item[(b)] If $ R_{n+2}-R_{n+1} $ is odd, then $ R_{n+3}+R_{n+2}-2R_{n+1}\le 2e $ or
					
					 $ d(-a_{j}a_{j+1})\le 2e+R_{n+1}-R_{j+1} $ for some $ n+2\le j\le m-1$.
				\end{enumerate}
				\item[(3)] If $ R_{n+1}=-2e $, then $ R_{n+2}\in\{0,1\}$.
				
				\item[(4)] $ R_{n+3}-R_{n+2}\le 2e $.	
			\end{enumerate}
		\end{itemize}	
	\end{thm}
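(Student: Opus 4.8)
The plan is to read $n$-universality as a universally quantified representation problem and to feed it into Beli's representation criterion for BONGs. Concretely, $M$ is $n$-universal precisely when $N\rep M$ for every integral $\cO_F$-lattice $N$ of rank $n$, and Beli's theorem (\cite{beli_representations_2019}, cf. \cite{beli_integral_2003}) expresses each single relation $N\rep M$---for $N\cong\prec b_1,\ldots,b_n\succ$ and $M\cong\prec a_1,\ldots,a_m\succ$ relative to good BONGs---through (a) the space-level condition $FN\rep FM$ and (b) a finite list of conditions relating the order sequences $R_i=\ord(a_i)$ and $S_i=\ord(b_i)$ together with the relative-defect invariants $d(-a_ia_{i+1})$, $d(-b_ib_{i+1})$ and certain Hilbert symbols. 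The whole argument is then to quantify each clause of Beli's criterion over all admissible $N$ and to show that the resulting constraints on the $a_i$ alone are exactly (i)--(iii), respectively the exceptional clause.

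Before the lattice analysis I would dispose of the space level. Since $FM$ must represent every $n$-dimensional quadratic space over $F$, the space $FM$ is $n$-universal; over a dyadic local field this forces $\dim FM=m\ge n+2$, and when $m=n+2$ only very restricted spaces are $n$-universal. A short computation with discriminants and Hasse invariants (the only obstruction for the complementary binary space being discriminant $-1$ together with nontrivial Hasse symbol) shows that the unique way a lattice with $m=n+2$ can be $n$-universal is the borderline case $n=2$, $m=4$, $FM\cong\mathbb{H}^2$ with the displayed order profile $R_1=R_3=R_2+2e=R_4+2e=0$; this yields the first alternative in the statement. For the remainder I would assume $m\ge n+3$, where the space-level obstruction disappears and everything is controlled by the integral (BONG) data.

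For necessity, under $m\ge n+3$, I would exhibit, for each clause of (i)--(iii), an explicit integral rank-$n$ test lattice whose representability by $M$ forces that clause. The coarse part (i) is obtained by representing the most demanding "unimodular-type" rank-$n$ lattice, forcing $R_i=0$ on odd indices and $R_i=-2e$ on even indices in $[1,n]$, while representing suitable $\langle 1\rangle\perp\cdots$-type lattices pins down $R_{n+1}$. The finer clauses (ii)(1)(a,b), (ii)(2), and (iii)(1)--(4) are extracted by feeding in test lattices tuned to probe the defect invariants $d(-a_ja_{j+1})$ and the order gaps $R_{n+k}-R_{n+\ell}$: each listed inequality or defect condition is read off as the precise obstruction to representing one such lattice. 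These are exactly the lattices that reappear as the minimal testing set of Theorem~\ref{thm:nuniversaldyadic15theorem}. For sufficiency I would run the converse: assume (i)--(iii), take an arbitrary integral $N\cong\prec b_1,\ldots,b_n\succ$, and verify every clause of Beli's criterion for $N\rep M$. The decisive structural input is that, once scales are normalized, Beli's conditions are monotone in the invariants of $N$, so it suffices to check them against the extremal $N$ isolated in the necessity step; the domination principle $d(ab)\ge\min\{d(a),d(b)\}$ and the vanishing rule ``$d(a)+d(b)>2e\Rightarrow(a,b)_{\mathfrak{p}}=1$'' (properties (2) and (3)) then convert the hypotheses on the $a_i$ into the required Hilbert-symbol and defect inequalities for the pair $(N,M)$.

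I expect the main obstacle to be the case analysis around the borderline order gaps. Beli's criterion changes form according to the parity of $n$ and of the gaps $R_{n+2}-R_{n+1}$, and according to whether a gap equals $2e$, $2e\pm 1$, or $-2e$; exactly at these thresholds the quadratic-defect and Hilbert-symbol behaviour flips, as already visible in property (1), where $d$ takes its extreme values $2e$ (on $\Delta F^{\times 2}$) and $\infty$ (on squares). Matching each threshold to the correct test lattice in the necessity direction, and simultaneously checking that (i)--(iii) close off every such threshold in the sufficiency direction, is where the bookkeeping is heaviest; in particular the defect conditions $d(-a_ja_{j+1})$ appearing in (ii)(1) and (iii)(1),(2) must be tracked along the whole tail $n\le j\le m-1$ of the BONG and handled with \cite[\S\,93]{omeara_quadratic_1963}-type care.
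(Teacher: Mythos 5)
Your architecture is the same as the paper's: run $n$-universality through Beli's representation criterion (Theorem \ref{thm:beligeneral}), settle the space level first (which isolates the exceptional alternative $m=n+2=4$, $FM\cong\mathbb{H}^{2}$, handled in the paper by Corollary \ref{cor:even-nuniversaldyadic-quaternary}), and obtain necessity of (i)--(iii) by forcing $M$ to represent the rank-$n$ $\mathcal{O}_F$-maximal lattices, i.e.\ exactly the testing set of Theorem \ref{thm:nuniversaldyadic15theorem}. This is what the implications (ii)$\Rightarrow$(iii) of Lemmas \ref{lem:con12even}, \ref{lem:con3even}, \ref{lem:con4even} and of the odd-case Lemmas \ref{lem:con3odd}, \ref{lem:con4odd} do, using the invariants of those lattices computed in Lemma \ref{lem:maximal-BONG-odd} (for odd $n$ the paper also gets part of the necessity for free from the observation that $n$-universal implies $(n-1)$-universal).

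The genuine gap is in your sufficiency step. Your declared ``decisive structural input'' --- that Beli's conditions are monotone in the invariants of $N$, so that checking the extremal $N$ suffices --- is not a true statement and cannot be made one: in Theorem \ref{thm:beligeneral} the invariants of $N$ enter \emph{both} sides of condition (ii) (through $b_{1,i}$, the $\beta_j$, and the terms $S_{i-1}+S_i$ inside $A_i$), and conditions (iii) and (iv) are implications whose hypotheses ($R_{i+1}>S_{i-1}$, $S_i\ge R_{i+2}>S_{i-1}+2e$, and a lower bound on a sum of $d[\,\cdot\,]$'s) themselves depend on $N$, so there is no ordering of rank-$n$ lattices under which these clauses vary monotonically. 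The reduction to the extremal lattices is nonetheless valid, but for a different reason, which your plan is missing: every integral lattice on a space $V$ is contained in, hence represented by, the $\mathcal{O}_F$-maximal lattice on $V$ (\cite[82:18]{omeara_quadratic_1963}), and representation is transitive, so $N\rep N'\rep M$ gives $N\rep M$. That containment argument is precisely how the paper proves Theorem \ref{thm:nuniversaldyadic15theorem}; once you substitute it for the monotonicity claim, your plan (under (i)--(iii), verify Beli's clauses for the finitely many maximal lattices with known BONGs) becomes a workable variant. Be aware, though, that it differs from the paper's actual sufficiency route, which verifies Theorem \ref{thm:beligeneral} directly for an \emph{arbitrary} integral rank-$n$ lattice $N$ (the (iii)$\Rightarrow$(i) halves of the same lemmas), using the structural Lemmas \ref{lem:Aj}--\ref{lem:da1nb1n}, and which interposes the $\alpha$-invariant conditions $I^{E}_{\bullet}(n)$, $I^{O}_{\bullet}(n)$ of Theorems \ref{thm:even-nuniversaldyadic} and \ref{cor:odd-nuniversaldyadic-simplify}, converting them into the pure $R_i$/$d(-a_ja_{j+1})$ form of the statement only at the end, in Section \ref{sec:proof-main}.
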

	The proof of  Theorem \ref{thm:nuniversaldyadic} will be given in Section \ref{sec:proof-main}.
	The criterion given in this theorem is effective in the sense that in practice we do have a method to find a good BONG for any integral lattice (cf. \cite[\S 7, p.\hskip 0.1cm 109]{beli_representations_2006}).
	
	Just as Beli's result on 1-universality (\cite[Theorem 2.1]{beli_universal_2020}), Theorem \ref{thm:nuniversaldyadic} can be stated in a more compact form if some more notations of Beli are used (see Theorem \ref{thm:even-nuniversaldyadic} for even $ n $ and  Theorem  \ref{cor:odd-nuniversaldyadic-simplify} for odd $ n$).
	
	For even $n$, a simplified version of Theorem\;\ref{thm:nuniversaldyadic} will be given in Theorem\;\ref{thm:2universaldyadicver2}. Corollary \ref{cor:even-nuniversaldyadic-quaternary} will show that our result on quaternary 2-universal lattices agrees with \cite[Proposition\;4.5]{hhx_indefinite_2021}.
	
	\medskip
	
	In the course of proving Theorem \ref{thm:nuniversaldyadic} we also obtain a local analogue of Bhargava and Hanke's 290-theorem. Here, let us call a set of rank $n$ lattices a \textit{testing set} for $ n $-universality if every integral lattice representing all lattices in the set is $n$-universal. A testing set is said to be \textit{minimal} if none of its proper subsets is sufficient for testing $n$-universality.

	In \cite[Corollary\;4.3]{hhx_indefinite_2021}, a testing set for 2-universality is obtained with only classical methods (see also \cite[Proposition\;3.2]{hhx_indefinite_2021} in the non-dyadic case). Using the BONG theory, we determine for general $n\ge 2$, the $ \mathcal{O}_{F} $-maximal lattices on all $ n $-dimensional quadratic spaces and show that they form a minimal testing set for $ n $-universality. In Theorem\;\ref{thm:nuniversaldyadic15theorem} below,  we describe these $ \mathcal{O}_{F} $-maximal lattices explicitly in terms of minimal norm splittings in the sense of \cite{xu_minimal_2003} (see also Proposition \ref{prop:maximallattices}).

	\begin{thm}\label{thm:nuniversaldyadic15theorem}
		Let $n\ge 2$ and let $\mathcal{U}$ be a complete system of representatives of $\mathcal{O}_F^{\times}/\mathcal{O}_F^{\times 2}$ such that $d(\delta)=\ord(\delta-1)$ for all $\delta\in \mathcal{U}$.
		
		\begin{enumerate}
			\item[(i)] If $n$ is even, a minimal testing set for $n$-universality consists of the following lattices:
		\[
\begin{split}
				&\mathbf{H}^{\frac{n}{2}},\;\;  \mathbf{H}^{\frac{n-4}{2}}\perp 2^{-1}A(2,2\rho)\perp 2^{-1}\pi A(2,2\rho)(\text{if }\;  n\ge 4), \\
 &\mathbf{H}^{\frac{n-2}{2}}\perp 2^{-1}A(2,2\rho),\;\;	 \mathbf{H}^{\frac{n-2}{2}}\perp 2^{-1}\pi A(2,2\rho), \\
				&\mathbf{H}^{\frac{n-2}{2}}\perp   \pi^{\frac{1-d(\delta)}{2}}A\big(\pi^{\frac{d(\delta)-1}{2}},-(\delta-1)\pi^{\frac{1-d(\delta)}{2}}\big),\\
			  &\mathbf{H}^{\frac{n-2}{2}}\perp (1+4\rho(\delta-1)^{-1}) \pi^{\frac{1-d(\delta)}{2}}A\big(\pi^{\frac{d(\delta)-1}{2}},-(\delta-1)\pi^{\frac{1-d(\delta)}{2}}\big),\\
&\mathbf{H}^{\frac{n-2}{2}}\perp \langle 1,-\varepsilon\pi \rangle, \quad  \mathbf{H}^{\frac{n-2}{2}}\perp  \langle \Delta,-\Delta\varepsilon\pi \rangle
\end{split}	\]
 for all $\varepsilon\in \mathcal{U}$ and all $\delta\in \mathcal{U}\backslash \{1,\Delta\}$.
			
			\item[(ii)] If $n$ is odd, a minimal testing set for $n$-universality consists of the following lattices:
			\[\begin{split}
			&\mathbf{H}^{\frac{n-1}{2}}\perp \langle \varepsilon\rangle, \quad   \mathbf{H}^{\frac{n-3}{2}}\perp 2^{-1}\pi A(2,2\rho)\perp \langle\Delta\varepsilon\rangle,  \\
			 &\mathbf{H}^{\frac{n-1}{2}}\perp \langle\varepsilon\pi\rangle,   \quad  \mathbf{H}^{\frac{n-3}{2}}\perp 2^{-1}A(2,2\rho)\perp \langle\Delta\varepsilon\pi\rangle
	\end{split}	\]for all $\varepsilon\in \mathcal{U}$.
		\end{enumerate}
	\end{thm}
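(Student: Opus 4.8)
The plan is to identify the two lists with the complete collection of $\cO_F$-maximal lattices on the $n$-dimensional quadratic spaces over $F$, and then to exploit two basic features of maximal lattices over a local field: every integral rank-$n$ lattice embeds into a maximal one on its own space, and maximal lattices on a fixed space are unique up to isometry. First I would invoke the Witt decomposition $V\cong \mathbb{H}^{k}\perp V_{0}$ of an arbitrary $n$-dimensional space, where $V_{0}$ is anisotropic and hence $\dim V_{0}\le 4$. By Proposition \ref{prop:maximallattices}, a maximal lattice on $V$ splits as $\mathbf{H}^{k}\perp L_{0}$, with $L_{0}$ the (unique up to isometry) maximal lattice on the anisotropic kernel $V_{0}$. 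Running through the finitely many anisotropic $V_{0}$ --- sorted by $\dim V_{0}\in\{0,2,4\}$ for even $n$ and $\dim V_{0}\in\{1,3\}$ for odd $n$, and parametrized through the fixed system $\mathcal{U}$ by discriminant and Hasse invariant --- and writing each $L_{0}$ via a minimal norm splitting, I would verify that the lattices so produced are exactly those listed. This is the bookkeeping part: matching each type of $V_{0}$ to the explicit blocks $2^{-1}A(2,2\rho)$, the $A(\pi^{(d(\delta)-1)/2},\dots)$ families, $\langle 1,-\varepsilon\pi\rangle$, $\langle\Delta,-\Delta\varepsilon\pi\rangle$, and the ternary/quaternary pieces.

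Next, for the testing-set (sufficiency) property, let $M$ be integral and suppose it represents every lattice in the relevant list. Given any integral lattice $N$ with $\rank N=n$, its ambient space $FN$ is some $n$-dimensional $V$, and $N$ is contained in a maximal integral lattice $L$ on $V$ (apply Zorn's lemma to the integral lattices on $V$, whose norms are bounded in $\cO_F$). Since maximal lattices on $V$ are unique up to isometry over the local field $F$, $L$ is isometric to the unique listed lattice attached to $V$, which $M$ represents; by transitivity of $\rep$ it follows that $N\rep M$. Hence $M$ is $n$-universal, so each list is indeed a testing set. Note that distinct members of the list sit on distinct spaces, so no member represents another, which already rules out the crude attempt $M=L$ for minimality and forces the witnesses below to have rank strictly larger than $n$.

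Finally --- and this is the main difficulty --- I would prove minimality: for each lattice $L$ in the list, the set with $L$ deleted fails to test $n$-universality. Equivalently, I must construct an integral lattice $M_{L}$ that represents all the remaining listed lattices but does not represent $L$ (so $M_{L}$ is not $n$-universal). The construction is steered by Theorem \ref{thm:nuniversaldyadic}: I would take a good BONG for a candidate $M_{L}$ of rank $m\ge n+3$ satisfying every numerical condition of that criterion except the single condition whose failure precisely blocks representation of the anisotropic kernel $V_{0}$ underlying $L$. I expect this step to be the crux. Checking that such a near-$n$-universal $M_{L}$ genuinely fails on $L$ while still representing all the other maximal lattices requires a delicate, case-by-case application of the representation criteria behind Theorem \ref{thm:nuniversaldyadic}, with the binary-kernel and quaternary-kernel families (the $A(\cdots)$ and $\langle\cdots\rangle$ cases) being the most sensitive, since there the discriminant and Hasse data of competing spaces lie closest together and the relevant conditions (such as those governed by the order of relative quadratic defect $d(-a_{j}a_{j+1})$) must be separated by hand.
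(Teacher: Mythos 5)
Your identification of the two lists with the $\cO_F$-maximal lattices of rank $n$, and your sufficiency argument (embed an arbitrary integral rank-$n$ lattice $N$ into an $\cO_F$-maximal lattice on $FN$, invoke uniqueness of maximal lattices up to isometry, and compose representations), coincide with the paper's proof of the testing-set property, which rests on Proposition \ref{prop:maximallattices} together with \cite[82:18 and 91:2]{omeara_quadratic_1963}. Up to that point the proposal is sound and essentially the paper's argument.

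The minimality part, however, is a genuine gap. You defer it explicitly (``I expect this step to be the crux'') to an unexecuted construction: a lattice of rank $m\ge n+3$, built through a good BONG, violating exactly one condition of Theorem \ref{thm:nuniversaldyadic} so as to block exactly one listed lattice $L$. Two concrete problems. First, nothing guarantees that violating a single condition blocks only one listed lattice: in the paper's own analysis (e.g.\ Lemma \ref{lem:con4even}), the failure of $I_3^E(n)$ is detected by a whole family of test lattices $N_2^n(1)$, $N_2^n(\Delta)$, $N_{\nu}^n(c)$, so your plan would require verifying, case by case and for each of the $2^{[F:\Q_2]+3}$ (or $2^{[F:\Q_2]+3}-1$ when $n=2$) listed lattices, that all the remaining ones are still represented --- precisely the work you have not done. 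Second, by restricting to $m\ge n+3$ you force $FM_L$ to be $n$-universal as a space, so every obstruction must be purely integral, and it is not even clear a priori that such witnesses exist for every $L$. The paper avoids all of this with one observation you missed: by Proposition \ref{prop:space}(iii), for each listed lattice $L$ there is a unique $(n+2)$-dimensional space $V$ that represents every $n$-dimensional space except $FL$; the witness is simply the $\cO_F$-maximal lattice on $V$ (which has rank $n+2$, not $\ge n+3$). It fails to represent $L$ for the trivial reason that $V$ does not represent $FL$, and it represents every other listed lattice by the same maximal-lattice principle \cite[82:18]{omeara_quadratic_1963} you already used in the sufficiency direction. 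Thus minimality is an immediate consequence of the space-level classification, not a delicate BONG computation.
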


Recall from \cite[82:18a and 91:2]{omeara_quadratic_1963} that for every quadratic space $ V $, up to isometry, there is exactly one $ \mathcal{O}_{F} $-maximal lattice on $ V $. Thus, the total number of lattices listed in Theorem\;\ref{thm:nuniversaldyadic15theorem} is equal to the number of classes of all $ n $-dimensional quadratic spaces. This number is $2^{[F:\mathbb{Q}_{2}]+3}$ if $n\ge 3$, or $2^{[F:\mathbb{Q}_{2}]+3}-1$ if $n=2$.

Indeed, \cite[63:20]{omeara_quadratic_1963} shows that a quadratic space  of dimension $n$ is determined uniquely by its determinant and Hasse symbol. When $ n\ge 3 $, there are exactly two quadratic spaces with the same determinant but with opposite Hasse symbols (\cite[63:22]{omeara_quadratic_1963}). The same holds for $ n=2 $ except in the case of determinant $-1$, the only binary space with determinant $ -1 $ being the hyperbolic plane $ \mathbb{H} $. So, by \cite[63:9 and 16:4]{omeara_quadratic_1963},  the total number of isometry classes of $n$-dimensional quadratic spaces over $F$ is $ 2|F^{\times}/F^{\times 2}|=8(N\mathfrak{p})^{e}=2^{[F:\mathbb{Q}_{2}]+3} $ if $n\ge 3$, or $2^{[F:\mathbb{Q}_{2}]+3}-1$ if $n=2$.

 \medskip

	The rest of the paper is organized as follows. In Section \ref{sec:pre}, we recall some notations and results from  Beli's papers that will be used in later proofs.
	In Section \ref{sec:construction} we determine all the  $\mathcal{O}_F$-maximal lattices of rank $ n $ and prove  preliminary properties of them. We prove that these lattices are precisely the lattices listed in Theorem \ref{thm:nuniversaldyadic15theorem} and thus obtain a proof of that theorem. Necessary and sufficient conditions for $n$-universality will be established in Sections  \ref{sec:even-con}  and  \ref{sec:odd-con} for even and odd $n$ respectively. For even $n\ge 2$, a more concise criterion for $n$-university will be shown in Theorem \ref{thm:2universaldyadicver2}. 	Section \ref{sec:proof-main} is devoted to a proof of Theorem \ref{thm:nuniversaldyadic}.

	\section{Representation theory using BONGs}\label{sec:pre}
	
	We briefly review some definitions and preliminary results from Beli's representation theory established in the series papers \cite{beli_thesis_2001,beli_integral_2003,beli_representations_2006,beli_Anew_2010,beli_representations_2019,beli_universal_2020}. The reader is referred to these papers for any
	unexplained notation and definition.
	
	\begin{defn}\label{defn:BONG}
		Let $ M$ be an $ \mathcal{O}_{F} $-lattice. A vector $ x\in M$ is called a \textit{norm generator} of $ M $ if $ \mathfrak{n}M=Q(x)\mathcal{O}_{F} $. A sequence of vectors $ x_{1},\ldots,x_{m}$ in $FM$ is called a \textit{Basis Of Norm Generators} (BONG) for $M$ if $ x_{1} $ is a norm generator for $ M $ and $ x_{2},\ldots,x_{m} $ is a BONG for $ \pr_{x_{1}^{\perp}}M$, where $ \pr_{x_{1}^{\perp}} $ denotes the projection from $ FM$ to $(Fx_{1})^{\perp} $, the orthogonal complement of $Fx_1$ in $FM$.
	 \end{defn}
	
		A BONG $ x_{1},\ldots,x_{m} $ is said to be \textit{good} if $ \ord\, (Q(x_{i}))\le \ord\,(Q(x_{i+2}))$ for all $ 1\le i\le m-2 $. If $ x_{1},\ldots,x_{m} $ is a good BONG for $M $,
		we define $ R_{i}(M):=\ord(Q(x_i))$. We have $ \mathfrak{n}M=Q(x_{1})\mathcal{O}_{F}=\mathfrak{p}^{R_{1}} $, so $M$ is integral if and only if
		\begin{align}\label{eq:integralcondition}
			R_{1}\ge 0\,.
		\end{align}
	
	Every lattice possesses a good BONG (see \cite[Lemma 4.6]{beli_integral_2003} for a proof and \cite[\S 7]{beli_representations_2006} for an algorithm) and the invariants $R_{i}(M)$ are independent of the choice of the good BONG (\cite[Lemma 4.7]{beli_integral_2003}).
		
		By \cite[Corollary 2.6]{beli_integral_2003}, a BONG $ x_{1},\ldots,x_{m} $ uniquely determines a lattice $ L $. Consequently, the class of $ L $ is uniquely determined by $ a_{1},\ldots, a_{m} $, where $ a_{i}=Q(x_{i}) $. Therefore, we will say that $ L\cong \prec a_{1},\ldots,a_{m}\succ $ relative to the BONG $ x_{1},\ldots,x_{m} $.

 For $ a_{1},\ldots, a_{m}\in F^{\times } $,  the expression $ \prec a_{1},\ldots,a_{m}\succ $ is well defined only if there is a lattice $ L$ with $ L\cong \prec a_{1},\ldots,a_{m}\succ $ relative to some BONG.

	\begin{lem}\label{lem:2.2}
		Let $ x_{1},\ldots, x_{m} $ be pairwise orthogonal vectors of a quadratic space with $ Q(x_{i})=a_{i} $ and $ R_{i}=\ord (a_{i})$.
		
		Then $ x_{1},\ldots,x_{m} $ is a good BONG for some lattice if and only if
		\begin{equation}\label{eq:GoodBONGs}
			R_{i}\le R_{i+2} \quad \text{for all }\;  1\le i\le m-2
		\end{equation}
		and
		\begin{equation}\label{eq:BONGs}
			R_{i+1}-R_{i}+d(-a_{i}a_{i+1})\ge 0 \quad\text{and}\quad  R_{i+1}-R_{i}\ge -2e \quad \text{for all }\; 1\le i\le m-1\,.
		\end{equation}
	\end{lem}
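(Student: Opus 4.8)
The plan is to treat the two families of inequalities separately. Condition \eqref{eq:GoodBONGs} is nothing but the defining property of a \emph{good} BONG, so the entire substance of the lemma is to prove that, for pairwise orthogonal $x_1,\dots,x_m$ with $Q(x_i)=a_i$, the existence of a lattice admitting $x_1,\dots,x_m$ as a BONG is equivalent to the consecutive conditions \eqref{eq:BONGs}. I would unwind Definition \ref{defn:BONG} into its iterated form: setting $M_1=M$ and $M_{i+1}=\pr_{x_i^{\perp}}M_i$, the sequence is a BONG for $M$ exactly when $x_i$ is a norm generator of $M_i$ for each $i$. Since the $x_j$ are pairwise orthogonal, $\pr_{x_i^{\perp}}$ fixes each of $x_{i+1},\dots,x_m$, so every $M_{i+1}$ still contains these vectors and $x_{i+1},\dots,x_m$ is again a BONG of $M_{i+1}$; this is precisely the feature that localizes the whole question to the behaviour of consecutive pairs $(x_i,x_{i+1})$.

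The core is a binary computation isolating a single index $i$. In $M_i$ the vector $x_{i+1}$ generates the norm of $\pr_{x_i^{\perp}}M_i$, so $M_i$ contains a lift $y=\alpha x_i+x_{i+1}$ of $x_{i+1}$ for some $\alpha\in F$. Writing $B$ for the symmetric bilinear form attached to $Q$, the norm-generator condition $\mathfrak{n}M_i=a_i\mathcal{O}_F=\mathfrak{p}^{R_i}$ forces two things. First, the cross term $2B(x_i,y)=2\alpha a_i$ lies in $2\mathfrak{s}M_i\subseteq\mathfrak{n}M_i=\mathfrak{p}^{R_i}$, whence $\ord(\alpha)\ge -e$; second, $Q(y)=\alpha^2a_i+a_{i+1}\in\mathfrak{p}^{R_i}$, i.e.\ $\ord\!\big(\alpha^2-(-a_{i+1}/a_i)\big)\ge 0$. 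Putting $c=-a_{i+1}/a_i$, so that $\ord(c)=R_{i+1}-R_i$ and $c$ lies in the square class of $-a_ia_{i+1}$, the best square approximation of $c$ has order $\ord(\mathfrak{d}(c))=\ord(c)+d(c)$; hence a suitable $\alpha$ with $\ord(\alpha^2-c)\ge 0$ exists iff $R_{i+1}-R_i+d(-a_ia_{i+1})\ge 0$, and when $R_{i+1}<R_i$ the optimal $\alpha$ satisfies $\ord(\alpha)=\tfrac12(R_{i+1}-R_i)$, turning $\ord(\alpha)\ge -e$ into $R_{i+1}-R_i\ge -2e$. (When $R_{i+1}\ge R_i$ one may take $\alpha=0$ and both inequalities are automatic.) This reproduces exactly the pair of conditions in \eqref{eq:BONGs} at index $i$, and explains why $R_{i+1}-R_i\ge -2e$ must be listed separately: it is the binding constraint precisely when $-a_ia_{i+1}$ is a square and $d(-a_ia_{i+1})=\infty$, so the first inequality becomes vacuous.

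For necessity I would run the displayed computation in each $M_i$: the lift $y$ exists because $x_{i+1}$ generates the norm of $\pr_{x_i^{\perp}}M_i$, and both membership relations above hold for this particular $\alpha$, yielding \eqref{eq:BONGs}. For sufficiency I would induct on $m$: assuming \eqref{eq:GoodBONGs} and \eqref{eq:BONGs} for the shifted data, $x_2,\dots,x_m$ is a good BONG of some lattice $N\subset(Fx_1)^{\perp}$, and I would build $M$ by adjoining $x_1$ and replacing the norm generator $x_2$ of $N$ by the glued vector $y_2=\alpha x_1+x_2$, with $\alpha$ chosen as in the binary step for $i=1$, so that $\pr_{x_1^{\perp}}M=N$ by construction.

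I expect the sufficiency step to be the main obstacle. One must check that after gluing, $x_1$ really is a norm generator of the whole $M$, i.e.\ that no element $cx_1+w$ with $w\in N$ drives the norm below $\mathfrak{p}^{R_1}$. The only dangerous directions are the small-norm ones of $N$: the direction $x_2$ is neutralized exactly by the $i=1$ case of \eqref{eq:BONGs}, while the remaining small-norm directions are already absorbed inside $N$ by its own gluings, and the good condition $R_1\le R_3$ guarantees that $x_3$ has norm contained in $\mathfrak{p}^{R_1}$ and hence need not interact with $x_1$. Making this decoupling rigorous—proving that the purely local consecutive-pair conditions glue consistently into a single lattice rather than merely pairwise—is the technical heart, and is the point at which I would, if necessary, invoke Beli's structural results on BONGs in place of a direct norm computation for $M$.
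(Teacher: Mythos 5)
Your necessity argument is correct and essentially complete: unwinding Definition \ref{defn:BONG} into the iterated projections $M_i$, lifting $x_{i+1}$ to $y=\alpha x_i+x_{i+1}\in M_i$, and using $Q(y)\in\mathfrak{n}M_i=\mathfrak{p}^{R_i}$ together with $2\alpha a_i=2B(x_i,y)\in 2\mathfrak{s}M_i\subseteq\mathfrak{n}M_i$ does yield both inequalities of \eqref{eq:BONGs}, and \eqref{eq:GoodBONGs} is the definition of goodness. Note, though, that this half is not "the paper's approach rephrased": the paper's entire proof of Lemma \ref{lem:2.2} is a citation of Beli's Lemmas 3.5, 3.6 and 4.3(ii), and what you have written out is in substance a reproof of Beli's Lemma 3.5.

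The sufficiency direction contains a genuine gap, which you flag but do not close. Setting $M=\mathcal{O}_Fx_1+\sigma(N)$, where $\sigma$ fixes $x_3,\dots,x_m$ and sends $x_2\mapsto x_2+\alpha x_1$, does give $\pr_{x_1^{\perp}}M=N$; but the assertion that $x_1$ is a norm generator of $M$ is where all the content lies, and "the remaining small-norm directions are already absorbed inside $N$ by its own gluings" is not an argument. Concretely, for $z=\beta x_1+\sigma(w)$ with $w\in N$ having $x_2$-coordinate $c_2$, one gets $Q(z)=a_1(\beta+c_2\alpha)^2+Q(w)$; in the dangerous case $R_2<R_1$ one has $\ord(\alpha)=(R_2-R_1)/2<0$, and to force $\ord Q(z)\ge R_1$ for \emph{all} $w$ you must control how $\ord Q(w)$ correlates with $c_2(w)$. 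For instance, for a lift $w=\gamma x_2+x_3\in N$ of $x_3$ you need $\ord\bigl(\gamma^2(a_1\alpha^2+a_2)+a_3\bigr)\ge R_1$, and nothing you have established excludes $\ord(\gamma)<0$, in which case this can fail. What actually saves the construction is a structural fact about $N$ that your induction never records: when $R_2<R_1$, goodness gives $R_3\ge R_1>R_2$, so the inductively built $N$ can be arranged to split orthogonally as $\mathcal{O}_Fx_2\perp N'$ with $\mathfrak{n}N'=\mathfrak{p}^{R_3}\subseteq\mathfrak{p}^{R_1}$; then every $w\in N$ has $c_2(w)\in\mathcal{O}_F$, and in $a_1\beta^2+2a_1\alpha\beta c_2+c_2^2(a_1\alpha^2+a_2)+Q(w')$ each term has order $\ge R_1$ (the cross term precisely because $R_2-R_1\ge-2e$, the third by the choice of $\alpha$ permitted by $R_2-R_1+d(-a_1a_2)\ge 0$). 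A correct proof must therefore carry this orthogonal splitting as part of the induction hypothesis -- equivalently, build the lattice as an orthogonal sum of unary pieces $\mathcal{O}_Fx_i$ and binary pieces $\mathcal{O}_Fx_i+\mathcal{O}_F(\alpha_ix_i+x_{i+1})$ -- which is essentially how Beli proves his Lemma 4.3(ii). As written, your fallback of "invoking Beli's structural results" is not a fallback at one technical point: it is the whole of this direction, i.e.\ it collapses your proof to the very citation that the paper gives.
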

	\begin{proof}
		See \cite[Lemmas 3.5, 3.6 and 4.3(ii)]{beli_integral_2003}.
	\end{proof}

	In the remainder of this section, let $M\cong \prec a_{1},\ldots, a_{m} \succ $ be an $ \mathcal{O}_{F} $-lattice  relative to some good BONG and $ R_{i}=R_{i}(M)$.

	\begin{cor}\label{cor:R-R-odd}
	   (i) If $ R_{i+1}-R_{i} $ is odd, then  $ R_{i+1}-R_{i}>0 $. Equivalently, if $ R_{i+1}-R_{i}\le 0 $, then $ R_{i+1}-R_{i} $ must be even.
	
	   (ii) If $ R_{i+1}-R_{i}=-2e $, then $ d(-a_{i}a_{i+1})\ge 2e $ and $ \prec a_{i},a_{i+1}\succ \cong 2^{-1}\pi^{R_{i}}A(0,0)$ or $ 2^{-1}\pi^{R_{i}}A(2,2\rho) $. Consequently, $[a_{i},a_{i+1}]\cong \mathbb{H}$ or $[\pi^{R_{i}},-\Delta\pi^{R_{i}}]$.
	\end{cor}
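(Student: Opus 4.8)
The plan is to treat the two parts separately, reading everything off the good-BONG inequalities of Lemma \ref{lem:2.2} together with the value table of the relative quadratic defect $d$ recalled in the introduction. For part (i), suppose $R_{i+1}-R_i$ is odd. Then $R_i+R_{i+1}=(R_{i+1}-R_i)+2R_i$ is odd, so $\ord(-a_ia_{i+1})=R_i+R_{i+1}$ is odd and hence $d(-a_ia_{i+1})=0$ (recall $d(c)=0$ exactly when $\ord(c)$ is odd). Plugging this into the first inequality of \eqref{eq:BONGs} gives $R_{i+1}-R_i\ge 0$; since an odd integer cannot vanish, $R_{i+1}-R_i>0$. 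The stated ``equivalently'' reformulation is just the contrapositive.

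For the first assertion of part (ii), I would substitute $R_{i+1}-R_i=-2e$ into the same inequality $R_{i+1}-R_i+d(-a_ia_{i+1})\ge 0$ to get $d(-a_ia_{i+1})\ge 2e$. Since the only values of $d$ that are $\ge 2e$ are $2e$ and $\infty$ (attained precisely on $\Delta F^{\times 2}$ and $F^{\times 2}$ respectively), this forces $-a_ia_{i+1}\in F^{\times 2}$ or $-a_ia_{i+1}\in\Delta F^{\times 2}$. This already pins down the quadratic space: writing $a_{i+1}=-a_ic$ with $c\in F^{\times 2}$ or $c\in\Delta F^{\times 2}$, we have $[a_i,a_{i+1}]\cong a_i[1,-c]$, which is $\mathbb{H}$ in the square case and $[\pi^{R_i},-\Delta\pi^{R_i}]$ in the $\Delta$ case.

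For the lattice normal form I would first scale $N:=\,\prec a_i,a_{i+1}\succ$ by $\pi^{-R_i}$, which shifts both $R_i,R_{i+1}$ by $-R_i$ and preserves the class of $-a_ia_{i+1}$ modulo squares, so I may assume $R_i=0$ and $R_{i+1}=-2e$. The extreme gap $R_{i+1}-R_i=-2e$ is exactly the signature of a binary $\mathfrak{p}^{-e}$-modular lattice of even type with $\mathfrak{n}N=\mathcal{O}_F$ (here $\mathfrak{n}N=2\,\mathfrak{s}N$), and by O'Meara's classification of binary modular lattices \cite[\S\,93]{omeara_quadratic_1963} such an $N$ is isometric to $2^{-1}A(0,0)$ or to $2^{-1}A(2,2\rho)$. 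To decide which, I would compute a good BONG of each candidate directly from its Gram matrix: one finds $2^{-1}A(0,0)\cong\,\prec 1,-2^{-2}\succ$ with $-a_1a_2=2^{-2}\in F^{\times 2}$, and $2^{-1}A(2,2\rho)\cong\,\prec 1,-\Delta/4\succ$ with $-a_1a_2=\Delta/4\in\Delta F^{\times 2}$. Matching against the defect class found above identifies $N$, and rescaling by $\pi^{R_i}$ yields $\prec a_i,a_{i+1}\succ\,\cong 2^{-1}\pi^{R_i}A(0,0)$ or $2^{-1}\pi^{R_i}A(2,2\rho)$; tensoring with $F$ recovers the space statement.

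The genuinely non-formal step is this last identification: verifying that the extreme BONG gap forces $N$ to be a \emph{proper} even-modular lattice of scale $\mathfrak{p}^{R_i-e}$ (this is the BONG--to--Jordan-splitting dictionary), and that the two defect classes of $-a_ia_{i+1}$ correspond bijectively to the two isometry classes $2^{-1}A(0,0)$ and $2^{-1}A(2,2\rho)$. The cleanest route is the explicit BONG computation of the two candidates above combined with the uniqueness of the lattice attached to a BONG \cite[Corollary 2.6]{beli_integral_2003} (equivalently, O'Meara's binary modular classification), after checking that in this regime $\prec a_i,a_{i+1}\succ$ is determined up to isometry by $R_i,R_{i+1}$ and the class of $a_ia_{i+1}$ modulo squares. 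Everything else is bookkeeping with \eqref{eq:BONGs} and the value table of $d$.
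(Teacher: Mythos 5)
Your proposal is correct and follows essentially the same route as the paper: part (i) is the identical parity-plus-\eqref{eq:BONGs} argument, and in part (ii) you derive $d(-a_ia_{i+1})\ge 2e$ from \eqref{eq:BONGs} and then reduce, exactly as the paper does, to O'Meara's classification \cite[93:11]{omeara_quadratic_1963} of binary modular lattices with $\mathfrak{n}=2\mathfrak{s}$. The single step you flag as non-formal --- that the gap $R_{i+1}-R_i=-2e$ forces $\mathfrak{n}(\prec a_i,a_{i+1}\succ)=2\mathfrak{s}(\prec a_i,a_{i+1}\succ)=\mathfrak{p}^{R_i}$ --- is precisely what the paper dispatches by citing \cite[Corollary 3.4(iii)]{beli_integral_2003}, so your argument needs only that citation (or the short scale/volume computation it encodes) to be complete; your extra matching of the two defect classes to the two candidate lattices is correct but not needed for the stated ``or''.
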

	\begin{proof}
		 (i) If $ R_{i+1}-R_{i} $ is odd, then $\ord(a_ia_{i+1})=R_i+R_{i+1}$ is odd, so $ d(-a_{i}a_{i+1})=0 $ and hence $ R_{i+1}-R_{i}\ge 0 $ by \eqref{eq:BONGs}. Therefore,  $ R_{i+1}-R_{i}\ge 1>0 $.
		
		(ii) Suppose $ R_{i+1}-R_{i}=-2e $. Then $ d(-a_{i}a_{i+1})\ge R_{i}-R_{i+1}=2e $ by \eqref{eq:BONGs}, and $ (R_{i+1}+R_{i})/2=(R_{i}-2e+R_{i})/2=R_{i}-e $. Thus the lattice $L:=\prec a_i,\,a_{i+1}\succ$ satisfies  $ \mathfrak{n}(L)=2\mathfrak{s}(L)=\mathfrak{p}^{R_{i}} $ by \cite[Corollary 3.4(iii)]{beli_integral_2003}. So $L\cong 2^{-1}\pi^{R_{i}}A(0,0)$ or $ 2^{-1}\pi^{R_{i}}A(2,2\rho)   $ by \cite[93:11]{omeara_quadratic_1963}.
	\end{proof}

	\begin{defn}\label{defn:alpha}
		For $ 1\le i\le m-1 $, we put
		\begin{align}\label{T}
			T_{0}^{(i)}=\dfrac{R_{i+1}-R_{i}}{2}+e, \quad T_{j}^{(i)}=
			\begin{cases}
				R_{i+1}-R_{j}+d(-a_{j}a_{j+1}) &\text{if $ 1\le j\le i $}\,, \\
			 R_{j+1}-R_{i}+d(-a_{j}a_{j+1})  &\text{if $ i\le j\le m-1 $}\,,
			\end{cases}
		\end{align}
		and define	$ \alpha_{i}=\alpha_{i}(M) $ to be the minimum of the set $ \{T_{0}^{(i)},\ldots, T_{m-1}^{(i)}\} $.

Let $  c_{1},c_{2},\ldots \in F^{\times} $. For $ 1\le i\le j+1 $, we write $ c_{i,j}=c_{i}\cdots c_{j} $ for short and set $ c_{i,i-1}=1 $. For $ 0\le i-1\le j\le m $, we define
	\begin{equation}\label{defn:d[]}
  d[ca_{i,j}]:=\min\{d(ca_{i,j}),\alpha_{i-1},\alpha_{j}\}\,,\quad c\in F^{\times}\,.
\end{equation}	Here, if $ i-1=0$ or $ m $, $ \alpha_{i-1} $ is ignored; if $ j=0 $ or $ m $, $ \alpha_{j} $ is ignored.  By \cite[Corollary\;2.5(i)]{beli_Anew_2010}, we have the following frequently used formula:
	\begin{align}\label{eq:alpha-defn}
		\alpha_{i}=\min\{(R_{i+1}-R_{i})/2+e,R_{i+1}-R_{i}+d[-a_{i,i+1}]\}\,.
	\end{align}
	The above quantities $ \alpha_{i}(M) $ and $ d[ca_{i,j}] $ are independent of the choice of good BONG (see \cite[\S 2]{beli_Anew_2010} and \cite[\S 4]{beli_representations_2006}).
\end{defn}

	In the next two propositions we collect some useful properties of the invariants  $ R_{i} $ and $ \alpha_{i} $.
	
	\begin{prop}\label{prop:Ralphaproperty1}
		Suppose  $ 1\le i\le j\le m-1 $.
		
		\begin{enumerate}
			\item[(i)]  We have $R_i+\alpha_i\le R_j+\alpha_j$ and $ -R_{i+1}+\alpha_{i}\ge -R_{j+1}+\alpha_{j} $.
			\item[(ii)] If  $ R_{i}+R_{i+1}=R_{j}+R_{j+1} $, then $ R_{i}+\alpha_{i}=\cdots=R_{j}+\alpha_{j} $.
		\end{enumerate}
	\end{prop}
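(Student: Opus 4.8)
The plan is to deduce both inequalities of (i) from their consecutive instances and then read off (ii) as a purely formal consequence. By transitivity along $i\le i+1\le\cdots\le j$, the first inequality reduces to $R_i+\alpha_i\le R_{i+1}+\alpha_{i+1}$ and the second to $\alpha_i-R_{i+1}\ge\alpha_{i+1}-R_{i+2}$. Writing $\alpha_i=\min_{0\le k\le m-1}T_k^{(i)}$ as in \eqref{T}, I would expand
\[
R_i+\alpha_i=\min_{0\le k\le m-1}\bigl(R_i+T_k^{(i)}\bigr),
\]
and record the summands explicitly from \eqref{T}: the $k=0$ term is $\tfrac{R_i+R_{i+1}}{2}+e$, the terms with $1\le k\le i$ equal $R_i+R_{i+1}-R_k+d(-a_ka_{k+1})$, and the terms with $i\le k\le m-1$ equal $R_{k+1}+d(-a_ka_{k+1})$ (the two descriptions agree at $k=i$).

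Carrying out the same expansion for $R_{i+1}+\alpha_{i+1}$, I would then compare the two families of summands index by index. The key point is that every $(i+1)$-summand dominates the corresponding $i$-summand: the terms with $i+1\le k\le m-1$ coincide (both equal $R_{k+1}+d(-a_ka_{k+1})$), the $k=0$ term increases by $(R_{i+2}-R_i)/2$, and the terms with $1\le k\le i$ increase by $R_{i+2}-R_i$. Since $R_{i+2}\ge R_i$ by the good-BONG condition \eqref{eq:GoodBONGs}, all these increments are nonnegative, so the minimum can only grow, giving $R_{i+1}+\alpha_{i+1}\ge R_i+\alpha_i$. For the second inequality I would expand $\alpha_i-R_{i+1}=\min_{0\le k\le m-1}\bigl(T_k^{(i)}-R_{i+1}\bigr)$ analogously; a termwise comparison now shows each summand \emph{weakly decreases} when $i$ is replaced by $i+1$ (with the only nonzero gaps again equal to $(R_{i+2}-R_i)/2$ or $R_{i+2}-R_i$), so the minimum can only shrink, yielding $\alpha_i-R_{i+1}\ge\alpha_{i+1}-R_{i+2}$. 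The only real work here is the bookkeeping across the piecewise definition of $T_k^{(i)}$ and the overlapping index $k=i$; there is no input beyond \eqref{eq:GoodBONGs}, and one could alternatively route the argument through \eqref{eq:alpha-defn}, though the direct estimate on the $T_k^{(i)}$ seems cleanest.

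Finally, (ii) drops out of (i) with no further computation. Set $A_k=R_k+\alpha_k$ and $B_k=\alpha_k-R_{k+1}$, so that the two parts of (i) read $A_i\le A_{i+1}\le\cdots\le A_j$ and $B_i\ge B_{i+1}\ge\cdots\ge B_j$, and note the identity $A_k-B_k=R_k+R_{k+1}$. The hypothesis $R_i+R_{i+1}=R_j+R_{j+1}$ is exactly $A_i-B_i=A_j-B_j$, i.e. $A_j-A_i=B_j-B_i$; but the left-hand side is $\ge 0$ while the right-hand side is $\le 0$, forcing $A_i=A_j$. The monotone chain $A_i\le\cdots\le A_j$ then collapses to $R_i+\alpha_i=\cdots=R_j+\alpha_j$, which is precisely (ii).
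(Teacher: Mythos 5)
Your proposal is correct. Note first that the paper does not actually prove this proposition: its ``proof'' is a citation to Beli's 2010 paper (Lemma~2.2 and Corollary~2.3(i) of \cite{beli_Anew_2010}), so your argument supplies the content that the paper outsources. Your verification is sound: expanding $R_i+\alpha_i=\min_k\bigl(R_i+T_k^{(i)}\bigr)$ and $\alpha_i-R_{i+1}=\min_k\bigl(T_k^{(i)}-R_{i+1}\bigr)$ from \eqref{T}, the index-by-index comparison works exactly as you describe --- for the first quantity the increments in passing from $i$ to $i+1$ are $0$, $(R_{i+2}-R_i)/2$ (at $k=0$), or $R_{i+2}-R_i$ (at $1\le k\le i$), and for the second quantity the decrements are $0$ (at $1\le k\le i$), $(R_{i+2}-R_i)/2$ (at $k=0$), or $R_{i+2}-R_i$ (at $k\ge i+1$); all are controlled by the good-BONG inequality \eqref{eq:GoodBONGs} alone, and a termwise domination of two minima over the same index set gives the desired comparison of the minima. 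The one place that needs the care you flag is the overlap index ($k=i$ or $k=i+1$), where the two branches of \eqref{T} must be seen to agree, and your bookkeeping there is right. Your deduction of (ii) from (i) via $A_k-B_k=R_k+R_{k+1}$, forcing $A_j-A_i=B_j-B_i$ to be simultaneously $\ge 0$ and $\le 0$, is the same formal squeeze that Beli's Corollary~2.3(i) rests on, so in spirit your route coincides with the cited source; the benefit of your write-up is that the reader of this paper gets a self-contained proof using nothing beyond Definition~\ref{defn:alpha} and \eqref{eq:GoodBONGs}, rather than a pointer into another paper's notation.
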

	\begin{proof}
		See \cite[Lemma\;2.2 and Corollary\;2.3(i)]{beli_Anew_2010}.
	\end{proof}
	
	\begin{prop}\label{prop:Ralphaproperty2}
		Suppose  $ 1\le i\le m-1 $.
		
		\begin{enumerate}
			\item[(i)] Either $\alpha_i\in [0,\,2e]\cap \Z$ or $\alpha_i\in (2e,\,\infty)\cap \frac{1}{2}\Z$. In particular, $\alpha_i\ge 0$.
			
			Moreover, $\alpha_i=0$ if and only if $R_{i+1}-R_i=-2e$.
			\item[(ii)] $ R_{i+1}-R_{i}>2e$ (resp. $ =2e $, $ <2e $) if and only if $ \alpha_{i}>2e $ (resp. $ =2e $, $ <2e $).
			\item[(iii)] Suppose $ R_{i+1}-R_{i}\le 2e $. Then $ \alpha_{i}\ge R_{i+1}-R_{i} $, and equality holds if and only if $ R_{i+1}-R_{i}=2e $ or $ R_{i+1}-R_{i} $ is odd.	
			\item[(iv)] If $ R_{i+1}-R_{i}\ge 2e$ or $ R_{i+1}-R_{i}\in \{-2e,2-2e,2e-2\} $, then $ \alpha_{i}=(R_{i+1}-R_{i})/2+e $.
		
			\item[(v)] If $ \alpha_{i}=0 $, or equivalently $R_{i+1}-R_i=-2e$ by (i), then $  d[-a_{i}a_{i+1}]\ge 2e $.
			
			\item[(vi)] Suppose $ \alpha_{i}=1 $. Then $ R_{i+1}-R_{i}\in [2-2e,\,0]^{E}\cup\{1\} $. Moreover, we have $ d[-a_{i,i+1}]\ge R_{i}-R_{i+1}+1 $ and equality holds
			if $R_{i+1}-R_{i}\not=2-2e $.
			
			\item[(vii)] If $ 2-2e<R_{i+1}-R_{i}\le 0 $, then $ \alpha_{i}=1 $ if and only if $ d[-a_{i,i+1}]=R_{i}-R_{i+1}+1 $.
			
			(Notice also that when $R_{i+1}-R_i\in \{2-2e,\,1\}$, we have $\alpha_i=1$ by (iii) and (iv).)
		\end{enumerate}
	\end{prop}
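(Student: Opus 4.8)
The plan is to run everything off the master formula \eqref{eq:alpha-defn}, which I write as $\alpha_i=\min\{\tfrac{\delta}{2}+e,\,\delta+D\}$ with $\delta:=R_{i+1}-R_i\ge -2e$ and $D:=d[-a_{i,i+1}]=\min\{d(-a_ia_{i+1}),\alpha_{i-1},\alpha_{i+1}\}$ (the expansion being \eqref{defn:d[]}). Three external facts do almost all the work: the BONG inequalities \eqref{eq:BONGs} give $\delta+d(-a_ia_{i+1})\ge 0$; the image of $d$ contains no even value strictly between $0$ and $2e$, so a bound $d(-a_ia_{i+1})\ge t$ with $t$ even and $0<t<2e$ improves to $\ge t+1$; and, crucially, the good-BONG inequality \eqref{eq:GoodBONGs} ($R_i\le R_{i+2}$) together with the $\alpha=0$ characterization of (i) shows that $\alpha_{i-1}=0$ or $\alpha_{i+1}=0$ can only occur when $\delta\ge 2e$. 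Consequently, whenever $\delta<2e$ the neighbouring $\alpha$'s are positive, so $D=0$ forces $d(-a_ia_{i+1})=0$, i.e.\ $\delta$ odd (Corollary \ref{cor:R-R-odd}(i)).

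Granting the nonnegativity in (i) (treated below), I would derive (ii)--(iv) as comparisons of the two arguments of the minimum. For (ii) and the $\delta\ge 2e$ case of (iv): when $\delta\ge 2e$ one has $\tfrac{\delta}{2}+e\le\delta\le\delta+D$, so $\alpha_i=\tfrac{\delta}{2}+e$, which exceeds, equals, or falls below $2e$ exactly as $\delta$ does, while $\delta<2e$ gives $\alpha_i\le\tfrac{\delta}{2}+e<2e$. For (iii), both arguments are $\ge\delta$ once $\delta\le 2e$, and $\alpha_i=\delta$ holds iff $\tfrac{\delta}{2}+e=\delta$ (i.e.\ $\delta=2e$) or $D=0$; by the previous paragraph the latter, for $\delta<2e$, means $\delta$ odd, which is precisely the equality criterion of (iii). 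The two remaining gaps of (iv) then follow: for $\delta=2e-2$ one has $\alpha_i\ge\delta$ with equality excluded by (iii) (as $\delta$ is even and $\ne 2e$), forcing $\alpha_i=2e-1=\tfrac{\delta}{2}+e$; for $\delta=2-2e$ one has $\alpha_i\le\tfrac{\delta}{2}+e=1$, and $\alpha_i\ne 0$ by (i) gives $\alpha_i=1$.

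Parts (v)--(vii) concern the small values. For (v), $\alpha_i=0$ is $\delta=-2e$ by (i), whence $\delta+D\ge\alpha_i=0$ yields $D\ge 2e$. For (vi)--(vii), assume $\alpha_i=1$: then (iii) forces $\delta\le 1$, (i) excludes $\delta=-2e$, and Corollary \ref{cor:R-R-odd}(i) restricts $\delta\le 0$ to even values, leaving $\delta\in[2-2e,0]^{E}\cup\{1\}$. If $2-2e<\delta\le 0$ then $\tfrac{\delta}{2}+e>1$, so $\alpha_i=1$ must be the second argument, i.e.\ $D=1-\delta=R_i-R_{i+1}+1$, giving both the equality in (vi) and the equivalence (vii); if $\delta=2-2e$ the first argument already equals $1$, so only $D\ge 2e-1$ survives, and $\delta=1$ gives $D=0=R_i-R_{i+1}+1$. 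These are the two exceptional cases of (vi).

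The real content sits in (i). Nonnegativity of every $\alpha_i$ I would obtain simultaneously by feeding the monotonicity bounds $\alpha_{i\pm 1}\ge\alpha_i-\delta$ of Proposition \ref{prop:Ralphaproperty1}(i) back into $D$; the integer-versus-half-integer dichotomy then follows because a half-integer value can only come from $\tfrac{\delta}{2}+e$ with $\delta$ odd, which by (ii) forces $\alpha_i>2e$, whereas for $\delta<2e$ every term entering the minimum is an integer. The genuinely delicate point, and the step I expect to be the main obstacle, is the converse half of the last clause, $\alpha_i=0\Rightarrow\delta=-2e$: one must exclude $\delta+D=0$ with $-2e<\delta\le -2$ (even). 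Here $D=-\delta$ cannot come from $d(-a_ia_{i+1})$ (which is $\ge -\delta+1$ by the no-even-defect fact), so it must come from a neighbour, say $\alpha_{i+1}=-\delta$; but $-\delta$ is even and lies in $(0,2e)$, so (iii) applied at index $i+1$ forces $R_{i+2}-R_{i+1}<-\delta$, hence $R_{i+2}<R_i$, contradicting the good-BONG inequality $R_{i+2}\ge R_i$. Because this invokes (iii) at a shifted index, I would carry (i) and (iii) through a single simultaneous induction on the length $m$ (peeling off the outermost terms), and this is the argument I would write out most carefully and cross-check against \cite[Cor.\ 2.5]{beli_Anew_2010}.
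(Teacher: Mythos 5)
The paper itself never proves this proposition---it cites \cite[Lemma 2.7, Corollaries 2.8, 2.9]{beli_Anew_2010} and \cite[Lemma 2.8]{beli_universal_2020}---so your attempt must stand as a self-contained argument. Your reductions of (ii)--(vii) to (i) are essentially correct, and you rightly identify (i) as the crux; but the route you propose for (i) cannot work, for a structural reason: you treat \eqref{eq:alpha-defn} (with the expansion \eqref{defn:d[]}) as the \emph{definition} of the $\alpha_i$, whereas it is only a derived property. The coupled system of equations \eqref{eq:alpha-defn} for $i=1,\ldots,m-1$ does not determine the $\alpha_i$, even after adjoining every auxiliary fact you allow yourself (\eqref{eq:BONGs}, \eqref{eq:GoodBONGs}, the image of $d$, Corollary \ref{cor:R-R-odd}, Proposition \ref{prop:Ralphaproperty1}). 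Concretely, take $e\ge 2$ and a good BONG with $m=3$, $R_1=R_3=0$, $R_2=-2$, $d(-a_1a_2)=2e$, $d(-a_2a_3)=1$ (such a lattice exists by Lemma \ref{lem:2.2}; e.g.\ $a_1=1$, $a_2=-\Delta\pi^{-2}$, $a_3=\Delta^{-1}(1+\pi)$). The true invariants, computed from Definition \ref{defn:alpha}, are $(\alpha_1,\alpha_2)=(1,3)$. But the assignment $(\alpha_1,\alpha_2)=(0,2)$ \emph{also} satisfies \eqref{eq:alpha-defn} at both indices (it would give $d[-a_{1,2}]=\min\{2e,2\}=2$, hence $\alpha_1=\min\{e-1,0\}=0$, and $d[-a_{2,3}]=\min\{1,0\}=0$, hence $\alpha_2=\min\{e+1,2\}=2$), satisfies both inequalities of Proposition \ref{prop:Ralphaproperty1}(i) (with equality) and part (ii) as well, and is consistent with all your other axioms---yet it violates (i), (v), and the equality clause of (iii) at $i=2$. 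Hence no argument using only these facts can prove (i); at some point you must return to the actual definition $\alpha_i=\min\{T^{(i)}_0,\ldots,T^{(i)}_{m-1}\}$.

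The same phenomenon is what defeats your two concrete devices. First, the circularity you flag is real and not removable inside your framework: in the contradiction scenario ($\alpha_i=0$, $-2e<\delta\le-2$, $D=\alpha_{i+1}=-\delta$), the dangerous case is $\alpha_{i+1}=\delta_{i+1}$, which by your own proof of (iii) forces $d[-a_{i+1,i+2}]=0$; but $\alpha_i$---the quantity you assumed to vanish---is itself one of the three terms in that minimum, so you cannot conclude $d(-a_{i+1}a_{i+2})=0$, hence cannot conclude that $\delta_{i+1}$ is odd, and the contradiction evaporates (the spurious solution above realizes exactly this escape). Second, induction on $m$ by ``peeling off the outermost terms'' has nothing to stand on if you only know \eqref{eq:alpha-defn}: truncating to $\prec a_1,\ldots,a_{m-1}\succ$ changes (weakly increases) every $\alpha_i$, so the inductive hypothesis for the shorter lattice says nothing about the $\alpha_i$ of $M$. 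What makes peeling legitimate is precisely the relation $\alpha_i(M)=\min\{\alpha_i(\prec a_1,\ldots,a_{m-1}\succ),\,T^{(i)}_{m-1}\}$, which is visible only from Definition \ref{defn:alpha}. Once you work from that definition, the hard parts of (i) become term-by-term estimates (each $T^{(i)}_j\ge 0$ follows from \eqref{eq:GoodBONGs} and \eqref{eq:BONGs}, and $T^{(i)}_j=0$ with $\delta>-2e$ is excluded by the parity and $d$-image arguments you already have), and your derivations of (ii)--(vii) then go through essentially as written; that is in substance Beli's route.
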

	\begin{proof}
		See \cite[Lemma 2.7, Corollaries\;2.8, 2.9]{beli_Anew_2010} for (i)--(v) and \cite[Lemma 2.8]{beli_universal_2020} for (vi) and (vii).
	\end{proof}
	
	\begin{prop}\label{prop:Ralphaproperty3}
		Suppose that $ M $ is integral.

		(i) We have $ R_{j}\ge R_{i}\ge 0 $ for all  $ i,j\in [1,\,m]^O$ with $ j\ge i $ and $ R_{j}\ge R_{i}\ge -2e $ for all $ i,j \in [1,\,m]^E$ with $ j\ge i $.
		
		(ii) If $ R_{j}=0 $ for some $ j\in [1,m]^{O} $, then $ R_{i}=0 $ for all  $ i\in [1,j]^{O} $ and $ R_{i} $ is even for all $ 1\le i\le j $.
		
		(iii) If $ R_{j}=-2e$ for some $ j\in [1,m]^{E} $, then for each $ i\in [1,j]^{E} $, we have $ R_{i-1}=0 $, $ R_{i}=-2e $ and $ d(-a_{i-1}a_{i})\ge d[-a_{i-1}a_{i}]\ge 2e $. Consequently, $ d[(-1)^{j/2}a_{1,j}]\ge 2e $.
		
	    (iv)  If $ R_{j}=-2e$ for some $ j\in [1,m]^{E} $, then $ [a_{1},\ldots,a_{j}]\cong \mathbb{H}^{j/2}$ or $\mathbb{H}^{(j-2)/2}\bot[1,\,-\Delta]$.
		
		(v) If $ R_{j}=-2e$ and $ R_{j+1} $ is even for some $ j\in [1,m]^{E} $, then $ [a_{1},\ldots,a_{j+1}]\cong \mathbb{H}^{j/2}\bot [\varepsilon]$ for some $ \varepsilon\in \mathcal{O}_{F}^{\times} $ with $ \varepsilon \in a_{j+1}F^{\times 2}\cup  \Delta a_{j+1} F^{\times 2} $.
	\end{prop}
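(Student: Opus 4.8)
The plan is to prove the five assertions in the order listed, since each feeds into the next, separating the purely combinatorial claims (i)--(iii), which flow from the good BONG inequalities, from the geometric claims (iv)--(v), which require identifying certain binary and ternary spaces.

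For (i) I would combine the monotonicity built into good BONGs with integrality: condition \eqref{eq:GoodBONGs} gives the two chains $R_1\le R_3\le\cdots$ and $R_2\le R_4\le\cdots$, so it suffices to bound the first term of each. Integrality \eqref{eq:integralcondition} gives $R_1\ge 0$, and the inequality $R_2-R_1\ge -2e$ from \eqref{eq:BONGs} together with $R_1\ge 0$ gives $R_2\ge -2e$; the chains then propagate these bounds. For (ii), assuming $R_j=0$ with $j$ odd, the odd chain of (i) squeezes $R_i=0$ for every odd $i\le j$. To see that the even-indexed $R_k$ ($k<j$) are even, I would note that $k-1$ and $k+1$ are odd indices $\le j$, hence $R_{k-1}=R_{k+1}=0$; if $R_k$ were odd, then both differences $R_k-R_{k-1}=R_k$ and $R_{k+1}-R_k=-R_k$ would be odd, and Corollary \ref{cor:R-R-odd}(i) would force both to be positive, which is impossible. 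This is the only nontrivial point in (i)--(ii).

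For (iii), with $R_j=-2e$ and $j$ even, the even chain of (i) forces $R_i=-2e$ for all even $i\le j$. For such $i$ the inequality $R_i-R_{i-1}\ge -2e$ from \eqref{eq:BONGs} gives $R_{i-1}\le 0$, while (i) gives $R_{i-1}\ge 0$ (odd index), so $R_{i-1}=0$ and $R_i-R_{i-1}=-2e$. Proposition \ref{prop:Ralphaproperty2}(i) then yields $\alpha_{i-1}=0$, and Proposition \ref{prop:Ralphaproperty2}(v) gives $d[-a_{i-1}a_i]\ge 2e$; since the bracket quantity is bounded above by $d(-a_{i-1}a_i)$, the stated chain follows. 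For the final assertion I would write $(-1)^{j/2}a_{1,j}=\prod_{k=1}^{j/2}(-a_{2k-1}a_{2k})$ and apply the domination principle to get $d\big((-1)^{j/2}a_{1,j}\big)\ge 2e$; since $R_{j+1}\ge 0>-2e=R_j$ forces $\alpha_j\ge 2e$ by Proposition \ref{prop:Ralphaproperty2}(ii) (when $j<m$, the term being dropped otherwise), the minimum defining $d[(-1)^{j/2}a_{1,j}]$ is $\ge 2e$.

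For (iv), the BONG vectors are pairwise orthogonal, so $[a_1,\ldots,a_j]$ splits orthogonally into the binary blocks $[a_{2k-1},a_{2k}]$, each satisfying $R_{2k}-R_{2k-1}=-2e$; by Corollary \ref{cor:R-R-odd}(ii) together with $R_{2k-1}=0$ from (iii), each block is isometric to $\mathbb{H}$ or to $[1,-\Delta]$. The crux is the identity $[1,-\Delta]\perp[1,-\Delta]\cong \mathbb{H}^2$: this quaternary space has square discriminant $\Delta^2$, and it is isotropic because $[1,-\Delta]$ is the norm form of the unramified quadratic extension $F(\sqrt\Delta)$ and hence represents every unit, in particular both $1$ and $-1$, so the two blocks represent a common value up to sign; a quaternary space with square discriminant that is isotropic must be hyperbolic. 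Reducing the blocks modulo this identity according to the parity of the number of $[1,-\Delta]$ summands yields exactly the two listed normal forms. Finally, (v) is obtained from (iv) by appending $[a_{j+1}]$: writing $a_{j+1}=\varepsilon_0\cdot(\text{square})$ with $\varepsilon_0$ a unit (legitimate since $R_{j+1}$ is even), the first case of (iv) gives $\mathbb{H}^{j/2}\perp[\varepsilon_0]$ directly, while in the second case I would observe that $[1,-\Delta,\varepsilon_0]$ is isotropic (again because $-\varepsilon_0$ is a unit, hence a norm) and so splits as $\mathbb{H}\perp[\Delta\varepsilon_0]$ by comparing discriminants, giving $\mathbb{H}^{j/2}\perp[\Delta\varepsilon_0]$; in both cases the unit $\varepsilon:=\varepsilon_0$ or $\Delta\varepsilon_0$ lies in $a_{j+1}F^{\times 2}\cup\Delta a_{j+1}F^{\times 2}$, as required. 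The main obstacle is not the combinatorics but this geometric input in (iv)--(v): one must exploit the specific role of $\Delta$ as a generator of the unramified quadratic extension, so that its norm form represents all units, in order to collapse pairs of anisotropic blocks into hyperbolic planes and to split off the trailing ternary space.
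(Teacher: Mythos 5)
Your proposal is correct and follows essentially the same route as the paper's proof: the same monotonicity-plus-parity arguments for (i)--(iii) via Corollary \ref{cor:R-R-odd} and Proposition \ref{prop:Ralphaproperty2}, and the same decomposition into binary blocks isometric to $\mathbb{H}$ or $[1,-\Delta]$ for (iv)--(v). The only cosmetic differences are that you supply a proof of the identity $[1,-\Delta]\perp[1,-\Delta]\cong\mathbb{H}^{2}$ (which the paper simply quotes) and that in (iii) you unpack the definition of $d[\,\cdot\,]$ and bound $\alpha_{j}$ directly, where the paper invokes the domination principle for the bracket invariants.
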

	\begin{proof}
		(i) For odd indices $ j\ge i $, we have $ R_{j}\ge R_{i}\ge R_{1}\ge 0 $ by \eqref{eq:GoodBONGs} and \eqref{eq:integralcondition}. For even indices $ j\ge i $, we have $ R_{j}\ge R_{i} \ge R_{i-1}-2e\ge -2e $ by \eqref{eq:GoodBONGs} and \eqref{eq:BONGs}.
		
		(ii) For $ i\in [1,j]^{O} $, $ 0\le  R_{i}\le R_{j}=0 $ by (i) and hence $ R_{i}=0 $. Suppose that there exists $ i_{0}\in [1,j-1]^{E} $ for which $ R_{i_{0}} $ is odd. Then $ (R_{i_{0}+1}-R_{i_{0}})(R_{i_{0}}-R_{i_{0}-1})=-R_{i_{0}}^{2}\le 0 $. But both $ R_{i_{0}+1}-R_{i_{0}}$ and  $ R_{i_{0}}-R_{i_{0}-1} $ are positive by Corollary \ref{cor:R-R-odd}(i), so we get a contradiction.
		
		(iii) For $ i\in [1,j]^{E} $, $ -2e\le R_{i}\le R_{j}=-2e $ by (i) and hence $ R_{i}=-2e $. Since $ -2e-R_{i-1}=R_{i}-R_{i-1}\ge -2e $ by \eqref{eq:BONGs}, $ R_{i-1}\le 0 $ and so $ R_{i-1}=0 $ by (i). It follows that $ d(-a_{i-1}a_{i})\ge d[-a_{i-1}a_{i}]\ge 2e $ by Proposition \ref{prop:Ralphaproperty2}(v). Hence $ d[(-1)^{j/2}a_{1,j}]\ge 2e $ by the domination principle.

		(iv) For $ i\in [1,j]^{E} $, we have $R_{i-1}=0 $ and $ R_{i}=-2e $ by (iii) and so $ [a_{i-1},a_{i}]\cong \mathbb{H} $ or $ [1,-\Delta] $ by Corollary \ref{cor:R-R-odd}(ii). Note that $ \mathbb{H}^{2}\cong [1,-\Delta]\perp [1,-\Delta] $ and hence
		$ [a_{1},\ldots,a_{j}]$ is isometric to either $\mathbb{H}^{j/2}$ or $ \mathbb{H}^{(j-2)/2}\perp [1,-\Delta]$.
		
		(v) Since $R_{j+1}=\ord(a_{j+1})$ is even, we can find a unit $\eta$ in $a_{j+1}F^{\times 2}$. Since $(\Delta,\,-\eta)_{\p}=1$ (\cite[63:11a]{omeara_quadratic_1963}), the ternary space $[1,\,-\Delta,\,\eta]$ is isotropic and hence isometric to $\mathbb{H}\bot [\Delta\eta]$. By (iv), $ [a_{1},\ldots,a_{j+1}]$ is isometric to either $ \mathbb{H}^{j/2}\perp [a_{j+1}] \cong \mathbb{H}^{j/2}\perp [\eta] $ or $ \mathbb{H}^{(j-2)/2}\perp [1,-\Delta,a_{j+1}]\cong  \mathbb{H}^{(j-2)/2}\perp [1,-\Delta,\eta] \cong\mathbb{H}^{j/2}\perp [\Delta\eta]$. Choosing $\varepsilon\in\{\eta,\,\Delta\eta\}$ accordingly completes the proof.
	\end{proof}

	Now consider two  $ \mathcal{O}_{F} $-lattices $ M\cong \prec a_{1},\ldots,a_{m} \succ$ and $ N\cong \prec b_{1},\ldots, b_{n}\succ $ relative to some good BONGs and suppose $ m\ge n $. Let $ R_{i}=R_{i}(M) $, $ S_{i}=R_{i}(N) $, $ \alpha_{i}=\alpha_{i}(M) $ and $ \beta_{i}=\alpha_{i}(N) $.
	For $ 0\le i,j\le m $, we define
	\begin{equation}\label{defn:d[ab]}
		d[ca_{1,i}b_{1,j}]=\min\{d(ca_{1,i}b_{1,j}),\alpha_{i},\beta_{j}\}\,,\quad c\in F^{\times}\,.
	\end{equation}
	Here if $ i=0 $ or $ m $, then $ \alpha_{i} $ is ignored; if $ j=0 $ or $ j=n $, $ \beta_{j} $ is ignored. Note that the quantity
 $d[ca_{i,j}]$ defined in \eqref{defn:d[]} coincides with $d[ca_{1,i-1}a_{1,j}]$.

 We have a domination principle for  $ d[ca_{1,i}b_{1,j}] $  (cf. \cite[\S 1.1, p.\hskip 0.1cm 6]{beli_representations_2019}). Namely, for another lattice $ L\cong \prec c_{1},\ldots,c_{k}\succ $ relative to a good BONG, we have
	\begin{align*}
		d[cc^{\prime}a_{1,i}c_{1,\ell}]\ge \min\{d[ca_{1,i}b_{1,j}],d[c^{\prime}b_{1,j}c_{1,\ell}]\}\,,\quad c,c^{\prime}\in F^{\times}\,.
	\end{align*}

	  For any $ 1\le i\le \min\{m-1,n\}$, we define
	\[
	\begin{split}
		A_{i}=A_{i}(M,N):=\min\{&(R_{i+1}-S_{i})/2+e,R_{i+1}-S_{i}+d[-a_{1,i+1}b_{1,i-1}],\\
		&R_{i+1}+R_{i+2}-S_{i-1}-S_{i}+d[a_{1,i+2}b_{1,i-2}]\}\,,
	\end{split}
	\]where  the term $ R_{i+1}+R_{i+2}-S_{i-1}-S_{i}+d[a_{1,i+2}b_{1,i-2}] $ is ignored if $ i=1 $ or $ m-1 $. It can be shown that  $ d[ca_{1,i}b_{1,j}] $ and $ A_{i}(M,N) $ are independent of the choice of good BONG (\cite[\S 4]{beli_representations_2006}).
	
	\medskip

	Taking the remarks following \cite[Theorem 2.1]{beli_representations_2019} into account (cf. \cite[Lemma 2.16]{beli_representations_2019} for details), we can restate   \cite[Theorem 4.5]{beli_representations_2006} as follows:
	
	\begin{thm}\label{thm:beligeneral}
		Suppose $  n\le m$. Then $ N\rep M $ if and only if  $ FN\rep FM $ and the following conditions hold:
		
		(i) For any $ 1\le i\le n $, we have either $ R_{i}\le S_{i} $, or $ 1<i<m $ and $ R_{i}+R_{i+1}\le S_{i-1}+S_{i} $.
		
		(ii) For any $ 1\le i\le \min\{m-1,n\} $, we have $ d[a_{1,i}b_{1,i}]\ge A_{i} $.
		
		(iii) For any $ 1<i\le \min\{m-1,n+1\} $, if
		\begin{align}\label{eq:assumption(3)(ii)'}
			R_{i+1}>S_{i-1} \quad
			\text{and}\quad d[-a_{1,i}b_{1,i-2}]+d[-a_{1,i+1}b_{1,i-1}]>2e+S_{i-1}-R_{i+1}\;,
		\end{align}
		then $ [b_{1},\ldots, b_{i-1}]\rep [a_{1},\ldots,a_{i}] $.

		(iv) For any $ 1<i\le \min\{m-2,n+1\} $ such that $ S_{i}\ge R_{i+2}>S_{i-1}+2e\ge R_{i+1}+2e$, we have $ [b_{1},\ldots,b_{i-1}]\rep [a_{1},\ldots,a_{i+1}] $. (If $ i=n+1 $, the condition $ S_{i}\ge R_{i+2} $ is ignored.)
	\end{thm}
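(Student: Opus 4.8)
The plan is to obtain this criterion as a faithful restatement of Beli's representation theorem \cite[Theorem 4.5]{beli_representations_2006}, reorganised according to \cite[Theorem 2.1]{beli_representations_2019} and the remarks following it. Beli's theorem already asserts, for $n\le m$, that $N\rep M$ holds if and only if $FN\rep FM$ together with a list of conditions on the invariants $R_i,S_i,\alpha_i,\beta_i$, the auxiliary defect quantities $d[ca_{1,i}b_{1,j}]$ of \eqref{defn:d[ab]}, and the numbers $A_i=A_i(M,N)$. So the first task is bookkeeping: to recall that statement under our normalisation (Definition \ref{defn:alpha} and the definitions of $d[ca_{1,i}b_{1,j}]$ and $A_i$ preceding the theorem) and to identify its two routine clauses with (i) and (ii). Concretely, (i) is the chain condition ``$R_i\le S_i$, or $1<i<m$ and $R_i+R_{i+1}\le S_{i-1}+S_i$'' over $1\le i\le n$, while (ii) is the defect inequality $d[a_{1,i}b_{1,i}]\ge A_i$ over $1\le i\le\min\{m-1,n\}$, with the last term of $A_i$ dropped at $i=1,m-1$ exactly as recorded above.

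The substantive step is to match the two essential conditions of \cite[Theorem 4.5]{beli_representations_2006} with (iii) and (iv). Here I would invoke \cite[Lemma 2.16]{beli_representations_2019}, whose purpose is precisely to rewrite these essential conditions in the hypothesis--conclusion form used here. For (iii), one checks that the hypothesis \eqref{eq:assumption(3)(ii)'}, namely $R_{i+1}>S_{i-1}$ together with the two-term inequality $d[-a_{1,i}b_{1,i-2}]+d[-a_{1,i+1}b_{1,i-1}]>2e+S_{i-1}-R_{i+1}$, is exactly the trigger under which the original criterion forces the auxiliary representation $[b_1,\dots,b_{i-1}]\rep[a_1,\dots,a_i]$ of the truncated spaces; the remarks following \cite[Theorem 2.1]{beli_representations_2019} supply the equivalence of this combined-defect formulation with Beli's original phrasing. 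For (iv) one similarly translates the large-gap clause $S_i\ge R_{i+2}>S_{i-1}+2e\ge R_{i+1}+2e$ into the representation $[b_1,\dots,b_{i-1}]\rep[a_1,\dots,a_{i+1}]$, keeping the boundary convention that $S_i\ge R_{i+2}$ is vacuous when $i=n+1$.

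It then remains to verify that the index ranges and endpoint conventions coincide in every clause: $1\le i\le n$ in (i); $1\le i\le\min\{m-1,n\}$ in (ii); $1<i\le\min\{m-1,n+1\}$ in (iii); and $1<i\le\min\{m-2,n+1\}$ in (iv). These, together with the convention in \eqref{defn:d[ab]} that $\alpha_i$ or $\beta_j$ is ignored at the extreme indices, are exactly what emerges from Beli's theorem once it is specialised to a lattice of the smaller rank $n$ being represented by one of rank $m$. The monotonicity and range properties of the $R_i$ and $\alpha_i$ collected in Propositions \ref{prop:Ralphaproperty1} and \ref{prop:Ralphaproperty2}, together with the domination principle for $d[ca_{1,i}b_{1,j}]$ recalled above, are available to make each of these boundary identifications routine once the essential conditions are in place.

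The main obstacle is the faithful translation of conditions (iii) and (iv): the 2006 and 2019 versions use different but equivalent formulations of the quadratic-defect triggers, so the delicate point is to confirm, via \cite[Lemma 2.16]{beli_representations_2019}, that the two-term threshold in \eqref{eq:assumption(3)(ii)'} and the chain of gap inequalities in (iv) are the correct ones and that no case is lost at the index boundaries. Once these equivalences are secured, the equivalence of Theorem \ref{thm:beligeneral} with \cite[Theorem 4.5]{beli_representations_2006} follows.
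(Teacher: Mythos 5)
Your proposal is correct and follows essentially the same route as the paper: the paper likewise obtains Theorem \ref{thm:beligeneral} purely as a restatement of \cite[Theorem 4.5]{beli_representations_2006}, invoking the remarks following \cite[Theorem 2.1]{beli_representations_2019} and \cite[Lemma 2.16]{beli_representations_2019} to recast the essential conditions in the hypothesis--conclusion form of (iii) and (iv). Your additional attention to the index ranges and the endpoint conventions for $d[ca_{1,i}b_{1,j}]$ and $A_i$ is exactly the bookkeeping implicit in the paper's citation.
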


	\begin{lem}\label{lem:Aj}
		Suppose that $ M $ and $ N $ are integral. Let $  j\in[ 1,\min\{m-1,n\}]^{E} $. If $ R_{j}=-2e $ and $ R_{j+1}=0 $, then $ d[a_{1,j}b_{1,j}]\ge A_{j} $, i.e. Theorem \ref{thm:beligeneral}(ii) holds at the index $ j $.
	\end{lem}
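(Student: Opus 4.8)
The plan is to verify that the asserted inequality is exactly condition (ii) of Theorem~\ref{thm:beligeneral} at the single index $j$, namely $d[a_{1,j}b_{1,j}]\ge A_j$, and to pin down the relevant invariants first. Since $R_{j+1}-R_j=2e$, Proposition~\ref{prop:Ralphaproperty2}(ii) gives $\alpha_j=2e$. As $N$ is integral and $j$ is even while $j-1$ is odd, Proposition~\ref{prop:Ralphaproperty3}(i) yields $S_j\ge -2e$ and $S_{j-1}\ge 0$. Hence the first defining term of $A_j$ satisfies $(R_{j+1}-S_j)/2+e=-S_j/2+e\le 2e$, so that $A_j\le 2e=\alpha_j$. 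Recalling that $d[a_{1,j}b_{1,j}]=\min\{d(a_{1,j}b_{1,j}),\,\alpha_j,\,\beta_j\}$ (with $\beta_j$ omitted when $j=n$), the proof then reduces to the two inequalities $\beta_j\ge A_j$ (needed only when $j<n$) and $d(a_{1,j}b_{1,j})\ge A_j$.

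For the first inequality I would bound $A_j$ by its second defining term $T_2:=R_{j+1}-S_j+d[-a_{1,j+1}b_{1,j-1}]$. By the definition of $d[\,\cdot\,]$ one has $d[-a_{1,j+1}b_{1,j-1}]\le \beta_{j-1}$, while Proposition~\ref{prop:Ralphaproperty1}(i) applied to $N$ gives $S_{j-1}+\beta_{j-1}\le S_j+\beta_j$. Using $R_{j+1}=0$ and $S_{j-1}\ge 0$, these combine to
\[
A_j\le T_2\le \beta_{j-1}-S_j\le \beta_j-S_{j-1}\le \beta_j,
\]
which settles the case $j<n$. When $j=n$ there is no $\beta_j$-term in $d[a_{1,j}b_{1,j}]$, so nothing is needed here.

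The remaining inequality $d(a_{1,j}b_{1,j})\ge A_j$ is the technical heart. Here I would first invoke Proposition~\ref{prop:Ralphaproperty3}(iii): from $R_j=-2e$ one gets $d[(-1)^{j/2}a_{1,j}]\ge 2e$, hence $d((-1)^{j/2}a_{1,j})\ge 2e$, so that $a_{1,j}\equiv (-1)^{j/2}$ or $(-1)^{j/2}\Delta$ in $F^\times/F^{\times 2}$ (equivalently $[a_1,\dots,a_j]\cong\mathbb H^{j/2}$ or $\mathbb H^{(j-2)/2}\perp[1,-\Delta]$ by part~(iv)). This collapses $d(a_{1,j}b_{1,j})$ to $d((-1)^{j/2}b_{1,j})$ or $d((-1)^{j/2}\Delta b_{1,j})$, a quantity attached to $N$ alone. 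I may also assume $d(a_{1,j}b_{1,j})<\min\{2e,\beta_j\}$, since otherwise $d[a_{1,j}b_{1,j}]=\min\{2e,\beta_j\}\ge A_j$ already by the previous two paragraphs. Bounding $A_j\le T_2$ once more, it then suffices to prove $d[-a_{1,j+1}b_{1,j-1}]\le S_j+d(a_{1,j}b_{1,j})$, which I would approach through the congruence $-a_{1,j+1}b_{1,j-1}\equiv (a_{1,j}b_{1,j})\cdot(-a_{j+1}b_j)\pmod{F^{\times 2}}$, where $\ord(-a_{j+1}b_j)=S_j$, together with the domination principle for the defects $d[\,\cdot\,]$ and the bound $d[-a_{1,j+1}b_{1,j-1}]\le\beta_{j-1}$.

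The main obstacle is exactly this last step. When $S_j\ge 0$ the comparison follows readily from the congruence and domination, but when $-2e\le S_j<0$ the contribution $-S_j$ works against us, and one must extract enough information from the defect $d(-a_{j+1}b_j)$ of the unit $-a_{j+1}b_j\pi^{-S_j}$, and from Proposition~\ref{prop:Ralphaproperty1}(i) for $N$, to compensate. Carrying this out requires distinguishing the parity of $S_j$ and a short case analysis of which of the three terms realizes $d[-a_{1,j+1}b_{1,j-1}]$; this is where the explicit structure of a good BONG, via Corollary~\ref{cor:R-R-odd} and Proposition~\ref{prop:Ralphaproperty3}, enters decisively.
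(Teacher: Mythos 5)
Your reductions $A_j\le 2e=\alpha_j$ and $A_j\le\beta_j$ are correct (the second, via $A_j\le R_{j+1}-S_j+d[-a_{1,j+1}b_{1,j-1}]\le \beta_{j-1}-S_j$ and Proposition~\ref{prop:Ralphaproperty1}(i) applied to $N$, is sound), but the third inequality $d(a_{1,j}b_{1,j})\ge A_j$ --- which you yourself call the technical heart --- is never proved: you reduce it to $d[-a_{1,j+1}b_{1,j-1}]\le S_j+d(a_{1,j}b_{1,j})$ and then concede that you do not know how to handle $-2e\le S_j<0$. That is a genuine gap, not a routine verification. Moreover, the route you sketch cannot be completed as described: the congruence $-a_{1,j+1}b_{1,j-1}\equiv(a_{1,j}b_{1,j})(-a_{j+1}b_j)$ together with the domination principle only yields \emph{lower} bounds on defects of products (with equality only when the two defects differ), and the factor $-a_{j+1}b_j$ has order $S_j$, hence possibly defect $0$; so no upper bound on $d[-a_{1,j+1}b_{1,j-1}]$ can be extracted this way. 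Your target inequality does turn out to be true, but the information needed to prove it is not visible at the single index $j$: it is distributed over all the even indices of $N$.

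The idea you are missing is to propagate through all of $N$. By Proposition~\ref{prop:Ralphaproperty1}(i) applied to $N$, one has $-S_j+\beta_{j-1}\le -S_i+\beta_{i-1}$ for every $i\in[1,j]^E$, and then $S_{i-1}\ge 0$ (Proposition~\ref{prop:Ralphaproperty3}(i)) together with \eqref{eq:alpha-defn} gives
\[
A_j\;\le\;-S_j+\beta_{j-1}\;\le\;-S_i+\beta_{i-1}\;\le\;S_{i-1}-S_i+\beta_{i-1}\;\le\;d[-b_{i-1}b_i]
\qquad\text{for all }i\in[1,j]^E\,.
\]
Since $(-1)^{j/2}b_{1,j}$ is the product of the elements $-b_{i-1}b_i$, the domination principle for $d[\,\cdot\,]$ yields $A_j\le d[(-1)^{j/2}b_{1,j}]$; combined with $A_j\le 2e\le d[(-1)^{j/2}a_{1,j}]$ (Proposition~\ref{prop:Ralphaproperty3}(iii)), one more application of domination gives $A_j\le d[a_{1,j}b_{1,j}]$ in a single stroke --- with no splitting into the three terms $d(a_{1,j}b_{1,j})$, $\alpha_j$, $\beta_j$, no parity discussion of $S_j$, and no case analysis of which term realizes $d[-a_{1,j+1}b_{1,j-1}]$.
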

	\begin{proof}
		 From Proposition \ref{prop:Ralphaproperty3}(i), we see $ S_{j}\ge -2e $. Since $ R_{j+1}=0 $ and $ R_{j}=-2e $, we have
		\begin{align*}
			A_{j}&\le \dfrac{R_{j+1}-S_{j}}{2}+e\le\dfrac{0-(-2e)}{2}+e= 2e\le d[(-1)^{j/2}a_{1,j}]\,,
		\end{align*}
	where the last inequality follows by Proposition \ref{prop:Ralphaproperty3}(iii). The hypothesis $ R_{j+1}=0 $ also implies	
\[
			A_{j}\le R_{j+1}-S_{j}+d[-a_{1,j+1}b_{1,j-1}]\le R_{j+1}-S_{j}+\beta_{j-1}=-S_{j}+\beta_{j-1}\,.
\]
	By Proposition \ref{prop:Ralphaproperty3}(i), we have $ S_{i-1}\ge 0 $ for every even $ i $. It follows that
	\begin{align}\label{Aj-3}
		 A_{j}\le -S_{j}+\beta_{j-1}\le -S_{i}+\beta_{i-1}  \le S_{i-1}-S_{i}+\beta_{i-1}\le d[-b_{i-1}b_{i}]
\quad\text{ for } i\in [1,j]^{E} \,,
	\end{align}
 where the second inequality holds by Proposition \ref{prop:Ralphaproperty1}(i) and the last inequality follows from \eqref{eq:alpha-defn}. By the domination principle, \eqref{Aj-3} implies $ A_{j}\le d[(-1)^{j/2}b_{1,j}] $. Since also $ A_{j}\le d[(-1)^{j/2}a_{1,j}]$, we deduce,  by the domination principle, that $ A_{j}\le d[a_{1,j}b_{1,j}] $.
	\end{proof}

		\begin{lem}\label{lem:d[-ai+1ai+2]=1-Ri+2}
		Suppose that $ M$ and $N $ are integral. Let $  i\in [1,\min\{m-2,n\}]^{E} $. Suppose $ R_{i}=-2e $, $ R_{i+1}=0 $ and $ R_{i+2}>-2e $.
		
		(i) We have $ d[-a_{i+1}a_{i+2}]\ge 1-R_{i+2}$. Also, if $ d[-a_{i+1}a_{i+2}]=1-R_{i+2}$, then $ \alpha_{i+1}=1 $ and $ R_{i+2}\in [2-2e,0]^{E}\cup \{1\} $.
		
		(ii) $ d[-a_{i+1}a_{i+2}]=1-R_{i+2} $ if and only if $ d[(-1)^{(i+2)/2}a_{1,i+2}]=1-R_{i+2} $.
		
		(iii) If $ d[-a_{i+1}a_{i+2}]=1-R_{i+2} $, then one of the following statements holds:
		
		(1) $ d[-a_{1,i+2}b_{1,i}]=1-R_{i+2} $.
		
		(2) There exists some $ j\in [1,i]^{E} $ such that $ R_{i+2}\le S_{j} $ and $ \beta_{k}\le S_{k+1}-S_{j-1}+1-R_{i+2}\le S_{k+1}+1-R_{i+2} $ for each $ j-1\le k\le n-1 $.
	\end{lem}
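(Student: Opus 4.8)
The plan is to dispatch the three parts in turn, relying on the formula \eqref{eq:alpha-defn} and Propositions~\ref{prop:Ralphaproperty1}--\ref{prop:Ralphaproperty3} for the invariants of $M$ and $N$, and invoking the domination principle for the brackets only in part~(iii), which carries the real difficulty. \emph{For part (i):} since $R_{i+1}-R_i=2e$, Proposition~\ref{prop:Ralphaproperty2}(iv) gives $\alpha_i=2e$, and since $R_{i+2}-R_{i+1}=R_{i+2}>-2e$, Proposition~\ref{prop:Ralphaproperty2}(i) gives $\alpha_{i+1}\ge 1$. Evaluating \eqref{eq:alpha-defn} at the index $i+1$ (with $R_{i+1}=0$) yields $\alpha_{i+1}\le R_{i+2}+d[-a_{i+1}a_{i+2}]$, so $R_{i+2}+d[-a_{i+1}a_{i+2}]\ge\alpha_{i+1}\ge 1$, which is the inequality $d[-a_{i+1}a_{i+2}]\ge 1-R_{i+2}$. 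If equality holds, then $d[-a_{i+1}a_{i+2}]=1-R_{i+2}\ge 0$ forces $R_{i+2}\le 1$; together with $R_{i+2}>-2e$ and the parity constraint of Corollary~\ref{cor:R-R-odd}(i) (a nonpositive $R_{i+2}-R_{i+1}$ is even) this gives $R_{i+2}\in[2-2e,0]^{E}\cup\{1\}$. In particular $R_{i+2}/2+e\ge 1$, so \eqref{eq:alpha-defn} collapses to $\alpha_{i+1}=\min\{R_{i+2}/2+e,\,1\}=1$.

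\emph{For part (ii):} put $w:=(-1)^{i/2}a_{1,i}$, so $(-1)^{(i+2)/2}a_{1,i+2}=w\cdot(-a_{i+1}a_{i+2})$. By Proposition~\ref{prop:Ralphaproperty3}(iii), $d(w)\ge d[(-1)^{i/2}a_{1,i}]\ge 2e$, hence $w\in F^{\times 2}\cup\Delta F^{\times 2}$. Using $d(\Delta)=2e$ and the domination principle for the quadratic defect, together with its sharpening $d(xy)=\min\{d(x),d(y)\}$ when $d(x)\ne d(y)$ (which follows from the domination principle), one checks that $d((-1)^{(i+2)/2}a_{1,i+2})=d(-a_{i+1}a_{i+2})$ whenever $d(-a_{i+1}a_{i+2})<2e$, while both are $\ge 2e$ otherwise. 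Since $\alpha_i=2e$, we have
\[ d[-a_{i+1}a_{i+2}]=\min\{d(-a_{i+1}a_{i+2}),\,2e,\,\alpha_{i+2}\},\qquad d[(-1)^{(i+2)/2}a_{1,i+2}]=\min\{d((-1)^{(i+2)/2}a_{1,i+2}),\,\alpha_{i+2}\}, \]
so the two brackets coincide as soon as either is $<2e$. As both equalities in (ii) force $R_{i+2}\le 1$, hence (by $R_{i+2}>-2e$ and Corollary~\ref{cor:R-R-odd}(i)) $1-R_{i+2}<2e$, the asserted equivalence follows.

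\emph{For part (iii)}, set $t:=1-R_{i+2}$; by part~(ii) the hypothesis reads $d[(-1)^{(i+2)/2}a_{1,i+2}]=t$. Write $P:=d[-a_{1,i+2}b_{1,i}]$ and $Q:=d[(-1)^{i/2}b_{1,i}]$. The three elements $(-1)^{(i+2)/2}a_{1,i+2}$, $-a_{1,i+2}b_{1,i}$ and $(-1)^{i/2}b_{1,i}$ each equal, modulo squares, the product of the other two; applying the domination principle for the brackets (with one auxiliary lattice of rank $0$) therefore gives $P\ge\min\{t,Q\}$ and $t\ge\min\{P,Q\}$. If $P\ne t$ these force $Q\le t$: were $Q>t$, the first inequality would give $P\ge t$, hence $P>t$ and $\min\{P,Q\}>t$, contradicting the second. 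Thus it suffices to show that $Q\le t$ implies alternative~(2).

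Here the family of inequalities in (2) first simplifies. Since $S_{j-1}\ge 0$ (Proposition~\ref{prop:Ralphaproperty3}(i)), the second inequality in (2) is automatic; and since $k\mapsto\beta_k-S_{k+1}$ is non-increasing (Proposition~\ref{prop:Ralphaproperty1}(i)) while the bound $-S_{j-1}+t$ is independent of $k$, the inequalities $\beta_k\le S_{k+1}-S_{j-1}+t$ for $j-1\le k\le n-1$ reduce to the single instance $k=j-1$, namely $\beta_{j-1}\le S_j-S_{j-1}+t$. By \eqref{eq:alpha-defn} for $N$ this is in turn equivalent to $S_j-S_{j-1}\ge 2e-2t$ or $d[-b_{j-1}b_j]\le t$. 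Hence (2) asks for an even $j\le i$ with $S_j\ge R_{i+2}$ realizing one of these two local conditions on $N$. To produce such a $j$ from $Q\le t$, one decomposes $(-1)^{i/2}b_{1,i}=\prod_{k\in[2,i]^{E}}(-b_{k-1}b_k)$ and uses the domination principle to locate an even block with small local defect, then arranges $S_j\ge R_{i+2}$ using the monotonicity of $S$ on even indices (Proposition~\ref{prop:Ralphaproperty3}(i)) and Proposition~\ref{prop:Ralphaproperty3}(iii). The main obstacle is to meet the size condition $S_j\ge R_{i+2}$ and the defect/gap condition for one and the same even index $j$ (while tracking the case $i=n$, where $\beta_i$ is suppressed in $Q$); this simultaneous matching, carried out by a careful analysis of the good BONG of $N$, is where the real work lies.
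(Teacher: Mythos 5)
Your parts (i) and (ii) are correct and essentially coincide with the paper's argument: (i) is the same computation with \eqref{eq:alpha-defn} and Propositions~\ref{prop:Ralphaproperty2}(i),(vi), and your more computational route through the defect function in (ii) is sound, though the paper does it in one line from $d[(-1)^{i/2}a_{1,i}]\ge 2e>1-R_{i+2}$ (Proposition~\ref{prop:Ralphaproperty3}(iii)) plus the domination principle. In part (iii) your setup is also correct as far as it goes: the deduction of $Q:=d[(-1)^{i/2}b_{1,i}]\le t:=1-R_{i+2}$ from $P\neq t$ is exactly the paper's first step, and your observation that the family of inequalities in alternative (2) reduces, via Proposition~\ref{prop:Ralphaproperty1}(i) and $S_{j-1}\ge 0$, to the single instance $\beta_{j-1}\le S_j-S_{j-1}+t$ is a valid and clean reformulation.

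But at the decisive step the proof stops: you explicitly defer producing the index $j$ (``this simultaneous matching \ldots is where the real work lies''), so the core of (iii) is missing — and you have also misjudged where the difficulty sits. There is no simultaneous-matching problem. Take $j\in[1,i]^{E}$ minimizing $d[-b_{k-1}b_k]$ over $k\in [1,i]^E$; the domination principle gives $d[-b_{j-1}b_j]\le Q\le t$, which is already one of the two disjuncts of your reformulated condition at $k=j-1$. The point you missed is that this same inequality forces the size condition automatically: since $t\le 2e-1<2e$ (because $R_{i+2}\ge 2-2e$ by part (i)), Proposition~\ref{prop:Ralphaproperty2}(v) rules out $S_j-S_{j-1}=-2e$, hence $\beta_{j-1}\ge 1$ by Proposition~\ref{prop:Ralphaproperty2}(i), and then \eqref{eq:alpha-defn} yields
\[
t\;\ge\; d[-b_{j-1}b_j]\;\ge\; S_{j-1}-S_j+\beta_{j-1}\;\ge\; 0-S_j+1\,,
\]
i.e. $S_j\ge R_{i+2}$. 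So $S_j\ge R_{i+2}$ is a consequence of the smallness of $d[-b_{j-1}b_j]$ itself, not an extra condition to be arranged by ``a careful analysis of the good BONG of $N$''; no monotonicity of $S$ on even indices and no appeal to Proposition~\ref{prop:Ralphaproperty3}(iii) is needed. With this observation the argument closes in a few lines (this is precisely the paper's final paragraph), but as written your proposal has a genuine gap at the step that carries the content of (iii).
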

	\begin{proof}		
		(i) Since $ R_{i+2}-R_{i+1}=R_{i+2}>-2e $, we have $ \alpha_{i+1}\ge 1 $ by Proposition \ref{prop:Ralphaproperty2}(i). Hence $ R_{i+2}+d[-a_{i+1}a_{i+2}]=R_{i+2}-R_{i+1}+d[-a_{i+1}a_{i+2}]\ge \alpha_{i+1}\ge 1 $ by \eqref{eq:alpha-defn}, and $ \alpha_{i+1}=1 $ if moreover $ d[-a_{i+1}a_{i+2}]=1-R_{i+2} $. Further, $\alpha_{i+1}=1$ implies $ R_{i+2}\in [2-2e,0]^{E}\cup \{1\} $ by Proposition \ref{prop:Ralphaproperty2}(vi).
		
		(ii) By Proposition \ref{prop:Ralphaproperty3}(iii), we have $ d[(-1)^{i/2}a_{1,i}]\ge 2e>1-R_{i+2} $ and so the statement holds by the domination principle.
		
		(iii) If $ d[-a_{1,i+2}b_{1,i}]\not=1-R_{i+2}=d[(-1)^{(i+2)/2}a_{1,i+2}] $,  then by the domination principle, $
		d[(-1)^{i/2}b_{1,i}]=\min\{d[-a_{1,i+2}b_{1,i}],d[(-1)^{(i+2)/2}a_{1,i+2}]\} \le 1-R_{i+2} $. Then, again by the domination principle,
		\begin{align}\label{d-bj-1bj}
			d[-b_{j-1}b_{j}]=\min\limits_{k\in [1,i]^{E}}\{d[-b_{k-1}b_{k}]\}\le d[(-1)^{i/2}b_{1,i}]\le 1-R_{i+2}
		\end{align}
		for some $ j\in [1,i]^{E} $. Note that $ S_{j-1}\ge S_{1}\ge 0 $. Hence for each $ j-1\le k\le n-1 $,
		\[
\begin{split}
			-S_{k+1}+\beta_{k}&\le -S_{j}+\beta_{j-1}\;\;\text{(by Proposition \ref{prop:Ralphaproperty1}(i))}\\
			&\le -S_{j-1}+d[-b_{j-1}b_{j}]\;\;\text{(by \eqref{eq:alpha-defn})}\\
&\le -S_{j-1}+(1-R_{i+2})\;\;\text{(by \eqref{d-bj-1bj})}\\
&\le 1-R_{i+2}\,,		
\end{split}\]
		i.e. $ \beta_{k}\le S_{k+1}-S_{j-1}+1-R_{i+2} \le S_{k+1}+1-R_{i+2} $.

Since $ d[-b_{j-1}b_{j}]\le 1-R_{i+2}<2e $, we have $ S_{j}-S_{j-1}\not=-2e $ by Proposition \ref{prop:Ralphaproperty2}(v) and so $ \beta_{j-1}\ge 1 $ by Proposition \ref{prop:Ralphaproperty2}(i). Hence by \eqref{eq:alpha-defn},
		\begin{align*}
			1-R_{i+2}\ge d[-b_{j-1}b_{j}]\ge  S_{j-1}-S_{j}+\beta_{j-1}\ge 0-S_{j}+1\,,
		\end{align*}
		i.e. $ R_{i+2}\le S_{j} $.
	\end{proof}
	
		\begin{lem}\label{lem:da1nb1n}
	 Suppose that $ M$ and $N $ are integral, $n\ge 3 $ is odd, $ R_{n-1}=-2e $ and $ R_{n}=0 $. If $ \alpha_{n}=1 $ and $d[-a_na_{n+1}]=1-R_{n+1}$, then $ R_{n+1}-S_{n}+d[-a_{1,n+1}b_{1,n-1}]\le d[a_{1,n}b_{1,n}] $.
	\end{lem}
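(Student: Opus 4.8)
The plan is to reduce the claim to two numerical inequalities on the left-hand side $L:=R_{n+1}-S_{n}+d[-a_{1,n+1}b_{1,n-1}]$, and then feed the hypotheses into Lemma \ref{lem:d[-ai+1ai+2]=1-Ri+2}. First I would evaluate the right-hand side. Since $a_{n+1}$ occurs we have $m\ge n+1$, so $\alpha_{n}$ is not ignored (while $\beta_{n}$ is), and by $\alpha_{n}=1$ we get $d[a_{1,n}b_{1,n}]=\min\{d(a_{1,n}b_{1,n}),\alpha_{n}\}=\min\{d(a_{1,n}b_{1,n}),1\}$. Because $R_{n-1}=-2e$, Proposition \ref{prop:Ralphaproperty3} forces $R_{i}=-2e$ for even $i\le n-1$ and $R_{i}=0$ for odd $i\le n$, whence $\sum_{i=1}^{n}R_{i}=-(n-1)e$ is even and $\ord(a_{1,n}b_{1,n})\equiv \sum_{i=1}^{n}S_{i}\pmod 2$. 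Recalling from property (1) of $d$ that $d(c)=0$ exactly when $\ord(c)$ is odd, it suffices to prove (a) $L\le 1$ always, and (b) $L\le 0$ whenever $\sum_{i=1}^{n}S_{i}$ is odd (so that $d[a_{1,n}b_{1,n}]=0$).

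To obtain both bounds I would invoke Lemma \ref{lem:d[-ai+1ai+2]=1-Ri+2}(iii) at the index $i=n-1$; its hypotheses hold because $R_{n-1}=-2e$, $R_{n}=0$, and $\alpha_{n}=1$ forces $R_{n+1}\in[2-2e,0]^{E}\cup\{1\}$, in particular $R_{n+1}>-2e$, by Proposition \ref{prop:Ralphaproperty2}(vi). This yields the dichotomy: either (1) $d[-a_{1,n+1}b_{1,n-1}]=1-R_{n+1}$, whence $L=1-S_{n}$; or (2) there is $j\in[1,n-1]^{E}$ with $R_{n+1}\le S_{j}$ and $\beta_{k}\le S_{k+1}-S_{j-1}+1-R_{n+1}$ for all $j-1\le k\le n-1$, whence, taking $k=n-1$ together with $d[-a_{1,n+1}b_{1,n-1}]\le\beta_{n-1}$, we get $L\le 1-S_{j-1}$. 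In either case the bound has the shape $1-S_{i_{0}}$ with $i_{0}\in\{n,\,j-1\}$ an odd index and $S_{i_{0}}\ge 0$ by Proposition \ref{prop:Ralphaproperty3}(i); this proves (a).

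For (b) I would use the parity input that a vanishing odd-indexed entry forces an even partial sum: if $S_{\ell}=0$ for an odd index $\ell$, then $S_{1}=\cdots=S_{\ell}=0$ (odd-indexed entries are nonnegative and nondecreasing by \eqref{eq:GoodBONGs}), and every even-indexed $S_{i}$ with $i<\ell$ is even by applying Corollary \ref{cor:R-R-odd}(i) to $N$ at the two differences flanking $S_{i}$, whose neighbors vanish; hence $\sum_{i=1}^{\ell}S_{i}$ is even. Under the hypothesis of (b), in case (1) this gives $S_{n}\ge 1$, so $L=1-S_{n}\le 0$; in case (2) with $S_{j-1}\ge 1$ we get $L\le 1-S_{j-1}\le 0$ directly.

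The hard part will be case (2) with $S_{j-1}=0$, where the crude bound $1-S_{j-1}=1$ is useless. Here I would first telescope: from $d[-a_{1,n+1}b_{1,n-1}]\le\beta_{n-1}$, the repeated use of $\beta_{m}\le S_{m+1}-S_{m}+d[-b_{m,m+1}]$ (cf.\ \eqref{eq:alpha-defn}) and $d[-b_{m,m+1}]\le\beta_{m-1}$ give $\beta_{n-1}-S_{n}\le d[-b_{k,k+1}]-S_{k}$, so that $L\le R_{n+1}-S_{k}+d[-b_{k,k+1}]$ for every $k\in[1,n-1]$. It then remains to produce an even $k\in[j,n-1]$ with $S_{k}+S_{k+1}$ odd: for such $k$ one has $d[-b_{k,k+1}]=0$, while $R_{n+1}\le S_{j}\le S_{k}$ by case (2) and \eqref{eq:GoodBONGs}, giving $L\le R_{n+1}-S_{k}\le 0$. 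Such a $k$ exists by a parity count: since $S_{j-1}=0$ makes $\sum_{i=1}^{j-1}S_{i}$ even (by the input above), $\sum_{i=j}^{n}S_{i}$ is odd; but $j,\dots,n$ splits into the consecutive pairs $(j,j+1),\dots,(n-1,n)$, and if every such pair satisfied $S_{k}\equiv S_{k+1}\pmod 2$ then the number of odd entries among $S_{j},\dots,S_{n}$ would be even, a contradiction. I expect this telescoping-and-parity step to be the main obstacle; the remaining defect bookkeeping (verifying the $\alpha$, $\beta$ inequalities and the domination estimates) is routine.
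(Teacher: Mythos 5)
Your proposal is correct and takes essentially the same route as the paper's proof: both evaluate $d[a_{1,n}b_{1,n}]$ via the parity of $\sum_{k=1}^{n}S_{k}$, invoke Lemma \ref{lem:d[-ai+1ai+2]=1-Ri+2}(iii) at $i=n-1$ to get the dichotomy, and resolve the exceptional case $S_{j-1}=0$ with the same parity-pairing argument producing an even $k\in[j,n-1]$ with $S_{k}+S_{k+1}$ odd, so that $\beta_{n-1}\le S_{n}-S_{k}\le S_{n}-R_{n+1}$ (your telescoping chain is just the paper's direct appeal to Definition \ref{defn:alpha}, or equivalently Proposition \ref{prop:Ralphaproperty1}(i)). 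The only blemish is cosmetic: ``$S_{1}=\cdots=S_{\ell}=0$'' should say that the odd-indexed entries vanish while the even-indexed ones are merely even --- which is what your argument actually proves and uses, and is exactly Proposition \ref{prop:Ralphaproperty3}(ii), which you could cite instead of re-deriving.
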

	\begin{proof}
		Since $ \alpha_{n}=1 $, we have
		\begin{align*}
			d[a_{1,n}b_{1,n}]&=\min\{d(a_{1,n}b_{1,n}),1\}=
			\begin{cases}
				1 \quad &\text{if $ \ord(a_{1,n}b_{1,n}) $ is even}\,,  \\
				0  \quad &\text{if $ \ord(a_{1,n}b_{1,n})  $ is odd}\,.
			\end{cases}
		\end{align*}
		Note that $ R_{1},\ldots, R_{n}\in \{0,-2e\} $ by the hypothesis and Proposition \ref{prop:Ralphaproperty3}(iii). So $ \ord(a_{1,n}b_{1,n})=\sum_{k=1}^{n}R_{k}+\sum_{k=1}^{n}S_{k}\equiv \sum_{k=1}^{n}S_{k}\pmod{2} $. Hence
		\begin{align}\label{iffda1nb1n}
			d[a_{1,n}b_{1,n}]=0\quad\text{if and only if}\quad \text{$ \sum_{k=1}^{n}S_{k} $ is odd}\,.
		\end{align}
		
Suppose that $ d[-a_{1,n+1}b_{1,n-1}]=1-R_{n+1} $. We have $ S_{n}\ge 0 $ by Proposition \ref{prop:Ralphaproperty3}(i). If $S_n\ge 1$, then $1-S_{n}\le 0\le d[a_{1,n}b_{1,n}]$. If $ S_{n}=0 $, then $ S_{1},\ldots,S_{n} $ are even by Proposition \ref{prop:Ralphaproperty3}(ii). So from \eqref{iffda1nb1n} we obtain  $ 1-S_{n}=1=d[a_{1,n}b_{1,n}] $. Therefore, we have
		\begin{align*}
			R_{n+1}-S_{n}+d[-a_{1,n+1}b_{1,n-1}]=1-S_{n}\le d[a_{1,n}b_{1,n}]\,,
		\end{align*}as desired.

	Now suppose that $ d[-a_{1,n+1}b_{1,n-1}]\not=1-R_{n+1} $.  We claim that $ R_{n+1}-S_{n}+\beta_{n-1} \le  d[a_{1,n}b_{1,n}]$. Since $ d[-a_{n}a_{n+1}]=1-R_{n+1} $, by Lemma \ref{lem:d[-ai+1ai+2]=1-Ri+2}(iii) with $ i=k=n-1 $, there exists some $ j\in [1,n-1]^{E} $ such that
		\[
			R_{n+1}\le S_{j}\quad	\text{and}\quad 	\beta_{n-1}\le S_{n}-S_{j-1}+1-R_{n+1}\label{betan-1}\,.
		\]Note that the second inequality is equivalent to $R_{n+1}-S_n+\beta_{n-1}\le 1-S_{j-1}$.
		Since $ j-1 $ is odd, we have $ S_{j-1}\ge 0 $. So
		\begin{align*}
			R_{n+1}-S_{n}+\beta_{n-1}\le 1-S_{j-1}\le d[a_{1,n}b_{1,n}]\,
		\end{align*}
		except possibly when $ S_{j-1}=0$ and $d[a_{1,n}b_{1,n}]=0 $. Now consider this exceptional case. Then $ \sum_{k=1}^{n}S_{k} $ is odd by \eqref{iffda1nb1n}. Since $ S_{j-1}=0 $, $ S_{1},\ldots,S_{j-1} $ are even by Proposition \ref{prop:Ralphaproperty3}(ii). Hence $ \sum_{k=j}^{n}S_{k}\equiv \sum_{k=1}^{n}S_{k}\equiv 1\pmod{2} $. Since $j$ is even and $n$ is odd, the sum $ \sum_{k=j}^{n}S_{k} $ can be written as $ (S_{j}+S_{j+1})+\cdots+(S_{n-1}+S_{n}) $. Hence there exists some $ \ell \in [j,n-1]^{E} $ such that $ S_{\ell}+S_{\ell+1} $ is odd, i.e. $ d(-b_{\ell}b_{\ell+1})=0 $. Since the indices $ \ell,j $ are even and $ \ell\ge j $, we have $ S_{\ell}\ge S_{j}\ge R_{n+1} $  and hence
		\[
			\beta_{n-1}\le  S_{n}-S_{\ell}+d(-b_{\ell}b_{\ell+1})=S_{n}-S_{\ell}\le S_{n}-R_{n+1}\,,
		\]		by the definition of $\beta_{n-1}$ (cf. Definition \ref{defn:alpha}). So $ R_{n+1}-S_{n}+\beta_{n-1}\le 0\le d[a_{1,n}b_{1,n}] $ and thus the claim also holds in the exceptional case.

From the claim and the obvious inequality $ d[-a_{1,n+1}b_{1,n-1}]\le \beta_{n-1} $ we conclude that
$R_{n+1}-S_{n}+d[-a_{1,n+1}b_{1,n-1}]\le R_{n+1}-S_{n}+\beta_{n-1} \le d[a_{1,n}b_{1,n}]$, as required.
	\end{proof}

	\section{Maximal lattices and their BONGs}\label{sec:construction}

Obviously, a unary $\mathcal{O}_F$-lattice is $ \mathcal{O}_{F} $-maximal if and only if it is of the form  $ \prec\delta\succ $ or $ \prec \delta\pi\succ $, with $\delta\in\mathcal{O}_F^\times$.	Our goal in this section is to determine all the $n$-ary $\mathcal{O}_F$-maximal lattices for any $n\ge 2$. We also show that these lattices form a minimal testing set for the $n$-universal property.

	\medskip
	
	Recall that for any $c\in F^\times$, we have $d(c)\ge 2e$ if and only if $c\in F^{\times 2}\cup\Delta F^{\times 2}$, and $d(c)\ge 1$ if and only if $\ord(c)$ is even.
	
	\begin{defn}\label{defn:3.1}
		Let $ c\in F^{\times}\backslash (F^{\times 2}\cup\Delta F^{\times 2})$ and write $\delta=c\pi^{-\ord(c)}\in \mathcal{O}_{F}^{\times}$.  When $\ord(c)$ is even (or equivalently, $ 1\le d(c)=d(\delta) <2e $), we choose an expression
		\begin{align}\label{eq:expressionmu}
			\delta=s^{2}(1+r\pi^{d(\delta)})=s^{2}(1+r\pi^{d(c)}),\quad\text{with }\; r,s\in \mathcal{O}_{F}^{\times}\,
		\end{align}and put
		\begin{align*}
			c^{\#}:=
			\begin{cases}
				\Delta    &\text{if }\; \ord(c) \text{ is odd}\,, \\
				1+4\rho r^{-1}\pi^{-d(c)}  &\text{if }\; \ord(c) \text{ is even}\,.
			\end{cases}
		\end{align*}
	When $\ord(c)$ is even, the element $ c^{\#} $ depends on the expression \eqref{eq:expressionmu}.
	\end{defn}

	\begin{prop}\label{prop:duality2}
		For any $ c\in F^{\times}\backslash (F^{\times 2}\cup\Delta F^{\times 2})$, we have
		\[
		c^{\#} \in \mathcal{O}_F^\times\,,\; d(c^{\#})=2e-d(c) \quad\text{ and }\quad  (c^{\#},c)_{\mathfrak{p}}=-1\,.
		\]
	\end{prop}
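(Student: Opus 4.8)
The plan is to treat the two cases of Definition \ref{defn:3.1} separately, according to the parity of $\ord(c)$, and in each case to split the three assertions into (a) an elementary valuation count for $c^{\#}\in\mathcal{O}_F^{\times}$, (b) the standard computation of the quadratic defect of a unit of the form $1+w$, and (c) a Hilbert symbol computation, the last being the only non-formal point. Throughout I write $t:=d(c)$. Since $c\notin F^{\times 2}\cup\Delta F^{\times 2}$ we have $t\neq\infty,2e$, so by property (1) of $d$ either $t=0$ (when $\ord(c)$ is odd) or $t\in\{1,3,\dots,2e-1\}$ is odd (when $\ord(c)$ is even). The key input I will use repeatedly is that $F(\sqrt{\Delta})/F$ is the unramified quadratic extension, whence $(\Delta,x)_{\mathfrak p}=(-1)^{\ord(x)}$ for all $x\in F^{\times}$ (cf. \cite[63:11a]{omeara_quadratic_1963}).

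First suppose $\ord(c)$ is odd, so $c^{\#}=\Delta$. Then $c^{\#}\in\mathcal{O}_F^{\times}$ and $d(c^{\#})=d(\Delta)=2e=2e-0=2e-d(c)$ are immediate, and $(c^{\#},c)_{\mathfrak p}=(\Delta,c)_{\mathfrak p}=(-1)^{\ord(c)}=-1$ since $\ord(c)$ is odd. This case is entirely formal once the value $(\Delta,x)_{\mathfrak p}=(-1)^{\ord(x)}$ is in hand.

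Now suppose $\ord(c)$ is even, so $t$ is odd with $1\le t\le 2e-1$ and $c^{\#}=1+4\rho r^{-1}\pi^{-t}$ for the chosen $r$ in \eqref{eq:expressionmu}. Since $\ord(4\rho r^{-1}\pi^{-t})=2e-t\ge 1$, the element $c^{\#}$ lies in $1+\mathfrak p\subseteq\mathcal{O}_F^{\times}$; and because $\ord(c^{\#}-1)=2e-t$ is odd and lies in $[1,2e-1]$, the standard defect computation gives $d(c^{\#})=2e-t=2e-d(c)$ (cf. property (1) of $d$ and \cite[63:2, 63:3]{omeara_quadratic_1963}). For the Hilbert symbol I first discard squares: as $\ord(c)$ is even and $\delta=s^{2}(1+r\pi^{t})$, we have $c\equiv 1+r\pi^{t}=:b\pmod{F^{\times 2}}$, so $(c^{\#},c)_{\mathfrak p}=(c^{\#},b)_{\mathfrak p}$.

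The crux, and the step I expect to be the main obstacle, is to prove $(c^{\#},b)_{\mathfrak p}=-1$. I will avoid any explicit reciprocity law and instead exploit the algebraic identity
\[
c^{\#}=1+\frac{4\rho}{r\pi^{t}}=\frac{b-\Delta}{b-1},\qquad\text{equivalently}\qquad c^{\#}(b-1)=b-\Delta,
\]
valid because $4\rho=1-\Delta$ and $b-1=r\pi^{t}$ (and legitimate since $b\neq\Delta$, as $b-\Delta=r\pi^{t}+4\rho\neq 0$). Dividing by $b$ turns this into $1-\Delta/b=c^{\#}(b-1)/b$, so the Steinberg relation $(\Delta/b,\,1-\Delta/b)_{\mathfrak p}=1$ becomes, after discarding squares, $(\Delta b,\,c^{\#}(b-1)b)_{\mathfrak p}=1$. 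Expanding by bilinearity and inserting $(\Delta,u)_{\mathfrak p}=1$ for units $u$, $(\Delta,b-1)_{\mathfrak p}=(\Delta,\pi)_{\mathfrak p}^{t}=-1$ (as $t$ is odd), and $(b,b-1)_{\mathfrak p}=(b,b)_{\mathfrak p}=(b,-1)_{\mathfrak p}$ (Steinberg together with $(x,x)_{\mathfrak p}=(x,-1)_{\mathfrak p}$), every factor collapses and one is left with $-(c^{\#},b)_{\mathfrak p}=1$, i.e. $(c^{\#},b)_{\mathfrak p}=-1$. The point worth emphasizing is that this argument needs only the symmetry, bilinearity and Steinberg relation of the Hilbert symbol, plus the unramified-extension values $(\Delta,x)_{\mathfrak p}=(-1)^{\ord(x)}$; in particular it sidesteps any direct analysis of the norm group of the ramified extension $F(\sqrt{b})$, which is where a naive approach would get stuck.
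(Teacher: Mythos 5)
Your proof is correct, and it takes a genuinely different route from the paper's. The paper disposes of Proposition \ref{prop:duality2} in one line, by citation: the odd-valuation case (where $c^{\#}=\Delta$) is exactly the unramified-extension computation of \cite[63:11a]{omeara_quadratic_1963}, and the substantive even-valuation case is delegated wholesale to \cite[Lemma 4.1]{hhx_indefinite_2021}, which asserts precisely that $1+4\rho r^{-1}\pi^{-d(c)}$ has relative defect order $2e-d(c)$ and Hilbert symbol $-1$ against $c$. You instead prove that case from scratch: the membership $c^{\#}\in 1+\mathfrak{p}$ is a valuation count; $d(c^{\#})=2e-d(c)$ follows from the standard defect computation for $1+w$ with $\ord(w)$ odd and $0<\ord(w)<2e$ (your pointer into O'Meara \S 63 is the right place, even if the exact item number may be off); and the symbol evaluation rests on the identity $c^{\#}=(b-\Delta)/(b-1)$ for $b=1+r\pi^{d(c)}$, which converts the Steinberg relation $(\Delta/b,\,1-\Delta/b)_{\mathfrak{p}}=1$ into $-(c^{\#},b)_{\mathfrak{p}}=1$ after expanding by multiplicativity and inserting $(\Delta,x)_{\mathfrak{p}}=(-1)^{\ord(x)}$. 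I checked the collapse: the side conditions $b\neq 1,\Delta$ needed for Steinberg hold because $b-1=r\pi^{d(c)}$ and $b-\Delta=r\pi^{d(c)}+4\rho$ have orders $d(c)$ and $\min\{d(c),2e\}=d(c)$ respectively, and the factors $(b,b-1)_{\mathfrak{p}}(b,b)_{\mathfrak{p}}=(b,-1)_{\mathfrak{p}}^{2}=1$ cancel as you say, leaving exactly $(c^{\#},b)_{\mathfrak{p}}=-1$. The trade-off: the paper's argument is shorter and keeps the duality construction $c\mapsto c^{\#}$ aligned with the companion paper where it originates, while yours makes the proposition self-contained --- it needs nothing beyond O'Meara's defect formulas and the formal properties of the Hilbert symbol, and in particular the reader need not chase a lemma in an external preprint nor analyze the norm group of the ramified extension $F(\sqrt{b})$.
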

	\begin{proof}
		It is clear from the definition of $ c^{\#} $, \cite[63:11a]{omeara_quadratic_1963} and \cite[Lemma 4.1]{hhx_indefinite_2021}.
	\end{proof}
	
	Recall from \cite[63:20 and 63:22]{omeara_quadratic_1963} that in general, for every $ n\ge 3 $ and every $ D\in F^{\times}/F^{\times 2} $, up to isometry, there are precisely two $ n $-ary quadratic spaces with determinant $ D $, one with Hasse symbol $1$ and the other with Hasse symbol $-1$. The same holds for $n=2$ and $D\not=-1$. The only exception is when $ n=2 $ and $ D=-1$, the only binary quadratic space of determinant $ -1 $ being  $ \mathbb{H} $.

	\begin{prop}\label{prop:dualspace}
		For any $ c\in F^{\times}\backslash (F^{\times 2}\cup\Delta F^{\times 2})$, up to isometry the quadratic space $[c^{\#},-c^{\#}c] $ is the only binary quadratic space with determinant $ -c $ that is not isometric to $ [1,-c] $.  In particular, its isometry class depends only on the square class of $c$ (and thus is independent of the expression $\eqref{eq:expressionmu}$).
	\end{prop}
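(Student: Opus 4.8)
The plan is to reduce the statement to O'Meara's classification of binary quadratic spaces by determinant and Hasse symbol, so that the whole argument comes down to one Hilbert symbol computation fed by Proposition \ref{prop:duality2}. First I would compute determinants: $\det[1,-c]=-c$ and $\det[c^{\#},-c^{\#}c]=-(c^{\#})^{2}c\equiv -c\pmod{F^{\times 2}}$, so both spaces sit among the binary spaces of determinant $-c$. Since $c\notin F^{\times 2}$ by hypothesis, the class of $-c$ in $F^{\times}/F^{\times 2}$ is not $-1$; hence by \cite[63:20 and 63:22]{omeara_quadratic_1963} there are exactly two isometry classes of binary spaces with determinant $-c$, distinguished by their Hasse symbols. (This is where the hypothesis $c\notin F^{\times 2}$ is essential: for $-c\equiv-1$ one would be in the exceptional case where $\mathbb{H}$ is the unique binary space of that determinant.)

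Next I would separate the two classes. Two binary spaces with the same determinant are isometric if and only if the Hilbert symbols of their diagonalizations agree, so it suffices to compare $(1,-c)_{\mathfrak{p}}$ with $(c^{\#},-c^{\#}c)_{\mathfrak{p}}$. The former equals $1$, while
\[
(c^{\#},-c^{\#}c)_{\mathfrak{p}}=(c^{\#},-c^{\#})_{\mathfrak{p}}\,(c^{\#},c)_{\mathfrak{p}}=1\cdot(-1)=-1,
\]
using $(c^{\#},-c^{\#})_{\mathfrak{p}}=1$ together with the identity $(c^{\#},c)_{\mathfrak{p}}=-1$ supplied by Proposition \ref{prop:duality2}. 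As these symbols differ, $[c^{\#},-c^{\#}c]$ is not isometric to $[1,-c]$, and consequently it is exactly the remaining one of the two binary spaces of determinant $-c$, i.e. the unique such space not isometric to $[1,-c]$.

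Finally, for the ``in particular'' clause I would note that this characterization is free of the expression \eqref{eq:expressionmu}: every admissible choice of $c^{\#}$ still lies in $\mathcal{O}_{F}^{\times}$ and satisfies $(c^{\#},c)_{\mathfrak{p}}=-1$ by Proposition \ref{prop:duality2}, so $[c^{\#},-c^{\#}c]$ always has determinant $-c$ and the same Hasse symbol. Because a binary space is determined up to isometry by its determinant and Hasse symbol, its isometry class depends only on the square class of $c$. The computation is entirely routine; the substantive input is Proposition \ref{prop:duality2}, whose guarantee $(c^{\#},c)_{\mathfrak{p}}=-1$ is precisely what flips the Hasse symbol and thereby produces the space distinct from $[1,-c]$. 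The only point demanding attention is the verification that $-c\not\equiv-1$, which is what makes the classification deliver two spaces rather than the single exceptional class.
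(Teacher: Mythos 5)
Your proof is correct and takes essentially the same approach as the paper: both invoke O'Meara's classification to conclude there are exactly two binary isometry classes of determinant $-c$ (using $c\notin F^{\times 2}$ to rule out the exceptional case $-c\equiv -1$), and both then use $(c^{\#},c)_{\mathfrak{p}}=-1$ from Proposition \ref{prop:duality2} as the sole substantive input to separate the two spaces. The only cosmetic difference is that you distinguish them by computing the Hasse symbols of the diagonalizations, whereas the paper phrases the very same fact as $[1,-c]$ failing to represent $c^{\#}$; these are equivalent formulations.
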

	\begin{proof}
As we have mentioned above, there are only two isometry classes of binary spaces with determinant $-c$. So it is sufficient to check that the spaces $[c^{\#},-c^{\#}c] $ and $[1,\,-c]$ are not isometric. Indeed,  by Proposition \ref{prop:duality2}, we have $ (c^{\#},c)_{\mathfrak{p}}=-1 $, so $  [1,-c] $ does not represent $ c^{\#} $ and hence $ [c^{\#},-c^{\#}c]\not\cong [1,-c]$.
	\end{proof}

	\begin{defn}\label{defn:space}
		Let $ n\ge 2 $ be an integer and $ c\in F^{\times}/F^{\times 2} $. Define the $ n $-dimensional quadratic spaces $ W_{1}^{n}(c) $ and $ W_{2}^{n}(c) $ as follows:
		
		(i) If $ n $ is even, then $ W_{1}^{n}(c):=\mathbb{H}^{(n-2)/2}\perp [1,-c ]$ and $ W_{2}^{n}(c) $ is the quadratic space with $ \det W_{2}^{n}(c)=\det W_{1}^{n}(c)=(-1)^{n/2}c$ and $ W_{2}^{n}(c)\not\cong W_{1}^{n}(c) $.

Note that $ W_{2}^{n}(c) $ is defined in all cases except when $ n=2 $ and $ c=1 $. (The only binary space with determinant $-1$ is $W_1^2(1)=\mathbb{H}$.)
		
		(ii) If $ n $ is odd, then $ W_{1}^{n}(c):=\mathbb{H}^{(n-1)/2}\perp [c]$ and $ W_{2}^{n}(c) $ is the quadratic space with $ \det W_{2}^{n}(c)=\det W_{1}^{n}(c)=(-1)^{(n-1)/2}c $ and $ W_{2}^{n}(c)\not\cong W_{1}^{n}(c) $.
	\end{defn}

Let $\mathcal{U}$ be  a complete set of representatives of $\mathcal{O}_F^\times/\mathcal{O}_F^{\times 2}$. Then $ \{\delta,\delta\pi\mid \delta\in \mathcal{U}\} $ is a complete set of representatives for $ F^{\times }/F^{\times 2} $.

	\begin{prop}\label{prop:space}
	 	Let $n\ge 2$ be an integer and $ c\in F^{\times}/F^{\times2} $.
	 	
	 	(i) The quadratic spaces $ W_{1}^{n}(c) $ and $ W_{2}^{n}(c) $ are given by the following table:
	 	\begin{center}
	 		\renewcommand\arraystretch{1.5}
	 		\begin{tabular}{|c|c|c|c|}
	 			\hline
	 		 $ n $	& $ c $ & $ W_{1}^{n}(c) $  & $ W_{2}^{n}(c) $  \\
	 			\hline
	 			\multirow{4}*{\text{even}}	  & $ 1 $ & $ \mathbb{H}^{\frac{n}{2}} $ & $ \mathbb{H}^{\frac{n-4}{2}} \perp [1,-\Delta,\pi,-\Delta\pi]$ \text{($ n\ge 4 $)} \\
	 			\cline{2-4}
	 			& $ \Delta $  & $ \mathbb{H}^{\frac{n-2}{2}}\perp [1,-\Delta]  $ & $ \mathbb{H}^{\frac{n-2}{2}}\perp [\pi,-\Delta\pi] $  \\
	 			 \cline{2-4}
	 			& $\delta\in \mathcal{U}\backslash \{1,\Delta\} $  & $ \mathbb{H}^{\frac{n-2}{2}}\perp [1,-\delta]$ & $ \mathbb{H}^{\frac{n-2}{2}}\perp [\delta^{\#},-\delta^{\#}\delta]$  \\
	 			\cline{2-4}
	 			& $ \delta\pi$, with $\delta\in \mathcal{U} $  & $ \mathbb{H}^{\frac{n-2}{2}}\perp [1,-\delta\pi]$ & $ \mathbb{H}^{\frac{n-2}{2}}\perp [\Delta,-\Delta\delta\pi]$  \\
	 			\hline
	 			\multirow{2}*{\text{odd}} & $\delta\in\mathcal{U} $ & $ \mathbb{H}^{\frac{n-1}{2}}\perp [\delta] $  &   $ \mathbb{H}^{\frac{n-3}{2}}\perp [\pi,-\Delta\pi,\Delta\delta] $  \\
	 			\cline{2-4}
	 			& $ \delta\pi$, with $\delta\in\mathcal{U} $ & $ \mathbb{H}^{\frac{n-1}{2}}\perp [\delta\pi] $  &   $ \mathbb{H}^{\frac{n-3}{2}}\perp [1,-\Delta ,\Delta\delta\pi] $  \\
	 			\hline
	 		\end{tabular}
	 	\end{center}

 	    (ii) Every $ n $-dimensional quadratic space over $ F $ is isometric to one of the spaces in the table above.
 	
	 	(iii) For every $ n $-dimensional quadratic space $ W $, up to isometry, there is exactly one $ (n+2) $-dimensional quadratic space $ V $ that represents all $ n $-dimensional quadratic spaces except $ W $. Explicitly,  if $ W=W_{\nu}^{n}(c) $, with $ \nu\in \{1,2\} $, then $ V=W_{3-\nu}^{n+2}(c) $.
	\end{prop}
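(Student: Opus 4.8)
The plan is to handle the three parts in turn, using throughout the classification of quadratic spaces over $F$ by dimension, determinant and Hasse symbol \cite[63:20, 63:22]{omeara_quadratic_1963}; I write $S(V)$ for the Hasse symbol of a space $V$. For (i), the space $W_1^n(c)$ is given by definition, so it suffices to check in each row that the listed candidate for $W_2^n(c)$ has the same determinant as $W_1^n(c)$ but is \emph{not} isometric to it; by \cite[63:22]{omeara_quadratic_1963} this forces it to be $W_2^n(c)$. The determinant identity is a direct computation in $F^{\times}/F^{\times2}$ in every row (e.g. $\det[1,-\Delta,\pi,-\Delta\pi]=1$ and $\det[\pi,-\Delta\pi,\Delta\delta]=-\delta$), so that $\det W_\nu^n(c)=(-1)^{\lfloor n/2\rfloor}c$ uniformly. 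For the non-isometry, the binary rows with $c\notin F^{\times2}\cup\Delta F^{\times2}$ are precisely Proposition \ref{prop:dualspace} (the row $c=\delta\pi$ being the subcase $c^{\#}=\Delta$, since $\ord(c)$ is odd), while the remaining rows ($c\in\{1,\Delta\}$ and the ternary rows for odd $n$) I would settle by a Hasse-symbol computation based on the identity $(c^{\#},c)_{\mathfrak p}=-1$ of Proposition \ref{prop:duality2}; for the binary anisotropic rows one may instead note that $[1,-\Delta]$ and $[\pi,-\Delta\pi]$ represent elements of even and odd value respectively and so cannot be isometric.

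Part (ii) is then a counting argument. Distinct square classes $c$ give distinct determinants $(-1)^{\lfloor n/2\rfloor}c$, and for each $c$ the two spaces $W_1^n(c),W_2^n(c)$ are non-isometric by construction, so the table lists $2\,|F^{\times}/F^{\times2}|$ pairwise non-isometric classes, minus one for the missing $W_2^2(1)$ when $n=2$. By the computation recorded in the Introduction this equals the total number of isometry classes of $n$-dimensional spaces over $F$, whence the list is exhaustive.

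For (iii) I would first record the local representation criterion for the codimension-two case. By Witt cancellation an $n$-dimensional space $U$ is represented by an $(n+2)$-dimensional space $V$ if and only if $V\cong U\perp W$ for some binary space $W$; such a $W$ must satisfy $\det W=\det U\,\det V$ and $S(W)=S(U)S(V)(\det U,\det W)_{\mathfrak p}$, and a binary space with prescribed determinant and Hasse symbol exists unless that determinant is $-1$ and the Hasse symbol is $-1$ (the only binary space of determinant $-1$ being $\mathbb H$). Hence $V$ fails to represent $U$ exactly when $\det U=-\det V$ together with one forced value of $S(U)$; in particular, for fixed $V$ there is \emph{at most one} $n$-dimensional space omitted by $V$, and it has determinant $-\det V$. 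The correspondence is then pinned down by one structural observation read off from the table in (i): for $\nu\in\{1,2\}$ each $W_\nu^n(c)$ has the shape $\mathbb H^{k_n}\perp T_\nu(c)$ with a tail $T_\nu(c)$ independent of $n$ and $k_{n+2}=k_n+1$, so that $W_\nu^{n+2}(c)\cong\mathbb H\perp W_\nu^n(c)$. Consequently $V:=W_{3-\nu}^{n+2}(c)$ represents $W_{3-\nu}^n(c)$. Since $\det V=-\det W_\nu^n(c)$ by the determinant formula, the unique omitted space of $V$ has determinant $\det W_\nu^n(c)$ and is therefore $W_1^n(c)$ or $W_2^n(c)$; as $W_{3-\nu}^n(c)$ is represented, the omitted space is exactly $W_\nu^n(c)$, proving existence. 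For uniqueness, if $V'$ omits only $W=W_\nu^n(c)$, then the criterion forces $\det V'=-\det W$ and forces $S(V')$ by the requirement that the omitted Hasse symbol equal $S(W)$; since $n+2\ge3$, these invariants determine $V'$ up to isometry, so $V'\cong V$. The degenerate case $n=2,\ c=1$, where $W_2^2(1)$ is undefined, I would treat separately: $W_1^4(1)=\mathbb H^2$ represents every binary space, so by the count the unique $4$-dimensional space omitting $\mathbb H=W_1^2(1)$ is $W_2^4(1)=W_{3-1}^{4}(1)$, as required.

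The main obstacle is the bookkeeping in (iii): making the equivalence between ``$V$ represents $U$'' and the existence of the binary complement fully rigorous, and correctly tracking the forced Hasse symbol through the edge cases in which the relevant determinant equals $-1$ (including the $n=2,\ c=1$ anomaly). Once the criterion is stated cleanly, the anchor identity $W_\nu^{n+2}(c)\cong\mathbb H\perp W_\nu^n(c)$ makes the $\nu\leftrightarrow 3-\nu$ matching immediate, and the remaining work is essentially the determinant computations already carried out for (i).
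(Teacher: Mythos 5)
Your proposal is correct and follows essentially the same route as the paper: both arguments reduce everything to the classification of quadratic spaces by dimension, determinant and Hasse symbol (\cite[63:20, 63:22]{omeara_quadratic_1963}) together with the codimension-two representation theorem \cite[63:21]{omeara_quadratic_1963}. The differences are tactical. In (i), where the paper cancels the hyperbolic part by Witt cancellation and settles the tails ad hoc (the quaternary case $\mathbb{H}^2\not\cong[1,-\Delta,\pi,-\Delta\pi]$ via \cite[63:17]{omeara_quadratic_1963}, the binary cases via the representation obstruction $(c^{\#},c)_{\p}=-1$, and the ternary cases by showing $[\pi,-\Delta\pi,\Delta\delta]$ and $[1,-\Delta,\Delta\delta\pi]$ are anisotropic), you instead cite Proposition \ref{prop:dualspace} for the generic binary rows, use a valuation-parity argument for $[1,-\Delta]$ versus $[\pi,-\Delta\pi]$, and compute Hasse symbols for the rest; all of these work. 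In (iii), where the paper simply writes that the claim ``follows from (ii) and \cite[63:21]{omeara_quadratic_1963}'', you spell out the binary-complement criterion and exploit the anchor isometry $W_{\nu}^{n+2}(c)\cong\mathbb{H}\perp W_{\nu}^{n}(c)$; this is precisely the content the paper leaves implicit, and your uniqueness bookkeeping (determinant and Hasse symbol of $V'$ are forced, and determine $V'$ since $n+2\ge 3$) is sound.

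Two small repairs are needed. First, in (i) you propose to settle the rows $c\in\{1,\Delta\}$ and the odd-$n$ ternary rows ``based on the identity $(c^{\#},c)_{\p}=-1$''; but $c^{\#}$ is defined only for $c\notin F^{\times 2}\cup\Delta F^{\times 2}$ (Definition \ref{defn:3.1}), so that identity is literally unavailable there (note that the odd-$n$ rows include $\delta\in\{1,\Delta\}$). The Hasse-symbol computations you intend do go through, but their input is $(\Delta,\pi)_{\p}=-1$ and $(\Delta,u)_{\p}=1$ for units $u$ (\cite[63:11a]{omeara_quadratic_1963}), not Proposition \ref{prop:duality2}. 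Second, in the existence half of (iii) you pass from ``at most one omitted space'' to ``the unique omitted space'' without first showing that $V=W_{3-\nu}^{n+2}(c)$ fails to represent anything at all. One more line closes this: either observe that, since $W_1^n(c)\not\cong W_2^n(c)$ share the same determinant, both Hasse symbols occur in that determinant class, so the forced ``bad'' invariant pair is realized by one of them; or argue directly that $V\cong\mathbb{H}\perp W_{3-\nu}^{n}(c)\not\cong\mathbb{H}\perp W_{\nu}^{n}(c)$ by Witt cancellation and (i), whence your criterion shows $V$ does not represent $W_{\nu}^{n}(c)$. With these two patches the argument is complete.
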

\begin{proof}
	(i) The description of $W^n_1(c)$ is clear from the definition.  The two quadratic spaces in each row of the table have the same determinant. So it suffices to show that they are not isometric. The quadratic space in the $ W_{2}^{n}(c) $ column writes as $ \mathbb{H}^{(n-k)/2}\perp U^{\prime} $, with $ \dim U^{\prime}=k $ for some $ k\le 4 $. We also have $ W_{1}^{n}(c)=\mathbb{H}^{(n-k)/2}\perp U $, where $ U=W_{1}^{k}(c) $. By Witt cancellation, we only have to prove that $ U\not\cong U^{\prime} $.
	
	Assume first that $ n $ is even. If $ c=1 $, then $ k=4 $, $ U=\mathbb{H}^{2} $ and $ U^{\prime}=[1,-\Delta,\pi,-\Delta\pi] $. We have $ U\not\cong U^{\prime} $ by \cite[63:17]{omeara_quadratic_1963}. If $ c\not=1 $, then $ k=2 $, $ U=[1,-c] $ and $ U^{\prime}=[c^{\prime},-c^{\prime}c] $ for some $ c^{\prime}\in F^{\times} $ with $ (c^{\prime},c)_{\mathfrak{p}}=-1 $. Since $ (c^{\prime},c)_{\mathfrak{p}}=-1 $, the binary space $  [1,-c] $ does not represent $ c^{\prime} $, so $ U=[1,-c]\not\cong U^{\prime}=[c^{\prime},-c^{\prime}c] $.
	
	If $ n $ is odd, then $ k=3 $, $ U=\mathbb{H}\perp [c] $ and $ U^{\prime}=[\pi,-\Delta\pi,\Delta\delta] $ or $ [1,-\Delta,\Delta\delta\pi] $. Note that $ (\Delta,-\Delta\delta\pi)_{\mathfrak{p}}=-1 $ and hence $ [1,-\Delta,\Delta\delta\pi] $ is anisotropic. Scaling by $ \pi $, we get that $ [\pi,-\Delta\pi,\Delta\delta] $ is also anisotropic. In both cases, $ U^{\prime} $ is anisotropic, so we cannot have $ U\cong U^{\prime} $.

	(ii) It is clear from the table because all possible determinants are exhausted and for every determinant we have two non-isometric quadratic spaces, except when $ n=2 $ and $ c=1 $. In the exceptional case, we only have $ W_{1}^{2}(1)=\mathbb{H} $.
	
	(iii) This follows from (ii) and \cite[63:21]{omeara_quadratic_1963}.
\end{proof}

\begin{defn}\label{defn:maximallattices}
		Let $ n\ge 2 $ be an integer, $ c\in F^{\times}/F^{\times 2} $ and $ \nu\in\{1,2\} $. We define $ N_{\nu}^{n}(c) $ as the $ \mathcal{O}_{F} $-maximal lattice on the space $ W_{\nu}^{n}(c) $ (cf. Definition \ref{defn:space}), provided that $ W_{\nu}^{n}(c) $ is defined. (Notice that $ N_{2}^{2}(1) $ is not defined.)
\end{defn}

	\begin{prop}\label{prop:maximallattices}
			Let $n\ge 2$ be an integer, $ c\in F^{\times}/F^{\times2} $ and $\mathcal{U}$ a complete system of representatives of $\mathcal{O}_F^\times/\mathcal{O}_F^{\times 2}$.

Then the $ \mathcal{O}_{F} $-maximal lattices $ N_{1}^{n}(c) $ and $ N_{2}^{n}(c) $ are given by the following table:
	\begin{center}
		\renewcommand\arraystretch{1.5}
		\begin{tabular}{|c|c|c|c|}
			\hline
			$ n $& $ c $ & $ N_{1}^{n}(c) $  & $ N_{2}^{n}(c) $  \\
			\hline
			\multirow{4}*{\text{even}}	 & $ 1 $ & $ \mathbf{H}^{\frac{n}{2}} $ & $ \mathbf{H}^{\frac{n-4}{2}} \perp 2^{-1}A(2,2\rho)\perp 2^{-1}\pi A(2,2\rho)$\text{($ n\ge 4 $)} \\
			\cline{2-4}
			& $ \Delta $  & $ \mathbf{H}^{\frac{n-2}{2}}\perp 2^{-1}A(2,2\rho)  $ & $ \mathbf{H}^{\frac{n-2}{2}}\perp 2^{-1}\pi A(2,2\rho) $  \\
			\cline{2-4}
			& $\delta\in \mathcal{U}\backslash \{1,\Delta\} $  & $ \mathbf{H}^{\frac{n-2}{2}}\perp \prec 1,-\delta\pi^{1-d(\delta)}\succ$ & $ \mathbf{H}^{\frac{n-2}{2}}\perp \prec\delta^{\#},-\delta^{\#}\delta\pi^{1-d(\delta)}\succ$  \\
			\cline{2-4}
			& $ \delta\pi$, with $\delta\in \mathcal{U} $  & $ \mathbf{H}^{\frac{n-2}{2}}\perp \langle  1,-\delta\pi\rangle$ & $ \mathbf{H}^{\frac{n-2}{2}}\perp \langle\Delta,-\Delta\delta\pi\rangle$  \\
			\hline
			\multirow{2}*{\text{odd}}	 & $\delta\in\mathcal{U} $ & $ \mathbf{H}^{\frac{n-1}{2}}\perp \langle \delta\rangle $  &   $ \mathbf{H}^{\frac{n-3}{2}}\perp 2^{-1}\pi A(2,2\rho)\perp \langle\Delta\delta\rangle $  \\
			 \cline{2-4}
			& $ \delta\pi$, with $\delta\in\mathcal{U} $ & $ \mathbf{H}^{\frac{n-1}{2}}\perp \langle\delta\pi\rangle $  &   $ \mathbf{H}^{\frac{n-3}{2}}\perp 2^{-1}A(2,2\rho)\perp \langle\Delta\delta\pi\rangle $  \\
			\hline
		\end{tabular}
	\end{center}
	\end{prop}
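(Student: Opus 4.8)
The plan is to pin down, for every space $W_{\nu}^{n}(c)$ in the table of Proposition~\ref{prop:space}, the $\mathcal{O}_{F}$-maximal lattice on it and to read off the entries of the table. Since the $\mathcal{O}_{F}$-maximal lattice on a given space is unique up to isometry by \cite[82:18a and 91:2]{omeara_quadratic_1963}, it suffices to produce one lattice of the asserted shape on each $W_{\nu}^{n}(c)$ and to check that it is maximal. First I would note that $\mathbf{H}=2^{-1}A(0,0)$ is $\mathcal{O}_{F}$-maximal on $\mathbb{H}$: a direct computation gives $\mathfrak{n}\mathbf{H}=\mathcal{O}_{F}$, $\mathfrak{s}\mathbf{H}=\mathfrak{p}^{-e}$ and $\mathfrak{v}\mathbf{H}=\mathfrak{p}^{-2e}=(\mathfrak{s}\mathbf{H})^{2}$, so $\mathbf{H}$ is $\mathfrak{p}^{-e}$-modular with the largest scale permitted by the integrality constraint $\mathfrak{n}\subseteq\mathcal{O}_{F}$ (which forces $\mathfrak{s}\subseteq\mathfrak{p}^{-e}$, since $2\mathfrak{s}\subseteq\mathfrak{n}$). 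A maximal-scale modular component always splits off (see \cite[\S\,82]{omeara_quadratic_1963}), so the $\mathcal{O}_{F}$-maximal lattice on $\mathbb{H}^{r}\perp V_{0}$ is $\mathbf{H}^{r}\perp M_{0}$, with $M_{0}$ the $\mathcal{O}_{F}$-maximal lattice on $V_{0}$. Reading the Witt indices off Proposition~\ref{prop:space}, this reduces the whole statement to computing the maximal lattices on the anisotropic cores, of dimension $1,2,3,4$.

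On an anisotropic space $V_{0}$ the $\mathcal{O}_{F}$-maximal lattice is $L_{0}=\{x\in V_{0}:Q(x)\in\mathcal{O}_{F}\}$ (cf. \cite[91:1]{omeara_quadratic_1963}), and the task is to make $L_{0}$ explicit. For the unary cores, $L_{0}=\langle\delta\rangle$ resp. $\langle\delta\pi\rangle$ is immediate. For the cores whose quadratic form, in a suitable basis, reduces modulo $\mathfrak{p}$ to an \emph{anisotropic} form over the residue field, $Q$ takes unit values on primitive vectors, so $Q(x)\in\mathcal{O}_{F}$ forces $x$ into the basis lattice. This applies, via the anisotropic binary reduction of $a^{2}+ab+\rho b^{2}$, to $[1,-\Delta]$ and $[\pi,-\Delta\pi]$, giving $L_{0}=2^{-1}A(2,2\rho)$ resp. $2^{-1}\pi A(2,2\rho)$ (their scale, norm and determinant identify them via \cite[93:11]{omeara_quadratic_1963}). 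For the ramified binaries $[1,-\delta\pi]$, $[\Delta,-\Delta\delta\pi]$ and for the ternary and quaternary cores, I would instead use a parity argument: the unit-valued and $\pi\cdot(\text{unit})$-valued orthogonal summands take $\ord$-values of opposite parity, so no cancellation occurs and $L_{0}$ is the orthogonal sum of the already-computed pieces. This yields $\langle1,-\delta\pi\rangle$, $\langle\Delta,-\Delta\delta\pi\rangle$, the ternary lattices $2^{-1}\pi A(2,2\rho)\perp\langle\Delta\delta\rangle$ and $2^{-1}A(2,2\rho)\perp\langle\Delta\delta\pi\rangle$, and the quaternary lattice $2^{-1}A(2,2\rho)\perp2^{-1}\pi A(2,2\rho)$.

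The delicate cases are the binary cores $[1,-\delta]$ and its companion $[\delta^{\#},-\delta^{\#}\delta]$ (Proposition~\ref{prop:dualspace}) for $\delta\in\mathcal{U}\setminus\{1,\Delta\}$. Here the residue-field reduction of the norm form is a perfect square of a linear form, hence isotropic, so the parity/reduction argument fails and the maximal lattice is strictly larger than the diagonal lattice. I would instead verify directly that the BONG $\prec1,-\delta\pi^{1-d(\delta)}\succ$ (resp. $\prec\delta^{\#},-\delta^{\#}\delta\pi^{1-d(\delta)}\succ$) defines a maximal lattice on the correct space. Using $d(\delta)=\ord(\delta-1)$ for $\delta\in\mathcal{U}$ together with Definition~\ref{defn:3.1} and Proposition~\ref{prop:duality2} (so $\delta^{\#}=1+4\rho(\delta-1)^{-1}$ and $d(\delta^{\#})=2e-d(\delta)$), and recalling that $d(\delta)$ is odd with $1\le d(\delta)\le2e-1$, one checks that $R_{1}=0$, $R_{2}=1-d(\delta)\in[2-2e,0]^{E}$ and $d(-a_{1}a_{2})=d(\delta)$, so Lemma~\ref{lem:2.2} is satisfied and the BONG is good; moreover $[a_{1},a_{2}]=[1,-\delta\pi^{1-d(\delta)}]=[1,-\delta]$ because $\pi^{1-d(\delta)}$ is a square. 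Passing to the normal form $\pi^{(1-d(\delta))/2}A(\pi^{(d(\delta)-1)/2},-(\delta-1)\pi^{(1-d(\delta))/2})$ of Theorem~\ref{thm:nuniversaldyadic15theorem} exhibits $\mathfrak{n}=\mathcal{O}_{F}$ and volume $\mathfrak{p}^{1-d(\delta)}$, and a direct quadratic-defect analysis identifies $\{x:Q(x)\in\mathcal{O}_{F}\}$ with this lattice, so it is the maximal lattice $L_{0}$; the companion case follows by scaling by $\delta^{\#}$.

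I expect this last step to be the main obstacle: because the residue-field reduction is isotropic and carries no information, it is the quadratic defect $d(\delta)$, rather than a naive valuation count, that dictates how far the maximal lattice stretches, and one must argue that the exponent $1-d(\delta)$ is sharp, i.e. that no more stretched lattice remains integral. A secondary, bookkeeping-type point is to confirm that the two modular binary blocks occur in the ternary and quaternary cores with exactly the scalings $\gamma=2^{-1}$ and $\gamma=2^{-1}\pi$ needed for $FM_{0}$ to be the prescribed space, which is where Proposition~\ref{prop:space} and \cite[93:11]{omeara_quadratic_1963} are invoked once more.
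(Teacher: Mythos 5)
Your proposal is correct, but it follows a genuinely different route from the paper's. The two arguments share only the first reduction: by \cite[82:23]{omeara_quadratic_1963} the $\mathcal{O}_F$-maximal lattice on an isotropic space splits as $\mathbf{H}$ orthogonal to the maximal lattice on a complementary space (this is the precise citation behind your appeal to \S\,82), so everything comes down to the anisotropic cores of dimension at most $4$. From there the paper stays inside Beli's theory: it first computes good BONGs and the invariants $S_i$ of the candidate lattices (Lemmas \ref{lem3.10new}, \ref{lem:BONGformaximallattice} and \ref{lem:maximal-BONG-odd}), and then argues by contradiction that the candidate $N$ cannot be a \emph{proper} sublattice of the maximal lattice $M$: a strict inclusion forces $\sum R_i(M)\le \sum R_i(N)-2$, because volumes of lattices on the same space differ by square factors (\cite[82:11]{omeara_quadratic_1963}), and this is incompatible with the bounds $R_i\ge 0$ (odd $i$), $R_i\ge -2e$ (even $i$) together with the identification of $FM$ via Proposition \ref{prop:Ralphaproperty3}(iv),(v). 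You instead compute the maximal lattice on each anisotropic core directly as $\{x:Q(x)\in\mathcal{O}_F\}$, using residue-field anisotropy of $a^2+ab+\rho b^2$ for the blocks $[1,-\Delta]$ and $[\pi,-\Delta\pi]$, a parity-of-orders argument to glue blocks of even and odd order, and a quadratic-defect analysis for the ramified unit binaries. This is precisely the ``classical'' proof whose existence the paper asserts in the unlabeled remark following its own proof. What your route buys is independence from the BONG computations (none of Lemmas \ref{lem3.10new}--\ref{lem:maximal-BONG-odd} is needed) and a constructive description of the maximal lattice; what the paper's route buys is a short, uniform contradiction argument that doubles as an advertisement for the machinery used throughout the rest of the paper.

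The one step you leave as a sketch---and rightly flag as the crux---does go through. For $\delta\in\mathcal{U}\setminus\{1,\Delta\}$ with $d:=d(\delta)=\ord(\delta-1)$ odd and $1\le d\le 2e-1$, write $x=ae_1+be_2$ with $Q(e_1)=1$, $Q(e_2)=-\delta$, and suppose $a^2-\delta b^2\in\mathcal{O}_F$ with $k:=-\ord(b)>0$. Then $\ord\big((a/b)^2-\delta\big)\ge 2k$, while the defect gives $\ord\big((a/b)^2-\delta\big)\le d$, so $2k\le d-1$. Moreover $\ord\big((a/b)^2-1\big)\ge\min\{2k,d\}=2k$, and since either $\ord(a/b-1)=\ord(a/b+1)$ (when $\ord(a/b-1)<e$) or $\ord(a/b-1)\ge e>k$, one gets $\ord(a/b-1)\ge k$, i.e. $a-b\in\mathcal{O}_F$. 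Hence $x=(a-b)e_1+b(e_1+e_2)$ lies in $\mathcal{O}_Fe_1+\mathcal{O}_F\pi^{(1-d)/2}(e_1+e_2)$, which is exactly the lattice with Gram matrix $\pi^{\frac{1-d}{2}}A\big(\pi^{\frac{d-1}{2}},-(\delta-1)\pi^{\frac{1-d}{2}}\big)$ in the basis $e_1,\,\pi^{(1-d)/2}(e_1+e_2)$; the reverse inclusion is the integrality check you already performed. Your closing remark is also sound: scaling the form by the unit $\delta^{\#}$ changes neither norms' integrality nor lattice containments, so it transports maximality and settles the companion space $[\delta^{\#},-\delta^{\#}\delta]$.
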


	\begin{re}\label{re:maximallattices}
		 For $ \delta\in \mathcal{U}\backslash \{1,\Delta\} $ with the property $ d(\delta)=\ord(\delta-1) $, we can deduce from  \cite[Corollary 3.4(iii)]{beli_integral_2003} and \cite[93:17]{omeara_quadratic_1963} that
		\begin{align*}
			\prec 1,-\delta\pi^{1-d(\delta)}\succ &\cong \pi^{\frac{1-d(\delta)}{2}}A(\pi^{\frac{d(\delta)-1}{2}},-(\delta-1)\pi^{\frac{1-d(\delta)}{2}})\,, \\
			\prec \delta^{\#},-\delta^{\#}\delta\pi^{1-d(\delta)}\succ &\cong \delta^{\#} \pi^{\frac{1-d(\delta)}{2}}A(\pi^{\frac{d(\delta)-1}{2}},-(\delta-1)\pi^{\frac{1-d(\delta)}{2}})\,,
		\end{align*}
		where $ \delta^{\#}=1+4\rho(\delta-1)^{-1} $.

	Note that for any $\varepsilon\in \mathcal{O}_F^\times$ one can find a $\delta\in \mathcal{O}_F^\times$ such that $\varepsilon\delta^{-1}\in \mathcal{O}_F^{\times 2}$ and  $d(\delta)=\ord(\delta-1)$. So, indeed, the set $ \mathcal{U} $ in Proposition \ref{prop:maximallattices} can be chosen  such that $d(\delta)=\ord(\delta-1)$ for all $\delta\in \mathcal{U}$.
	\end{re}
Let us first determine the $R_i$ invariants for the lattices listed in  Proposition \ref{prop:maximallattices}. We begin with the following observation.

\begin{lem}\label{lem3.10new}
(i) We have $\mathbf{H}=2^{-1}A(0,0)\cong  \prec 1,\,-\pi^{-2e}\succ$ and
\[
 2^{-1}A(2,\,2\rho)\cong \prec 1,\,-\Delta\pi^{-2e}\succ\,,\; 2^{-1}\pi A(2,\,2\rho)\cong \prec \pi,\,-\Delta\pi^{1-2e}\succ.
\]
(ii) For any $\delta\in \mathcal{U} $, we have
\[
		\langle 1,\,-\delta \pi\rangle\cong \prec 1,\,-\delta\pi\succ  \,,\;\; \langle \Delta,\,-\Delta\delta \pi\rangle\cong \prec \Delta,\,-\Delta\delta\pi\succ.
\]
(iii) Let $\kappa\in \cO_F^{\times}$ be such that $d(\kappa)=2e-1$ and let $\delta\in\cO_F^\times$. Then
\[
2^{-1}\pi A(2,2\rho) \perp \langle \Delta\delta\rangle\cong \prec \delta\kappa^{\#},-\delta\kappa^{\#}\kappa\pi^{2-2e}, \delta\kappa\succ\,.
\]
\end{lem}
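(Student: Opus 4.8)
Here the plan is elementary. In (i), each binary expression has $R_2-R_1=-2e$ (namely $(R_1,R_2)=(0,-2e),(0,-2e),(1,1-2e)$), and in each case $-a_1a_2\in F^{\times 2}\cup\Delta F^{\times 2}$, so $d(-a_1a_2)\ge 2e$ and the sequences are good BONGs by Lemma \ref{lem:2.2}. By Corollary \ref{cor:R-R-odd}(ii) the lattice is $2^{-1}\pi^{R_1}A(0,0)$ or $2^{-1}\pi^{R_1}A(2,2\rho)$, the two being told apart by whether the spanned space is $\mathbb{H}$ or $[\pi^{R_1},-\Delta\pi^{R_1}]$. Computing these spaces ($[1,-\pi^{-2e}]\cong\mathbb{H}$, $[1,-\Delta\pi^{-2e}]\cong[1,-\Delta]$, $[\pi,-\Delta\pi^{1-2e}]\cong[\pi,-\Delta\pi]$) yields the stated identifications. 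For (ii), the diagonal lattices $\langle 1,-\delta\pi\rangle$ and $\langle\Delta,-\Delta\delta\pi\rangle$ satisfy $\ord(a_1)=0\le 1=\ord(a_2)$, so $a_1$ is a norm generator and the orthogonal basis is already a BONG, automatically good since $m=2$; hence $\langle a_1,a_2\rangle\cong\prec a_1,a_2\succ$.

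\textbf{Part (iii), good BONG.} Set $M:=\prec\delta\kappa^{\#},-\delta\kappa^{\#}\kappa\pi^{2-2e},\delta\kappa\succ$ and $L:=2^{-1}\pi A(2,2\rho)\perp\langle\Delta\delta\rangle$. First I would confirm $M$ is a good BONG via Lemma \ref{lem:2.2}. Absorbing the squares $\delta^2$, $(\kappa^{\#})^2$ and $\pi^{2-2e}=(\pi^{1-e})^2$, one has $-a_1a_2\equiv\kappa$ and $-a_2a_3\equiv\kappa^{\#}\pmod{F^{\times 2}}$, so Proposition \ref{prop:duality2} gives $d(-a_1a_2)=d(\kappa)=2e-1$ and $d(-a_2a_3)=d(\kappa^{\#})=1$. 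The conditions of Lemma \ref{lem:2.2} then hold for $(R_1,R_2,R_3)=(0,2-2e,0)$, so $M$ is a well-defined lattice with $\mathfrak{v}(M)=\mathfrak{p}^{2-2e}$.

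\textbf{Part (iii), spaces.} Next I would match the ambient spaces. Since $\pi^{2-2e}$ is a square, $FM\cong[\delta\kappa^{\#},-\delta\kappa^{\#}\kappa,\delta\kappa]$, of determinant $-\delta$. Scaling by $\delta^{-1}$ and using the ternary isotropy criterion, this is anisotropic iff $[\kappa^{\#},-\kappa^{\#}\kappa]$ fails to represent $-\kappa$, i.e. iff $(-\kappa(\kappa^{\#})^{-1},\kappa)_{\mathfrak{p}}\neq 1$; and indeed $(-\kappa(\kappa^{\#})^{-1},\kappa)_{\mathfrak{p}}=(-\kappa,\kappa)_{\mathfrak{p}}(\kappa^{\#},\kappa)_{\mathfrak{p}}=1\cdot(-1)=-1$ by Proposition \ref{prop:duality2}. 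Thus $FM$ is the anisotropic space of determinant $-\delta$, namely $W_2^3(\delta)$ by Proposition \ref{prop:space}; the same proposition gives $FL=[\pi,-\Delta\pi,\Delta\delta]=W_2^3(\delta)$, so $FM\cong FL$.

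\textbf{Part (iii), maximality and conclusion.} Finally I would pin down the lattices by maximality, which is the crux. As $FL=W_2^3(\delta)$ is anisotropic, its unique $\mathcal{O}_F$-maximal lattice is $\{x\in FL:Q(x)\in\mathcal{O}_F\}$, and $L$ sits inside it. The key observation is that the block $2^{-1}\pi A(2,2\rho)$ carries the form $\pi(x^2+xy+\rho y^2)$, where $x^2+xy+\rho y^2$ is the norm form of the unramified quadratic extension $F(\sqrt{\Delta})$ (its discriminant is the unit $\Delta$, with $d(\Delta)=2e$); its nonzero values therefore have even valuation, so $Q$ on this block takes only odd valuations, while $\langle\Delta\delta\rangle$ contributes only even valuations. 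Since the two parities cannot cancel, any $v$ with $Q(v)\in\mathcal{O}_F$ must have integral coordinates, whence $L$ equals the maximal lattice and $\mathfrak{v}(L)=\mathfrak{p}^{2-2e}$. As $M$ is integral on the isometric space $FM\cong FL$, it embeds into the maximal lattice (isometric to $L$), and the equality of volumes $\mathfrak{v}(M)=\mathfrak{p}^{2-2e}=\mathfrak{v}(L)$ forces $M\cong L$. The main obstacle is exactly this identification of the abstract BONG lattice $M$ with the concrete $L$; the plan circumvents a direct good-BONG computation for $L$ (whose natural BONG $\prec\Delta\delta,\pi,-\Delta\pi^{1-2e}\succ$ is not good and would need reordering) by exploiting anisotropy together with the unramified norm-form structure of the binary block.
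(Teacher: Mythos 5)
Your parts (i) and (ii) are correct and essentially coincide with the paper's proof: the paper just cites \cite[Lemma 3.3(ii),(iii)]{beli_integral_2003} together with \cite[93:11]{omeara_quadratic_1963}, and your direct verification via Lemma \ref{lem:2.2} and Corollary \ref{cor:R-R-odd}(ii) (distinguishing $2^{-1}\pi^{R_1}A(0,0)$ from $2^{-1}\pi^{R_1}A(2,2\rho)$ by the isometry class of the spanned space) amounts to the same computation.

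For part (iii) your argument is correct but follows a genuinely different route. After the same existence check for $\prec \delta\kappa^{\#},-\delta\kappa^{\#}\kappa\pi^{2-2e},\delta\kappa\succ$ via Lemma \ref{lem:2.2}, the paper computes the $R$-invariants of $N=2^{-1}\pi A(2,2\rho)\perp\langle\Delta\delta\rangle$ by running Beli's maximal-norm-splitting algorithm from \cite[\S 7]{beli_representations_2006}, matches the $\alpha$-invariants and the $d[\cdot]$-conditions, and then invokes Beli's classification theorem \cite[Theorem 3.2]{beli_representations_2006} to conclude that the two lattices are isometric. You instead stay within the classical theory: you identify both ambient spaces with the anisotropic ternary space $W_2^3(\delta)$, prove that $N$ is itself the unique $\mathcal{O}_F$-maximal lattice on that space by a valuation-parity argument (the binary block is $\pi$ times the norm form of the unramified extension $F(\sqrt{\Delta})$, so its nonzero values have odd order while the unary block contributes even orders, and no cancellation can occur), and conclude by uniqueness of the maximal lattice on an anisotropic space together with equality of volumes $\mathfrak{v}(M)=\mathfrak{v}(N)=\mathfrak{p}^{2-2e}$. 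This bypasses both the norm-splitting computation and \cite[Theorem 3.2]{beli_representations_2006}, at the price of two standard facts you use but only compress into ``must have integral coordinates'': that on an anisotropic space $\{x:Q(x)\in\mathcal{O}_F\}$ is the unique $\mathcal{O}_F$-maximal lattice (\cite[91:1]{omeara_quadratic_1963}), and that for the unramified extension $E=F(\sqrt{\Delta})$ one has $\mathcal{O}_E=\mathcal{O}_F[\omega]$, so $N_{E/F}(x+y\omega)\in\mathcal{O}_F$ if and only if $x,y\in\mathcal{O}_F$; both are standard, so this is acceptable compression rather than a gap. Your route is more elementary and has the pleasant side effect of exhibiting $N$ directly as a maximal lattice, which is close in spirit to the volume argument the paper uses later in the proof of Proposition \ref{prop:maximallattices}; the paper's route keeps the whole section uniformly within the BONG machinery and illustrates the algorithm it advertises for finding good BONGs.
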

\begin{proof}
(i) This follows by combining \cite[Lemma\;3.3(iii)]{beli_integral_2003} with \cite[93:11]{omeara_quadratic_1963}.

  (ii) This is a special case of \cite[Lemma 3.3(ii)]{beli_integral_2003}.

  (iii) Let $  a_{1}=\delta\kappa^{\#}$, $a_{2}=-\delta\kappa^{\#}\kappa\pi^{2-2e}$ and $ a_{3}=\delta\kappa  $. Write $R_i=\ord(a_i)$. First note that $  \prec a_{1},a_{2},a_{3} \succ $ exists. Indeed, $ \kappa^{\#} $ is a unit and $ d(\kappa^{\#})=1 $, by Proposition\;\ref{prop:duality2}. Hence $ R_{1}=R_{3}=\ord\, a_{1}=\ord\, a_{3}=0 $ and $ R_{2}=\ord\, a_{2}=2-2e $. Also, $ d(-a_{1}a_{2})=d(\kappa)=2e-1 $ and $ d(-a_{2}a_{3})=d(\kappa^{\#})=1 $. An easy verification using Lemma \ref{lem:2.2} shows that the lattice $ L:=\prec a_{1},a_{2},a_{3} \succ$ is well defined.

  Let $  N=2^{-1}\pi A(2,2\rho)\perp \langle\Delta\delta\rangle $ and   put $ S_{i}=R_{i}(N) $. To show that $ N\cong L $, we start by proving $S_i=R_i$ for $i=1,\,2,\,3$.

	    By the algorithm from \cite[\S 7]{beli_representations_2006}, we have a maximal norm splitting $ N=N_{1}\perp N_{2} $, where $ N_{1} $ (resp. $ N_{2} $) is binary (resp. unary), $ \mathfrak{s}(N_{i})=\mathfrak{p}^{r_{i}} $, $ \mathfrak{n}(N_{i})=\mathfrak{p}^{v_{i}} $ and $ \mathfrak{n}(N^{\mathfrak{s}(N_{i})})=\mathfrak{p}^{u_{i}} $. We have $ r_{1}=1-e $, $ v_{1}=1$ and $ r_{2}=v_{2}=0 $, so
	    \begin{align*}
	    		   u_{1}=\min\{v_{1},v_{2}\}=0\quad\text{and}\quad u_{2}=\min\{v_{2},2(r_{2}-r_{1})+v_{1}\}=0\,.
	    \end{align*}
	    Then, by \cite[Lemma 3.3(iii)]{beli_integral_2003}, the $ R $-invariants of $ N_{i} $ are given by
	    \begin{align*}
	    	&R_{1}(N_{1})=u_{1}=0\,,\quad  R_{2}(N_{1})=2r_{1}-u_{1}=2-2e\quad\text{and}\quad R_{1}(N_{2})=r_{2}=u_{2}=0\,.
	    \end{align*}
	    By \cite[Lemma 4.3(iii)]{beli_integral_2003},  the $ R $-invariants of $ N  $ can be obtained by putting together those of $ N_{1} $ and $ N_{2} $, so $ (S_{1},S_{2},S_{3})=(R_{1}(N_{1}),R_{2}(N_{1}),R_{1}(N_{2}))=(0,2-2e,0) $. This shows that $S_{i}=R_{i}$ for all $i=1,\,2,\,3$.

Now we apply \cite[Theorem 3.2]{beli_representations_2006} to prove that $N\cong L$. By \cite[Lemma 2.1]{beli_integral_2003}, $ \det FN=-\delta =\det FL $. Note that  Proposition \ref{prop:duality2} implies $ (\kappa,-\kappa\kappa^{\#})_{\p}=-1 $, so $ [1,-\kappa,\kappa\kappa^{\#}] $ is anisotropic. After scaling by $ \delta\kappa^{\#} $, we get that $ FL\cong [\delta\kappa^{\#},-\delta\kappa^{\#}\kappa,\delta\kappa] $ is anisotropic as well. We have shown that $ FN\cong [\pi,-\Delta\pi,\Delta\delta] $ is anisotropic in the proof of Proposition \ref{prop:space}(i). So $ FL\cong FN $ by \cite[63:20]{omeara_quadratic_1963}. In remains to check conditions (ii)--(iv) in \cite[Theorem 3.2]{beli_representations_2006}.

Let $\alpha_i=\alpha_i(L)$ and $\beta_i=\alpha_i(N)$. Since $R_{2}-R_{1}=2-2e$ and $R_{3}-R_{2}=2e-2$, we obtain $ \alpha_{1}=1 $ and $ \alpha_{2}=2e-1 $ by Proposition \ref{prop:Ralphaproperty2}(iv). Since $ R_{i}=S_{i} $ for $ 1\le i\le 3 $, the same argument gives $\beta_1=1$ and $\beta_2=2e-1$. So condition (ii) in \cite[Theorem 3.2]{beli_representations_2006} is verified. For condition (iii) of that theorem, note that, by \eqref{eq:alpha-defn}, $d[-a_{1,2}]\ge R_{1}-R_{2}+\alpha_{1}=2e-1$. Similarly, $ d[-b_{1,2}]\ge 2e-1 $. Hence
\begin{align*}
	d(a_{1,2}b_{1,2})\ge d[a_{1,2}b_{1,2}]\ge 2e-1=\alpha_{2}
\end{align*}
by the domination principle. Since $ \ord(a_{1}b_{1})=R_{1}+S_{1}=0 $ is even, $ d(a_{1}b_{1})\ge 1=\alpha_{1} $. Finally, since  $ \alpha_{1}+\alpha_{2}=2e $, there is no need to check condition (iv) in  \cite[Theorem 3.2]{beli_representations_2006}. We may thus conclude that $N\cong L$, as desired.
\end{proof}

	\begin{lem}\label{lem:BONGformaximallattice}
		Let $ k\ge 0  $ be an integer and $ L $ an integral $ \mathcal{O}_{F} $-lattice of rank $ \ell $. If $ L\cong \prec c_{1},\ldots,c_{\ell} \succ $ relative to a good BONG, then $ \mathbf{H}^{k}\perp L\cong \prec 1,-\pi^{-2e},\ldots, 1,-\pi^{-2e}, c_{1},\ldots,c_{\ell}\succ $ relative to a good BONG.
	\end{lem}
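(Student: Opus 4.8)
The plan is to argue by induction on $k$, the base case $k=0$ being the tautology $L\cong\prec c_1,\ldots,c_\ell\succ$. The inductive step reduces to the single claim: if $L'\cong\prec d_1,\ldots,d_t\succ$ is any integral lattice relative to a good BONG, then $\mathbf{H}\perp L'\cong\prec 1,-\pi^{-2e},d_1,\ldots,d_t\succ$. Granting this, I would apply it with $L'=\mathbf{H}^{k-1}\perp L$, which is integral and, by the induction hypothesis, carries the good BONG $\prec 1,-\pi^{-2e},\ldots,1,-\pi^{-2e},c_1,\ldots,c_\ell\succ$ with $k-1$ pairs; prepending one more hyperbolic plane then yields the asserted expression with $k$ pairs.

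To prove the single-prepend claim I would exhibit an explicit BONG rather than merely invoke an existence criterion. Writing $\mathbf{H}=2^{-1}A(0,0)=\mathcal{O}_F x+\mathcal{O}_F y$ with $Q(x)=Q(y)=0$ and associated bilinear form value $B(x,y)=2^{-1}$, set $v=x+y$ and $w=2^{-1}(x-y)$. A direct computation gives $Q(v)=1$, $Q(w)=-4^{-1}$, $B(v,w)=0$ and $\pr_{v^\perp}(\mathbf{H})=\mathcal{O}_F w$. I would then run the defining recursion of Definition~\ref{defn:BONG} on $\mathbf{H}\perp L'$. Since $\mathfrak{n}\mathbf{H}=\mathcal{O}_F$ and $\mathfrak{n}L'\subseteq\mathcal{O}_F$ by integrality, we have $\mathfrak{n}(\mathbf{H}\perp L')=\mathcal{O}_F=Q(v)\mathcal{O}_F$, so $v$ is a norm generator; projecting off $v$ gives $\pr_{v^\perp}(\mathbf{H}\perp L')=\mathcal{O}_F w\perp L'$. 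Next, because $\mathfrak{n}L'\subseteq\mathcal{O}_F\subseteq\mathfrak{p}^{-2e}$, we obtain $\mathfrak{n}(\mathcal{O}_F w\perp L')=\mathfrak{p}^{-2e}=Q(w)\mathcal{O}_F$, so $w$ is a norm generator, and projecting off $w$ leaves exactly $L'$. Hence $v,w$ followed by a good BONG of $L'$ is a BONG of $\mathbf{H}\perp L'$.

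It remains to read off the BONG values and confirm good-ness. Since $Q(w)=-4^{-1}=-\pi^{-2e}(\pi^{e}/2)^{2}$ with $\pi^{e}/2\in\mathcal{O}_F^{\times}$, replacing $w$ by the unit multiple $(2/\pi^{e})w$ leaves the lattice $\mathcal{O}_F w$ unchanged while changing the recorded value from $-4^{-1}$ to $-\pi^{-2e}$; thus $\mathbf{H}\perp L'\cong\prec 1,-\pi^{-2e},d_1,\ldots,d_t\succ$. The associated $R$-invariants are $0,-2e,\ord(d_1),\ldots,\ord(d_t)$, and I would verify the good-BONG inequalities of Lemma~\ref{lem:2.2} directly: the inequalities $0\le\ord(d_1)$ and $-2e\le\ord(d_2)$ hold by integrality of $L'$ together with Proposition~\ref{prop:Ralphaproperty3}(i), while the inequalities among the $d_i$ hold because $\prec d_1,\ldots,d_t\succ$ is already good.

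I expect the main obstacle to be the second norm-generator verification, namely that the vector $w$ of order $-2e$ remains a norm generator after adjoining $L'$. This is precisely where integrality is essential: it forces $\mathfrak{n}L'\subseteq\mathcal{O}_F\subseteq\mathfrak{p}^{-2e}$, so that the norm of $\mathcal{O}_F w\perp L'$ is still generated by $Q(w)$; without integrality the naive concatenation of the two BONGs would fail to be a BONG at this step. The remaining points—the square-class adjustment turning $-4^{-1}$ into $-\pi^{-2e}$ and the good-condition check—are routine.
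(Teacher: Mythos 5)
Your proposal is correct, but it takes a genuinely different route from the paper's proof. The paper disposes of all $k$ hyperbolic planes in one step: it notes via Lemma \ref{lem:2.2} that the expression $\prec 1,-\pi^{-2e},\ldots, 1,-\pi^{-2e}, c_{1},\ldots,c_{\ell}\succ$ exists, quotes Lemma \ref{lem3.10new}(i) for the identification $\mathbf{H}\cong\prec 1,-\pi^{-2e}\succ$ (itself resting on Beli's Lemma 3.3(iii) and O'Meara 93:11), and then invokes Beli's general concatenation criterion for good BONGs of orthogonal sums (Corollary 4.4(i) of \cite{beli_integral_2003}), the only hypothesis to check being $-2e<0\le \ord(c_{1})$, i.e. integrality. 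You instead induct on $k$ and prove the one-plane prepending step from first principles: you exhibit the explicit orthogonal vectors $v=x+y$ and $w=2^{-1}(x-y)$ in $\mathbf{H}$, run the defining recursion of Definition \ref{defn:BONG} directly (both norm-generator verifications and both projection computations are correct), rescale $w$ by the unit $2/\pi^{e}$ to normalize its value to $-\pi^{-2e}$, and check goodness at the junction using integrality and Proposition \ref{prop:Ralphaproperty3}(i). In effect you reprove, in exactly the special case needed, both Lemma \ref{lem3.10new}(i) and the relevant instance of Beli's Corollary 4.4(i); the trade-off is that the paper's argument is three lines of citations while yours is longer but self-contained, and it makes transparent precisely where integrality of $L$ enters (your second norm-generator check is the same condition the paper feeds into Beli's corollary). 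One minor point of bookkeeping: your final appeal to Lemma \ref{lem:2.2} is not really needed — since you have verified the BONG property directly through the recursion, the conditions \eqref{eq:BONGs} hold automatically, and goodness is by definition just the chain of inequalities \eqref{eq:GoodBONGs}, which is exactly what you check; this does not affect the correctness of your argument.
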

	\begin{proof}
	 By Lemma \ref{lem:2.2}, the lattice
$\prec 1,-\pi^{-2e},\ldots, 1,-\pi^{-2e},c_{1},\ldots,c_{\ell}\succ$ exists. Also,  by \eqref{eq:integralcondition}, we have $ -2e<0\le \ord(c_{1}) $. So we can apply \cite[Corollary 4.4(i)]{beli_integral_2003} to obtain
		\begin{align*}
			\mathbf{H}^{k}\perp L&\cong \prec 1,-\pi^{-2e}\succ \perp \ldots \perp \prec 1,-\pi^{-2e}\succ \perp L \cong   \prec 1,-\pi^{-2e},\ldots, 1,-\pi^{-2e},c_{1},\ldots,c_{\ell}\succ\,,
		\end{align*}
		  noticing that $\mathbf{H}\cong  \prec 1,\,-\pi^{-2e}\succ$ (Lemma\;\ref{lem3.10new} (i)).
	\end{proof}

Let $\tilde{N}_{\nu}(c)$ denote temporarily the lattices in the $N_{\nu}(c)$ column of the table in Proposition \ref{prop:maximallattices}.

	\begin{lem}\label{lem:maximal-BONG-odd}
		Let $ n\ge 2 $ and let $ N $ be an $ \mathcal{O}_{F} $-lattice of rank $ n $. Let $ S_{i}=R_{i}(N) $.
		
		(i) Suppose that $n$ is even.

If $ N=\tilde{N}_{1}^{n}(1) $ or $ \tilde{N}_{1}^{n}(\Delta) $, then $ S_{i}=0 $ for $ i\in [1,n]^{O} $ and $ S_{i}=-2e $ for $ i\in [1,n]^{E} $.
		
		If $ N=\tilde{N}_{2}^{n}(1)$ or $ \tilde{N}_{2}^{n}(\Delta) $, then $ S_{i}=0 $  for $ i\in [1,n-2]^{O} $, $ S_{i}=-2e $ for $ i\in [1,n-2]^{E} $,
		$ S_{n-1}=1 $ and $ S_{n}=1-2e $.
		
	If $ N=\tilde{N}_{1}^{n}(c) $ or $ \tilde{N}_{2}^{n}(c) $, with $ c\in F^{\times}/F^{\times 2} $ and $ c\not=1,\Delta $, then $ S_{i}=0 $ for $ i\in [1,n-1]^{O} $, $ S_{i}=-2e $ for $ i\in [1,n-1]^{E} $ and $ S_{n}=1-d(c)$.

		(ii) Suppose that $ n $ is odd and $ \delta\in \mathcal{O}_F^{\times} $.

 If $ N=\tilde{N}_{1}^{n}(\delta) $, then $ S_{i}=0 $ for $ i\in [1,n]^{O} $ and $ S_{i}=-2e $ for $ i\in [1,n]^{E} $.
		
	If $ N=\tilde{N}_{2}^{n}(\delta)$, then $ S_{i}=0 $  for $ i\in [1,n]^{O} $, $ S_{i}=-2e $ for $ i\in [1,n-2]^{E} $
	  and $ S_{n-1}=2-2e $.
		
	If $ N=\tilde{N}_{1}^{n}(\delta\pi) $ or $ \tilde{N}_{2}^{n}(\delta\pi) $, then $ S_{i}=0 $ for $ i\in [1,n-1]^{O} $, $ S_{i}=-2e $ for $ i\in [1,n-1]^{E} $ and $ S_{n}=1$. (Note that  $d(\delta\pi)=0$. Hence the formula $S_n=1-d(c)$ is still true for $N=\tilde{N}_{1}^{n}(c) $ or $ \tilde{N}_{2}^{n}(c) $, with $c=\delta\pi$, when $n$ is odd.)
	\end{lem}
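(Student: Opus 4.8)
The plan is to reduce everything to the two mechanisms already available: Lemma~\ref{lem3.10new}, which records good BONGs for each of the unary, binary and ternary ``core'' blocks occurring in the table of Proposition~\ref{prop:maximallattices}, and Lemma~\ref{lem:BONGformaximallattice}, which shows that prepending $\mathbf{H}^k$ simply inserts $k$ copies of the pair $1,-\pi^{-2e}$ in front of any good BONG of an integral lattice. Each listed lattice $\tilde N_\nu^n(c)$ has the shape $\mathbf{H}^k\perp C$ with $C$ a core block of rank $n-2k$ whose first BONG entry has nonnegative order. Hence, once a good BONG $\prec c_1,\dots,c_{n-2k}\succ$ of $C$ is known, Lemma~\ref{lem:BONGformaximallattice} yields
\[
\mathbf{H}^k\perp C\;\cong\;\prec\underbrace{1,-\pi^{-2e},\dots,1,-\pi^{-2e}}_{2k},\,c_1,\dots,c_{n-2k}\succ\,,
\]
so that $S_i=0$ for $i\in[1,2k]^{O}$, $S_i=-2e$ for $i\in[1,2k]^{E}$, while $S_{2k+j}=\ord(c_j)$. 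It therefore suffices, row by row, to produce a good BONG of the core $C$ and read off the orders of its entries, then match index parities against the claimed ranges.

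For the rows whose core is directly listed in Lemma~\ref{lem3.10new} this is immediate. For even $n$ this handles $\tilde N_1^n(1)=\mathbf{H}^{n/2}$, the cores $2^{-1}A(2,2\rho)$ and $2^{-1}\pi A(2,2\rho)$ of $\tilde N_1^n(\Delta)$, $\tilde N_2^n(\Delta)$ (part~(i), orders $(0,-2e)$ and $(1,1-2e)$), the cores $\langle 1,-\delta\pi\rangle$, $\langle\Delta,-\Delta\delta\pi\rangle$ of $\tilde N_\nu^n(\delta\pi)$ (part~(ii), orders $(0,1)$), and the two binary cores $\prec 1,-\delta\pi^{1-d(\delta)}\succ$, $\prec\delta^{\#},-\delta^{\#}\delta\pi^{1-d(\delta)}\succ$ already written as BONGs; for the latter I would invoke Proposition~\ref{prop:duality2} to note $\delta^{\#}\in\cO_F^{\times}$, so both have entry orders $(0,1-d(\delta))$. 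For odd $n$, $\tilde N_1^n(\delta)$ and $\tilde N_1^n(\delta\pi)$ have unary cores, while the ternary core of $\tilde N_2^n(\delta)$ is handled verbatim by Lemma~\ref{lem3.10new}(iii), with orders $(0,2-2e,0)$.

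The only rows needing extra work are those whose core is an orthogonal sum of two pieces not covered by a single part of Lemma~\ref{lem3.10new}: the block $2^{-1}A(2,2\rho)\perp 2^{-1}\pi A(2,2\rho)$ in $\tilde N_2^n(1)$ (even $n$) and $2^{-1}A(2,2\rho)\perp\langle\Delta\delta\pi\rangle$ in $\tilde N_2^n(\delta\pi)$ (odd $n$). Here I would write each summand as a BONG via Lemma~\ref{lem3.10new}(i),(ii), concatenate, and invoke \cite[Corollary~4.4(i)]{beli_integral_2003} exactly as in the proof of Lemma~\ref{lem:BONGformaximallattice}: the last entry of the first summand has order $-2e$ and the first entry of the second has order $\ge 0$, so the hypothesis $-2e<0\le\ord(\cdot)$ is met and the concatenated sequence $\prec 1,-\Delta\pi^{-2e},\pi,-\Delta\pi^{1-2e}\succ$, respectively $\prec 1,-\Delta\pi^{-2e},\Delta\delta\pi\succ$, represents the orthogonal sum. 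A quick verification with Lemma~\ref{lem:2.2}, using $d(\Delta\pi^{-2e})=d(\Delta)=2e$ and that odd-order elements have $d=0$, confirms these are good BONGs, with entry orders $(0,-2e,1,1-2e)$ and $(0,-2e,1)$.

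Assembling the $S_i$ by the displayed formula and comparing with the ranges $[1,n]^{O/E}$, $[1,n-1]^{O/E}$, $[1,n-2]^{O/E}$ then gives every asserted value, where one also notes $1-d(\delta\pi)=1$ so that the uniform expression $S_n=1-d(c)$ holds for $c=\delta\pi$ in the odd case. The bookkeeping is routine; the only point demanding care is the validity of the two concatenations, namely checking the hypotheses of \cite[Corollary~4.4(i)]{beli_integral_2003} and confirming via Lemma~\ref{lem:2.2} that the concatenated sequences are good BONGs, together with the mundane task of tracking which index parities receive $0$, $-2e$, or the exceptional final values.
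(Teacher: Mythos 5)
Your proposal is correct and follows the same route as the paper: the paper's proof is precisely the combination of Lemma~\ref{lem3.10new} (good BONGs of the core blocks), Lemma~\ref{lem:BONGformaximallattice} (prepending $\mathbf{H}^k$), and \cite[Corollary~4.4(i)]{beli_integral_2003} for concatenating the two composite cores, which is exactly your plan, just written out in more detail.
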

	\begin{proof}
This follows easily by combining Lemmas\;\ref{lem3.10new} and \ref{lem:BONGformaximallattice} and \cite[Corollary 4.4(i)]{beli_integral_2003}.
\end{proof}

	  \begin{proof}[Proof of Proposition \ref{prop:maximallattices}]
Let $V=W_{\nu}^n(c)$, with $\nu\in \{1,\,2\}$. 	Recall from \cite[93:11]{omeara_quadratic_1963} that the  $\mathcal{O}_F$-maximal lattice in  $\mathbb{H}$ is $\mathbf{H}$. By \cite[82:23]{omeara_quadratic_1963}, if $V$ is isotropic, then the $\mathcal{O}_F$-maximal lattice in $V$ can be split as $\mathbf{H}\perp L$, where $L$ is the $\cO_F$-maximal lattice on $W^{n-2}_{\nu}(c)$. Thus this allows us to reduce to the case where $V$ is anisotropic of dimension $2\le n\le 4$. (So $c\neq 1$ if $n=2$.)

Let $M=N^n_{\nu}(c)$ be the $\cO_F$-maximal lattice on $V$ and let $N=\tilde{N}_{\nu}(c)$. Put $ R_{i}=R_{i}(M)$ and $S_i=R_i(N)$. We have $FN=V=FM$. So the maximality of $M$ implies $N\subseteq M$. Assume that the strict inclusion relation $ N\subset M$ holds. Then we have the strict inclusion relation  $\mathfrak{v}(N)\subset\mathfrak{v}(M)$ between the volumes of $N$ and $M$. Note that $\mathfrak{v}(L)=\det(L)\mathcal{O}_{F} $ for any lattice $ L $, so, by \cite[Lemma 2.1]{beli_integral_2003}, we have $ \ord(\mathfrak{v}(L))=\ord(\det L)=\sum_{i=1}^{n}R_{i}(L) $. Thus, by taking orders, we get $ \sum_{i=1}^{n}S_{i}>\sum_{i=1}^{n}R_{i} $. But, by \cite[82:11]{omeara_quadratic_1963}, $ \mathfrak{v}(M) $ and $\mathfrak{v}(N) $ differ by a square factor, so their orders have the same parity, i.e. $ \sum_{i=1}^{n}R_{i}\equiv\sum_{i=1}^{n}S_{i} \pmod{2}$. Hence $ \sum_{i=1}^{n}S_{i}>\sum_{i=1}^{n}R_{i} $ implies $ \sum_{i=1}^{n}R_{i}\le \sum_{i=1}^{n}S_{i}-2 $.

The values of $S_i$ have been determined in Lemma \ref{lem:maximal-BONG-odd}. Also, by Proposition \ref{prop:Ralphaproperty3}(i), we have $ R_{i}\ge R_{1}\ge 0 $ for odd $ i $ and $ R_{i}\ge R_{2}\ge -2e $ for even $ i $. It is now sufficient to show that the inequality $ \sum_{i=1}^{n}R_{i}\le \sum_{i=1}^{n}S_{i}-2 $ always leads to a contradiction. We do this case by case.

\medskip
\noindent {\bf Case I:} $n=2$.

Recall that $c\neq 1$ in this case.

If $ c=\Delta $ and $ \nu=1 $, then $ R_{1}+R_{2}\le S_{1}+S_{2}-2=0+(-2e)-2=-2e-2 $, which is impossible since $ R_{1}\ge 0 $ and $ R_{2}\ge -2e $.

If $ c=\Delta $ and $ \nu=2 $, then $ R_{1}+R_{2}\le S_{1}+S_{2}-2=1+(1-2e)-2=-2e $. But $ R_{1}\ge 0 $ and $ R_{2}\ge -2e $, so we must have $ R_{1}=0 $ and $ R_{2}=-2e $. By Proposition \ref{prop:Ralphaproperty3}(iv), we have $ FM\cong \mathbb{H}=W_{1}^{2}(1) $ or $ [1,-\Delta]=W_{1}^{2}(\Delta) $, which contradicts $ FM\cong W_{2}^{2}(\Delta) $.

If $ c\neq \Delta $, then $ R_{1}+R_{2}\le S_{1}+S_{2}-2=0+(1-d(c))-2=-1-d(c) $. But this is impossible, since $ R_{1}\ge 0 $ and, by \eqref{eq:BONGs}, $ R_{2}\ge R_{2}-R_{1}\ge -d(-a_{1}a_{2})=-d(c) $.

\noindent {\bf Case II:} $n=4$.

Since $ V $ is assumed to be anisotropic, we have $ c=\Delta $ and $ \nu=2 $. Then $ \sum_{i=1}^{4}R_{i}\le \sum_{i=1}^{4}S_{i}=0+(-2e)+1+(1-2e)-2=-4e $. Since $ R_{3}\ge R_{1}\ge 0 $ and $ R_{4}\ge R_{2}\ge -2e $, we get $ R_{1}=R_{3}=0 $ and $ R_{2}=R_{4}=-2e$. By Proposition \ref{prop:Ralphaproperty3}(iv), we have $ FM\cong \mathbb{H}^{2}=W_{1}^{4}(1) $ or $ \mathbb{H}\perp [1,-\Delta]=W_{1}^{4}(\Delta) $, which contradicts $ FM\cong W_{2}^{4}(\Delta) $.

\noindent {\bf Case III:} $n=3$.

Since $ V $ is anisotropic, we must have $ \nu=2 $. If $ c=\delta\pi $, with $ \delta\in \mathcal{U} $, then $ \sum_{i=1}^{3}R_{i}\le \sum_{i=1}^{3}S_{i}-2=0+(-2e)+1-2\le -2e-1$. But this is impossible, since $ R_{3}\ge R_{1}\ge 0 $ and $ R_{2}\ge -2e $.

If $ c=\delta $, then $ \sum_{i=1}^{3}R_{i}\le \sum_{i=1}^{3}S_{i}-2=0+(2-2e)+0-2=-2e$. Since $ R_{3}\ge R_{1}\ge 0 $ and $ R_{2}\ge -2e $, we must have $ R_{1}=R_{3}=0 $ and $ R_{2}=-2e $. By Proposition \ref{prop:Ralphaproperty3}(v), we have $ FM\cong \mathbb{H}\perp [\varepsilon]=W_{1}^{3}(\varepsilon) $ for some $ \varepsilon\in \mathcal{O}_{F}^{\times} $. But this contradicts $ FM\cong W_{2}^{3}(\delta) $.

\medskip

We have derived a contradiction in all cases.  This proof is thus complete.
	\end{proof}

	\begin{re}
			In the proof of Proposition \ref{prop:maximallattices}, we have used the BONG theory. Giving a proof using only the classical theory is also possible, at least for even $n$ (cf. \cite[Proposition 4.2]{hhx_indefinite_2021} when $ n=2 $). However,  as a nice application of Beli's theory, we think that the above method is also worthy noticing.
	\end{re}

		\begin{proof}[Proof of Theorem \ref{thm:nuniversaldyadic15theorem}]
			By Remark \ref{re:maximallattices}, the lattices in this theorem are precisely the lattices in Proposition \ref{prop:maximallattices}. So, by Proposition \ref{prop:maximallattices} and \cite[82:18]{omeara_quadratic_1963}, these lattices form a testing set for the $n$-universality.
			
			To prove that the set is minimal for the $n$-universality test, consider any lattice $N$ in the set. By Proposition \ref{prop:space}(iii), there is a unique space $V$ of dimension $n+2$ which does not represent $FN$ but represents all the other $n$-dimensional spaces. The $\mathcal{O}_F$-maximal lattice in $V$ does not represent $N$, but it represents all the other $n$-dimensional $\mathcal{O}_F$-maximal lattices. This completes the proof.
		\end{proof}

 The following two lemmas will be used in Sections \ref{sec:even-con} and \ref{sec:odd-con}.

\begin{lem}\label{lem:spacerep-criterion}
			Let $n\ge 2$ and let $W_1$ and $W_2$ be $n$-dimensional quadratic spaces with $\det W_1=\det W_2=D$ and $W_1\not\cong W_2$ (e.g. $W_1=W_1^n(c)$ and $W_2=W_2^n(c)$ for some  $ c\in F^{\times}/F^{\times2} $). Let $V$ be a quadratic space over $F$.

Suppose either $ \dim V=n+1 $, or $  \dim V=n+2  $ with $\det V=-D$.

Then $ V $ represents exactly one of $ W_{1}$ and $ W_{2}$.
\end{lem}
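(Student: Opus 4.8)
The plan is to turn each statement ``$V$ represents $W_i$'' into an orthogonal splitting of $V$ and then to count isometry classes. Since all spaces here are nondegenerate, $V$ represents $W_i$ if and only if $V\cong W_i\perp X_i$ for some complementary space $X_i$: this is the existence of an orthogonal complement of an isometric copy of $W_i$ in $V$, together with Witt's theorem. Any such complement has dimension $\dim V-n$ and determinant $\det X_i=\det V\cdot(\det W_i)^{-1}=\det V\cdot D$ in $F^{\times}/F^{\times 2}$, using $D^{-1}=D$. The key point I would establish first is that in each of the two cases this complement is forced to lie in a single isometry class $X$, independent of $i$, so that ``$V$ represents $W_i$'' collapses to the single condition $V\cong W_i\perp X$.

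Next I would identify $X$ case by case. When $\dim V=n+1$, the complement is unary, hence equals the unique space $\langle\det V\cdot D\rangle$ of its dimension and determinant. When $\dim V=n+2$ and $\det V=-D$, the complement is binary of determinant $\det V\cdot D=-D^{2}=-1$; recalling that the only binary space of determinant $-1$ is $\mathbb{H}$ (cf.\ the remark in Definition~\ref{defn:space}), this forces $X\cong\mathbb{H}$. This is precisely where the hypothesis $\det V=-D$ is used: it is exactly the condition making the binary complement hyperbolic, and therefore unique. In either case we obtain that $V$ represents $W_i$ if and only if $V\cong W_i\perp X$ for one fixed space $X$.

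To conclude, I would compare $W_1\perp X$ and $W_2\perp X$. Both have dimension $\dim V$ and determinant $\det(W_i\perp X)=D\cdot\det X=\det V$. By Witt cancellation they are non-isometric, since $W_1\perp X\cong W_2\perp X$ would give $W_1\cong W_2$, contrary to hypothesis. Since $\dim V=n+r\ge 3$ (with $r\in\{1,2\}$ and $n\ge 2$), \cite[63:22]{omeara_quadratic_1963} gives exactly two isometry classes of spaces of dimension $\dim V$ and determinant $\det V$; hence $W_1\perp X$ and $W_2\perp X$ are precisely these two classes. As $V$ itself has dimension $\dim V$ and determinant $\det V$, it is isometric to exactly one of $W_1\perp X$ and $W_2\perp X$, that is, $V$ represents exactly one of $W_1$ and $W_2$.

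The step I expect to be the main obstacle is pinning down the complement as a single isometry class. In codimension one this is automatic because a unary space is determined by its determinant. In codimension two it genuinely depends on the determinant hypothesis: only when $\det V=-D$ does the binary complement become hyperbolic and hence unique, and one must invoke the special status of the determinant-$(-1)$ binary space; without this hypothesis there would be two a priori possible binary complements and the conclusion would fail. The remaining bookkeeping---the determinant computation and the count of exactly two classes in dimension $\ge 3$---is a routine use of \cite[63:20 and 63:22]{omeara_quadratic_1963}.
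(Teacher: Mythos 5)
Your proposal is correct and follows essentially the same route as the paper: both reduce ``$V$ represents $W_i$'' to an isometry $V\cong W_i\perp X$ with one fixed complement $X$ (the unary space $[D\det V]$ in codimension one, $\mathbb{H}$ in codimension two via $\det V=-D$), then use Witt cancellation and the fact that there are exactly two isometry classes of spaces with given dimension $\ge 3$ and determinant. The only cosmetic difference is that you derive the splitting characterization from Witt's theorem and uniqueness of the complement class, whereas the paper cites \cite[63:21]{omeara_quadratic_1963} directly for it.
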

		\begin{proof}
First consider the case $\dim V=n+1$. Then $V$ represents an $n$-dimensional space $W$ if and only if $V\cong W\perp [\det V\det W]$, by \cite[63:21]{omeara_quadratic_1963}. Note that $W_1\perp [D\det V]$ and $W_2\perp [D\det V]$ are non-isometric spaces with the same determinant as $V$ (by Witt cancellation and the hypothesis). Since there are exactly two isometry classes of $(n+1)$-dimensional spaces with the fixed determinant, $V$ is isometric to precisely one of the two spaces $W_1\perp [D\det V]$ and $W_2\perp [D\det V]$, i.e., $V$ represents precisely one of $W_1$ and $W_2$.

If $\dim V=n+2$ and $\det V=-D$, then $V$ represents an $n$-dimensional space $W$ with $\det W=D$ if and only if $V\cong W\perp\mathbb{H}$, by \cite[63:21]{omeara_quadratic_1963}. So the result can be proved in the same way as in the previous case.
\end{proof}

		\begin{lem}\label{lem:spacerep-criterion-2}
			Let $ n\ge 2 $, $ \mu\in \{1,\Delta\} $ and $ \varepsilon\in \mathcal{O}_{F}^{\times} $.
			
			(i) If $ n $ is even, then $ W_{1}^{n+1}(\varepsilon) $ represents $ W_{1}^{n}(\mu) $ but does not represent $ W_{2}^{n}(\mu) $.
			
(When $n=2$ we ignore $W_2^n(1)$, as $W^2_2(1)$ is not defined.)

			(ii) If $ n $ is odd, then $ W_{1}^{n+1}(\mu) $ represents $ W_{1}^{n}(\varepsilon) $.
		\end{lem}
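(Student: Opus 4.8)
The plan is to reduce both parts to the single elementary fact that, for $\mu\in\{1,\Delta\}$, the binary space $[1,-\mu]$ represents every unit $u\in\mathcal{O}_F^{\times}$. Indeed, $[1,-\mu]$ is the norm form of the quadratic \'etale algebra $F(\sqrt{\mu})$, so it represents $u$ if and only if $(u,\mu)_{\mathfrak p}=1$ (for $\mu=1$ the space is $\mathbb{H}$, which is universal). When $\mu=\Delta$ we have $d(\Delta)=2e$, while $d(u)\ge 1$ because $\ord(u)=0$ is even; hence $d(u)+d(\Delta)>2e$ and the domination property of the Hilbert symbol (property (3) of the function $d$ recalled in the introduction) yields $(u,\Delta)_{\mathfrak p}=1$. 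This is the only genuine computation in the proof.

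For part (ii), since $n+1$ is even, $W_1^{n+1}(\mu)=\mathbb{H}^{(n-1)/2}\perp[1,-\mu]$ and $W_1^{n}(\varepsilon)=\mathbb{H}^{(n-1)/2}\perp[\varepsilon]$. By the fact above, $[1,-\mu]$ represents the unit $\varepsilon$, so comparison of determinants gives $[1,-\mu]\cong[\varepsilon,-\mu\varepsilon]$. Splitting off the common summand $\mathbb{H}^{(n-1)/2}$ then shows $W_1^{n+1}(\mu)\cong W_1^{n}(\varepsilon)\perp[-\mu\varepsilon]$, whence $W_1^{n+1}(\mu)$ represents $W_1^{n}(\varepsilon)$.

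For part (i), $n+1$ is odd, so $W_1^{n+1}(\varepsilon)=\mathbb{H}^{(n-2)/2}\perp\big(\mathbb{H}\perp[\varepsilon]\big)$ whereas $W_1^{n}(\mu)=\mathbb{H}^{(n-2)/2}\perp[1,-\mu]$; it therefore suffices to prove that the ternary space $\mathbb{H}\perp[\varepsilon]$ represents the binary space $[1,-\mu]$. By \cite[63:21]{omeara_quadratic_1963} this holds iff $\mathbb{H}\perp[\varepsilon]\cong[1,-\mu]\perp[\varepsilon\mu]=[1,-\mu,\varepsilon\mu]$; both sides are ternary of determinant $-\varepsilon$, and an isotropic ternary space is determined up to isometry by its determinant, so it is enough to check that $[1,-\mu,\varepsilon\mu]$ is isotropic. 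The latter amounts to $[1,-\mu]$ representing the unit $-\varepsilon\mu$, which again follows from the fact above. This establishes that $W_1^{n+1}(\varepsilon)$ represents $W_1^{n}(\mu)$. For the non-representation of $W_2^{n}(\mu)$, I invoke Lemma \ref{lem:spacerep-criterion} with $V=W_1^{n+1}(\varepsilon)$: since $\dim V=n+1$ and $W_1^{n}(\mu)$, $W_2^{n}(\mu)$ are non-isometric with equal determinant, $V$ represents exactly one of them, hence not $W_2^{n}(\mu)$. In the exceptional case $(n,\mu)=(2,1)$ the space $W_2^{2}(1)$ is undefined and only the (immediate) representation of $W_1^{2}(1)=\mathbb{H}$ is asserted.

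The steps are essentially routine; the only point requiring care is the ternary argument in part (i)---correctly applying \cite[63:21]{omeara_quadratic_1963} to express representation of a binary space by a ternary one as an isometry, and using that an isotropic ternary space is pinned down by its determinant---together with keeping track of the $(n,\mu)=(2,1)$ exception. No serious obstacle is anticipated beyond this bookkeeping.
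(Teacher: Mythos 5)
Your proposal is correct and follows essentially the same route as the paper: both arguments hinge on the Hilbert-symbol fact $(u,\Delta)_{\mathfrak{p}}=1$ for units $u$ (which you derive from property (3) of $d$, while the paper cites \cite[63:11a]{omeara_quadratic_1963}), both reduce to low-dimensional spaces by Witt cancellation, and the non-representation half is handled identically via Lemma \ref{lem:spacerep-criterion}. The only difference is cosmetic: in part (i) the paper shows directly that $[-1,\varepsilon]$ represents $-\Delta$, whereas you pass through \cite[63:21]{omeara_quadratic_1963} and the fact that an isotropic ternary space is determined by its determinant---both are routine and valid.
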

		\begin{proof}
			(i) By Lemma\;\ref{lem:spacerep-criterion}, it is sufficient to show that $W_1^{n+1}(\varepsilon)$ represents $W_1^n(\mu)$. For $\mu=1$, this is clear from Defintion\;\ref{defn:space}. For $\mu=\Delta$, by Witt cancellation, it suffices to prove that $W_1^3(\varepsilon)=\mathbb{H}\bot[\varepsilon]=[1,\,-1,\,\varepsilon]$ represents $W_1^2(\Delta)=[1,\,-\Delta]$. Indeed, since $(\Delta,\,\varepsilon)_{\p}=1$, $-\Delta$ is represented by $[-1,\,\varepsilon]$, whence  $ [1,-\Delta] \rep [1,\,-1,\,\varepsilon] $.

			(ii) This is because both $ \mathbb{H} $ and $ [1,-\Delta] $ represent $ \varepsilon $.
		\end{proof}

%
%
%
%
%

		\section{Characterization of $n$-universality for even $ n $}\label{sec:even-con}
		
		Throughout this section,  let $ n \ge 2$ be an even integer, $M$ an integral $ \mathcal{O}_{F} $-lattice of rank $ m\ge n+2 $, and suppose that
		$  M\cong \prec a_{1},\ldots,a_{m}\succ $  relative to some good BONG. Write $ R_{i}=R_{i}(M) $ for $ 1\le i\le m $ and $ \alpha_{i}=\alpha_{i}(M) $ for $ 1\le i\le m-1 $.
		Whenever a rank $n$ $\mathcal{O}_F$-lattice $N$ is considered, we assume that $N\cong \prec b_1,\ldots, b_n\succ$ relative to some good BONG and we denote by $S_i=R_i(N)$ and $\beta_i=\alpha_i(N)$ the associated invariants.

		\begin{thm}\label{thm:even-nuniversaldyadic}
			The lattice	 $ M $ is $n$-universal if and only if the space $ FM $ is $n$-universal and the following conditions hold:
			
			\begin{itemize}
				\item[ $ I_{1}^{E}(n) $:] $ R_{i}=0 $ for  $ i\in [1,n+1]^{O} $ and $ R_{i}=-2e $ for $ i\in [1,n]^{E} $.

				\item[$ I_{2}^{E}(n)$:] Either $\alpha_{n+1}=0 $, or $ \alpha_{n+1}=1 $ and $ d[-a_{n+1,n+2}]=1-R_{n+2} $.
				
				\item[$ I_{3}^{E}(n) $:] If $m\ge n+3$ and $ R_{n+3}-R_{n+2}>2e $, then $ R_{n+2}=-2e $; and if moreover $ n\ge 4 $, or $ n=2 $ and $ d(a_{1,4})=2e $, then $ R_{n+3}=1 $.
			\end{itemize}
		\end{thm}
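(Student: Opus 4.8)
The plan is to reduce $n$-universality of $M$ to the representation of the finite testing set and then read the three conditions off Beli's criterion. By Theorem \ref{thm:nuniversaldyadic15theorem} the $n$-dimensional $\mathcal{O}_F$-maximal lattices form a testing set, so $M$ is $n$-universal if and only if $N\rep M$ for every maximal lattice $N=N_{\nu}^{n}(c)$ of Proposition \ref{prop:maximallattices}. For each such $N$, Theorem \ref{thm:beligeneral} gives that $N\rep M$ is equivalent to $FN\rep FM$ together with conditions (i)--(iv). As $\nu$ and $c$ range, the spaces $FN=W_{\nu}^{n}(c)$ exhaust all isometry classes of $n$-dimensional quadratic spaces (Proposition \ref{prop:space}(ii)), so the totality of space conditions $\{FN\rep FM\}$ is exactly the requirement that $FM$ be $n$-universal. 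This isolates the hypothesis ``$FM$ is $n$-universal'' and reduces the theorem to showing that, under that hypothesis, conditions (i)--(iv) hold for \emph{all} maximal $N$ if and only if $I_1^E(n)$, $I_2^E(n)$ and $I_3^E(n)$ hold. The invariants $S_i=R_i(N)$ of each maximal lattice, which drive both directions, are the ones recorded in Lemma \ref{lem:maximal-BONG-odd}.

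For necessity I would test $M$ against well-chosen members of the set. Representing $N_1^n(1)=\mathbf{H}^{n/2}$, with $S_i=0$ for odd $i$ and $S_i=-2e$ for even $i$, forces through condition (i) and the integrality bounds $R_i\ge 0$ (odd $i$), $R_i\ge -2e$ (even $i$) of Proposition \ref{prop:Ralphaproperty3}(i) the alternating pattern $R_i=0$ for $i\in[1,n]^{O}$ and $R_i=-2e$ for $i\in[1,n]^{E}$; the remaining equality $R_{n+1}=0$ is extracted from conditions (ii)/(iii) at the boundary indices $n$ and $n+1$, where $R_{n+1}$ first enters (via $A_n$ and the good-BONG relation $R_{n-1}\le R_{n+1}$). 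Together these give $I_1^E(n)$. Condition (iv) at the top index $i=n+1$ becomes relevant precisely in the regime $R_{n+3}-R_{n+2}>2e$, and its conclusion $[b_1,\ldots,b_n]\rep[a_1,\ldots,a_{n+2}]$ yields $I_3^E(n)$; while conditions (ii)/(iii) at index $n$ translate, after converting between the $\alpha$- and $R$-languages by Proposition \ref{prop:Ralphaproperty2}(i),(vi),(vii), into the dichotomy ``$\alpha_{n+1}=0$, or $\alpha_{n+1}=1$ and $d[-a_{n+1,n+2}]=1-R_{n+2}$'', that is, $I_2^E(n)$.

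For sufficiency I would assume $I_1^E(n)$, $I_2^E(n)$, $I_3^E(n)$ and verify Beli's conditions (i)--(iv) for every maximal $N$. Condition (i) is immediate from the explicit $R_i$ and $S_i$ values together with the pair inequalities $R_i+R_{i+1}\le S_{i-1}+S_i$. The heart is condition (ii): at an even index $j\le n$ one has $R_j=-2e$ and $R_{j+1}=0$ by $I_1^E(n)$, so Lemma \ref{lem:Aj} delivers $d[a_{1,j}b_{1,j}]\ge A_j$ at once; the odd indices and the delicate index around $n+1$ are handled by combining the monotonicity estimates of Proposition \ref{prop:Ralphaproperty1} with Lemma \ref{lem:d[-ai+1ai+2]=1-Ri+2} (applied at $i=n$), which is exactly tailored to control $d[-a_{n+1,n+2}]$ and its relation to $d[(-1)^{(n+2)/2}a_{1,n+2}]$ under $I_2^E(n)$. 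Conditions (iii) and (iv) then reduce to space-level representations of the truncations $[a_1,\ldots,a_i]$, for which the explicit hyperbolic/anisotropic descriptions of Proposition \ref{prop:Ralphaproperty3}(iv),(v) are used.

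I expect the sufficiency verification of (ii)--(iv) across all maximal lattices and all boundary configurations to be the main obstacle, since it demands uniform control of the truncated defects $d[\,\cdot\,]$ exactly at the transition indices $n$ and $n+1$. A secondary source of friction is the exceptional small case $n=2$: there the quadratic term $R_{i+1}+R_{i+2}-S_{i-1}-S_i+d[a_{1,i+2}b_{1,i-2}]$ in $A_i$ is suppressed, and the extra hypothesis $d(a_{1,4})=2e$ in $I_3^E(n)$ must be invoked to force $R_{n+3}=1$, so this case has to be argued separately from $n\ge 4$.
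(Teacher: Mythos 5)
Your strategy is sound and, importantly, not circular: Theorem \ref{thm:nuniversaldyadic15theorem} is proved in Section \ref{sec:construction} using only Proposition \ref{prop:maximallattices}, Proposition \ref{prop:space}(iii) and \cite[82:18]{omeara_quadratic_1963}, so it may legitimately be used here. But your route is genuinely different from the paper's. The paper never invokes the testing-set theorem in this proof; instead, Lemmas \ref{lem:con12even}, \ref{lem:con3even} and \ref{lem:con4even} each establish a three-way equivalence: [the relevant condition of Theorem \ref{thm:beligeneral} holds for \emph{all} integral rank-$n$ lattices $N$] $\Leftrightarrow$ [it holds for a few named maximal lattices] $\Leftrightarrow$ [$I_j^E(n)$]. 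Thus the paper's sufficiency direction verifies Beli's conditions for an \emph{arbitrary} integral $N$ (via the non-essential-index reduction of \cite[Lemma 2.12]{beli_representations_2019}, Lemma \ref{lem:Aj}, Lemma \ref{lem:d[-ai+1ai+2]=1-Ri+2}, and Proposition \ref{prop:Ralphaproperty3}(iv),(v)), whereas you only need to verify them for the finitely many maximal lattices, whose invariants are explicit by Lemma \ref{lem:maximal-BONG-odd}. What your route buys is a finite, concrete sufficiency obligation; what the paper's route buys is a self-contained argument inside Beli's framework whose all-$N$ statements are directly reusable in the odd-$n$ analysis of Section \ref{sec:odd-con} (where Lemma \ref{lem:con3even} is applied to a truncation $N'=\prec b_1,\ldots,b_{n-1}\succ$ of an arbitrary lattice, not a maximal one). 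The necessity arguments in the two approaches essentially coincide.

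Two points in your sketch are under-specified and must be repaired in a full write-up, though neither breaks the plan. First, $R_{n+1}=0$ cannot be extracted from a single test lattice: the paper's argument assumes $R_{n+1}>0$, observes that then $R_{n+1}-S_n>2e$, and uses Beli's condition (ii) at $i=n$ together with \cite[Corollary 2.10]{beli_representations_2019} to force $a_{1,n}b_{1,n}\in F^{\times 2}$; a contradiction arises only because this is applied to \emph{both} $N_1^n(1)$ and $N_1^n(\Delta)$, whose determinants lie in distinct square classes. Second, $I_2^E(n)$ is not read off from conditions (ii)/(iii) ``at index $n$'': it is precisely the statement that Beli's condition (iii) at index $i=n+1$, tested against $N_2^n(\Delta)$, does not fail (Lemmas \ref{lem:evenlatticeproperty2} and \ref{lem:con3even}). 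Relatedly, in your sufficiency paragraph you file Lemma \ref{lem:d[-ai+1ai+2]=1-Ri+2} under condition (ii); but condition (ii) only concerns indices $i\le n$, where under $I_1^E(n)$ the odd indices are trivial (there $A_i\le 0\le d[a_{1,i}b_{1,i}]$) and the even ones follow from Lemma \ref{lem:Aj}, while that lemma is the key tool for condition (iii) at $i=n+1$.
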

		\begin{proof}
			It suffices to combine Theorem \ref{thm:beligeneral} with  Lemmas \ref{lem:con12even}, \ref{lem:con3even} and \ref{lem:con4even} below.
		\end{proof}

		\begin{lem}\label{lem:con12even}
			Suppose that $ FM $ is n-universal. Then the following conditions are equivalent:
			
			(i) Theorem $\ref{thm:beligeneral}(i)(ii)$ hold for all integral $ \mathcal{O}_{F} $-lattices $ N $ of rank $ n $.
			
			(ii) Theorem $\ref{thm:beligeneral}(i)(ii)$ hold for  $N=N_{1}^{n}(1)$, $ N_{1}^{n}(\Delta) $ (cf. Proposition \ref{prop:maximallattices}).
			
			(iii) $ M $ satisfies the condition $ I_{1}^{E}(n) $ in Theorem $\ref{thm:even-nuniversaldyadic}$.
		\end{lem}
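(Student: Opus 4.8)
The plan is to establish the three conditions are equivalent via the cycle $(i)\Rightarrow(ii)\Rightarrow(iii)\Rightarrow(i)$. The implication $(i)\Rightarrow(ii)$ is immediate, since $N_1^n(1)=\mathbf{H}^{n/2}$ and $N_1^n(\Delta)=\mathbf{H}^{(n-2)/2}\perp 2^{-1}A(2,2\rho)$ are particular integral $\mathcal{O}_F$-lattices of rank $n$ (both have first $R$-invariant $0$, hence are integral by \eqref{eq:integralcondition}), so a property holding for all such $N$ holds for these two. For the start of $(ii)\Rightarrow(iii)$, I would read off the invariants $S_i=R_i(N)$ of the two test lattices from Lemma \ref{lem:maximal-BONG-odd}(i): for both, $S_i=0$ for odd $i\in[1,n]$ and $S_i=-2e$ for even $i\in[1,n]$. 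Feeding this pattern into Theorem \ref{thm:beligeneral}(i) (applied to either lattice) pins down the $R_i$ for $i\le n$: using $R_i\ge 0$ for odd $i$ and $R_i\ge -2e$ for even $i$ (Proposition \ref{prop:Ralphaproperty3}(i)), each alternative of the dichotomy in Theorem \ref{thm:beligeneral}(i) forces $R_i=0$ for odd $i\le n$ and $R_i=-2e$ for even $i\le n$. It then remains only to show $R_{n+1}=0$, where so far we know just $R_{n+1}\ge 0$.

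For this last point I would argue by contradiction, assuming $R_{n+1}>0$. Since $R_n=-2e$, Proposition \ref{prop:Ralphaproperty3}(iii) gives $(-1)^{n/2}a_{1,n}\in F^{\times 2}\cup\Delta F^{\times 2}$, while the explicit BONGs of Lemma \ref{lem3.10new} show that $(-1)^{n/2}b_{1,n}$ is a square for $N_1^n(1)$ and lies in $\Delta F^{\times 2}$ for $N_1^n(\Delta)$. Hence exactly one of the two test lattices makes $a_{1,n}b_{1,n}\in\Delta F^{\times 2}$, i.e.\ $d(a_{1,n}b_{1,n})=2e$; for that $N$ one gets $d[a_{1,n}b_{1,n}]=2e$, because $\alpha_n\ge 2e$ (as $R_{n+1}-R_n\ge 2e$, by Proposition \ref{prop:Ralphaproperty2}(ii)). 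On the other hand, with $R_{n+1}>0$, $S_n=-2e$, $S_{n-1}=0$ and $R_{n+2}\ge R_n=-2e$, each of the three defining terms of $A_n$ exceeds $2e$, so $A_n>2e$. This contradicts Theorem \ref{thm:beligeneral}(ii) at $i=n$, forcing $R_{n+1}=0$ and thus $I_1^E(n)$. This is precisely where both test lattices are needed: since one cannot predict whether $(-1)^{n/2}a_{1,n}$ is a square or a $\Delta$-multiple, both possibilities for the square class of $(-1)^{n/2}b_{1,n}$ must be available.

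For $(iii)\Rightarrow(i)$, I would assume $I_1^E(n)$ and take any integral rank-$n$ lattice $N$. Theorem \ref{thm:beligeneral}(i) then holds by its first alternative throughout: $R_i=0\le S_i$ for odd $i\le n$, and $R_i=-2e\le S_i$ for even $i\le n$ (using $S_i\ge -2e$ from Proposition \ref{prop:Ralphaproperty3}(i)). For Theorem \ref{thm:beligeneral}(ii), I would split by the parity of $i\in[1,n]$. For even $j$, one has $R_j=-2e$ and $R_{j+1}=0$ (as $j+1\le n+1$ is odd), so Lemma \ref{lem:Aj} gives $d[a_{1,j}b_{1,j}]\ge A_j$ directly. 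For odd $i\le n-1$, $R_{i+1}-R_i=-2e$ yields $\alpha_i=0$ (Proposition \ref{prop:Ralphaproperty2}(i)), hence $d[a_{1,i}b_{1,i}]=0$; and since $A_i\le (R_{i+1}-S_i)/2+e=-S_i/2\le 0$, the required inequality $d[a_{1,i}b_{1,i}]\ge A_i$ holds.

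I expect the main obstacle to be the step $R_{n+1}=0$ in $(ii)\Rightarrow(iii)$. The condition Theorem \ref{thm:beligeneral}(i) ranges only over $i\le n$ and leaves $R_{n+1}$ unconstrained, so its value must be extracted from condition (ii) at the top index $i=n$. The delicate part is to make $d[a_{1,n}b_{1,n}]$ small while $A_n$ stays large, which is achieved exactly by selecting the test lattice whose product $b_{1,n}$ lies in the square class \emph{opposite} to $a_{1,n}$; this is the structural reason for using two maximal lattices with identical $R$-invariants but underlying spaces differing by the factor $\Delta$.
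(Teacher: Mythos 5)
Your proposal follows the same overall route as the paper: the first half of (ii)$\Rightarrow$(iii), pinning down $R_1,\dots,R_n$ from Theorem \ref{thm:beligeneral}(i), is correct (you do it by direct case analysis on the dichotomy, where the paper cites a lemma of Beli, and both work); your (iii)$\Rightarrow$(i), which verifies Theorem \ref{thm:beligeneral}(ii) at \emph{every} index (odd $i$ via $\alpha_i=0$ and $A_i\le -S_i/2\le 0$, even $i$ via Lemma \ref{lem:Aj}) instead of reducing to essential indices, is also correct; and your idea for forcing $R_{n+1}=0$ --- pick the test lattice whose $b_{1,n}$ lies in the square class opposite to $a_{1,n}$, so that $d[a_{1,n}b_{1,n}]=2e$ while $A_n$ is large --- is exactly the square-class dichotomy the paper exploits.

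There is, however, a genuine gap in that last step: your claim that ``each of the three defining terms of $A_n$ exceeds $2e$'' is not justified for the third term. With $S_{n-1}=0$ and $S_n=-2e$, that term equals $R_{n+1}+R_{n+2}+2e+d[a_{1,n+2}b_{1,n-2}]$, and the facts you invoke ($R_{n+1}>0$, $R_{n+2}\ge R_n=-2e$, $d[\,\cdot\,]\ge 0$) give only the lower bound $R_{n+1}>0$, not $>2e$. For example the configuration $R_{n+1}=1$, $R_{n+2}=1-2e$ (say $a_{n+1}=\pi$, $a_{n+2}=-\Delta\pi^{1-2e}$) is compatible with \eqref{eq:GoodBONGs}, \eqref{eq:BONGs} and everything established up to that point, and there your estimate yields only $2+d[a_{1,n+2}b_{1,n-2}]$; one still needs $d[a_{1,n+2}b_{1,n-2}]>2e-2$, which does not follow from anything you wrote. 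The conclusion is in fact true, but proving it requires extra work: by \eqref{eq:alpha-defn} one has $d[-a_{n+1,n+2}]\ge \alpha_{n+1}-(R_{n+2}-R_{n+1})\ge R_{n+1}-R_{n+2}$, and combining this with $d[(-1)^{n/2}a_{1,n}]\ge 2e$ and $d[(-1)^{(n-2)/2}b_{1,n-2}]\ge 2e$ (Proposition \ref{prop:Ralphaproperty3}(iii) applied to $M$ and to $N$) via the domination principle gives $d[a_{1,n+2}b_{1,n-2}]\ge \min\{2e,\,R_{n+1}-R_{n+2}\}$, whence the third term is at least $\min\{R_{n+1}+R_{n+2}+4e,\,2R_{n+1}+2e\}>2e$, using $R_{n+2}\ge R_{n+1}-2e$. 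The paper sidesteps all of this by citing \cite[Corollary 2.10]{beli_representations_2019}: since $R_{n+1}-S_n>2e$, the inequality $d[a_{1,n}b_{1,n}]\ge A_n$ already forces $a_{1,n}b_{1,n}\in F^{\times 2}$, which is impossible for one of the two test lattices. So your proof needs either that citation or the supplementary estimate above; as written, the step asserting $A_n>2e$ is incomplete.
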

		\begin{proof}
			
			\textbf{(i)$ \Rightarrow $(ii)}: It is trivial.

			\textbf{(ii)$ \Rightarrow $(iii)}: For every $ 1\le i\le n/2 $, we have $ R_{2i-1} \ge 0 $ and $ R_{2i} \ge -2e $ by Proposition \ref{prop:Ralphaproperty3}(i). On the other hand, by \cite[Lemma 4.6(i)]{beli_representations_2006}, we have $ R_{2i-1}+R_{2i}\le S_{2i-1}+S_{2i}=-2e $. Hence $ R_{2i-1}=R_{2i}+2e=0 $. It remains to show that $ R_{n+1}=0 $.
			
			Suppose $ R_{n+1}>0 $. Then $ R_{n+1}-S_{n}=R_{n+1}+2e>2e$. Thus, the assumption $ d[a_{1,n}b_{1,n}]\ge A_{n}$ implies  $ a_{1,n}b_{1,n}\in F^{\times 2}$ by \cite[Corollary 2.10]{beli_representations_2019} and so $ a_{1,n}=b_{1,n}=\det FN $ (in $ F^{\times}/F^{\times 2} $). But $ \det FN_1^n(1)=(-1)^{n/2}$, $  \det FN_1^n(\Delta)=(-1)^{n/2}\Delta $, and $  a_{1,n} $ cannot be both $ (-1)^{n/2} $ and $ (-1)^{n/2}\Delta $. So Theorem \ref{thm:beligeneral}(ii) fails for $ N=N_{1}^{n}(1) $ or $ N_{1}^{n}(\Delta) $.

			\textbf{(iii)$ \Rightarrow $(i)}: By Proposition \ref{prop:Ralphaproperty3}(i), we have $ S_{i}\ge 0$ for any odd $ i $ and $ S_{i}\ge -2e $ for any even $ i $. Hence, by $ I_{1}^{E}(n) $, Theorem \ref{thm:beligeneral}(i) holds and
			\begin{align}\label{eq:notessentialcondition}
				R_{i+1}\le S_{i-1}\quad \text{ for } 2\le i\le n\,.
			\end{align} So the indices $ 2,\ldots, n $ are not essential (in the sense of \cite[Definition 7]{beli_representations_2019}). By \cite[Lemma 2.12]{beli_representations_2019}, we only need to prove that $d[a_{1,i}b_{1,i}]\ge A_i$ for $ i=1 $ and $ i=n $. For $ i=1 $, since $ S_{1}\ge 0 $, $ A_{1}\le (R_{2}-S_{1})/2+e\le (-2e-0)/2+e\le 0\le d[a_{1}b_{1}]$. For $ i=n $, it suffices to apply  Lemma \ref{lem:Aj} with $ j=n $, since $ R_{n}=-2e $ and $ R_{n+1}=0 $.
	\end{proof}

		\begin{lem}\label{lem:evenlatticeproperty2}
	  Suppose that $ M $ satisfies $ I_{1}^{E}(n) $. If $  R_{n+2}\ge 2-2e $ and $ d[-a_{n+1,n+2}]>1-R_{n+2} $, then
\[
R_{n+2}>S_n\quad\text{and}\quad d[-a_{1,n+1}b_{1,n-1}]+d[-a_{1,n+2}b_{1,n}]>2e+S_n-R_{n+2}\,,
\]but $[b_1,\ldots, b_n]$ is not represented by $[a_1,\ldots, a_{n+1}]$; in other words, Theorem \ref{thm:beligeneral}(iii) fails at $ i=n+1 $ for $ N=N_{2}^{n}(\Delta) $ (cf. Proposition \ref{prop:maximallattices}).
	\end{lem}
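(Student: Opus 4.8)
The plan is to treat the lattice $N=N_2^n(\Delta)$ explicitly, first verifying the two numerical inequalities (which are exactly the hypotheses of Theorem \ref{thm:beligeneral}(iii) at the index $i=n+1$) and then separately showing that the representation conclusion of that theorem fails. I would begin by recording the invariants of $N$. By Lemma \ref{lem:maximal-BONG-odd}(i) one has $S_i=0$ for $i\in[1,n-2]^O$, $S_i=-2e$ for $i\in[1,n-2]^E$, $S_{n-1}=1$ and $S_n=1-2e$; in particular $S_n-S_{n-1}=-2e$, so $\beta_{n-1}=0$ by Proposition \ref{prop:Ralphaproperty2}(i), and $\det FN=b_{1,n}\equiv(-1)^{n/2}\Delta$ modulo squares. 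With $S_n=1-2e$, the first assertion $R_{n+2}>S_n$ is immediate from $R_{n+2}\ge 2-2e>1-2e=S_n$, and the right-hand side $2e+S_n-R_{n+2}$ of the second assertion simplifies to $1-R_{n+2}$.

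Because $\beta_{n-1}=0$ occurs in the minimum defining $d[-a_{1,n+1}b_{1,n-1}]$, that first summand is $0$, so everything reduces to proving $d[-a_{1,n+2}b_{1,n}]>1-R_{n+2}$, where $d[-a_{1,n+2}b_{1,n}]=\min\{d(-a_{1,n+2}b_{1,n}),\alpha_{n+2}\}$ (the $\alpha_{n+2}$ term being absent if $m=n+2$). I would bound the two entries separately. For the defect entry I would use the factorization
\[
-a_{1,n+2}b_{1,n}=\big[(-1)^{n/2}a_{1,n}\big]\cdot\big[(-1)^{n/2}b_{1,n}\big]\cdot(-a_{n+1}a_{n+2})
\]
together with the domination principle for $d$: here $d((-1)^{n/2}a_{1,n})\ge 2e$ by Proposition \ref{prop:Ralphaproperty3}(iii), while $(-1)^{n/2}b_{1,n}\equiv\Delta$ gives $d((-1)^{n/2}b_{1,n})=2e$, and $d(-a_{n+1}a_{n+2})\ge d[-a_{n+1,n+2}]>1-R_{n+2}$ by hypothesis. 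Since $1-R_{n+2}\le 2e-1<2e$, this yields $d(-a_{1,n+2}b_{1,n})>1-R_{n+2}$.

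The delicate entry is $\alpha_{n+2}$. Since $R_{n+2}-R_{n+1}=R_{n+2}>-2e$ we have $\alpha_{n+1}\ge 1$ by Proposition \ref{prop:Ralphaproperty2}(i), and Proposition \ref{prop:Ralphaproperty1}(i) gives $\alpha_{n+2}\ge\alpha_{n+1}-R_{n+2}$. If $\alpha_{n+1}\ge 2$ this already gives $\alpha_{n+2}\ge 2-R_{n+2}>1-R_{n+2}$, and we are done. The main obstacle is the boundary case $\alpha_{n+1}=1$: here I would first argue, using Proposition \ref{prop:Ralphaproperty2}(vi) and the hypothesis $d[-a_{n+1,n+2}]>1-R_{n+2}$, that necessarily $R_{n+2}=2-2e$ (otherwise equality $d[-a_{n+1,n+2}]=1-R_{n+2}$ would hold, a contradiction), so $1-R_{n+2}=2e-1$. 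Then, noting $\alpha_n=2e$ because $R_{n+1}-R_n=2e$ (Proposition \ref{prop:Ralphaproperty2}(ii)), I would expand $d[-a_{n+1,n+2}]=\min\{d(-a_{n+1}a_{n+2}),\alpha_n,\alpha_{n+2}\}$; the hypothesis $d[-a_{n+1,n+2}]>2e-1$ then forces each term of this minimum to exceed $2e-1$, in particular $\alpha_{n+2}\ge 2e>1-R_{n+2}$. Combining with the defect bound gives $d[-a_{1,n+2}b_{1,n}]>1-R_{n+2}$, which is the second assertion.

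Finally, for the failure of representation I would pass to the underlying spaces. Since $R_n=-2e$ and $R_{n+1}=0$ is even, Proposition \ref{prop:Ralphaproperty3}(v) gives $[a_1,\ldots,a_{n+1}]\cong\mathbb{H}^{n/2}\perp[\varepsilon]=W_1^{n+1}(\varepsilon)$ for some unit $\varepsilon$, whereas $[b_1,\ldots,b_n]=FN=W_2^n(\Delta)$. By Lemma \ref{lem:spacerep-criterion-2}(i), $W_1^{n+1}(\varepsilon)$ represents $W_1^n(\Delta)$ but not $W_2^n(\Delta)$, so $[b_1,\ldots,b_n]$ is not represented by $[a_1,\ldots,a_{n+1}]$. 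Thus the antecedent of Theorem \ref{thm:beligeneral}(iii) holds at $i=n+1$ while its consequent fails for $N=N_2^n(\Delta)$, as claimed.
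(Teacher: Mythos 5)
Your proposal is correct, and its overall skeleton coincides with the paper's proof: the same invariants of $N=N_2^n(\Delta)$ from Lemma \ref{lem:maximal-BONG-odd}(i) (giving $S_n=1-2e$, hence $R_{n+2}>S_n$), the same observation that $\beta_{n-1}=0$ kills the first summand, and the same use of Proposition \ref{prop:Ralphaproperty3}(v) together with Lemma \ref{lem:spacerep-criterion-2}(i) for the failure of representation. The only divergence is in the bound $d[-a_{1,n+2}b_{1,n}]>1-R_{n+2}$. The paper gets it in one stroke from the domination principle for the combined invariants $d[\,\cdot\,]$ (the one stated after \eqref{defn:d[ab]}), chaining $d[(-1)^{n/2}a_{1,n}]\ge 2e$, the hypothesis $d[-a_{n+1,n+2}]>1-R_{n+2}$, and $d[(-1)^{n/2}b_{1,n}]=d(\Delta)=2e$; this automatically keeps track of the $\alpha$-terms. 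You instead unpack $d[-a_{1,n+2}b_{1,n}]=\min\{d(-a_{1,n+2}b_{1,n}),\alpha_{n+2}\}$ and bound the two entries separately, which is valid but makes the $\alpha_{n+2}$ entry look "delicate" when it is not: since $\alpha_{n+2}$ is itself one of the terms in the minimum defining $d[-a_{n+1,n+2}]=\min\{d(-a_{n+1}a_{n+2}),\alpha_n,\alpha_{n+2}\}$ (when $m>n+2$), the hypothesis $d[-a_{n+1,n+2}]>1-R_{n+2}$ gives $\alpha_{n+2}>1-R_{n+2}$ immediately, with no need for your case split on $\alpha_{n+1}$, the deduction $R_{n+2}=2-2e$ via Proposition \ref{prop:Ralphaproperty2}(vi), or the monotonicity from Proposition \ref{prop:Ralphaproperty1}(i). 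So your argument is sound but spends effort re-deriving by hand what either the $d[\,\cdot\,]$-domination principle or a direct reading of the definition of $d[-a_{n+1,n+2}]$ gives for free.
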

	\begin{proof}

		 First, $ [b_{1},\ldots,b_{n}]=FN=W_{2}^{n}(\Delta) $ by Definition \ref{defn:maximallattices}, and $ [a_{1},\ldots,a_{n+1}]\cong W_{1}^{n+1}(\varepsilon) $ for some $ \varepsilon\in \mathcal{O}_{F}^{\times} $ by Proposition \ref{prop:Ralphaproperty3}(v). Hence $ [b_{1},\ldots,b_{n}]$ is not represented by $[a_{1},\ldots,a_{n+1}] $ by Lemma \ref{lem:spacerep-criterion-2}(i).

		We have  $S_{n}=1-2e$ by Lemma\;\ref{lem:maximal-BONG-odd}(i), so $R_{n+2}\ge 2-2e>S_{n}$.
		
		Since $ R_{n}=-2e $, by Proposition \ref{prop:Ralphaproperty3}(iii), we have $ d[(-1)^{n/2}a_{1,n}]\ge 2e>1-R_{n+2} $. Also, $ d[-a_{n+1,n+2}]>1-R_{n+2} $ by the hypothesis. Hence $ d[(-1)^{(n+2)/2}a_{1,n+2}]>1-R_{n+2} $ by the domination principle. On the other hand, in $ F^{\times}/F^{\times 2} $ we have $ b_{1,n}=\det FN=\det W_{2}^{n}(\Delta)=(-1)^{n/2}\Delta $, so $ d[(-1)^{n/2}b_{1,n}]=d((-1)^{n/2}b_{1,n})=d(\Delta)=2e>1-R_{n+2} $. By the domination principle, we get $ d[-a_{1,n+2}b_{1,n}]>1-R_{n+2} $. Also, by Proposition \ref{prop:Ralphaproperty2}(i), we have $ d[-a_{1,n+1}b_{1,n-1}]=\beta_{n-1}=0 $. So		$d[-a_{1,n+1}b_{1,n-1}]+d[-a_{1,n+2}b_{1,n}]>0+(1-R_{n+2})=2e+S_{n}-R_{n+2}$. This completes the proof.
	\end{proof}

		\begin{lem}\label{lem:con3even}
			Suppose that $ FM $ is n-universal and $M$ satisfies the condition $ I_{1}^{E}(n) $ in Theorem $\ref{thm:even-nuniversaldyadic}$. Then the following conditions are equivalent:
			
			(i) Theorem $\ref{thm:beligeneral}(iii)$ holds for all integral $ \mathcal{O}_{F} $-lattices $ N $ of rank $ n $.
			
			(ii) Theorem $\ref{thm:beligeneral}(iii)$ holds for $N=N_{2}^{n}(\Delta)$ (cf. Proposition \ref{prop:maximallattices}).
			
			(iii) $ M $ satisfies the condition $ I_{2}^{E}(n) $ in Theorem $\ref{thm:even-nuniversaldyadic}$.
		\end{lem}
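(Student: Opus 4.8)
The plan is to prove the cycle of implications (i)$\Rightarrow$(ii)$\Rightarrow$(iii)$\Rightarrow$(i), exactly as in Lemma \ref{lem:con12even}. The implication (i)$\Rightarrow$(ii) is immediate, since $N_2^n(\Delta)$ is one particular integral $\mathcal{O}_F$-lattice of rank $n$; note that it is defined for every even $n\ge 2$ because $\Delta\neq 1$.

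For (ii)$\Rightarrow$(iii) I would argue by contraposition, letting Lemma \ref{lem:evenlatticeproperty2} do the work. Suppose $I_2^E(n)$ fails. Since $M$ satisfies $I_1^E(n)$ we have $R_{n+1}=0$, so by Proposition \ref{prop:Ralphaproperty2}(i) the condition $\alpha_{n+1}\neq 0$ is equivalent to $R_{n+2}>-2e$, hence to $R_{n+2}\ge 2-2e$ (a non-positive value of $R_{n+2}-R_{n+1}$ is even by Corollary \ref{cor:R-R-odd}(i)). The negation of $I_2^E(n)$ gives $\alpha_{n+1}\neq 0$, which lets us invoke Lemma \ref{lem:d[-ai+1ai+2]=1-Ri+2}(i) with $i=n$; combined with the remaining part of the negation this forces $d[-a_{n+1,n+2}]>1-R_{n+2}$. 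Thus $R_{n+2}\ge 2-2e$ and $d[-a_{n+1,n+2}]>1-R_{n+2}$, which are precisely the hypotheses of Lemma \ref{lem:evenlatticeproperty2}; its conclusion is that Theorem \ref{thm:beligeneral}(iii) fails at $i=n+1$ for $N=N_2^n(\Delta)$, so (ii) fails.

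The bulk of the work is (iii)$\Rightarrow$(i). Here I would verify Theorem \ref{thm:beligeneral}(iii) at every index $i$ in its range $2\le i\le\min\{m-1,n+1\}=n+1$ (the equality holding because $m\ge n+2$), for an arbitrary integral rank-$n$ lattice $N$. For $2\le i\le n$ the first clause $R_{i+1}>S_{i-1}$ of \eqref{eq:assumption(3)(ii)'} already fails: by $I_1^E(n)$ and Proposition \ref{prop:Ralphaproperty3}(i) one checks $S_{i-1}\ge R_{i+1}$ according as $i$ is even or odd, so the hypothesis is void. The same happens at $i=n+1$ when $\alpha_{n+1}=0$, for then $R_{n+2}=-2e\le S_n$ by Proposition \ref{prop:Ralphaproperty3}(i), so $R_{n+2}>S_n$ fails. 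This leaves the single genuine case $i=n+1$ with $\alpha_{n+1}=1$ and $d[-a_{n+1,n+2}]=1-R_{n+2}$, which is where the main obstacle lies.

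In that case, assuming the hypotheses of Theorem \ref{thm:beligeneral}(iii) hold (in particular $R_{n+2}>S_n$), I would first apply Lemma \ref{lem:d[-ai+1ai+2]=1-Ri+2}(iii) with $i=n$: its alternative~(2) would supply an even index $j\le n$ with $R_{n+2}\le S_j\le S_n$, the last inequality by Proposition \ref{prop:Ralphaproperty3}(i), contradicting $R_{n+2}>S_n$; hence alternative~(1) holds, i.e. $d[-a_{1,n+2}b_{1,n}]=1-R_{n+2}$. Since $d[-a_{1,n+1}b_{1,n-1}]\le\alpha_{n+1}=1$, this bounds the left side of the second clause of \eqref{eq:assumption(3)(ii)'}, and the domination principle applied to the square-class identity $(-1)^{n/2}b_{1,n}=[-a_{1,n+2}b_{1,n}]\cdot[(-1)^{(n+2)/2}a_{1,n+2}]$ bounds $d((-1)^{n/2}\det FN)$ from below and fixes the parity of $\ord\det FN$. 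The hard part is to convert this numerical information, together with Proposition \ref{prop:Ralphaproperty3}(iii)--(iv) and the space-representation criterion Lemma \ref{lem:spacerep-criterion-2}(i) (recalling $[a_1,\ldots,a_{n+1}]\cong W_1^{n+1}(\varepsilon)$ by Proposition \ref{prop:Ralphaproperty3}(v)), into the desired conclusion $[b_1,\ldots,b_n]\rep[a_1,\ldots,a_{n+1}]$. I expect the borderline values $S_n\in\{-2e,\,1-2e\}$ to either contradict the second clause of the hypothesis outright or to force $FN\cong W_1^n(1)$ or $W_1^n(\Delta)$, both of which $W_1^{n+1}(\varepsilon)$ represents; carefully tracking the quadratic defects and the relevant Hilbert symbols (via property~(3) of the function $d$) in these borderline situations is the step that will demand the most care.
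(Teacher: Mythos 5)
Your implications (i)$\Rightarrow$(ii) and (ii)$\Rightarrow$(iii) are correct and coincide with the paper's argument (the paper runs (ii)$\Rightarrow$(iii) as a direct contradiction rather than a contraposition, but the content --- Lemma \ref{lem:d[-ai+1ai+2]=1-Ri+2}(i) feeding into Lemma \ref{lem:evenlatticeproperty2} --- is identical). Likewise, the first half of your (iii)$\Rightarrow$(i) matches the paper: the reduction to the single index $i=n+1$ with $\alpha_{n+1}=1$, and the use of Lemma \ref{lem:d[-ai+1ai+2]=1-Ri+2}(iii) together with $S_j\le S_n$ to force $d[-a_{1,n+2}b_{1,n}]=1-R_{n+2}$, are exactly the paper's steps. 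The problem is that you stop there: the conclusion of (iii)$\Rightarrow$(i) is only announced as an ``expectation'', and the mechanism you propose for the remaining borderline case is not the one that works.

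Concretely, combining $d[-a_{1,n+1}b_{1,n-1}]\le\alpha_{n+1}=1$ and $d[-a_{1,n+2}b_{1,n}]=1-R_{n+2}$ with the second clause of \eqref{eq:assumption(3)(ii)'} gives $2-R_{n+2}>2e+S_n-R_{n+2}$, i.e. $S_n<2-2e$, so $S_n\in\{-2e,\,1-2e\}$. The case $S_n=-2e$ indeed goes as you sketch, via Proposition \ref{prop:Ralphaproperty3}(iii)--(v) and Lemma \ref{lem:spacerep-criterion-2}(i). But for $S_n=1-2e$ no contradiction follows from the bounds you have, and no amount of Hilbert-symbol bookkeeping via property (3) of the function $d$ will produce one: the hypothesis $d[-a_{1,n+1}b_{1,n-1}]+d[-a_{1,n+2}b_{1,n}]>2e+S_n-R_{n+2}=1-R_{n+2}$ is numerically consistent with $d[-a_{1,n+1}b_{1,n-1}]=1$. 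The missing idea is a bootstrap that sharpens this last bound to $0$: since $S_{n-1}\ge 0$ by Proposition \ref{prop:Ralphaproperty3}(i), one has $-2e\le S_n-S_{n-1}\le S_n<2-2e$, and Corollary \ref{cor:R-R-odd}(i) excludes odd negative differences, so $S_n-S_{n-1}=-2e$; then $\beta_{n-1}=0$ by Proposition \ref{prop:Ralphaproperty2}(i), whence $d[-a_{1,n+1}b_{1,n-1}]\le\beta_{n-1}=0$. Re-running the hypothesis with this sharper bound yields $1-R_{n+2}>2e+S_n-R_{n+2}$, i.e. $S_n<1-2e$, a contradiction. This is precisely how the paper disposes of the whole range $S_n\ge 1-2e$ (showing the hypotheses of Theorem \ref{thm:beligeneral}(iii) are then vacuous), and without this step your proof is incomplete at exactly the point you flag as hardest.
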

		\begin{proof}	\textbf{(i)$ \Rightarrow $(ii)}: It is trivial.
			
			\textbf{(ii)$ \Rightarrow $(iii)}: If $ R_{n+2}-R_{n+1}=-2e $, then $ \alpha_{n+1}=0 $ by Proposition \ref{prop:Ralphaproperty2}(i). Hence we may assume that $ R_{n+2}-R_{n+1}>-2e $. Then $ R_{n+2}-R_{n+1}\ge 2-2e $ by Corollary \ref{cor:R-R-odd}(i). Thus $ R_{n+2}\ge 2-2e $, since $ R_{n+1}=0 $ (by $I^E_1(n)$). By Lemma \ref{lem:d[-ai+1ai+2]=1-Ri+2}(i), we have $ d[-a_{n+1}a_{n+2}]\ge 1-R_{n+2} $. If $ d[-a_{n+1}a_{n+2}]> 1-R_{n+2} $, from Lemma \ref{lem:evenlatticeproperty2} we see that Theorem \ref{thm:beligeneral}(iii) fails at $ i=n+1 $ for $ N=N_{2}^{n}(\Delta) $. This contradicts condition (ii). Hence $ d[-a_{n+1,n+2}]=1-R_{n+2} $. So $ \alpha_{n+1}=1 $ by  Lemma \ref{lem:d[-ai+1ai+2]=1-Ri+2}(i).

			\textbf{(iii)$ \Rightarrow $(i)}:	In view of \eqref{eq:notessentialcondition}, we only need to consider Theorem \ref{thm:beligeneral}(iii) for  $ i=n+1= \min\{m-1,n+1\}$.
			
			If $ S_{n}=-2e $, then $ S_{i}=0 $ for $ i\in [1,n]^{O} $ and $ S_{i}=-2e $ for $ i\in [1,n]^{E} $ by Proposition \ref{prop:Ralphaproperty3}(iii), so $ [b_{1},\ldots,b_{n}]\cong W_{1}^{n}(1) $ or $ W_{1}^{n}(\Delta) $ by Proposition \ref{prop:Ralphaproperty3}(iv). Also, since $ R_{n+1}=R_{n}+2e =0$, $ [a_{1},\ldots,a_{n+1}]\cong W_{1}^{n+1}(\varepsilon) $ for some $ \varepsilon\in \mathcal{O}_{F}^{\times} $ by Proposition \ref{prop:Ralphaproperty3}(v). In both cases, $ [b_{1},\ldots,b_{n}] $ is represented by $ [a_{1},\ldots,a_{n+1}] $ by Lemma \ref{lem:spacerep-criterion-2}(i).

			Now suppose $ S_{n}\ge 1-2e $. To check Theorem\;\ref{thm:beligeneral}(iii) we further assume that $ R_{n+2}>S_{n} $ and $ d[-a_{1,n+1}b_{1,n-1}]+d[-a_{1,n+2}b_{1,n}]>2e+S_{n}-R_{n+2} $. If $ d[-a_{1,n+2}b_{1,n}]\not=1-R_{n+2} $, then $ R_{n+2}\le S_{j}\le S_{n} $ for some $ j\in [1,n]^{E} $ by Lemma \ref{lem:d[-ai+1ai+2]=1-Ri+2}(iii) and \eqref{eq:GoodBONGs}, a contradiction. Hence $ d[-a_{1,n+2}b_{1,n}]=1-R_{n+2} $. Note that $R_{n+2}-R_{n+1}=R_{n+2}>S_n\ge 1-2e$. So $\alpha_{n+1}\neq 0$ by Proposition\;\ref{prop:Ralphaproperty2}(i). Thus $\alpha_{n+1}=1$ by $I_2^E(n)$. Since $d[-a_{1,n+1}b_{1,n-1}]\le  \alpha_{n+1}=1 $, we have
			\begin{align*}
			2-R_{n+2}=1+(1-R_{n+2})\ge d[-a_{1,n+1}b_{1,n-1}]+d[-a_{1,n+2}b_{1,n}]>2e+S_{n}-R_{n+2}\,,
			\end{align*}
			which implies $ S_{n}<2-2e $. Since $ S_{n-1}\ge 0$ by Proposition \ref{prop:Ralphaproperty3}(i), it follows that $ -2e\le S_{n}-S_{n-1}\le S_{n}<2-2e $. So $ S_{n}-S_{n-1}=-2e $ by Corollary \ref{cor:R-R-odd}(i) and hence $ d[-a_{1,n+1}b_{1,n-1}]=\beta_{n-1}=0 $ 	by Proposition \ref{prop:Ralphaproperty2}(i). Therefore, we deduce that
			\begin{align*}
				1-R_{n+2}=0+(1-R_{n+2})=d[-a_{1,n+1}b_{1,n-1}]+d[-a_{1,n+2}b_{1,n}]>2e+S_{n}-R_{n+2}\,,
			\end{align*}
		 which implies $ S_{n}<1-2e $. This contradicts the assumption $ S_{n}\ge 1-2e $.
		\end{proof}

		\begin{lem}\label{lem:con4even}
			Suppose that $ FM $ is n-universal and $M$ satisfies the conditions $I_1^{E}(n)$ and $I_{2}^{E}(n) $ in Theorem $\ref{thm:even-nuniversaldyadic}$. Then the following conditions are equivalent:
			
			(i)  Theorem $\ref{thm:beligeneral}(iv)$ holds for all integral $ \mathcal{O}_{F} $-lattices $ N $ of rank $ n $.
			
			(ii) Theorem $\ref{thm:beligeneral}(iv)$ holds for all the lattices $N$ in the following list if $m\ge n+3$ and $ R_{n+3}-R_{n+2}>2e $:
			\begin{align*}
				N_{2}^{n}(1) \,\text{(if $ n\ge 4 $)},\; N_{2}^{n}(\Delta)\;\text{and}\; N_{\nu}^{n}(c)\,,\;\;\text{ with }  \nu \in \{1,2\} \,,\, c\in F^{\times}/F^{\times 2} \text{ and } d(c)<2e
			\end{align*}
		(cf. Proposition \ref{prop:maximallattices}).
			
			(iii) $ M $ satisfies the condition $ I_{3}^{E}(n) $ in Theorem $\ref{thm:even-nuniversaldyadic}$. 	
		\end{lem}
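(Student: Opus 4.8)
The plan is to decide, under the standing hypotheses $I_1^E(n)$ and $I_2^E(n)$, exactly when Theorem~\ref{thm:beligeneral}(iv) holds for every integral rank-$n$ lattice $N$, and to match this with $I_3^E(n)$. First I would localize the single index at which (iv) can bite. Condition (iv) is vacuous at an index $i$ unless $R_{i+2}-R_{i+1}>2e$. By $I_1^E(n)$ the first $n+1$ values $R_1,\ldots,R_{n+1}$ are $0,-2e,0,\ldots,0$, so every gap among them equals $\pm 2e$; and $I_2^E(n)$ gives $\alpha_{n+1}\le 1$, whence $R_{n+2}-R_{n+1}<2e$ by Proposition~\ref{prop:Ralphaproperty2}(ii), i.e.\ $R_{n+2}\le 1$. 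Hence no index $i\le n$ can trigger (iv), and the only candidate is $i=n+1$, which moreover requires $m\ge n+3$ and $R_{n+3}-R_{n+2}>2e$. If this regime fails, all of (i)--(iii) hold vacuously, so I may assume $m\ge n+3$ and $R_{n+3}-R_{n+2}>2e$. There (iv) at $i=n+1$ reads: if $R_{n+3}>S_n+2e\ge R_{n+2}+2e$, then $FN=[b_1,\ldots,b_n]\rep[a_1,\ldots,a_{n+2}]$.

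The implication (i)$\Rightarrow$(ii) is immediate. For (iii)$\Rightarrow$(i) I would invoke $I_3^E(n)$, which gives $R_{n+2}=-2e$; then Proposition~\ref{prop:Ralphaproperty3}(iv) shows $[a_1,\ldots,a_{n+2}]\cong\mathbb{H}^{(n+2)/2}$ or $\mathbb{H}^{n/2}\perp[1,-\Delta]$, that is $W_1^{n+2}(1)$ or $W_1^{n+2}(\Delta)$. By Proposition~\ref{prop:space}(iii) such a space represents every $n$-dimensional space except $W_2^n(1)$ or $W_2^n(\Delta)$ respectively (and for $n=2$ with $d(a_{1,4})\ne 2e$ the space is just $\mathbb{H}^2$, which is $2$-universal, so (iv) is automatic). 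It therefore suffices to treat $FN=W_2^n(c)$, $c\in\{1,\Delta\}$; since then $FN\not\cong W_1^n(\cdot)$, Proposition~\ref{prop:Ralphaproperty3}(iv) forces $S_n\ne -2e$, so $S_n\ge 1-2e=R_{n+3}-2e$ using $R_{n+3}=1$ from $I_3^E(n)$. The hypothesis $R_{n+3}>S_n+2e$ then fails and (iv) holds vacuously.

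For (ii)$\Rightarrow$(iii) I would argue contrapositively, exhibiting a test lattice that violates (iv) whenever $I_3^E(n)$ fails. Write $[a_1,\ldots,a_n]=W_1^n(\mu)$ with $\mu\in\{1,\Delta\}$ (Proposition~\ref{prop:Ralphaproperty3}(iv)). The key computation is the exact value $d(-a_{n+1}a_{n+2})=1-R_{n+2}$ when $R_{n+2}\ne -2e$: here $\alpha_{n+1}=1$ and $R_{n+2}\in[2-2e,0]^E\cup\{1\}$, while $\alpha_n=2e$ and $\alpha_{n+2}>2e$ (the latter from $R_{n+3}-R_{n+2}>2e$, via Proposition~\ref{prop:Ralphaproperty2}(ii)), so neither $\alpha$-term is active in $d[-a_{n+1,n+2}]=1-R_{n+2}$, forcing the equality. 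Consequently $[a_1,\ldots,a_{n+2}]=W_\nu^{n+2}(c)$ with $c=-\mu a_{n+1}a_{n+2}$ and $d(c)=1-R_{n+2}<2e$ (the factor $\mu$ is harmless, as $d(\mu)\ge 2e$); by Proposition~\ref{prop:space}(iii) it omits $W_{3-\nu}^n(c)$, whose maximal lattice $N_{3-\nu}^n(c)$ lies in the list and has $S_n=1-d(c)=R_{n+2}$ by Lemma~\ref{lem:maximal-BONG-odd}. Then $S_n\ge R_{n+2}$ and $R_{n+3}>R_{n+2}+2e=S_n+2e$, so (iv) fails. In the remaining case $R_{n+2}=-2e$ with the side condition but $R_{n+3}\ne 1$ (hence $R_{n+3}\ge 2$), the space is $W_1^{n+2}(1)$ or $W_1^{n+2}(\Delta)$, omitting $W_2^n(1)$ (for $n\ge 4$) or $W_2^n(\Delta)$; its maximal lattice $N_2^n(1)$ or $N_2^n(\Delta)$ is in the list with $S_n=1-2e$, and $S_n\ge -2e=R_{n+2}$ together with $R_{n+3}\ge 2>1=S_n+2e$ again breaks (iv).

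The main obstacle is the precise identification of the space $[a_1,\ldots,a_{n+2}]$ together with the sharp defect computation $d(-a_{n+1}a_{n+2})=1-R_{n+2}$: it is this equality, rather than the inequality supplied by $I_2^E(n)$, that pins the relevant maximal lattice's invariant $S_n$ to $R_{n+2}$ and makes the hypothesis of (iv) exactly attainable. What makes it go through is that the governing regime $R_{n+3}-R_{n+2}>2e$ itself forces $\alpha_{n+2}>2e$, deactivating the truncating $\alpha$-terms in $d[-a_{n+1,n+2}]$. The remainder is bookkeeping: matching each omitted space to the prescribed minimal test list and disposing of the $n=2$ degeneracy, in which $\mathbb{H}^2$ is already $2$-universal and no test lattice is needed.
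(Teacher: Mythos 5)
Your proposal is correct and follows essentially the same route as the paper's proof: localize condition (iv) to the single index $i=n+1$, use the explicit maximal lattices $N_\nu^n(c)$ with their $S_n$-invariants from Lemma \ref{lem:maximal-BONG-odd} as the decisive tests, and hinge everything on the sharp computation $d((-1)^{(n+2)/2}a_{1,n+2})=1-R_{n+2}$ (valid because $\alpha_{n+2}>2e$ deactivates the truncation), exactly as the paper does. The only differences are organizational — you argue (ii)$\Rightarrow$(iii) and (iii)$\Rightarrow$(i) contrapositively and invoke Proposition \ref{prop:space}(iii) where the paper phrases the same dichotomy via Lemma \ref{lem:spacerep-criterion} — which does not change the substance.
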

		\begin{proof} 	\textbf{(i)$ \Rightarrow $(ii)}: It is trivial.

			\textbf{(ii)$ \Rightarrow $(iii)}: Assume that $ R_{n+3}-R_{n+2}>2e $. This implies $\alpha_{n+2}>2e$ by Proposition \ref{prop:Ralphaproperty2}(ii).

We claim that $ \alpha_{n+1}=0 $. If not, then $ \alpha_{n+1}=1 $ and $ d[-a_{n+1}a_{n+2}]=1-R_{n+2} $ by $ I_{2}^{E}(n) $. It follows  from Lemma \ref{lem:d[-ai+1ai+2]=1-Ri+2}(i) that $ R_{n+2}\ge 2-2e $. So, by Lemma \ref{lem:d[-ai+1ai+2]=1-Ri+2}(ii), we have $d[(-1)^{(n+2)/2}a_{1,n+2}]=1-R_{n+2}$. But
			\[
			d[(-1)^{(n+2)/2}a_{1,n+2}]=\min\{d((-1)^{(n+2)/2}a_{1,n+2}),\alpha_{n+2}\}
			\]and $ \alpha_{n+2}>2e>1-R_{n+2} $. It follows that			$ d((-1)^{(n+2)/2}a_{1,n+2})=1-R_{n+2}$.

		   	Write $V=[a_{1},\ldots,a_{n+2}] $. Let $ N=N_{\nu}^{n}(c) $, with $ \nu\in \{1,2\} $ and  $ c=(-1)^{(n+2)/2}a_{1,n+2} $. Then $ \det V=a_{1,n+2}=(-1)^{(n+2)/2}c=-\det W_{\nu}^{n}(c)=-\det FN$. Since $ d(c)=1-R_{n+2}<2e $, we have $ S_{n}=1-d(c)=R_{n+2} $ by Lemma \ref{lem:maximal-BONG-odd}(i). Now $ R_{n+3}>R_{n+2}+2e=S_{n}+2e $, so $ FN=[b_{1}\ldots,b_{n}]\rep [a_{1},\ldots,a_{n+2}]=V$ by condition (ii). This shows that $V$ represents both $W^n_1(c)=FN_1^n(c)$ and $W^n_2(c)=FN_2^n(c)$. But this contradicts  Lemma \ref{lem:spacerep-criterion}. So the claim is proved.

		Now $ \alpha_{n+1}=0 $.   So Proposition \ref{prop:Ralphaproperty2}(i) implies $ R_{n+2}=R_{n+2}-R_{n+1}=-2e $. Since $ R_{n+3}-R_{n+2}>2e $, we have $ R_{n+3}\ge 1 $. Assume that $ I_{3}^{E}(n) $ fails. Then $ R_{n+3}>1 $ and either $ n\ge 4 $, or $ n=2 $ and $ d(a_{1,4})=2e $. Since $ R_{n+2}=-2e $, we have $ [a_{1},\ldots,a_{n+2}]\cong W_{1}^{n+2}(\mu) $, with $ \mu\in \{1,\Delta\} $, by Proposition \ref{prop:Ralphaproperty3}(iv). Also, if $ n=2 $, then $ d(a_{1,4})=2e$, so $ \mu=\Delta$ in this case.
			
			Take $ N=N_{2}^{n}(\mu) $, which is always defined since for $n=2$ we have $\mu=\Delta$. Then $ S_{n}=1-2e $ by Lemma \ref{lem:maximal-BONG-odd}(i). Recall that $ R_{n+2}=-2e $ and $ R_{n+3}>1 $, so the condition $R_{n+3}>S_{n}+2e\ge R_{n+2}+2e$ is fulfilled. Hence $ [b_{1},\ldots,b_{n}]\rep [a_{1},\ldots,a_{n+2}]$ by condition (ii). But $ [a_{1},\ldots,a_{n+2}]\cong W_{1}^{n+2}(\mu) $, $ [b_{1},\ldots,b_{n}]\cong  FN=W_{2}^{n}(\mu) $, and $ W_{1}^{n+2}(\mu) $ does not represent $ W_{2}^{n}(\mu) $ by Proposition \ref{prop:space}(iii). A contradiction is derived.
			
			\textbf{(iii)$ \Rightarrow $(i)}:  For $ 1<i\le n-1$, we have  $ R_{i+2}-R_{i+1}\le 2e $ by $ I_{1}^{E}(n) $. Since $\alpha_{n+1}\le 2e$ by $ I_{2}^{E}(n) $, we have $ R_{n+2}-R_{n+1}\le 2e $ by Proposition \ref{prop:Ralphaproperty2}(ii). Hence we may suppose $ m\ge n+3 $ and only need to consider Theorem \ref{thm:beligeneral}(iv) for $ i=n+1 $. Thus, we assume that $R_{n+3}>S_n+2e\ge R_{n+2}+2e$ and we want to show that $[b_1,\ldots, b_n]\rep [a_1,\ldots, a_{n+2}]$.
		
		Since $ R_{n+3}-R_{n+2}>2e $, we have $ R_{n+2}=-2e $ by the first part of $ I_{3}^{E}(n) $.
			It follows that $ [a_{1},\ldots,a_{n+2}]\cong  W_{1}^{n+2}(1)$ or $ W_{1}^{n+2}(\Delta) $ by Proposition \ref{prop:Ralphaproperty3}(iv).
			
			If $ n=2 $, then since $ R_{4}=-2e $ (by $ I_{1}^{E}(2) $), we have $ d(a_{1,4})\ge d[a_{1,4}]\ge 2e $ by Proposition \ref{prop:Ralphaproperty3}(iii). If $ d(a_{1,4})=\infty$, then the space  $ [a_{1},a_{2},a_{3},a_{4}]\cong \mathbb{H}\perp \mathbb{H}=W^4_1(1)$ is $ 2 $-universal by \cite[Theorem 2.3]{hhx_indefinite_2021} and thus represents all binary quadratic spaces. So, if $n=2$ we may assume $ d(a_{1,4})=2e $ and hence $[a_1,a_{2},a_{3}, a_4]\cong W^4_1(\Delta)$. Then, by $ I_{3}^{E}(n) $, we get   $ R_{n+3}=1 $, both when $ n=2 $ and when $ n\ge 4 $. Hence $R_{n+3}=1>S_n+2e\ge R_{n+2}+2e=0$ and so $ S_{n}=-2e $. It follows that $ [b_{1},\ldots,b_{n}]\cong W_{1}^{n}(1)$ or $ W_{1}^{n}(\Delta) $ by Proposition \ref{prop:Ralphaproperty3}(iv). In all cases, $ [b_{1},\ldots,b_{n}]\rep [a_{1},\ldots,a_{n+2}] $ by Proposition \ref{prop:space}(iii).
		\end{proof} 	
	
		In the case $m=n+2=4$ we can derive the  following corollary from Theorem\;\ref{thm:even-nuniversaldyadic}.
		
		\begin{cor}\label{cor:even-nuniversaldyadic-quaternary}
			Let $ M $ be a quaternary integral $ \mathcal{O}_{F} $-lattice. Then the following conditions are equivalent:
			
			(i) $ M $ is $ 2 $-universal.
			
			(ii) $ FM\cong \mathbb{H}^{2} $, $ R_{1}=R_{3}=0$ and $R_{2}=R_{4}=-2e $.
			
			(iii) $M\cong 2^{-1}A(0,0)\perp 2^{-1}A(0,0) $.
			
		\end{cor}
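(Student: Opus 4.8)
The plan is to deduce the corollary from Theorem~\ref{thm:even-nuniversaldyadic} applied with $n=2$ and $m=4$. Since $m=n+2<n+3$, the condition $I_3^E(2)$ is vacuous, so $M$ is $2$-universal precisely when $FM$ is $2$-universal and the conditions $I_1^E(2)$ (namely $R_1=R_3=0$ and $R_2=-2e$) and $I_2^E(2)$ both hold. First I would record that a quaternary space is $2$-universal if and only if it is isometric to $\mathbb{H}^2$: indeed $\mathbb{H}^2$ is $2$-universal by \cite[Theorem~2.3]{hhx_indefinite_2021}, while every other quaternary space arises as the space $V$ of Proposition~\ref{prop:space}(iii) for some binary $W$ (by the explicit rule $W_\nu^2(c)\mapsto W_{3-\nu}^4(c)$, whose image omits exactly $W_1^4(1)=\mathbb{H}^2$), and hence fails to represent $W$. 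This identifies ``$FM$ is $2$-universal'' with ``$FM\cong\mathbb{H}^2$''.

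The crux is then to prove that, under $FM\cong\mathbb{H}^2$ together with $I_1^E(2)$, condition $I_2^E(2)$ is equivalent to $R_4=-2e$. The implication $R_4=-2e\Rightarrow I_2^E(2)$ is immediate, since $R_4-R_3=-2e$ forces $\alpha_3=0$ by Proposition~\ref{prop:Ralphaproperty2}(i). For the converse I would rule out the second alternative of $I_2^E(2)$. As $R_2=-2e$, Proposition~\ref{prop:Ralphaproperty3}(iv) gives $[a_1,a_2]\cong\mathbb{H}$ or $[1,-\Delta]$; applying Witt cancellation to $FM\cong\mathbb{H}^2\cong[1,-\Delta]\perp[1,-\Delta]$ shows $[a_3,a_4]$ is likewise $\mathbb{H}$ or $[1,-\Delta]$, so $d(-a_3a_4)\ge 2e$. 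Since $R_3-R_2=2e$, Proposition~\ref{prop:Ralphaproperty2}(iv) yields $\alpha_2=2e$, whence $d[-a_{3,4}]=\min\{d(-a_3a_4),\alpha_2\}=2e$ (the term $\alpha_4$ being ignored as $j=m=4$). If the second alternative held we would have $d[-a_{3,4}]=1-R_4$ with $\alpha_3=1$; but $2e=1-R_4$ forces $R_4=1-2e$, an odd value incompatible with the constraint $R_4\in[2-2e,0]^E\cup\{1\}$ imposed by $\alpha_3=1$ via Proposition~\ref{prop:Ralphaproperty2}(vi). Hence $\alpha_3=0$ and $R_4=-2e$, completing the proof of (i)$\Leftrightarrow$(ii).

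Finally, for (ii)$\Leftrightarrow$(iii) I would argue at the level of lattices. The lattice $\mathbf{H}^2=2^{-1}A(0,0)\perp 2^{-1}A(0,0)$ is the $\mathcal{O}_F$-maximal lattice $N_1^4(1)$ on $\mathbb{H}^2$, and by Lemmas~\ref{lem3.10new} and \ref{lem:BONGformaximallattice} one has $\mathbf{H}^2\cong\prec 1,-\pi^{-2e},1,-\pi^{-2e}\succ$, giving $FM\cong\mathbb{H}^2$ and $R=(0,-2e,0,-2e)$; this is (iii)$\Rightarrow$(ii). Conversely, given (ii), the lattice $M$ is integral on $\mathbb{H}^2$ and hence contained in its maximal lattice $\mathbf{H}^2$. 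Mirroring the volume computation in the proof of Proposition~\ref{prop:maximallattices}, we have $\ord(\det M)=\sum_i R_i=-4e=\ord(\det\mathbf{H}^2)$, so by \cite[82:11]{omeara_quadratic_1963} the index ideal $[\mathbf{H}^2:M]$ is trivial and $M=\mathbf{H}^2$.

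The main obstacle is the computation in the second paragraph: one must recognize that the hyperbolicity of $FM$, propagated through Witt cancellation to the pair $a_3,a_4$ and combined with $\alpha_2=2e$, forces $d[-a_{3,4}]=2e$, and that this single value is exactly what excludes the ``$\alpha_{n+1}=1$'' branch of $I_2^E(2)$ and thereby pins down $R_4=-2e$.
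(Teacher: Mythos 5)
Your proposal is correct, and its skeleton agrees with the paper's (reduce (i)$\Leftrightarrow$(ii) to Theorem \ref{thm:even-nuniversaldyadic} with $I_3^E(2)$ vacuous, then identify the lattice for (ii)$\Leftrightarrow$(iii)), but both decisive steps are carried out by genuinely different arguments. To exclude the branch ``$\alpha_3=1$ and $d[-a_{3,4}]=1-R_4$'' of $I_2^E(2)$, the paper uses Lemma \ref{lem:d[-ai+1ai+2]=1-Ri+2}(ii): that equality would force $d[a_{1,4}]=1-R_4$, which is impossible since $a_{1,4}\in F^{\times2}$ gives $d[a_{1,4}]=\infty$; this needs only $\det FM=1$. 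Your route uses the full isometry class of $FM$: Witt cancellation (together with $\mathbb{H}^2\cong[1,-\Delta]\perp[1,-\Delta]$) gives $[a_3,a_4]\cong\mathbb{H}$ or $[1,-\Delta]$, hence $d(-a_3a_4)\ge 2e$, which combined with $\alpha_2=2e$ pins $d[-a_{3,4}]=2e$, so $R_4=1-2e$; this is excluded by Proposition \ref{prop:Ralphaproperty2}(vi) (or even more cheaply by Corollary \ref{cor:R-R-odd}(i), since $1-2e$ is odd and $\le 0$). Both are sound; the paper's is shorter, yours is more geometric and also gives a self-contained proof (via Proposition \ref{prop:space}(iii)) that the only $2$-universal quaternary space is $\mathbb{H}^2$, where the paper simply cites \cite[Theorem 2.3]{hhx_indefinite_2021}. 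For (ii)$\Rightarrow$(iii), the paper splits $M\cong\prec a_1,a_2\succ\perp\prec a_3,a_4\succ$ using \cite[Corollary 4.4(i)]{beli_integral_2003} and Corollary \ref{cor:R-R-odd}(ii), then identifies $2^{-1}A(2,2\rho)\perp 2^{-1}A(2,2\rho)\cong\mathbf{H}^2$ by \cite[93:9 and 93:18(vi)]{omeara_quadratic_1963}; your argument instead embeds $M$ in a maximal lattice on $FM$ and compares volumes, exactly the device used in the paper's proof of Proposition \ref{prop:maximallattices}. This avoids the classification of binary lattices at the cost of the maximal-lattice machinery; one small wording point is that you should say ``$M$ is contained in \emph{some} maximal lattice $M'$ on $FM$, necessarily $M'\cong\mathbf{H}^2$,'' since maximal lattices are unique only up to isometry, and then conclude $M=M'\cong\mathbf{H}^2$ from the equality of volume orders. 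The paper's explicit splitting exhibits the orthogonal decomposition of $M$ directly, while yours identifies $M$ with the maximal lattice abstractly.
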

		\begin{proof}
			\textbf{(i)$ \Rightarrow $(ii):} Suppose that $M$ is 2-universal. Then the space $FM$ is 2-universal. Hence $ FM\cong \mathbb{H}^{2}$  by \cite[Theorem 2.3]{hhx_indefinite_2021}. Moreover, $M$ satisfies $I_1^E(2)$ and $I_2^E(2)$ by Theorem \ref{thm:even-nuniversaldyadic}. From $I_1^E(2)$ we get $ R_{1}=R_{3}=0 $ and $ R_{2}=-2e $. If $ \alpha_{3}=1$, then $ d[-a_{3,4}]=1-R_{4} $ by $I^E_2(2)$. Lemma \ref{lem:d[-ai+1ai+2]=1-Ri+2}(ii) implies that $ d[a_{1,4}]=1-R_{4} $. But $ a_{1,4}\in F^{\times 2} $, so $ d[a_{1,4}]=d(a_{1,4})=\infty $, a contradiction. Hence $ \alpha_{3}=0 $ and so $ R_{4}=R_{4}-R_{3}=-2e $ (Proposition \ref{prop:Ralphaproperty2}(i)).

			\textbf{(ii)$ \Rightarrow $(i):} Suppose that (ii) holds. Then the space $ FM\cong \mathbb{H}^{2}$  is $ 2 $-universal, and we have $ \alpha_{3}=0 $  by Proposition \ref{prop:Ralphaproperty2}(i). Hence $M$ is 2-universal by Theorem\;\ref{thm:even-nuniversaldyadic}.		
			
			\textbf{(ii)$ \Rightarrow $(iii):}  Since $ R_{2}-R_{1}=-2e $, Corollary \ref{cor:R-R-odd}(ii) implies that $ \prec a_{1},a_{2} \succ \cong 2^{-1}A(0,0)$ or $ 2^{-1}A(2,2\rho) $. Similarly, $ \prec a_{3},a_{4} \succ \cong 2^{-1}A(0,0)$ or $ 2^{-1}A(2,2\rho) $. By \cite[Corollary 4.4(i)]{beli_integral_2003}, $ M\cong \prec a_{1},a_{2},a_{3},a_{4}\succ \cong \prec a_{1},a_{2}\succ \perp \prec a_{3},a_{4}\succ $. But $FM\cong \mathbb{H}^{2} $, so $\det FM=1$. Hence $ M\cong 2^{-1}A(0,0)\perp 2^{-1}A(0,0) $ or $ 2^{-1}A(2,2\rho)\perp 2^{-1}A(2,2\rho)$. By \cite[93:9 and 93:18(vi)]{omeara_quadratic_1963}, we have
			$2^{-1}A(2,2\rho)\perp 2^{-1}A(2,2\rho)\cong 2^{-1}A(0,0)\perp 2^{-1}A(0,0)$. So we get the desired result.
			
			\textbf{(iii)$ \Rightarrow $(ii):}	Using Lemma\;\ref{lem:maximal-BONG-odd}(i), we can get  the $R_i$ invariants of $\mathbf{H}^2=2^{-1}A(0,0)\perp 2^{-1}A(0,0) $. So the result follows easily.	
		\end{proof}

		The equivalence between (i) and (iii) in Corollary\;\ref{cor:even-nuniversaldyadic-quaternary} recovers \cite[Proposition\;4.5]{hhx_indefinite_2021}.
		
		\medskip
		
		In general, we have the following  concise criterion for $n$-universality (when $n\ge 2$ is even).
		
			\begin{thm}\label{thm:2universaldyadicver2}
			Let $ M\cong \prec a_{1},\ldots,a_{m}\succ $ be an integral $ \mathcal{O}_{F} $-lattice relative to some good BONG, $ R_{i}=R_{i}(M)$ for $ 1\le i\le m $ and $ \alpha_{i}=\alpha_{i}(M) $ for $ 1\le i\le m-1 $.
			
			Then $ M $ is $n$-universal if and only if $ m\ge n+3 $ or $m=n+2=4 $ and the following conditions hold:
			
			\begin{enumerate}
				\item[(i)] $ R_{i}=0 $ for  $ i\in [1,n+1]^{O} $ and $ R_{i}=-2e $ for $ i\in [1,n]^{E} $.
				\item[(ii)] If $ m=n+2=4 $, then $ FM\cong \mathbb{H}^{2} $ and $ R_{4}=-2e $.
				\item[(iii)] If $ m\ge n+3 $, then one has:
				
				\begin{enumerate}
					\item[(1)]  $ \alpha_{n+1}\le 1 $.
					\item[(2)] If $ R_{n+3}-R_{n+2}>2e  $, then $ R_{n+2}=-2e $; and if moreover either $n\ge 4$, or $n=2$ and $ d(a_{1,4})=2e $, then $ R_{n+3}=1 $.
					\item[(3)]  If $ R_{n+3}-R_{n+2}=2e $ and $ R_{n+2}=2-2e $, then $ d(-a_{n+1}a_{n+2})=2e-1 $.
				\end{enumerate}
			\end{enumerate}	
		\end{thm}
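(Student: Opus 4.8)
The plan is to derive this concise criterion directly from Theorem \ref{thm:even-nuniversaldyadic}, which characterizes the $n$-universality of $M$ as the conjunction of the condition that $FM$ be $n$-universal with the three conditions $I_1^E(n)$, $I_2^E(n)$, $I_3^E(n)$. Since $n$-universality of $M$ forces $FM$ to be $n$-universal, and no quadratic space of dimension $\le n+1$ is $n$-universal (a space of dimension $n+1$ represents exactly one of each pair $W_1^n(c),W_2^n(c)$ by Lemma \ref{lem:spacerep-criterion}), we may assume $m\ge n+2$; otherwise neither alternative of the rank condition holds and $M$ is not $n$-universal. The first step is then to convert the statement that $FM$ is $n$-universal into the stated rank dichotomy: by the criterion for $n$-universality of quadratic spaces (\cite[Theorem 2.3]{hhx_indefinite_2021}), for even $n$ an $m$-dimensional space with $m\ge n+2$ is $n$-universal if and only if $m\ge n+3$, or $m=n+2=4$ and the space is isometric to $\mathbb{H}^2$. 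For $n\ge 4$ no $(n+2)$-dimensional space is $n$-universal. Thus the hypothesis that $FM$ be $n$-universal is equivalent to $m\ge n+3$, or $m=n+2=4$ and $FM\cong\mathbb{H}^2$.

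With this reformulation I would treat the two alternatives separately. In the case $m=n+2=4$ (so $n=2$) the condition $I_3^E(2)$ is vacuous, and the equivalence of the surviving conditions with (i) and (ii) is essentially Corollary \ref{cor:even-nuniversaldyadic-quaternary}: granting $I_1^E(2)$ and $FM\cong\mathbb{H}^2$, the alternative $\alpha_3=1$ in $I_2^E(2)$ is impossible, since Lemma \ref{lem:d[-ai+1ai+2]=1-Ri+2}(ii) would give $d[a_{1,4}]=1-R_4<\infty$ while $a_{1,4}=\det FM\in F^{\times 2}$ forces $d[a_{1,4}]=\infty$. Hence $I_2^E(2)$ holds if and only if $\alpha_3=0$, i.e. $R_4=-2e$, which is exactly the extra content of (ii). So in this case the conditions of Theorem \ref{thm:even-nuniversaldyadic} amount precisely to (i) and (ii).

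In the case $m\ge n+3$ the space $FM$ is automatically $n$-universal, so what remains is to show that, under $I_1^E(n)$, the pair $I_2^E(n)$, $I_3^E(n)$ is equivalent to (iii)(1)--(iii)(3). As (i) is $I_1^E(n)$ and (iii)(2) is $I_3^E(n)$ verbatim, everything reduces to proving, granting $I_1^E(n)$ and $I_3^E(n)$, that $I_2^E(n)$ is equivalent to the conjunction of $\alpha_{n+1}\le 1$ and (iii)(3). Here I would exploit $R_{n+1}=0$ (from $I_1^E(n)$), so that $\alpha_{n+1}$ is controlled by $R_{n+2}$ through Proposition \ref{prop:Ralphaproperty2}, expand $d[-a_{n+1,n+2}]=\min\{d(-a_{n+1}a_{n+2}),\alpha_n,\alpha_{n+2}\}$ from \eqref{defn:d[]}, and note that $\alpha_n=2e$ since $R_{n+1}-R_n=2e$. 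The implication that $I_2^E(n)$ forces $\alpha_{n+1}\le 1$ is immediate from the definition of $I_2^E(n)$.

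The remaining case analysis runs over $\alpha_{n+1}\in\{0,1\}$ and the admissible values $R_{n+2}\in[2-2e,0]^E\cup\{1\}$. When $\alpha_{n+1}=0$, or when $\alpha_{n+1}=1$ with $R_{n+2}=1$ or with $2-2e<R_{n+2}\le 0$, the equality $d[-a_{n+1,n+2}]=1-R_{n+2}$ demanded by $I_2^E(n)$ is either vacuous or automatic (by parity when $R_{n+2}=1$, and by Proposition \ref{prop:Ralphaproperty2}(vii) otherwise), while (iii)(3) is vacuous because $R_{n+2}\ne 2-2e$. I expect the one genuinely delicate sub-case, and the main obstacle, to be $R_{n+2}=2-2e$: then $\alpha_{n+1}=1$ automatically, and $I_3^E(n)$ forces $R_{n+3}-R_{n+2}\le 2e$ since $R_{n+2}\ne -2e$. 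One then distinguishes $R_{n+3}-R_{n+2}<2e$, where $\alpha_{n+2}=2e-1$ and the required $d[-a_{n+1,n+2}]=2e-1$ holds automatically, from $R_{n+3}-R_{n+2}=2e$, where $\alpha_{n+2}=2e$ and $d[-a_{n+1,n+2}]=2e-1$ is equivalent to $d(-a_{n+1}a_{n+2})=2e-1$, i.e. to (iii)(3); throughout one uses $d(-a_{n+1}a_{n+2})\ge d[-a_{n+1,n+2}]\ge 2e-1$ from Proposition \ref{prop:Ralphaproperty2}(vi). Collating these computations against (iii)(1)--(iii)(3) yields the desired equivalence and completes the proof.
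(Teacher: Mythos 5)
Your proposal is correct and follows essentially the same route as the paper: reduce to Theorem \ref{thm:even-nuniversaldyadic} via the dimension dichotomy from \cite[Theorem 2.3]{hhx_indefinite_2021}, dispose of the case $m=n+2=4$ by the argument of Corollary \ref{cor:even-nuniversaldyadic-quaternary}, and for $m\ge n+3$ reduce to the equivalence of $I_2^E(n)$ with (iii)(1) and (iii)(3), settled by the same case analysis on $\alpha_{n+1}$, $R_{n+2}$, and whether $R_{n+3}-R_{n+2}$ equals $2e$ (using $\alpha_n=\alpha_{n+2}=2e$ and Proposition \ref{prop:Ralphaproperty2}(vi)--(vii) exactly as the paper does). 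The only cosmetic differences are that you re-derive the quaternary corollary rather than cite it, and you claim the exact value $\alpha_{n+2}=2e-1$ where the paper only needs (and proves) the bound $\alpha_{n+2}\le 2e-1$; both are harmless.
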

		
			\begin{proof}
			A necessary condition for $M$ to be $n$-universal is that the space $FM$ is $n$-universal, and the latter condition implies that either $m\ge n+3$, or $m=n+2=4$ and condition $FM\cong \mathbb{H}^{2}$ from (ii) holds, by \cite[Theorem 2.3]{hhx_indefinite_2021}.
			In view of Corollary \ref{cor:even-nuniversaldyadic-quaternary} we may assume that $m\ge n+3$. To prove the theorem, we compare the above conditions (i) and (iii) with the conditions $I^E_1(n)$,  $I_2^E(n)$ and  $ I^{E}_{3}(n)$ in  Theorem\;\ref{thm:even-nuniversaldyadic}.
			
			Condition (i) is the same as $I^E_1(n)$,  $I_2^E(n)$ implies (iii)(1), and (iii)(2) is equivalent to $ I^{E}_{3}(n)$. We may therefore assume that the conditions (i), (iii)(1) and (iii)(2) hold. By Proposition \ref{prop:Ralphaproperty2}(i),  we have $\alpha_{n+1}\in \{0,\,1\}$, and if $\alpha_{n+1}=0$, then $I_2^E(n)$ holds and $R_{n+2}=R_{n+1}-2e=-2e\neq 2-2e$. So we may assume that $\alpha_{n+1}=1$.
			
			Now we only need to show that under the above assumptions, (iii)(3) holds if and only if $d[-a_{n+1,n+2}]=1-R_{n+2}$.
			
			By Proposition \ref{prop:Ralphaproperty2}(vi), we have $d[-a_{n+1,n+2}]\ge 1-R_{n+2}$, and if $R_{n+2}\neq 2-2e$, then $d[-a_{n+1,n+2}]=1-R_{n+2}$. We may thus assume further that $ R_{n+2}=2-2e $. Then, by (iii)(2), we have $R_{n+3}-R_{n+2}\le 2e$.
			If $R_{n+3}-R_{n+2}<2e$, then $\alpha_{n+2}<2e$ by Proposition \ref{prop:Ralphaproperty2}(ii) and $\alpha_{n+2}\in \Z$ by Proposition \ref{prop:Ralphaproperty2}(i). So $d[-a_{n+1,n+2}]\le \alpha_{n+2}\le 2e-1=1-R_{n+2}$. As we have seen that $d[-a_{n+1,n+2}]\ge 1-R_{n+2}$, we get $d[-a_{n+1,n+2}]=1-R_{n+2}$ when  $R_{n+3}-R_{n+2}<2e$. Now consider the case $R_{n+3}-R_{n+2}=2e$. It remains to prove that $ d[-a_{n+1,n+2}]=2e-1 $ if and only if $ d(-a_{n+1,n+2})=2e-1 $. Recall that $R_{n+1}-R_{n}=R_{n+3}-R_{n+2}=2e$. By Proposition \ref{prop:Ralphaproperty2}(ii), we have $ \alpha_{n}=\alpha_{n+2}=2e$. But $ d[-a_{n+1,n+2}]=\min\{d(-a_{n+1,n+2}),\alpha_{n},\alpha_{n+2}\} $. This implies the desired equivalence and the theorem is thus proved.				
		\end{proof}

		\section{Characterization of $n$-universality for odd $ n $}\label{sec:odd-con}
		
		In  this section, we assume that $ n\ge 3 $ is an odd integer. By \cite[Theorem 2.1]{hhx_indefinite_2021}, a quadratic space over $F$ is $ n $-universal if and only if its dimension is at least $ n+3$. Let $ m\ge n+3$. As in the previous section, we fix a lattice $M$ and assume that $M\cong \prec a_{1},\ldots,a_{m}\succ $ relative to a good BONG. Invariants associated to the given BONG of $M$ will be denoted by the same notations as before, and similarly for any rank $n$  lattice $N$.
		
	\begin{thm}\label{thm:odd-nuniversaldyadic-2}\footnote{We are grateful to the anonymous referee for suggesting this theorem as an improved version of Proposition\;\ref{thm:odd-nuniversaldyadic}.}\label{cor:odd-nuniversaldyadic-simplify}
	The lattice $ M $ is $n$-universal if and only if $ m\ge n+3 $ (or equivalently, $FM$ is $n$-universal) and  the following conditions hold:
	\begin{itemize}
		\item[ $ I_{1}^{O}(n) $:]  $ R_{i}=0 $ for $ i\in [1,n]^{O} $, $ R_{i}=-2e $ for $ i\in [1,n]^{E} $, and  $ \alpha_{n}=0 $ or $ 1 $.
		
		\item[$ I_{2}^{O}(n) $:] If $ \alpha_{n}=0 $, then $ R_{n+2} \in \{0,1\}$.
		
		\item[ ] If $ \alpha_{n}=1 $ and either $ R_{n+1}=1 $ or $ R_{n+2}>1 $, then $ \alpha_{n+2}\le G_{n} $, where
\begin{equation}\label{eq5.1new}
\begin{split}
  G_n:&= 2(e-\lfloor(R_{n+2}-R_{n+1})/2\rfloor)-1\\
  &=\begin{cases}
		 		2e-R_{n+2}+R_{n+1}-1 &\text{if $ R_{n+2}-R_{n+1} $ is even}\,, \\
		 		2e-R_{n+2}+R_{n+1} &\text{if $ R_{n+2}-R_{n+1} $ is odd}\,.
		 	\end{cases}
\end{split}
\end{equation}
		\item[$ I_{3}^{O}(n) $:] $ R_{n+3}-R_{n+2}\le 2e $.
	\end{itemize}
\end{thm}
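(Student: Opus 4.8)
The plan is to mirror, for odd $n$, the architecture used for even $n$ in Section~\ref{sec:even-con}: I translate Beli's representation criterion (Theorem~\ref{thm:beligeneral}) into the arithmetic conditions $I_1^O(n)$, $I_2^O(n)$, $I_3^O(n)$ through a short chain of lemmas, each tested against the explicit maximal lattices of the minimal testing set (Theorem~\ref{thm:nuniversaldyadic15theorem}(ii)), whose $R$-invariants are recorded in Lemma~\ref{lem:maximal-BONG-odd}(ii). Since a quadratic space is $n$-universal exactly when its dimension is at least $n+3$ (\cite[Theorem 2.1]{hhx_indefinite_2021}), the hypothesis $m\ge n+3$ is equivalent to $FM$ being $n$-universal, and then $FN\rep FM$ holds automatically for every integral rank $n$ lattice $N$. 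Hence $M$ is $n$-universal if and only if conditions (i)--(iv) of Theorem~\ref{thm:beligeneral} hold for all such $N$, and by the minimality of the testing set it is enough to test them against the lattices $N_\nu^n(c)$. The theorem then follows by combining three lemmas, exactly as Theorem~\ref{thm:even-nuniversaldyadic} follows from Lemmas~\ref{lem:con12even}, \ref{lem:con3even} and~\ref{lem:con4even}.

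First I would prove the odd analogue of Lemma~\ref{lem:con12even}: conditions (i) and (ii) of Theorem~\ref{thm:beligeneral}, holding for all $N$ (equivalently for $N_1^n(\delta)$), are equivalent to the $R$-pattern part of $I_1^O(n)$, namely $R_i=0$ on odd indices and $R_i=-2e$ on even indices up to $n$. This comes, as in the even case, from the lower bounds of Proposition~\ref{prop:Ralphaproperty3}(i) together with the determinant comparison $R_{2i-1}+R_{2i}\le S_{2i-1}+S_{2i}=-2e$ forced by \cite[Lemma 4.6(i)]{beli_representations_2006}. In the sufficiency direction, Lemma~\ref{lem:Aj} disposes of the intermediate even indices, while Lemma~\ref{lem:da1nb1n}—whose hypotheses $R_{n-1}=-2e$, $R_n=0$, $\alpha_n=1$ are tailored to this setting—verifies condition (ii) at the top index $n$. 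The remaining clause $\alpha_n\le 1$ of $I_1^O(n)$ I would extract from Theorem~\ref{thm:beligeneral}(iii), testing against the odd counterpart $N_2^n(\varepsilon)$ of the even case's $N_2^n(\Delta)$ (note $S_{n-1}=2-2e$, $S_n=0$ there), by an argument parallel to Lemma~\ref{lem:evenlatticeproperty2}; the auxiliary space facts of Lemmas~\ref{lem:spacerep-criterion} and~\ref{lem:spacerep-criterion-2} identify which of $N_1^n(c)$, $N_2^n(c)$ produces the failure.

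Next I would handle conditions (iii) and (iv), following Lemmas~\ref{lem:con3even} and~\ref{lem:con4even}. The case split in $I_2^O(n)$ according to $\alpha_n=0$ or $\alpha_n=1$ (and within the latter, $R_{n+1}=1$ or $R_{n+2}>1$) mirrors the even split, and Lemma~\ref{lem:d[-ai+1ai+2]=1-Ri+2}, applied with $i=n-1$, is the main tool for deciding when the hypotheses \eqref{eq:assumption(3)(ii)'} of Theorem~\ref{thm:beligeneral}(iii) are actually met. Condition $I_3^O(n)$, namely $R_{n+3}-R_{n+2}\le 2e$, should drop out of Theorem~\ref{thm:beligeneral}(iv) at $i=n+1$ precisely as in Lemma~\ref{lem:con4even}, using Proposition~\ref{prop:Ralphaproperty2}(ii) to read off the sign of $R_{n+3}-R_{n+2}$ from $\alpha_{n+2}$.

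The hard part will be the bound $\alpha_{n+2}\le G_n$ in $I_2^O(n)$. Unlike the even case, the obstruction to representing a rank $n$ lattice here sits one index further out, around $n+1$ and $n+2$, and the quantity $G_n=2(e-\lfloor (R_{n+2}-R_{n+1})/2\rfloor)-1$ packages the two parities of $R_{n+2}-R_{n+1}$ into a single inequality. I would derive it by expanding the conclusion $[b_1,\ldots,b_{i-1}]\rep[a_1,\ldots,a_i]$ of Theorem~\ref{thm:beligeneral}(iii) at $i=n+2$ and translating the existential defect conditions $d(-a_ja_{j+1})\le 2e+R_{n+1}-R_{j+1}-1$ (even case) and $\le 2e+R_{n+1}-R_{j+1}$ (odd case) of the full criterion (Theorem~\ref{thm:nuniversaldyadic}(iii)(2)(a),(b)) into a single statement about $\alpha_{n+2}$: using formula~\eqref{eq:alpha-defn} and the monotonicity of Proposition~\ref{prop:Ralphaproperty1}(i), the ``for some $n+2\le j\le m-1$'' quantifier collapses to the first such index and yields the compact bound $\alpha_{n+2}\le G_n$. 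Verifying this collapse, and checking that the floor function correctly absorbs the even/odd bookkeeping so that $\alpha_{n+2}\le G_n$ is genuinely equivalent to the disjunction in Theorem~\ref{thm:nuniversaldyadic}(iii)(2), is where the delicate work lies; once it is done, the three lemmas combine with Theorem~\ref{thm:beligeneral} to give the stated criterion, and this compact form is what later feeds into the proof of Theorem~\ref{thm:nuniversaldyadic}.
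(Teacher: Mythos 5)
Your plan misses the one idea the paper's proof actually turns on, and its first step is false as stated. The paper does not redo the even-case analysis from scratch for odd $n$: it first notes that an $n$-universal lattice is automatically $(n-1)$-universal, and since $n-1$ is even, Theorem \ref{thm:even-nuniversaldyadic} immediately supplies $I_1^E(n-1)$, $I_2^E(n-1)$ and $I_3^E(n-1)$ as necessary conditions. The odd-specific analysis (Proposition \ref{thm:odd-nuniversaldyadic}, via Lemmas \ref{lem:con12odd}, \ref{lem:con3odd} and \ref{lem:con4odd}) is then carried out \emph{under} these hypotheses, and Lemma \ref{lem:simplifythm} converts the combined list into the compact conditions $I_1^O(n)$, $I_2^O(n)$, $I_3^O(n)$. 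This reduction is not cosmetic: the hypothesis $I_2^E(n-1)$ (that is, $\alpha_n=0$, or $\alpha_n=1$ together with $d[-a_{n,n+1}]=1-R_{n+1}$) is exactly what makes Beli's condition (ii) verifiable at the index $i=n$, through Lemma \ref{lem:da1nb1n}.

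This is precisely where your first lemma breaks. You assert that Beli's conditions (i) and (ii), holding for all rank-$n$ lattices $N$, are equivalent to the $R$-pattern of $I_1^O(n)$ alone; the direction ``$R$-pattern $\Rightarrow$ condition (ii) for all $N$'' is false. Take $M$ with the $R$-pattern and $R_{n+1}=2e$, so that $\alpha_n=2e$ by Proposition \ref{prop:Ralphaproperty2}(iv). For $N=N_1^n(\delta)$ one has $S_{n-1}=-2e$, $S_n=0$, hence $A_n=2e$; on the other hand $a_{1,n}b_{1,n}\equiv a_n\delta$ or $\Delta a_n\delta$ modulo squares by Proposition \ref{prop:Ralphaproperty3}(iv), so choosing the unit $\delta$ with $d(a_n\delta)=1$ gives $d[a_{1,n}b_{1,n}]=\min\{1,\alpha_n\}=1<2e=A_n$, and Beli's condition (ii) fails at $i=n$ even though the $R$-pattern holds. (You even observe that Lemma \ref{lem:da1nb1n} carries the hypothesis $\alpha_n=1$, but your setting provides no such hypothesis.) Two further concrete errors: Theorem \ref{thm:beligeneral}(iii) only runs over $1<i\le\min\{m-1,n+1\}$, so your plan to obtain $\alpha_{n+2}\le G_n$ by ``expanding condition (iii) at $i=n+2$'' is out of range --- the paper gets this bound at $i=n+1$, testing against $N_1^n(c)$ and $N_1^n(c\tilde{c}^{\#})$ with $c=(-1)^{(n+1)/2}a_{1,n+2}$ and $\tilde{c}=(-1)^{(n+1)/2}a_{1,n+1}$ (Lemmas \ref{lem:oddlatticeproperty2-1}, \ref{lem:oddlatticeproperty2-2} and \ref{lem:con3odd}); and your claim that the clause $\alpha_n\le 1$ can be extracted from a failure of condition (iii) for $N_2^n(\varepsilon)$ is unsupported, since when $\alpha_n\ge 2$ the representation of a testing lattice may already fail in condition (ii), as the computation above shows.
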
	
\begin{proof}We will show in
Proposition \ref{thm:odd-nuniversaldyadic} that $M$ is $n$-universal if and only if $FM$ is $n$-universal and $ M $ satisfies $I_1^{E}(n-1),\, I_{2}^{E}(n-1),\, I_{3}^{E}(n-1) $  (cf. Theorem $\ref{thm:even-nuniversaldyadic}$), $ I_{2}^{O}(n) $ and $ I_{3}^{O}(n) $. Clearly, $ I_{1}^{O}(n) $ holds if and only if  $ I_{1}^{E}(n-1) $ holds and  $ \alpha_{n}=0$ or $1 $. So $I_1^{E}(n-1)$ and $I_{2}^{E}(n-1)$ together imply $ I_{1}^{O}(n) $.

Assume that $M$ satisfies $I_{1}^{O}(n)$ and $ I_{2}^{O}(n) $. Then $ I_{2}^{E}(n-1) $ follows from Lemma \ref{lem:simplifythm}(ii) below. If $ \alpha_{n}=0 $, then $ R_{n+1}=-2e $ by Lemma \ref{lem:simplifythm}(i), and $R_{n+2}\in\{0, 1\}$ by $I_{2}^{O}(n)$. In this case we have either $R_{n+2}-R_{n+1}=2e$ or $R_{n+2}=1$. If $\alpha_n\neq 0$, then $\alpha_n=1$ by $I_{1}^{O}(n)$, and thus $R_{n+2}-R_{n+1}\le 2e-1< 2e$ by Lemma \ref{lem:simplifythm}(ii). So we see that $I_{3}^{E}(n-1)$ holds.  This proves the theorem.
\end{proof}

		\begin{re}\label{re:Rn+1=Rn+2=1}
In fact, the following two conditions are equivalent:

(i) Either $ R_{n+1}=1 $ or $ R_{n+2}>1 $.

(ii) Either $ R_{n+1}=R_{n+2}=1 $ or $ R_{n+2}>1 $.

To see this, it suffices to show that $R_{n+1}=1$ implies $R_{n+2}\ge 1$.

			Suppose $ R_{n+1}=1 $ and $R_{n+2}< 1$. Since $ R_{n+2}\ge 0 $ (by Proposition\;\ref{prop:Ralphaproperty3}(i)), we get $ R_{n+2}=0 $. But then $R_{n+2}-R_{n+1}=-1$, which contradicts  Corollary\;\ref{cor:R-R-odd}(i).
		\end{re}

	Theorem  \ref{cor:odd-nuniversaldyadic-simplify} can be easily rephrased in the style of \cite[Theorem 2.1]{beli_universal_2020} as the following.
	
	\begin{thm}\label{thm:odd-nuniversaldyadic-beli-style}
		The lattice $ M $ is $n$-universal if and only if $ m\ge n+3 $, $ R_{i}=0 $ for $ i\in [1,n]^{O} $, $ R_{i}=-2e $ for $ i\in [1,n]^{E} $, $ R_{n+3}-R_{n+2}\le 2e $ and one of the following conditions holds:		
	
	(i) $ \alpha_{n}=0 $ (or equivalently, $ R_{n+1}=-2e $) and $ R_{n+2}\le 1 $ (or equivalently, $ R_{n+2}\in \{0,1\} $).
			
	(ii) $ \alpha_{n}=1 $ and, if either $ R_{n+1}=1 $ or $ R_{n+2}>1 $, then $ \alpha_{n+2}\le 2(e-\lfloor(R_{n+2}-R_{n+1})/2\rfloor)-1 $.
	\end{thm}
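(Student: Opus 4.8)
The plan is to deduce the statement directly from Theorem~\ref{cor:odd-nuniversaldyadic-simplify}, since the two criteria differ only in packaging. The requirements $m\ge n+3$, the equalities $R_i=0$ for $i\in[1,n]^{O}$ and $R_i=-2e$ for $i\in[1,n]^{E}$, and the inequality $R_{n+3}-R_{n+2}\le 2e$ (which is exactly $I_3^{O}(n)$) appear verbatim in both statements; moreover the bound $2(e-\lfloor(R_{n+2}-R_{n+1})/2\rfloor)-1$ is precisely $G_n$ by \eqref{eq5.1new}. So all that remains is to reconcile the conditions $I_1^{O}(n)$ and $I_2^{O}(n)$ with clauses (i) and (ii).

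First I would settle the two parenthetical equivalences. As $n$ is odd, $n\in[1,n]^{O}$ forces $R_n=0$; hence by Proposition~\ref{prop:Ralphaproperty2}(i) the condition $\alpha_n=0$ is equivalent to $R_{n+1}-R_n=-2e$, i.e. to $R_{n+1}=-2e$. Similarly $n+2$ is odd, so $R_{n+2}\ge 0$ by Proposition~\ref{prop:Ralphaproperty3}(i), and therefore $R_{n+2}\le 1$ is equivalent to $R_{n+2}\in\{0,1\}$.

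The core of the argument is then a purely logical reshuffling. Beyond the $R_i$ equalities, $I_1^{O}(n)$ contributes only the restriction $\alpha_n\in\{0,1\}$, while $I_2^{O}(n)$ consists of two implications, one for the case $\alpha_n=0$ and one for the case $\alpha_n=1$. Splitting on the value of $\alpha_n$: if $\alpha_n=0$, the first implication of $I_2^{O}(n)$ becomes the requirement $R_{n+2}\in\{0,1\}$ and the second is vacuous, which is exactly clause (i); if $\alpha_n=1$, the first implication is vacuous and the second is exactly the conditional in clause (ii). Conversely, each of (i) and (ii) pins down a single admissible value of $\alpha_n$ and recovers the relevant branch of $I_2^{O}(n)$ while rendering the other branch vacuous. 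Hence ``$I_1^{O}(n)$ and $I_2^{O}(n)$'' is equivalent to the disjunction of (i) and (ii), which finishes the reformulation. I expect no genuine obstacle here: the mathematical content already resides in Theorem~\ref{cor:odd-nuniversaldyadic-simplify}, and the only care required is the bookkeeping of the two equivalences above together with the verification that the disjunctive repackaging is information-preserving.
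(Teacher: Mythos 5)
Your proposal is correct and matches the paper's approach: the paper itself offers no separate proof of Theorem~\ref{thm:odd-nuniversaldyadic-beli-style}, asserting only that it is an easy rephrasing of Theorem~\ref{cor:odd-nuniversaldyadic-simplify}, and your argument supplies exactly the intended bookkeeping (the identification of the bound with $G_n$ via \eqref{eq5.1new}, the equivalences $\alpha_n=0\Leftrightarrow R_{n+1}=-2e$ and $R_{n+2}\le 1\Leftrightarrow R_{n+2}\in\{0,1\}$ from Propositions~\ref{prop:Ralphaproperty2}(i) and \ref{prop:Ralphaproperty3}(i), and the case split on $\alpha_n\in\{0,1\}$ showing the two branches of $I_2^{O}(n)$ repackage as the disjunction of (i) and (ii)).
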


\begin{lem}\label{lem:simplifythm}
	Suppose that $ M $ satisfies $ I_{1}^{E}(n-1) $ and $ I_{2}^{O}(n) $.
	
	(i) If $ \alpha_{n}=0 $, then $ R_{n+1}=-2e $.
	
	(ii) If $ \alpha_{n}=1 $, then $ R_{n+2}-R_{n+1}\le 2e-1$ and $ d[-a_{n,n+1}]=1-R_{n+1} $. If moreover $ R_{n+1}=2-2e $, then $ \alpha_{n+1}=2e-1 $.
\end{lem}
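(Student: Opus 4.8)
The plan is to read off $R_{n-1}=-2e$ and $R_{n}=0$ from $I_{1}^{E}(n-1)$ (using that $n$ is odd, so $n\in[1,n]^{O}$ and $n-1\in[1,n-1]^{E}$), and then to derive all three assertions from Proposition~\ref{prop:Ralphaproperty2} together with the single substantive input $I_{2}^{O}(n)$. Part (i) is immediate: by Proposition~\ref{prop:Ralphaproperty2}(i), $\alpha_{n}=0$ is equivalent to $R_{n+1}-R_{n}=-2e$, and since $R_{n}=0$ this says $R_{n+1}=-2e$.

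For part (ii), assuming $\alpha_{n}=1$, I would first establish the gap bound $R_{n+2}-R_{n+1}\le 2e-1$. Since $R_{n+2}\ge R_{n}=0$ by \eqref{eq:GoodBONGs}, I split on $R_{n+2}$. When $R_{n+2}\le 1$, Proposition~\ref{prop:Ralphaproperty2}(vi) forces $R_{n+1}\in[2-2e,0]^{E}\cup\{1\}$, hence $R_{n+1}\ge 2-2e$, giving $R_{n+2}-R_{n+1}\le 1-R_{n+1}\le 2e-1$ outright. When $R_{n+2}>1$, the trigger condition of $I_{2}^{O}(n)$ is met, so $\alpha_{n+2}\le G_{n}$; combining with $\alpha_{n+2}\ge 0$ (Proposition~\ref{prop:Ralphaproperty2}(i)) gives $G_{n}\ge 0$, and a direct inspection of the two-parity formula in \eqref{eq5.1new} shows that in either parity of $R_{n+2}-R_{n+1}$ the inequality $G_{n}\ge 0$ is equivalent to $R_{n+2}-R_{n+1}\le 2e-1$.

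Next I would settle the equality $d[-a_{n,n+1}]=1-R_{n+1}$ and the final clause together. Proposition~\ref{prop:Ralphaproperty2}(vi) already yields $d[-a_{n,n+1}]\ge 1-R_{n+1}$, with equality whenever $R_{n+1}\ne 2-2e$; so only the case $R_{n+1}=2-2e$ remains, which is exactly the case of the last clause. There $R_{n+2}\ge 0$ gives $R_{n+2}-R_{n+1}\ge 2e-2$, while the gap bound just proved gives $R_{n+2}-R_{n+1}\le 2e-1$, so $R_{n+2}-R_{n+1}\in\{2e-2,2e-1\}$. If it equals $2e-2$, Proposition~\ref{prop:Ralphaproperty2}(iv) gives $\alpha_{n+1}=(R_{n+2}-R_{n+1})/2+e=2e-1$; if it equals $2e-1$, which is odd, Proposition~\ref{prop:Ralphaproperty2}(iii) again gives $\alpha_{n+1}=2e-1$. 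This proves the final clause, and then by \eqref{defn:d[]} we have $d[-a_{n,n+1}]\le\alpha_{n+1}=2e-1=1-R_{n+1}$, which combined with the lower bound closes the equality in this last case.

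The delicate point---and the only place the nontrivial hypothesis $I_{2}^{O}(n)$ enters---is the case $R_{n+1}=2-2e$, where Proposition~\ref{prop:Ralphaproperty2}(vi) supplies merely an inequality for $d[-a_{n,n+1}]$. The equality must be forced through the exact value $\alpha_{n+1}=2e-1$, which itself rests on squeezing $R_{n+2}-R_{n+1}$ into $\{2e-2,2e-1\}$; the upper half of that squeeze is precisely the conversion of the $\alpha_{n+2}$-bound in $I_{2}^{O}(n)$ into a gap bound on $R_{n+2}-R_{n+1}$ via the nonnegativity of $\alpha_{n+2}$. Everything else is a bookkeeping application of the dictionary between $\alpha_{i}$ and $R_{i+1}-R_{i}$ recorded in Proposition~\ref{prop:Ralphaproperty2}.
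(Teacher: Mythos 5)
Your proof is correct and follows essentially the same route as the paper: part (i) from Proposition \ref{prop:Ralphaproperty2}(i), the gap bound from $\alpha_{n+2}\ge 0$ versus $\alpha_{n+2}\le G_n$, and the case $R_{n+1}=2-2e$ handled by squeezing $R_{n+2}-R_{n+1}$ into $\{2e-2,2e-1\}$ and invoking Proposition \ref{prop:Ralphaproperty2}(iii)--(iv) to get $\alpha_{n+1}=2e-1$. The only (welcome) difference is that your case split $R_{n+2}\le 1$ versus $R_{n+2}>1$ makes explicit the verification that the trigger of $I_2^O(n)$ is actually met, a point the paper leaves implicit.
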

\begin{proof}
	 	 (i)  This follows immediately from Proposition \ref{prop:Ralphaproperty2}(i), since $ R_{n}=0 $ by  $ I_{1}^{E}(n-1) $.
	 	
	 	 (ii)  Suppose $ \alpha_{n}=1 $. We have $ R_{n+1} \in [2-2e,0]^{E}\cup \{1\} $ by Proposition \ref{prop:Ralphaproperty2}(vi). If $ R_{n+2}-R_{n+1}\ge 2e $, then,  by $ I_{2}^{O}(n) $, we have
 \[
  \alpha_{n+2}\le G_{n}=2(e-\lfloor(R_{n+2}-R_{n+1})/2\rfloor)-1\le 2(e-\lfloor 2e/2\rfloor)-1=-1\,.
  \] This contradicts Proposition \ref{prop:Ralphaproperty2}(i). Hence we have $ R_{n+2}-R_{n+1}\le 2e-1 $.
	 	
	 	By Proposition \ref{prop:Ralphaproperty2}(vi), we have $ d[-a_{n,n+1}]\ge 1-R_{n+1} $, and equality holds except possibly when $ R_{n+1}=2-2e $. Assume that $ R_{n+1}=2-2e $. Then $ 2e-1=1-R_{n+1}\le d[-a_{n,n+1}]\le \alpha_{n+1}$. Recall that $R_{n+2}\ge 0$ from Proposition \ref{prop:Ralphaproperty3}(i). Now $  2e-2=0-(2-2e)\le R_{n+2}-R_{n+1}\le 2e-1 $, i.e. $ R_{n+2}-R_{n+1}\in \{2e-1,2e-2\} $, so $ \alpha_{n+1}=2e-1 $ by Proposition \ref{prop:Ralphaproperty2}(iii) and (iv). Hence $ 2e-1=1-R_{n+1}=d[-a_{n,n+1}]=\alpha_{n+1} $, as desired.
\end{proof}

		\begin{prop}\label{thm:odd-nuniversaldyadic}
			The lattice $ M $ is $n$-universal if and only if the space $FM $ is $n$-universal and $ M $ satisfies the conditions
$I_1^{E}(n-1),\, I_{2}^{E}(n-1),\, I_{3}^{E}(n-1) $, $ I_{2}^{O}(n) $ and $ I_{3}^{O}(n) $.
		\end{prop}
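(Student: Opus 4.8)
The plan is to combine the minimal testing set with Beli's representation criterion. By Theorem~\ref{thm:nuniversaldyadic15theorem}(ii), $M$ is $n$-universal if and only if it represents each of the finitely many rank $n$ maximal lattices listed, for odd $n$, in Proposition~\ref{prop:maximallattices}; and by Theorem~\ref{thm:beligeneral} each representation $N\rep M$ is equivalent to conditions (i)--(iv) there, the space condition $FN\rep FM$ being automatic since $FM$ is $n$-universal (as $m\ge n+3$). The invariants $S_i=R_i(N)$ of these test lattices are recorded in Lemma~\ref{lem:maximal-BONG-odd}(ii), so the proof reduces to translating (i)--(iv), evaluated on this family, into conditions on the invariants $R_i,\alpha_i$ of $M$.

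The key organizing observation is that the three even-labelled conditions $I_1^E(n-1)$, $I_2^E(n-1)$, $I_3^E(n-1)$ are exactly those attached by Theorem~\ref{thm:even-nuniversaldyadic} (with even parameter $n-1$) to $(n-1)$-universality. Since $n$-universality forces $(n-1)$-universality — any rank $n-1$ integral lattice $N'$ is an orthogonal summand of the rank $n$ integral lattice $N'\perp\langle 1\rangle$, which $M$ represents — and since $\dim FM=m\ge n+3$ makes $FM$ automatically $(n-1)$-universal, the necessity of these three conditions follows at once from Theorem~\ref{thm:even-nuniversaldyadic}. They pin down $R_i=0$ for odd $i\le n$ and $R_i=-2e$ for even $i\le n-1$, and constrain the transition invariants through $I_2^E(n-1)$ and $I_3^E(n-1)$. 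For necessity of the odd-labelled conditions $I_2^O(n)$ and $I_3^O(n)$ I would argue contrapositively: were either to fail, one of conditions (iii), (iv) of Theorem~\ref{thm:beligeneral} would fail for a suitably chosen test lattice among $N_1^n(\delta),N_2^n(\delta),N_1^n(\delta\pi),N_2^n(\delta\pi)$ (with $S_n\in\{0,1\}$, and $S_{n-1}=2-2e$ for $N_2^n(\delta)$), contradicting $N\rep M$; the space-level obstructions here come from Lemmas~\ref{lem:spacerep-criterion} and~\ref{lem:spacerep-criterion-2}.

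For sufficiency I would assume all five conditions and verify (i)--(iv) of Theorem~\ref{thm:beligeneral} for every rank $n$ integral $N$. Condition (i) is immediate from $I_1^E(n-1)$ and Proposition~\ref{prop:Ralphaproperty3}(i) (in particular $R_n=0\le S_n$, as $n$ is odd). For condition (ii), the $R$-values forced by $I_1^E(n-1)$ render the intermediate indices non-essential, reducing the check to $i=1$ and the odd index $i=n$; the latter is precisely Lemma~\ref{lem:da1nb1n} when $\alpha_n=1$ (using $d[-a_na_{n+1}]=1-R_{n+1}$ from $I_2^E(n-1)$), and is direct when $\alpha_n=0$. Conditions (iii) and (iv), which run through index $n+1$, are where the odd conditions enter: $I_2^O(n)$ governs the interaction of $\alpha_n,R_{n+1},R_{n+2},\alpha_{n+2}$ needed to force the initial-segment representations $[b_1,\dots,b_{i-1}]\rep[a_1,\dots,a_i]$ (resp.\ $[a_1,\dots,a_{i+1}]$) demanded there, while $I_3^O(n)$ excludes the gap $R_{n+3}-R_{n+2}>2e$ that would otherwise activate (iv). Lemma~\ref{lem:d[-ai+1ai+2]=1-Ri+2} is the main tool for evaluating the $d[\cdot]$-invariants at the boundary, and Lemmas~\ref{lem:spacerep-criterion}, \ref{lem:spacerep-criterion-2} certify the required space representations.

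The main obstacle is the sufficiency verification of (iii) and (iv) at the boundary indices $i=n$ and $i=n+1$, where the odd rank breaks the symmetry exploited in the even case. The delicate point is the sharp bound $\alpha_{n+2}\le G_n$ in $I_2^O(n)$: one must show that this single inequality is exactly what guarantees, once the odd index $n$ has been crossed, all the space-level representations required by Theorem~\ref{thm:beligeneral}(iii)--(iv), and conversely that any weakening is detected by one of the test lattices. Matching the two even ``halves'' flanking the odd index $n$, and disentangling the cases of $I_2^O(n)$ according to the parity of $R_{n+2}-R_{n+1}$ and to whether $\alpha_n=0$ or $1$, is the part demanding the most care.
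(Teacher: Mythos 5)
Your proposal follows essentially the same route as the paper's proof: necessity of the three even-indexed conditions via the observation that $n$-universality implies $(n-1)$-universality combined with Theorem \ref{thm:even-nuniversaldyadic}, and then a translation of conditions (i)--(iv) of Theorem \ref{thm:beligeneral} -- verified for all rank-$n$ integral lattices in the sufficiency direction, and defeated by suitably chosen maximal test lattices $N_{\nu}^{n}(\cdot)$ in the necessity direction -- into the conditions $I_{2}^{O}(n)$ and $I_{3}^{O}(n)$, which is exactly what the paper's Lemmas \ref{lem:con12odd}, \ref{lem:con3odd} and \ref{lem:con4odd} accomplish, using the same key tools (Lemmas \ref{lem:maximal-BONG-odd}, \ref{lem:Aj}, \ref{lem:da1nb1n}, \ref{lem:d[-ai+1ai+2]=1-Ri+2}, \ref{lem:spacerep-criterion} and \ref{lem:spacerep-criterion-2}). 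Apart from summary-level omissions (e.g.\ the check of Theorem \ref{thm:beligeneral}(ii) at $i=n-1$, and the precise choice of discriminating determinants $c=(-1)^{(n+1)/2}a_{1,n+2}$ and $c\tilde{c}^{\#}$), your plan matches the paper's proof in structure and substance.
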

		\begin{proof}
			Note that if $ M $ is $ n $-universal, then it is also $ (n-1) $-universal. So, by Theorem \ref{thm:even-nuniversaldyadic}, $ M $ satisfies the conditions $ I_{1}^{E}(n-1) $, $ I_{2}^{E}(n-1) $ and $ I_{3}^{E}(n-1) $. For the remainder of the proof, we are done by combining Theorem \ref{thm:beligeneral} with  Lemmas \ref{lem:con12odd}, \ref{lem:con3odd} and \ref{lem:con4odd} below.
		\end{proof}

		\begin{lem}\label{lem:con12odd}
			Suppose that $ FM $ is $n$-universal.
			
			(i) If $ M $ satisfies  $ I_{1}^{E}(n-1) $, then Theorem $\ref{thm:beligeneral}(i)$ holds for all integral $ \mathcal{O}_{F} $-lattices $ N $ of rank $ n $.
			
(ii)  If $ M $ satisfies  $ I_{1}^{E}(n-1) $ and $ I_{2}^{E}(n-1) $, then Theorem $\ref{thm:beligeneral}(ii)$ holds for all integral $ \mathcal{O}_{F} $-lattices $ N $ of rank $ n $.
		\end{lem}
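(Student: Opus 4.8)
The plan is to mirror the structure of the even-degree argument in Lemma~\ref{lem:con12even}, exploiting that the two reductive tools needed here---Lemma~\ref{lem:Aj} and, above all, Lemma~\ref{lem:da1nb1n}---have already been prepared. Both parts rest on the integrality of $N$, which via Proposition~\ref{prop:Ralphaproperty3}(i) gives $S_i\ge 0$ for odd $i$ and $S_i\ge -2e$ for even $i$.

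For part (i) I would argue that the stronger condition $R_i\le S_i$ holds at every index $1\le i\le n$, which already implies Theorem~\ref{thm:beligeneral}(i). Under $I_1^E(n-1)$ one has $R_i=0$ for odd $i\le n$ and $R_i=-2e$ for even $i\le n-1$; since $n$ is odd these two ranges exhaust $[1,n]$. Comparing with $S_i\ge 0$ (odd $i$) and $S_i\ge -2e$ (even $i$) yields $R_i\le S_i$ in every case. This part is essentially immediate and does not use $I_2^E(n-1)$.

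For part (ii) the first step is the same ``not essential index'' reduction used in the even case. From $I_1^E(n-1)$ one checks that $R_{i+1}\le S_{i-1}$ for $2\le i\le n-1$; the parity bookkeeping differs slightly from the even case, since here the guaranteed range stops at $n-1$ rather than $n$ because $R_{n+1}$ is not controlled. Hence the indices $2,\ldots,n-1$ are not essential in the sense of \cite[Definition~7]{beli_representations_2019}, and \cite[Lemma~2.12]{beli_representations_2019} reduces the verification of $d[a_{1,i}b_{1,i}]\ge A_i$ to the two indices $i=1$ and $i=n$. For $i=1$ the estimate $A_1\le (R_2-S_1)/2+e=-S_1/2\le 0\le d[a_1b_1]$ settles it at once, using $R_2=-2e$ (valid as $n\ge 3$) and $S_1\ge 0$.

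The crux is the index $i=n$, where I would split according to $I_2^E(n-1)$. If $\alpha_n=0$, then $R_{n+1}=-2e$ by Proposition~\ref{prop:Ralphaproperty2}(i), so $A_n\le (R_{n+1}-S_n)/2+e=-S_n/2\le 0$; on the other hand, since $j=n$ forces $\beta_n$ to be dropped in \eqref{defn:d[ab]}, one has $d[a_{1,n}b_{1,n}]=\min\{d(a_{1,n}b_{1,n}),\alpha_n\}=0\ge A_n$. If instead $\alpha_n=1$, then $I_2^E(n-1)$ supplies $d[-a_{n,n+1}]=1-R_{n+1}$, and because $R_{n-1}=-2e$ and $R_n=0$ hold by $I_1^E(n-1)$, this is exactly the hypothesis of Lemma~\ref{lem:da1nb1n}; that lemma gives $R_{n+1}-S_n+d[-a_{1,n+1}b_{1,n-1}]\le d[a_{1,n}b_{1,n}]$, and since this left-hand side is one of the terms in the minimum defining $A_n$, we conclude $A_n\le d[a_{1,n}b_{1,n}]$. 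The main obstacle is precisely this last subcase, but all of its analytic difficulty has been front-loaded into Lemma~\ref{lem:da1nb1n}, so the present proof amounts to verifying that its hypotheses are met and reading off the conclusion.
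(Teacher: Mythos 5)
Your part (i), your handling of the index $i=1$, and your analysis at $i=n$ (splitting on $\alpha_n\in\{0,1\}$ via $I_2^E(n-1)$ and feeding the case $\alpha_n=1$ into Lemma~\ref{lem:da1nb1n}) are correct and follow the same route as the paper. The gap is in the reduction step: you assert that, since the indices $2,\ldots,n-1$ are not essential, \cite[Lemma 2.12]{beli_representations_2019} cuts the verification of $d[a_{1,i}b_{1,i}]\ge A_i$ down to the two indices $i=1$ and $i=n$. That is more than the reduction gives. An index $i$ can be discarded only when the non-essentiality is known one step further as well, i.e. when both $R_{i+1}\le S_{i-1}$ and $R_{i+2}\le S_i$ are guaranteed; this is already visible in the even case you are mirroring, where the paper keeps $i=n$ on the checklist even though every index in $2,\ldots,n$ is non-essential there (precisely because $R_{n+2}$ is uncontrolled). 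In the odd case, $I_1^E(n-1)$ controls $R_i$ only for $i\le n$; nothing prevents $R_{n+1}>S_{n-1}$, so the index $n$ may be essential, and consequently the index $n-1$ cannot be dropped. The correct reduction is to the three indices $i=1$, $n-1$ and $n$, which is what the paper does.

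The missing verification at $i=n-1$ is easy to supply, and in fact you named the right tool in your opening without ever using it: since $n-1$ is even and $I_1^E(n-1)$ gives $R_{n-1}=-2e$ and $R_n=0$, Lemma~\ref{lem:Aj} applied with $j=n-1$ yields $d[a_{1,n-1}b_{1,n-1}]\ge A_{n-1}$ (this is the exact analogue of how the index $n$ is handled in the even case, via Lemma~\ref{lem:Aj} with $j=n$). Adding this single application closes the gap and makes your argument coincide with the paper's proof.
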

		\begin{proof}
			 (i) By $ I_{1}^{E}(n-1) $ and Proposition \ref{prop:Ralphaproperty3}(i), we have $ R_{i}=0\le S_{i} $ for odd $ i $ and $ R_{i}=-2e\le S_{i} $ for even $ i $, so  Theorem\;\ref{thm:beligeneral}(i) holds.
			
			 (ii) Similarly, we have $ R_{i+1}=0\le S_{i-1} $ for $ i\in [2,n-1]^{E} $ and $ R_{i+1}=-2e\le S_{i-1}$ for $ i\in [2,n-1]^{O} $. Hence the indices $ 2,\ldots, n-1 $ are not essential. By \cite[Lemma 2.12]{beli_representations_2019}, it remains to prove that $d[a_{1,i}b_{1,i}]\ge A_i$ for $ i=1 $, $n-1 $ and $n $. For $ i=1 $, since $ S_{1}\ge 0 $, $ A_{1}\le (R_{2}-S_{1})/2+e\le (-2e-0)/2+e=0\le d[a_{1}b_{1}]$. For $ i=n-1 $, it follows from Lemma \ref{lem:Aj} with $ j=n-1 $.

			Recall from the definition that $ A_{n}\le (R_{n+1}-S_{n})/2+e $ and $ A_{n}\le R_{n+1}-S_{n}+d[-a_{1,n+1}b_{1,n-1}] $.
			If $ R_{n+1}=-2e $, then since $ S_{n}\ge 0 $, we have $ A_{n}\le (R_{n+1}-S_{n})/2\le (-2e-0)/2+e=0\le d[a_{1,n}b_{1,n}] $. If $ R_{n+1}\not=-2e $, i.e. $ R_{n+1}-R_{n}\not=-2e $, then $ \alpha_{n}\not=0 $ by Proposition \ref{prop:Ralphaproperty2}(i). Hence $ \alpha_{n}=1 $ and $ d[-a_{n}a_{n+1}]=1-R_{n+1} $ by $ I_{2}^{E}(n-1) $. So
				\begin{align*}
			A_{n}\le R_{n+1}-S_{n}+d[-a_{1,n+1}b_{1,n-1}]\le d[a_{1,n}b_{1,n}]
		\end{align*}
		 by Lemma \ref{lem:da1nb1n}.
		\end{proof}
	
		\begin{lem}\label{lem:oddlatticeproperty1}
		Suppose that $ M $ satisfies $ I_{1}^{E}(n-1) $. If $ n=3 $, $ d(a_{1,4})=\infty $, $ R_{4}=-2e $ and $ R_{5}>1 $, then Theorem \ref{thm:beligeneral}(iii) fails at $ i=4 $ when $ N=N_{2}^{3}(\varepsilon\pi) $, with $ \varepsilon\in \mathcal{O}_{F}^{\times} $ (cf. Proposition \ref{prop:maximallattices}).

	\end{lem}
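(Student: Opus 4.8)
The plan is to exhibit, for $N=N_{2}^{3}(\varepsilon\pi)$, that the hypotheses \eqref{eq:assumption(3)(ii)'} of Theorem~\ref{thm:beligeneral}(iii) are satisfied at the index $i=4=n+1=\min\{m-1,n+1\}$ (recall $m\ge n+3=6$) while its conclusion $[b_{1},b_{2},b_{3}]\rep[a_{1},a_{2},a_{3},a_{4}]$ fails. First I would record the data of $N$. By Proposition~\ref{prop:maximallattices}, $N=2^{-1}A(2,2\rho)\perp\langle\Delta\varepsilon\pi\rangle$, and by Lemma~\ref{lem3.10new}(i) together with \cite[Corollary 4.4(i)]{beli_integral_2003} it has the good BONG $\prec 1,-\Delta\pi^{-2e},\Delta\varepsilon\pi\succ$; equivalently, by Lemma~\ref{lem:maximal-BONG-odd}(ii), $S_{1}=0$, $S_{2}=-2e$, $S_{3}=1$. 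In particular $b_{1,2}\equiv-\Delta$ and $b_{1,3}\equiv-\varepsilon\pi$ in $F^{\times}/F^{\times2}$, and $FN=W_{2}^{3}(\varepsilon\pi)$.

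Next I would show that the conclusion of (iii) fails. From $I_{1}^{E}(2)$ we have $R_{1}=R_{3}=0$ and $R_{2}=-2e$, and by hypothesis $R_{4}=-2e$ and $d(a_{1,4})=\infty$, i.e. $a_{1,4}\in F^{\times2}$. By Proposition~\ref{prop:Ralphaproperty3}(iv), $[a_{1},a_{2},a_{3},a_{4}]$ is isometric to $\mathbb{H}^{2}$ or $\mathbb{H}\perp[1,-\Delta]$; since $\det[a_{1},\ldots,a_{4}]=a_{1,4}\equiv 1$ while $\det(\mathbb{H}\perp[1,-\Delta])=\Delta\notin F^{\times2}$, we must have $[a_{1},a_{2},a_{3},a_{4}]\cong\mathbb{H}^{2}=W_{1}^{4}(1)$. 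I would then apply Lemma~\ref{lem:spacerep-criterion} with $W_{1}=W_{1}^{3}(\varepsilon\pi)$, $W_{2}=W_{2}^{3}(\varepsilon\pi)$ and $V=\mathbb{H}^{2}$ (of dimension $4=n+1$): the space $\mathbb{H}^{2}$ represents exactly one of $W_{1}^{3}(\varepsilon\pi)$ and $W_{2}^{3}(\varepsilon\pi)$. Since $\mathbb{H}^{2}=\mathbb{H}\perp[\varepsilon\pi,-\varepsilon\pi]$ visibly represents $W_{1}^{3}(\varepsilon\pi)=\mathbb{H}\perp[\varepsilon\pi]$, it does not represent $W_{2}^{3}(\varepsilon\pi)=FN$. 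Hence $[b_{1},b_{2},b_{3}]$ is not represented by $[a_{1},a_{2},a_{3},a_{4}]$.

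Finally I would verify the two conditions in \eqref{eq:assumption(3)(ii)'} at $i=4$. The first, $R_{5}>S_{3}=1$, is exactly the hypothesis $R_{5}>1$. For the second, note $-a_{1,4}b_{1,2}\equiv\Delta$, so $d(-a_{1,4}b_{1,2})=d(\Delta)=2e$; moreover $R_{5}-R_{4}=R_{5}+2e>2e$ gives $\alpha_{4}>2e$ and $S_{3}-S_{2}=1+2e>2e$ gives $\beta_{2}>2e$ by Proposition~\ref{prop:Ralphaproperty2}(ii), whence $d[-a_{1,4}b_{1,2}]=\min\{2e,\alpha_{4},\beta_{2}\}=2e$. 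Since $d[-a_{1,5}b_{1,3}]=\min\{d(-a_{1,5}b_{1,3}),\alpha_{5}\}\ge 0$ (here $\beta_{3}$ is ignored as $j=n=3$, and $\alpha_{5}\ge 0$ by Proposition~\ref{prop:Ralphaproperty2}(i)), I obtain
\[
d[-a_{1,4}b_{1,2}]+d[-a_{1,5}b_{1,3}]\ge 2e>2e+1-R_{5}=2e+S_{3}-R_{5},
\]
the strict inequality using $R_{5}>1$. Thus both hypotheses of (iii) hold while its conclusion fails at $i=4$, as required.

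The step I expect to be the genuine obstacle is the exact evaluation $d[-a_{1,4}b_{1,2}]=2e$: a crude lower bound of $0$ for this bracket would not suffice when $R_{5}=2$, since then $2e+S_{3}-R_{5}=2e-1$ lies just below $2e$. One therefore really needs that both $\alpha_{4}$ and $\beta_{2}$ strictly exceed $2e$, which forces the minimum defining $d[-a_{1,4}b_{1,2}]$ to equal the square-class contribution $d(\Delta)=2e$.
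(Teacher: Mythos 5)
Your proposal is correct and follows essentially the same route as the paper's proof: verify $R_5>S_3=1$, bound $d[-a_{1,4}b_{1,2}]$ from below by $2e$ to get the second hypothesis of \eqref{eq:assumption(3)(ii)'}, identify $[a_1,\ldots,a_4]\cong\mathbb{H}^2$ via Proposition \ref{prop:Ralphaproperty3}(iv) and $a_{1,4}\in F^{\times 2}$, and conclude non-representation of $W_2^3(\varepsilon\pi)$ by Lemma \ref{lem:spacerep-criterion}. The only (immaterial) difference is that the paper obtains $d[-a_{1,4}b_{1,2}]\ge 2e$ from Proposition \ref{prop:Ralphaproperty3}(iii) applied to both lattices plus the domination principle, whereas you compute the square class $-a_{1,4}b_{1,2}\equiv\Delta$ directly and dispose of the terms $\alpha_4,\beta_2$ via Proposition \ref{prop:Ralphaproperty2}(ii).
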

	\begin{proof}
		We have $ R_{5}>1=S_{3}$ by Lemma \ref{lem:maximal-BONG-odd}(ii). Since $ R_{4}=S_{2}=-2e $, it follows that $ d[a_{1,4}]\ge 2e $ and $ d[-b_{1,2}]\ge 2e $ by Proposition \ref{prop:Ralphaproperty3}(iii). Hence $ d[-a_{1,4}b_{1,2}]\ge 2e $ by the domination principle. It follows that
		\begin{align*}
			d[-a_{1,4}b_{1,2}]+d[-a_{1,5}b_{1,3}]\ge 2e+0>2e+1-2\ge 2e+S_{3}-R_{5}\,.
		\end{align*}
	    Since $ R_{4}=-2e $,  by Proposition \ref{prop:Ralphaproperty3}(iv), we have $ [a_{1},a_{2},a_{3},a_{4}]\cong \mathbb{H}^{2}$ or $ \mathbb{H}\perp [1,-\Delta] $. Since also $a_{1,4}\in F^{\times 2} $, we must have $ [a_{1},a_{2},a_{3},a_{4}]\cong \mathbb{H}^{2} $. By definition, $ [b_{1},b_{2},b_{3}]\cong FN_{2}^{3}(\varepsilon\pi)=W_{2}^{3}(\varepsilon\pi) $. But $ \mathbb{H}^{2} $ clearly represents $ W_{1}^{3}(\varepsilon\pi)=\mathbb{H}\perp [\varepsilon\pi]$, so it does not represent $ W_{2}^{3}(\varepsilon\pi) $ by Lemma \ref{lem:spacerep-criterion}.		
	\end{proof}

		\begin{lem}\label{lem:oddlatticeproperty2-1}
		Suppose that $ M $ satisfies $ I_{i}^{E}(n-1) $ for $ i=1,2$. If $ \alpha_{n}=1 $ and either $ R_{n+1}=1 $ or $ R_{n+2}>1 $, then $ d((-1)^{(n+1)/2}a_{1,n+1})=1-R_{n+1} $, $ ((-1)^{(n+1)/2}a_{1,n+1})^{\#} $ is a unit and $ d(((-1)^{(n+1)/2}a_{1,n+1})^{\#})=2e+R_{n+1}-1 $.
	\end{lem}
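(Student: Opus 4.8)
The plan is to reduce everything to the single claim that $d(c)=1-R_{n+1}$, where $c:=(-1)^{(n+1)/2}a_{1,n+1}$. Once this is established, parts (b) and (c) are immediate: since $\alpha_n=1$ forces $R_{n+1}\in[2-2e,0]^E\cup\{1\}$ by Proposition \ref{prop:Ralphaproperty2}(vi), we have $1-R_{n+1}\le 2e-1<2e$, so $c\notin F^{\times2}\cup\Delta F^{\times2}$ and $c^{\#}$ is defined; then Proposition \ref{prop:duality2} gives at once that $c^{\#}$ is a unit and $d(c^{\#})=2e-d(c)=2e+R_{n+1}-1$.

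To compute $d(c)$, I would first isolate the ``tail'' factor $-a_na_{n+1}$. From $I_1^{E}(n-1)$ we have $R_n=0$ and $R_{n-1}=-2e$, so $R_n-R_{n-1}=2e$ and hence $\alpha_{n-1}=2e$ by Proposition \ref{prop:Ralphaproperty2}(ii). Since $\alpha_n=1$, condition $I_2^{E}(n-1)$ gives $d[-a_{n,n+1}]=1-R_{n+1}$, and by \eqref{defn:d[]} this equals $\min\{d(-a_na_{n+1}),\alpha_{n-1},\alpha_{n+1}\}$. As $1-R_{n+1}<2e=\alpha_{n-1}$, the term $\alpha_{n-1}$ is irrelevant, so $\min\{d(-a_na_{n+1}),\alpha_{n+1}\}=1-R_{n+1}$.

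The crux is to promote this to the honest equality $d(-a_na_{n+1})=1-R_{n+1}$, i.e. to rule out the possibility that the minimum is pinned down by $\alpha_{n+1}$; this is exactly where the hypothesis ``$R_{n+1}=1$ or $R_{n+2}>1$'' enters and is the only real obstacle. If $R_{n+1}=1$ the claim is trivial by parity, since then $\ord(a_na_{n+1})=1$ is odd and $d(-a_na_{n+1})=0=1-R_{n+1}$. Otherwise $R_{n+1}$ is even and the hypothesis forces $R_{n+2}>1$, hence $R_{n+2}\ge 2$; assuming $\alpha_{n+1}=1-R_{n+1}$, I would plug into the formula \eqref{eq:alpha-defn} for $\alpha_{n+1}$: the first term $(R_{n+2}-R_{n+1})/2+e=1-R_{n+1}$ would give $R_{n+2}=2-R_{n+1}-2e\le 0$ (using $R_{n+1}\ge 2-2e$), while the second term $R_{n+2}-R_{n+1}+d[-a_{n+1,n+2}]=1-R_{n+1}$ would force $d[-a_{n+1,n+2}]=1-R_{n+2}<0$; both contradict $R_{n+2}\ge 2$ (together with the non-negativity of $d[\,\cdot\,]$). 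Hence $\alpha_{n+1}>1-R_{n+1}$, and therefore $d(-a_na_{n+1})=1-R_{n+1}$.

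Finally I would transfer this to the full product by writing $c=c_1\cdot(-a_na_{n+1})$ with $c_1:=(-1)^{(n-1)/2}a_{1,n-1}$. Since $R_{n-1}=-2e$, Proposition \ref{prop:Ralphaproperty3}(iii) gives $d(c_1)\ge d[(-1)^{(n-1)/2}a_{1,n-1}]\ge 2e$. The domination principle for $d$ then yields $d(c)\ge\min\{d(c_1),d(-a_na_{n+1})\}=1-R_{n+1}$, the minimum being attained at the tail because $1-R_{n+1}<2e\le d(c_1)$. Applying domination in the reverse direction to $-a_na_{n+1}=c_1^{-1}c$ gives $1-R_{n+1}=d(-a_na_{n+1})\ge\min\{d(c_1),d(c)\}$, which together with $d(c_1)\ge 2e>1-R_{n+1}$ forces $d(c)\le 1-R_{n+1}$. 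Combining the two inequalities gives $d(c)=1-R_{n+1}$, completing the argument.
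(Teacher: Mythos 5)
Your proof is correct, and its skeleton parallels the paper's: extract $d[-a_{n,n+1}]=1-R_{n+1}$ from $I_2^{E}(n-1)$, show that $\alpha_{n+1}$ cannot be the term realizing this minimum, and finish with Proposition \ref{prop:duality2}. The difference lies in how the two steps are organized. The paper first passes from the tail to the full product at the level of the bracketed invariants, quoting Lemma \ref{lem:d[-ai+1ai+2]=1-Ri+2}(ii) to obtain $d[(-1)^{(n+1)/2}a_{1,n+1}]=1-R_{n+1}=\min\{d((-1)^{(n+1)/2}a_{1,n+1}),\alpha_{n+1}\}$, and then eliminates $\alpha_{n+1}$ by proving $\alpha_{n+1}>1-R_{n+1}$ directly from Proposition \ref{prop:Ralphaproperty2}(ii),(iii), splitting on whether $R_{n+2}-R_{n+1}>2e$. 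You invert the order: you first strip the $\alpha$'s (discarding $\alpha_{n-1}=2e$, then ruling out $\alpha_{n+1}=1-R_{n+1}$ by a contradiction read off from the two terms of \eqref{eq:alpha-defn}) to get the honest equality $d(-a_na_{n+1})=1-R_{n+1}$, and only afterwards transfer over the head factor $(-1)^{(n-1)/2}a_{1,n-1}$, whose defect is $\ge 2e$ by Proposition \ref{prop:Ralphaproperty3}(iii), via a two-sided application of the plain domination principle. Your route is more self-contained, never invoking Lemma \ref{lem:d[-ai+1ai+2]=1-Ri+2}; the paper's route is shorter precisely because that lemma encapsulates your final head-transfer step. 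Both exclusions of $\alpha_{n+1}$ use the hypothesis ``$R_{n+1}=1$ or $R_{n+2}>1$'' in essentially the same way, so the arguments are equivalent in substance.
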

	
	\begin{proof}
		Since $ \alpha_{n}=1 $,  $ R_{n+1}=R_{n+1}-R_{n}>-2e $ by Proposition \ref{prop:Ralphaproperty2}(i) and $ d[-a_{n}a_{n+1}]=1-R_{n+1} $ by $ I_{2}^{E}(n-1) $. Hence
		\begin{align*}
			1-R_{n+1}=d[(-1)^{(n+1)/2}a_{1,n+1}]\,\in\,\{d((-1)^{(n+1)/2}a_{1,n+1})\,,\,\alpha_{n+1}\}
		\end{align*}
	by Lemma \ref{lem:d[-ai+1ai+2]=1-Ri+2}(ii) and the definition of $ d[(-1)^{(n+1)/2}a_{1,n+1}] $.	From Lemma \ref{lem:d[-ai+1ai+2]=1-Ri+2}(i) we find $  R_{n+1}\in [2-2e,0]^{E}\cup \{1\} $.

If $ R_{n+1}=1 $, then $ d((-1)^{(n+1)/2}a_{1,n+1})=0=1-R_{n+1} $, as $ \ord(a_{1,n+1})$ is odd. Suppose $ R_{n+1}\in [2-2e,0]^{E} $. Then, by the hypothesis, $ R_{n+2}>1 $. If $ R_{n+2}-R_{n+1}>2e $, then $ \alpha_{n+1}>2e>1-R_{n+1} $ by Proposition \ref{prop:Ralphaproperty2}(ii); if $ R_{n+2}-R_{n+1}\le 2e $, then $ \alpha_{n+1}\ge R_{n+2}-R_{n+1}>1-R_{n+1} $ by Proposition \ref{prop:Ralphaproperty2}(iii). In both cases, we see that $ \alpha_{n+1}>1-R_{n+1}=d[(-1)^{(n+1)/2}a_{1,n+1}]  $ and so $ d((-1)^{(n+1)/2}a_{1,n+1})=1-R_{n+1}<2e$.

The other assertions of the lemma follow from Proposition \ref{prop:duality2}.
	\end{proof}

		\begin{lem}\label{lem:oddlatticeproperty2-2}
		Suppose that $ M $ satisfies $ I_{i}^{E}(n-1) $ for $ i=1,2,3 $ (cf. Theorem \ref{thm:even-nuniversaldyadic}). Assume that $ \alpha_{n+2}>G_{n}$ (cf. $\eqref{eq5.1new}$),  $ \alpha_{n}=1 $, and either $ R_{n+1}=1 $ or $ R_{n+2}>1 $. Let $c=(-1)^{(n+1)/2}a_{1,n+2}$ and $\tilde{c}=(-1)^{(n+1)/2}a_{1,n+1} $.

(i) We have $R_{n+2}>S_n$ and $d[-a_{1,n+1}b_{1,n-1}]+d[-a_{1,n+2}b_{1,n}]>2e+S_n-R_{n+2}$ for both $N=N^n_1(c)$ and $N=N^n_1(c\tilde{c}^{\#})$.

(ii) $[a_1,\ldots, a_{n+1}]$ does not represent $FN=[b_1,\ldots, b_n]$ for $N=N^n_1(c)$ or $N=N^n_1(c\tilde{c}^{\#})$.

Thus Theorem \ref{thm:beligeneral}(iii) fails at $ i=n+1 $ for at least one of the lattices  $N_{1}^{n}(c) $ and $ N_{1}^{n}(c\tilde{c}^{\#}) $.
	\end{lem}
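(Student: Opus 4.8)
The plan is to show that parts (i) and (ii) together furnish the claimed failure of Theorem \ref{thm:beligeneral}(iii) at the index $i=n+1$: condition (i) is precisely the hypothesis of that statement at $i=n+1$ (namely $R_{n+2}>S_n$ together with the displayed inequality on the two $d[\cdots]$ terms), while (ii) asserts that its conclusion $[b_1,\ldots,b_n]\rep[a_1,\ldots,a_{n+1}]$ fails for at least one of the two lattices. Thus, once (i) and (ii) are in hand, the final ``Thus'' is immediate. Throughout I write $c_0$ for the parameter of the maximal lattice at hand, so $c_0\in\{c,\,c\tilde{c}^{\#}\}$, and I first record from Lemma \ref{lem:oddlatticeproperty2-1} that $d(\tilde{c})=1-R_{n+1}$, that $\tilde{c}^{\#}$ is a unit, and that $d(\tilde{c}^{\#})=2e+R_{n+1}-1$. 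Both candidates are of type $W_1^n$, so by Lemma \ref{lem:maximal-BONG-odd}(ii) they satisfy $S_{n-1}=-2e$, $b_{1,n-1}\equiv(-1)^{(n-1)/2}$ and $b_{1,n}\equiv(-1)^{(n-1)/2}c_0$; moreover, since $\tilde{c}^{\#}$ is a unit, $c$ and $c\tilde{c}^{\#}$ have $\ord$ of the same parity, so $S_n$ (which is $0$ or $1$) is the same for both.

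For (i), the first point is $R_{n+2}>S_n$. Since $S_n\in\{0,1\}$ and $R_{n+2}\ge 0$, this amounts to excluding small values of $R_{n+2}$: if $S_n=0$ one rules out $R_{n+2}=0$ using the running hypothesis and Remark \ref{re:Rn+1=Rn+2=1}, and if $S_n=1$ (so $R_{n+1}+R_{n+2}$ is odd) one rules out $R_{n+2}\in\{0,1\}$ the same way. For the displayed inequality I evaluate the two terms. From $b_{1,n-1}\equiv(-1)^{(n-1)/2}$ one gets $-a_{1,n+1}b_{1,n-1}\equiv\tilde{c}$, and combined with $\beta_{n-1}\ge 2e$ (from $S_n-S_{n-1}\ge 2e$ and Proposition \ref{prop:Ralphaproperty2}) and $\alpha_{n+1}\ge 1-R_{n+1}$ (as in the proof of Lemma \ref{lem:oddlatticeproperty2-1}) this yields $d[-a_{1,n+1}b_{1,n-1}]=1-R_{n+1}$. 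Likewise $-a_{1,n+2}b_{1,n}\equiv cc_0$, which equals $1$ when $c_0=c$ and $\tilde{c}^{\#}$ when $c_0=c\tilde{c}^{\#}$; hence $d[-a_{1,n+2}b_{1,n}]=\alpha_{n+2}$ in the first case and $\min\{2e+R_{n+1}-1,\,\alpha_{n+2}\}$ in the second. The decisive observation is that $G_n$ is calibrated so that $G_n=2e+S_n+R_{n+1}-R_{n+2}-1$ for both parities of $R_{n+2}-R_{n+1}$; since $\alpha_{n+2}>G_n$ and (using $R_{n+2}>S_n$) also $2e+R_{n+1}-1>G_n$, each second term exceeds $2e+S_n+R_{n+1}-R_{n+2}-1$, and adding $d[-a_{1,n+1}b_{1,n-1}]=1-R_{n+1}$ gives the required strict inequality $>2e+S_n-R_{n+2}$.

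For (ii) I use the representation criterion \cite[63:21]{omeara_quadratic_1963}: writing $V=[a_1,\ldots,a_{n+1}]$, the space $V$ represents $W_1^n(c_0)$ if and only if $V\cong W_1^n(c_0)\perp[\det V\cdot\det W_1^n(c_0)]$. A short determinant computation identifies the one-dimensional complement as $[-a_{n+2}]$ when $c_0=c$ and $[-a_{n+2}\tilde{c}^{\#}]$ when $c_0=c\tilde{c}^{\#}$, so the two targets are $T_1=\mathbb{H}^{(n-1)/2}\perp[c,-a_{n+2}]$ and $T_2=\mathbb{H}^{(n-1)/2}\perp[c\tilde{c}^{\#},-a_{n+2}\tilde{c}^{\#}]$. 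They have equal dimension ($\ge 4$) and equal determinant, hence are isometric if and only if they have the same Hasse symbol. Forming the ratio of their Hasse symbols and simplifying with $ca_{n+2}\equiv\tilde{c}$ collapses it to $(\tilde{c}^{\#},\tilde{c})_{\p}$, which is $-1$ by Proposition \ref{prop:duality2}. Therefore $T_1\not\cong T_2$, so $V$ is isometric to at most one of them; equivalently, $V$ fails to represent at least one of $W_1^n(c)=FN_1^n(c)$ and $W_1^n(c\tilde{c}^{\#})=FN_1^n(c\tilde{c}^{\#})$, which is exactly (ii).

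I expect the main obstacle to be the bookkeeping in (i): correctly pinning down the two $d[\cdots]$ terms (in particular, that the engineered parameters force $-a_{1,n+2}b_{1,n}$ to become $1$ or $\tilde{c}^{\#}$) and verifying the exact calibration $G_n=2e+S_n+R_{n+1}-R_{n+2}-1$ across both parities of $R_{n+2}-R_{n+1}$, since this identity is precisely what makes the hypothesis $\alpha_{n+2}>G_n$ strong enough. Part (ii), in contrast, is short once one sees that the ratio of the two Hasse symbols reduces to $(\tilde{c}^{\#},\tilde{c})_{\p}=-1$.
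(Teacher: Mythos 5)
Your proof is correct, and its overall architecture matches the paper's: verify the hypotheses of Theorem \ref{thm:beligeneral}(iii) at $i=n+1$ for both lattices, then show the representation conclusion must fail for at least one of them, with Proposition \ref{prop:duality2} (i.e. $(\tilde{c},\tilde{c}^{\#})_{\p}=-1$) as the punchline. Part (i) is essentially the paper's argument, mildly streamlined: where the paper splits into the cases $R_{n+2}\le 1$ and $R_{n+2}\ge 2$ to get $d(-a_{1,n+2}b_{1,n})\ge 2e+R_{n+1}-1>G_n$, you note that the calibration $G_n=2e+S_n+R_{n+1}-R_{n+2}-1$ makes the inequality $2e+R_{n+1}-1>G_n$ literally equivalent to the already-established $R_{n+2}>S_n$, which is tidier. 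One point deserves care: the square class of $b_{1,n-1}$ is not a priori independent of the chosen good BONG of $N$ (only the truncated quantities $d[\,\cdot\,]$ are), so your identity $b_{1,n-1}\equiv(-1)^{(n-1)/2}$ must be read as a computation with the explicit BONG coming from Lemma \ref{lem:BONGformaximallattice}; this is legitimate exactly because $d[-a_{1,n+1}b_{1,n-1}]$ is BONG-independent, but it should be said. (The paper sidesteps the issue by using $d[(-1)^{(n-1)/2}b_{1,n-1}]\ge 2e$ from Proposition \ref{prop:Ralphaproperty3}(iii) together with the domination principle.) In part (ii) you take a genuinely different, and arguably cleaner, route: the paper first splits $[a_1,\ldots,a_{n+1}]\cong\mathbb{H}^{(n-1)/2}\perp[(-1)^{(n-1)/2}a_{1,n},a_{n+1}]$ via Proposition \ref{prop:Ralphaproperty3}(v), Witt-cancels to reduce to binary spaces, and derives $(\tilde{c},\tilde{c}^{\#})_{\p}=1$, a contradiction; you instead apply \cite[63:21]{omeara_quadratic_1963} to see that representing $W_1^n(c)$ (resp.\ $W_1^n(c\tilde{c}^{\#})$) forces $V=[a_1,\ldots,a_{n+1}]$ to be isometric to $T_1$ (resp.\ $T_2$), and then show $T_1\not\cong T_2$ by a Hasse-symbol ratio that collapses, using $ca_{n+2}\equiv\tilde{c}$, to $(\tilde{c}^{\#},\tilde{c})_{\p}=-1$ (I checked this computation; it is correct under either convention for the Hasse symbol, since the diagonal and cross terms cancel in the ratio). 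Your version needs no structural information about $[a_1,\ldots,a_{n+1}]$ at all; the paper's version avoids any Hasse-invariant computation. Both are valid, and the final assembly of (i) and (ii) into the failure of Theorem \ref{thm:beligeneral}(iii) is immediate, as you say.
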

	\begin{proof}
(i) Note that $ \ord(a_{1,n})$ is even by $ I_{1}^{E}(n-1) $, and $ \tilde{c}^{\#} $ is a unit by Lemma \ref{lem:oddlatticeproperty2-1}. Hence
\[
\ord(c)\equiv \ord(c\tilde{c}^{\#})\equiv\ord(a_{n+1}a_{n+2})\equiv R_{n+2}-R_{n+1}\pmod{2}\,.
\] So, by Lemma \ref{lem:maximal-BONG-odd}(ii),
\begin{equation}\label{eq5.2new}
S_n=\begin{cases}
  0\quad & \text{ if } R_{n+2}-R_{n+1} \text{ is even}\,,\\
  1\quad & \text{ if } R_{n+2}-R_{n+1} \text{ is odd}\,.
\end{cases}
\end{equation}By the hypothesis and Remark\;\ref{re:Rn+1=Rn+2=1}, either $R_{n+2}=R_{n+1}=1$ or $R_{n+2}>1$.	 If $ R_{n+2}-R_{n+1} $ is even, then $  R_{n+2}\ge 1>S_{n}=0 $. If $ R_{n+2}-R_{n+1} $ is odd, then $  R_{n+2}>1=S_{n} $. We have thus proved that $ R_{n+2}>S_{n}$.

From \eqref{eq5.1new} and \eqref{eq5.2new} we see that
\[
1-R_{n+1}+G_{n}=1-R_{n+1}+(2e-R_{n+2}+R_{n+1}+S_n-1)=2e+S_{n}-R_{n+2}\,.
\]Now we are left to show that
\begin{equation}\label{eq:d-an+1b1n-1d-a1n+2b1n}
			d[-a_{1,n+1}b_{1,n-1}]+d[-a_{1,n+2}b_{1,n}]> (1-R_{n+1})+G_{n} \,.
\end{equation}
		
We have $ N=N_{1}^{n}(c) $ or $ N=N_{1}^{n}(c\tilde{c}^{\#}) $, so in $ F^{\times}/F^{\times 2} $ we have $ b_{1,n}=\det FN=(-1)^{(n-1)/2}c=-a_{1,n+2} $ or $ b_{1,n}=(-1)^{(n-1)/2}c\tilde{c}^{\#}=-a_{1,n+2}\tilde{c}^{\#} $. (Recall that $ c=(-1)^{(n+1)/2}a_{1,n+2} $.) Thus in $ F^{\times}/F^{\times 2} $ we have $ -a_{1,n+2}b_{1,n}=1 $ or $ \tilde{c}^{\#} $, respectively. By Lemma \ref{lem:oddlatticeproperty2-1}, we get
		\begin{align*}	
			&d(-a_{1,n+2}b_{1,n})=
			\begin{cases}
				d(1)=\infty  &\text{if $ N=N_{1}^{n}(c) $}\,,\\
				d(\tilde{c}^{\#})=2e+R_{n+1}-1 &\text{if $ N=N_{1}^{n}(c\tilde{c}^{\#}) $}\,.
			\end{cases}
		\end{align*}
	 	If $ R_{n+2}\le 1 $, then $ R_{n+1}=R_{n+2}=1 $ by Remark \ref{re:Rn+1=Rn+2=1}. So
	 	\begin{align*}
	 			2e+R_{n+1}-1=2e>2e-1=2\left(e-\left\lfloor\dfrac{1-1}{2}\right\rfloor\right )-1=G_{n}\,.
	 	\end{align*}
	 	 If $ R_{n+2}\ge 2 $, then
	 	\begin{align*}
	 			2e+R_{n+1}-1\ge 2e-R_{n+2}+R_{n+1}+1>G_{n}\,.
	 	\end{align*}
	 	 Hence $ d(-a_{1,n+2}b_{1,n})\ge 2e+R_{n+1}-1>G_{n} $. Together with the assumption that $ \alpha_{n+2}>G_{n}  $, this yields
		\begin{align}\label{eq:N3odd1}
			d[-a_{1,n+2}b_{1,n}]=\min\{d(-a_{1,n+2}b_{1,n}),\alpha_{n+2}\}>G_{n}\,.
		\end{align}
	On the other hand, since  $ d[-a_{n}a_{n+1}]=1-R_{n+1} $ (by $ I_{2}^{E}(n-1) $), we have $d[(-1)^{(n+1)/2}a_{1,n+1}]=1-R_{n+1}<2e$ by Lemma \ref{lem:d[-ai+1ai+2]=1-Ri+2}(ii). (We have $ R_{n+1}\ge 2-2e $, so $ 1-R_{n+1}<2e $.) Also, by Lemma\;\ref{lem:maximal-BONG-odd}(ii), we have $S_{n-1}=-2e$, which, by Proposition \ref{prop:Ralphaproperty3}(iii), implies that
	\[
		d[(-1)^{(n-1)/2}b_{1,n-1}] \ge 2e>1-R_{n+1}=d[(-1)^{(n+1)/2}a_{1,n+1}]\,.
	\]Now, by the domination principle, $d[-a_{1,n+1}b_{1,n-1}]=1-R_{n+1}$. From this and \eqref{eq:N3odd1}, we get \eqref{eq:d-an+1b1n-1d-a1n+2b1n} as desired.

(ii)   By Proposition \ref{prop:Ralphaproperty3}(v),
\[
 [a_{1},\ldots, a_{n+1}]\cong [a_{1},\ldots,a_{n}]\perp [a_{n+1}]\cong \mathbb{H}^{(n-1)/2}\perp [(-1)^{(n-1)/2}a_{1,n},a_{n+1}] \,.
  \]By definition, $ [b_{1},\ldots,b_{n}]=FN=\mathbb{H}^{(n-1)/2}\perp [\eta]$, with $\eta\in \{c,\,c\tilde{c}^{\#}\}$.

  Suppose that $\mathbb{H}^{(n-1)/2}\perp [(-1)^{(n-1)/2}a_{1,n},a_{n+1}] $ represents both $ \mathbb{H}^{(n-1)/2}\perp [c] $ and $ \mathbb{H}^{(n-1)/2}\perp [c\tilde{c}^{\#}] $. Then $ [(-1)^{(n-1)/2}a_{1,n},a_{n+1}] $ represents both $ [c] $ and $ [c\tilde{c}^{\#}] $ by Witt cancellation. But $ \det [(-1)^{(n-1)/2}a_{1,n},a_{n+1}]=(-1)^{(n-1)/2}a_{1,n+1}=-\tilde{c} $. It follows that
	   \begin{align*}
	   	[c,-c\tilde{c}]\cong [(-1)^{(n-1)/2}a_{1,n},a_{n+1}] \cong  [c\tilde{c}^{\#},-c\tilde{c}^{\#}\tilde{c}]
	   \end{align*}
   by \cite[63:21 Theorem]{omeara_quadratic_1963}. Scaling by $ c $, we get $ [1,-\tilde{c}]\cong [\tilde{c}^{\#},-\tilde{c}^{\#}\tilde{c}] $. Hence $ \tilde{c}^{\#}\rep [1,-\tilde{c}]$, which implies $ (\tilde{c},\tilde{c}^{\#})_{\mathfrak{p}}=1 $. But this contradicts Proposition \ref{prop:duality2}.
	\end{proof}

		\begin{lem}\label{lem:con3odd}
			Suppose that $ FM $ is $n$-universal and that $ M $ satisfies $ I_{i}^{E}(n-1) $ for $ i=1,\,2,\,3$. Then the following conditions are equivalent:		
			
			\begin{itemize}
				\item[(i)] Theorem $\ref{thm:beligeneral}(iii)$ holds for all integral $ \mathcal{O}_{F} $-lattices $ N $ of rank $ n $.
				\item[(ii)] Theorem $\ref{thm:beligeneral}(iii)$ holds for  the lattices
				\begin{align*}
					N_{1}^{n}(c),\;\;N_{1}^{n}(c\tilde{c}^{\#}),\quad\text{with }\; c=(-1)^{(n+1)/2}a_{1,n+2} \quad\text{and}\quad \tilde{c}=(-1)^{(n+1)/2}a_{1,n+1} \,
				\end{align*} if $ \alpha_{n}=1 $ and either $ R_{n+1}=1 $ or $ R_{n+2}>1$, and for some lattice $N_{2}^{3}(\varepsilon\pi) $, with $\varepsilon\in \mathcal{U} $ (cf. Proposition \ref{prop:maximallattices}), if $ n=3 $,  $ \alpha_{3}=0 $, $ R_{5}>0 $ and $ d(a_{1,4})=\infty $.
				
				\item[(iii)] $ M $ satisfies the condition $ I_{2}^{O}(n) $ in  Theorem $\ref{cor:odd-nuniversaldyadic-simplify}$.
			\end{itemize} 	 	
		\end{lem}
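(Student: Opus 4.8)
The plan is to prove the three implications (i)$\Rightarrow$(ii)$\Rightarrow$(iii)$\Rightarrow$(i), following the template of the even-index Lemma \ref{lem:con3even}. The implication (i)$\Rightarrow$(ii) is immediate, since (ii) only asks that Theorem \ref{thm:beligeneral}(iii) hold for a sublist of the lattices covered by (i). Before splitting into cases I would record the key reduction: under $I_1^E(n-1)$ together with Proposition \ref{prop:Ralphaproperty3}(i), for every $2\le i\le n-1$ one has $R_{i+1}\le S_{i-1}$ (the indices $i+1$ and $i-1$ share the same parity, $R_{i+1}$ attains the minimal value $0$ or $-2e$ allowed by $I_1^E(n-1)$, while $S_{i-1}$ is no smaller). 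Hence the first half of the premise of Theorem \ref{thm:beligeneral}(iii) fails at these indices, and only $i=n$ and $i=n+1$ remain to be analyzed.

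For (ii)$\Rightarrow$(iii) I would argue the contrapositive and distinguish the two ways $I_2^O(n)$ can fail. If $\alpha_n=1$, one of $R_{n+1}=1$ or $R_{n+2}>1$ holds, and $\alpha_{n+2}>G_n$, then this is precisely the hypothesis of Lemma \ref{lem:oddlatticeproperty2-2}, which exhibits a failure of Theorem \ref{thm:beligeneral}(iii) at $i=n+1$ for one of $N_1^n(c)$, $N_1^n(c\tilde c^{\#})$. If instead $\alpha_n=0$, then $R_{n+1}=-2e$ by Proposition \ref{prop:Ralphaproperty2}(i), and failure of $I_2^O(n)$ means $R_{n+2}\ge 2$ (as $R_{n+2}\ge 0$ by Proposition \ref{prop:Ralphaproperty3}(i)). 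Then $R_{n+2}-R_{n+1}=R_{n+2}+2e>2e$, so $I_3^E(n-1)$ applies; its ``moreover'' clause forces $R_{n+2}=1$ whenever $n\ge 5$, or $n=3$ and $d(a_{1,4})=2e$, both of which contradict $R_{n+2}\ge 2$. Thus necessarily $n=3$, $\alpha_3=0$ and $d(a_{1,4})=\infty$, with $R_5\ge 2>1$, and Lemma \ref{lem:oddlatticeproperty1} yields a failure at $i=4$ for $N_2^3(\varepsilon\pi)$. In either case (ii) fails, as required.

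For (iii)$\Rightarrow$(i) I would assume $I_2^O(n)$ and verify Theorem \ref{thm:beligeneral}(iii) at $i=n$ and $i=n+1$ for every integral rank-$n$ lattice $N$. The case $i=n+1$ is the crux and splits on $\alpha_n$. When $\alpha_n=0$, $I_2^O(n)$ gives $R_{n+2}\in\{0,1\}$ while $S_n\ge 0$, so the premise condition $R_{n+2}>S_n$ forces $R_{n+2}=1$ and $S_n=0$; the remaining situation is settled by identifying $[a_1,\ldots,a_{n+1}]$ through Proposition \ref{prop:Ralphaproperty3}(iv)--(v) and invoking the space-representation criteria of Lemmas \ref{lem:spacerep-criterion} and \ref{lem:spacerep-criterion-2}. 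When $\alpha_n=1$, the condition $\alpha_{n+2}\le G_n$ is the converse of the mechanism in Lemma \ref{lem:oddlatticeproperty2-2}: assuming the premise holds, I would use Lemmas \ref{lem:oddlatticeproperty2-1} and \ref{lem:d[-ai+1ai+2]=1-Ri+2} to evaluate $d[-a_{1,n+2}b_{1,n}]$ and $d[-a_{1,n+1}b_{1,n-1}]$, showing that the cap $\alpha_{n+2}\le G_n$ is exactly what is needed for the $d$-inequality to pin down the determinant and Hasse symbol of $[b_1,\ldots,b_n]$, whence $[b_1,\ldots,b_n]\rep[a_1,\ldots,a_{n+1}]$. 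The auxiliary index $i=n$ is treated by the same two space-representation lemmas: $[a_1,\ldots,a_n]\cong W_1^n(\varepsilon)$ by Proposition \ref{prop:Ralphaproperty3}(v), and I would show that whenever the premise at $i=n$ holds, the invariants of $[b_1,\ldots,b_{n-1}]$ match those of the unique $(n-1)$-dimensional space represented by $W_1^n(\varepsilon)$.

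I expect the main obstacle to be the $\alpha_n=1$ part of (iii)$\Rightarrow$(i) at $i=n+1$: one must translate the single numerical inequality $\alpha_{n+2}\le G_n$, with its parity-dependent value from \eqref{eq5.1new}, into the precise statement that the $d$-invariant premise cannot hold for an $N$ whose space is not represented by $[a_1,\ldots,a_{n+1}]$, while keeping careful track of the two cases according to the parity of $R_{n+2}-R_{n+1}$ and the corresponding values $S_n\in\{0,1\}$ recorded in \eqref{eq5.2new}. Correctly disposing of the index $i=n$, where the premise must be shown to fail exactly for the non-represented $(n-1)$-dimensional spaces, is the secondary technical point.
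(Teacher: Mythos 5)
Your treatment of (i)$\Rightarrow$(ii) and (ii)$\Rightarrow$(iii) is exactly the paper's: the same dichotomy on $\alpha_n$, with Lemma \ref{lem:oddlatticeproperty2-2} supplying the counterexample pair $N_1^n(c)$, $N_1^n(c\tilde c^{\#})$ when $\alpha_n=1$ and $\alpha_{n+2}>G_n$, and $I_3^E(n-1)$ plus Lemma \ref{lem:oddlatticeproperty1} isolating the exceptional case $n=3$, $d(a_{1,4})=\infty$, $R_5>1$ when $\alpha_n=0$. The divergence is in (iii)$\Rightarrow$(i). The paper never verifies the index $i=n$ directly: it passes to the truncated lattice $N'=\prec b_1,\ldots,b_{n-1}\succ$, notes that $\beta_j\le\alpha_j(N')$ and hence each paired invariant $d[ca_{1,i}b_{1,j}]$ for $(M,N)$ is bounded above by the corresponding one for $(M,N')$, and then settles every index $2\le i\le n$ in one stroke by applying the even-rank Lemma \ref{lem:con3even} (available since $M$ satisfies $I_1^E(n-1)$ and $I_2^E(n-1)$) to the pair $(M,N')$; the conclusion $[b_1,\ldots,b_{i-1}]\rep[a_1,\ldots,a_i]$ is literally the same statement for both pairs. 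Your plan kills $2\le i\le n-1$ by the parity argument (which is correct) but must then handle $i=n$ from scratch; that is feasible, but it amounts to replaying the whole critical-index argument of Lemma \ref{lem:con3even} for the pair $(M,N)$ --- Lemma \ref{lem:d[-ai+1ai+2]=1-Ri+2}(iii) to force $d[-a_{1,n+1}b_{1,n-1}]=1-R_{n+1}$, the bound $d[-a_{1,n}b_{1,n-2}]\le\min\{\alpha_n,\beta_{n-2}\}$, the forced equality $S_{n-1}-S_{n-2}=-2e$, and the final split between $S_{n-1}=-2e$ (representation holds) and $S_{n-1}>-2e$ (premise fails) --- none of which appears in your sketch. (Also, there is no ``unique $(n-1)$-dimensional space represented by $W_1^n(\varepsilon)$'': one space from each determinant pair is represented.)

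The substantive gap is your account of the case $\alpha_n=1$ at $i=n+1$. You propose to assume the premise of Theorem \ref{thm:beligeneral}(iii), evaluate the $d$-invariants, ``pin down the determinant and Hasse symbol of $[b_1,\ldots,b_n]$,'' and conclude representation. That is not how this case can go: under $I_2^O(n)$ with $\alpha_n=1$, the premise at $i=n+1$ never holds, for any integral rank-$n$ lattice $N$, and the paper's proof is a pure contradiction argument, so Theorem \ref{thm:beligeneral}(iii) holds vacuously there. The engine of the contradiction is a pair of bounds your plan never invokes: Lemma \ref{lem:da1nb1n}, giving $R_{n+1}-S_n+d[-a_{1,n+1}b_{1,n-1}]\le d[a_{1,n}b_{1,n}]\le\alpha_n=1$, and Lemma \ref{lem:simplifythm}(ii), giving $R_{n+2}-R_{n+1}\le 2e-1$; combined with $d[-a_{1,n+2}b_{1,n}]\le\alpha_{n+2}\le G_n$ and the parity bookkeeping built into \eqref{eq5.1new}, these force $d[-a_{1,n+2}b_{1,n}]$ to be simultaneously positive and zero. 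Your cited tools --- Lemma \ref{lem:oddlatticeproperty2-1}, which concerns only the $a$-side invariants, and Lemma \ref{lem:d[-ai+1ai+2]=1-Ri+2} --- do not produce this chain. A similar, smaller omission occurs in your $\alpha_n=0$ case: the identification that matters is of $[b_1,\ldots,b_n]$, and it requires first extracting $S_{n-1}=-2e$ from the premise (via $d[-a_{1,n+2}b_{1,n}]=0$ and $\beta_{n-1}\ge 2e$) before Proposition \ref{prop:Ralphaproperty3}(v) can be applied to $N$. Since in the $\alpha_n=1$ case the premise is contradictory, any correct computation will eventually close the argument, so your plan is not unsalvageable; but as written it aims at the wrong target, and the two lemmas that actually make the case close are missing.
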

		\begin{proof}
			\textbf{(i)$ \Rightarrow $(ii)}: It is trivial.

			\textbf{(ii)$ \Rightarrow $(iii)}: First assume that $ \alpha_{n}=0 $ and $ R_{n+2}>0 $. Then $ R_{n+1}=R_{n+1}-R_{n}=-2e $ by Proposition \ref{prop:Ralphaproperty2}(i), and thus $ R_{n+2}-R_{n+1}>2e $. By $ I_{3}^{E}(n-1)$, we are left to show that $R_{n+2}=1$ when $ n=3 $ and $ d(a_{1,4})\not=2e$. In this case we have $ n=3 $, $ \alpha_{3}=0 $ and $ R_{5}>0 $. Since $ R_{4}=-2e $, $ d(a_{1,4})\ge d[a_{1,4}]\ge 2e $ by Proposition \ref{prop:Ralphaproperty3}(iii). But $ d(a_{1,4})\not=2e $, so $ d(a_{1,4})=\infty $. If $ R_{5}>1$, then, by Lemma \ref{lem:oddlatticeproperty1}, Theorem \ref{thm:beligeneral}(iii) fails at $ i=4 $ for all lattices $N$ of the form $N=N_{2}^{3}(\varepsilon\pi) $, with $\varepsilon\in \mathcal{O}_{F}^{\times} $. This contradicts condition (ii). Hence $ R_{n+3}=R_{5}=1 $ as desired.
			
			 Now suppose $ \alpha_{n}=1 $ and either $ R_{n+1}=1 $ or $ R_{n+2}>1 $. If $ \alpha_{n+2}>G_{n} $, then, by Lemma \ref{lem:oddlatticeproperty2-2}, Theorem \ref{thm:beligeneral}(iii) fails at $ i=n+1 $ for either $ N=N_{1}^{n}(c) $ or $ N=N_{1}^{n}(c\tilde{c}^{\#}) $. This contradicts condition (ii) again.

			\textbf{(iii)$ \Rightarrow $(i)}: Let $ N^{\prime}=\prec b_{1},\ldots,b_{n-1}\succ$ and $ \beta'_{i}=\alpha_{i}(N^{\prime})$ for $ 1\le i\le n-2 $. A comparison with the definition of $\beta_i=\alpha_i(N)$ (cf. Definition \ref{defn:alpha}) shows that for $ 1\le i\le n-2 $,
			\[
              \beta_{i}=\min\{\beta_{i}^{\prime},S_{n}-S_{i}+d(-b_{n-1}b_{n})\}\le \beta'_i\,.
              \]
			 For $ 0\le i\le m $, $ 0\le j\le n-1 $ and $ c\in F^{\times} $, we denote by $ d^{\prime}[ca_{1,i}b_{1,j}] $ the invariant $ d[ca_{1,i}b_{1,j}] $ corresponding to $ M $ and $ N'$ (cf. \eqref{defn:d[ab]}). Then
			\begin{align*}
				d[ca_{1,i}b_{1,j}] =\min\{d(ca_{1,i}b_{1,j}),\alpha_{i},\beta_{j}\} \quad\text{and}\quad  d^{\prime}[ca_{1,i}b_{1,j}] =\min\{d(ca_{1,i}b_{1,j}),\alpha_{i},\beta_{j}^{\prime}\}\,,
			\end{align*}
			where $ \alpha_{i} $ is ignored if $ i=0 $ or $ m $,  $ \beta_{j} $ and $ \beta_{j}^{\prime} $ are ignored if $ j=0 $, and $ \beta_{j}^{\prime} $ is ignored if $ j=n-1 $.  Since $ \beta_{j}\le \beta_{j}^{\prime} $ for $ 1\le j\le n-2 $, we have $ d[ca_{1,i}b_{1,j}]\le d^{\prime}[ca_{1,i}b_{1,j}] $.
			
			Suppose $ R_{i+1}>S_{i-1} $ and $ d[-a_{1,i}b_{1,i-2}]+d[-a_{1,i+1}b_{1,i-1}]>2e+S_{i-1}-R_{i+1} $ for some $ 2\le i \le n $. Then
			\begin{align*}
				d^{\prime}[-a_{1,i}b_{1,i-2}]+d^{\prime}[-a_{1,i+1}b_{1,i-1}]\ge d[-a_{1,i}b_{1,i-2}]+d[-a_{1,i+1}b_{1,i-1}]>2e+S_{i-1}-R_{i+1}\,.
			\end{align*}
			By Lemma\;\ref{lem:con3even},  Theorem \ref{thm:beligeneral}(iii) holds for $M$ and $ N'$. So  we have $ [b_{1},\ldots,b_{i-1}]\rep [a_{1},\ldots,a_{i}]$.
			
			Hence it suffices to consider Theorem \ref{thm:beligeneral}(iii) for $N$ when $i=n+1$. We assume that $ R_{n+2}>S_{n} $ and $ d[-a_{1,n+1}b_{1,n-1}]+d[-a_{1,n+2}b_{1,n}]>2e+S_{n}-R_{n+2} $, and we want to show that $FN=[b_1,\ldots, b_n]$ is represented by $[a_1,\ldots, a_{n+1}]$.

Recall that, by $I^E_2(n-1)$, we have $\alpha_n=0$ or $\alpha_n=1$.
			
			\textbf{Case I: $ \alpha_{n}=0 $.}
			
			 We have  $ R_{n+2}\le 1 $ by $ I_{2}^{O}(n) $. Also, $ S_{n}\ge 0 $. Then the assumption $ R_{n+2}>S_{n} $ implies $ R_{n+2}=1 $ and $ S_{n}=0 $. Since $ R_{n+1}=-2e $,  $ \ord(a_{1,n+1})$ is even by Proposition \ref{prop:Ralphaproperty3}(iii). Since $ S_{n}=0 $, $ \ord( b_{1,n}) $ is even by Proposition \ref{prop:Ralphaproperty3}(ii). It follows that $\ord(a_{1,n+2}b_{1,n})$ is odd and thus $ d[-a_{1,n+2}b_{1,n}]=0 $. Hence
			\begin{align*}
				\beta_{n-1}= \beta_{n-1}+0\ge d[-a_{1,n+1}b_{1,n-1}]+d[-a_{1,n+2}b_{1,n}]>2e+S_{n}-R_{n+2}=2e-1\,.
			\end{align*} So $ \beta_{n-1}\ge 2e $ and hence $ S_{n}-S_{n-1}\ge 2e$ by Proposition \ref{prop:Ralphaproperty2}(ii). However, $ S_{n}=0 $ and $ S_{n-1}\ge -2e $ (by Proposition \ref{prop:Ralphaproperty3}(i)), so $ S_{n-1}=-2e $. Since $ S_{n}=S_{n-1}+2e=0 $, $ [b_{1},\ldots,b_{n}]\cong W_{1}^{n}(\varepsilon) $ for some $ \varepsilon\in \mathcal{O}_{F}^{\times} $ by Proposition \ref{prop:Ralphaproperty3}(v). Recall that $ R_{n+1}=-2e $, so $ [a_{1},\ldots,a_{n+1}]\cong W_{1}^{n+1}(1) $ or $ W_{1}^{n+1}(\Delta) $ by Proposition \ref{prop:Ralphaproperty3}(iv). In both cases, $ [b_{1},\ldots,b_{n}]\rep [a_{1},\ldots,a_{n+1}] $ by Lemma \ref{lem:spacerep-criterion-2}(ii).
			
			 \textbf{Case II: $ \alpha_{n}=1 $.}

		    By Lemma \ref{lem:da1nb1n}, we have $ R_{n+1}-S_{n}+d[-a_{1,n+1}b_{1,n-1}]\le d[a_{1,n}b_{1,n}]\le \alpha_{n}=1 $ and so
		    \begin{align}\label{d-a1n+1b1n-1le}
		    	d[-a_{1,n+1}b_{1,n-1}] \le S_{n}-R_{n+1}+d[a_{1,n}b_{1,n}]\le S_{n}-R_{n+1}+1\,.
		    \end{align}
	    Assume that $ d[-a_{1,n+2}b_{1,n}]=0 $. Then
		    \[
		    	d[-a_{1,n+1}b_{1,n-1}]=d[-a_{1,n+1}b_{1,n-1}]+d[-a_{1,n+2}b_{1,n}]>2e+S_{n}-R_{n+2}\,.
		    \]This combined with \eqref{d-a1n+1b1n-1le} shows that $ R_{n+2}-R_{n+1}>2e-1 $, contradicting Lemma \ref{lem:simplifythm}(ii). So we must have $ d[-a_{1,n+2}b_{1,n}]>0 $.

			If $ R_{n+1}\in [2-2e,0]^{E} $ and $ R_{n+2}=1 $, then $ S_{n}=0 $ since $ R_{n+2}>S_{n}\ge 0$.  Thus $\ord(b_{1,n})$ is even by Proposition \ref{prop:Ralphaproperty3}(ii). Note that $ \ord( a_{1,n+2})$ is odd, so  $ \ord(a_{1,n+2}b_{1,n})$ is odd, which implies  $ d[-a_{1,n+2}b_{1,n}]=0 $, a contradiction. So we have either $R_{n+1}=1$ or $R_{n+2}>1$. (Recall that $R_{n+2}>S_n\ge 0$.) Then $ d[-a_{1,n+2}b_{1,n}]\le \alpha_{n+2}\le G_{n}  $ by
$ I_{2}^{O}(n) $. Combining this with the first inequality in $ \eqref{d-a1n+1b1n-1le} $, we see that
	\[
				S_{n}-R_{n+1}+d[a_{1,n}b_{1,n}]+G_n\ge  d[-a_{1,n+1}b_{1,n-1}]+d[-a_{1,n+2}b_{1,n}]>2e+S_{n}-R_{n+2}\,.
	\]It follows that
\[
1=\alpha_n\ge d[a_{1,n}b_{1,n}]>2e-R_{n+2}+R_{n+1}-G_n\,.
\]From \eqref{eq5.1new} we see that $2e-R_{n+2}+R_{n+1}-G_n\in \{0,\,1\}$. It follows that $ d[a_{1,n}b_{1,n}]=1 $, which implies that $ \ord(a_{1,n}b_{1,n}) $ is even, and $ 2e-R_{n+2}+R_{n+1}-G_{n}=0 $, which implies, by \eqref{eq5.1new}, that $ \ord(a_{n+1}a_{n+2})=R_{n+1}+R_{n+2} $ is odd. Hence we deduce that $ \ord(a_{1,n+2}b_{1,n}) $ is odd, so $ d[-a_{1,n+2}b_{1,n}]=0 $, a contradiction again.
 \end{proof}

		\begin{lem}\label{lem:con4odd}
			Suppose that $ FM $ is $n$-universal and that $ M $ satisfies $ I_{i}^{E}(n-1) $ for $ i=1,\,2,3 $ and $I_{2}^{O}(n)$. Then the following conditions are equivalent:
			
			\begin{itemize}
				\item[(i)] Theorem $\ref{thm:beligeneral}(iv)$ holds for all integral $ \mathcal{O}_{F} $-lattices $ N $ of rank $ n $.
				
				\item[(ii)] Theorem $\ref{thm:beligeneral}(iv)$ holds for the lattices $ N_{1}^{n}(c)$ and $ N_{2}^{n}(c) $, with $ c=(-1)^{(n+1)/2}a_{1,n+2}  $.
				\item[(iii)] $ M $ satisfies the condition $ I_{3}^{O}(n) $, i.e., $R_{n+3}-R_{n+2}\le 2e$.
			\end{itemize}
		\end{lem}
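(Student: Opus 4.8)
The plan is to follow the three-step scheme of Lemma~\ref{lem:con4even}: (i)$\Rightarrow$(ii) is trivial, and for the remaining implications the decisive preliminary fact is that, under the standing hypotheses, $R_{n+3}-R_{n+2}>2e$ forces $R_{n+2}\in\{0,1\}$ with $R_{n+1}$ even. I would establish this reduction first. By $I_2^E(n-1)$ we have $\alpha_n\in\{0,1\}$. If $\alpha_n=0$, then $R_{n+1}=-2e$ by Lemma~\ref{lem:simplifythm}(i) and $R_{n+2}\in\{0,1\}$ by $I_2^O(n)$. If $\alpha_n=1$, then $R_{n+2}-R_{n+1}\le 2e-1$ by Lemma~\ref{lem:simplifythm}(ii); should $R_{n+1}=1$ or $R_{n+2}>1$ hold, Remark~\ref{re:Rn+1=Rn+2=1} gives $R_{n+2}-R_{n+1}\ge 0$, hence $G_n\le 2e-1$ (cf. \eqref{eq5.1new}), so $I_2^O(n)$ would yield $\alpha_{n+2}\le G_n<2e$, i.e. $R_{n+3}-R_{n+2}<2e$ by Proposition~\ref{prop:Ralphaproperty2}(ii), against our assumption; thus $R_{n+1}\ne 1$ and $R_{n+2}\le 1$, and with $R_{n+2}\ge 0$ (Proposition~\ref{prop:Ralphaproperty3}(i)) and $R_{n+1}\in[2-2e,0]^E$ (Proposition~\ref{prop:Ralphaproperty2}(vi)) the claim follows.

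For (ii)$\Rightarrow$(iii) I argue by contraposition: assuming $R_{n+3}-R_{n+2}>2e$ I exhibit a lattice among $N_1^n(c)$, $N_2^n(c)$, with $c=(-1)^{(n+1)/2}a_{1,n+2}$, for which Theorem~\ref{thm:beligeneral}(iv) fails at $i=n+1$. Since $\sum_{i=1}^{n}R_i$ is even by $I_1^E(n-1)$ and $R_{n+1}$ is even, $\ord(c)\equiv R_{n+1}+R_{n+2}\equiv R_{n+2}\pmod 2$; together with the reduction above and Lemma~\ref{lem:maximal-BONG-odd}(ii) this gives $S_n=R_{n+2}$ for both $N=N_1^n(c)$ and $N=N_2^n(c)$. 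Hence $R_{n+3}>R_{n+2}+2e=S_n+2e\ge R_{n+2}+2e$, so the hypothesis of Theorem~\ref{thm:beligeneral}(iv) at $i=n+1$ is satisfied for both. As $\det[a_1,\dots,a_{n+2}]=a_{1,n+2}=-\det FN$, Lemma~\ref{lem:spacerep-criterion} shows that $[a_1,\dots,a_{n+2}]$ represents exactly one of $W_1^n(c)=FN_1^n(c)$ and $W_2^n(c)=FN_2^n(c)$; for the unrepresented one the conclusion of (iv) fails, contradicting (ii).

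For (iii)$\Rightarrow$(i) I would assume $R_{n+3}-R_{n+2}\le 2e$ and verify Theorem~\ref{thm:beligeneral}(iv) at each admissible index $2\le i\le n+1$. For $i\le n-2$ the entries $R_{i+1},R_{i+2}$ lie in $\{0,-2e\}$ by $I_1^E(n-1)$, while for $i=n-1$ one has $R_{i+1}=R_n=0$ and $R_{i+2}=R_{n+1}<2e$ (because $\alpha_n\le 1$); in both ranges $R_{i+2}\le R_{i+1}+2e$, so the hypothesis of (iv) cannot hold. At $i=n+1$ the hypothesis would require $R_{n+3}>R_{n+2}+2e$, contradicting (iii), so that index is vacuous as well. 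The only possibly live index is $i=n$, whose hypothesis forces $R_{n+2}>R_{n+1}+2e$; by the reduction this occurs only when $\alpha_n=0$, $R_{n+1}=-2e$ and $R_{n+2}=1$. In that configuration the hypothesis gives $S_{n-1}=-2e$, and Proposition~\ref{prop:Ralphaproperty3}(iv) yields $[a_1,\dots,a_{n+1}]\cong W_1^{n+1}(\mu)$ and $[b_1,\dots,b_{n-1}]\cong W_1^{n-1}(\mu')$ for some $\mu,\mu'\in\{1,\Delta\}$. Since $W_1^{n+1}(\mu)$ and $W_2^{n+1}(\mu')$ are never isometric (equal determinants would force $\mu=\mu'$, and then they differ by definition), Proposition~\ref{prop:space}(iii) guarantees that $W_1^{n+1}(\mu)$ represents $W_1^{n-1}(\mu')$, i.e. $[b_1,\dots,b_{n-1}]\rep[a_1,\dots,a_{n+1}]$, which is the conclusion of (iv).

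The step I expect to be the main obstacle is the reduction $R_{n+2}\in\{0,1\}$ in the first paragraph, specifically the case $\alpha_n=1$, where one must weigh the upper bound $\alpha_{n+2}\le G_n$ supplied by $I_2^O(n)$ against $\alpha_{n+2}>2e$ to rule out $R_{n+1}=1$ or $R_{n+2}>1$. Once this dichotomy is settled, both directions reduce to determinant-and-parity bookkeeping together with the space-representation facts of Lemma~\ref{lem:spacerep-criterion} and Proposition~\ref{prop:space}(iii), and the remaining verifications are routine.
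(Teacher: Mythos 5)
Your (i)$\Rightarrow$(ii) step, your reduction claim ($R_{n+3}-R_{n+2}>2e$ forces $R_{n+2}\in\{0,1\}$ and $R_{n+1}$ even), and your (ii)$\Rightarrow$(iii) argument coincide with the paper's proof essentially line by line: same case split on $\alpha_n$, same parity computation for $\ord(c)$, same use of Lemma \ref{lem:maximal-BONG-odd}(ii) to get $S_n=R_{n+2}$, and the same contradiction via Lemma \ref{lem:spacerep-criterion}. Where you genuinely diverge is (iii)$\Rightarrow$(i) at the one live index $i=n$. The paper does no configuration analysis there: it notes that $M$ satisfies $I_1^E(n-1)$, $I_2^E(n-1)$, $I_3^E(n-1)$, hence is $(n-1)$-universal by Theorem \ref{thm:even-nuniversaldyadic}, so $N':=\prec b_1,\ldots,b_{n-1}\succ$ is represented by $M$; the required conclusion $[b_1,\ldots,b_{n-1}]\rep[a_1,\ldots,a_{n+1}]$ is then read off from the necessity direction of Theorem \ref{thm:beligeneral}(iv) applied to the pair $(M,N')$ at $i=n$ (where the condition $S_i\ge R_{i+2}$ is ignored). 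Your alternative---pinning down $\alpha_n=0$, $R_{n+1}=-2e$, $R_{n+2}=1$, $S_{n-1}=-2e$, and identifying both sides via Proposition \ref{prop:Ralphaproperty3}(iv)---is more explicit and avoids invoking the even-rank theorem, which is a reasonable trade-off.

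Two points in your (iii)$\Rightarrow$(i) need tightening. First, ``by the reduction'' is not quite right: the reduction was proved under the hypothesis $R_{n+3}-R_{n+2}>2e$, whereas at index $i=n$ you need its analogue under $R_{n+2}-R_{n+1}>2e$. The fix is immediate from facts you already quote ($\alpha_n\in\{0,1\}$ by $I_2^E(n-1)$, and $\alpha_n=1$ forces $R_{n+2}-R_{n+1}\le 2e-1$ by Lemma \ref{lem:simplifythm}(ii)), but it must be said. Second, and more substantively, your appeal to Proposition \ref{prop:space}(iii) is misapplied: non-isometry of $W_1^{n+1}(\mu)$ and $W_2^{n+1}(\mu')$ does not, via that proposition, yield that $W_1^{n+1}(\mu)$ represents $W_1^{n-1}(\mu')$ --- the proposition asserts uniqueness of the space representing all-but-one prescribed space, not uniqueness of a space failing to represent a given one. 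The correct invocation takes $W=W_2^{n-1}(\mu)$: then $V=W_1^{n+1}(\mu)$ represents every $(n-1)$-dimensional space except $W_2^{n-1}(\mu)$, and $W_1^{n-1}(\mu')\not\cong W_2^{n-1}(\mu)$ for either value of $\mu'$. This still leaves the edge case $n=3$, $\mu=1$, where $W_2^{2}(1)$ is not defined; there $[a_1,\ldots,a_4]\cong\mathbb{H}^2$, which represents every binary space (it is $2$-universal as a space, cf.\ \cite[Theorem 2.3]{hhx_indefinite_2021}, used elsewhere in the paper), so the conclusion still holds. With these two repairs your argument is complete and correct.
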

		\begin{proof}

			\textbf{(i)$ \Rightarrow $(ii)}: It is trivial.
			
			\textbf{(ii)$ \Rightarrow $(iii)}: Suppose $ R_{n+3}-R_{n+2}>2e $. We claim that $ R_{n+2}=0 $ or $ 1 $ and that $ R_{n+1} $ is even.
			
			By $ I_{2}^{E}(n-1) $, we have $ \alpha_{n}=0 $ or $ 1 $. If $ \alpha_{n}=0 $, then we have $ R_{n+1}=-2e $ by Lemma \ref{lem:simplifythm}(i), and $ R_{n+2}=0 $ or $ 1 $ by $I_{2}^{O}(n)$. Thus the claim is true in this case.

		 If $ \alpha_{n}=1 $, then $R_{n+1}=R_{n+1}-R_{n}\in [2-2e,0]^{E} \cup\{1\} $ by Proposition \ref{prop:Ralphaproperty2}(vi). If the claim were not true, we would have either $ R_{n+1}=1 $ or $ R_{n+2}>1 $. Then $ R_{n+2}\ge 1\ge R_{n+1} $ by Remark \ref{re:Rn+1=Rn+2=1}, and thus $ R_{n+2}-R_{n+1}\ge 0 $. By $ I_{2}^{O}(n) $, we get
			\[
				\alpha_{n+2}\le G_n=2\bigg(e-\bigg\lfloor\dfrac{R_{n+2}-R_{n+1}}{2}\bigg\rfloor\bigg)-1\le 2\bigg(e-\bigg\lfloor\dfrac{0}{2}\bigg\rfloor\bigg)-1=2e-1\,.
			\] So $ R_{n+3}-R_{n+2}\le 2e $ by Proposition \ref{prop:Ralphaproperty2}(ii), a contradiction. Our claim is thus proved.
			
			Take $ N=N_{\nu}^{n}(c) $, with $ \nu\in \{1,2\} $ and $ c=(-1)^{(n+1)/2}a_{1,n+2} $. By $ I_{1}^{E}(n-1) $ and the claim, $ R_{i} $ is even for $ 1\le i\le n+1 $, so $ \ord(c)=\ord( a_{1,n+2})\equiv R_{n+2}\pmod{2}$. If $ R_{n+2}=0 $, then $ \ord(c)\equiv 0\pmod{2} $ and so $ N=N_{\nu}^{n}(\varepsilon) $ for some
$\varepsilon\in \mathcal{O}_{F}^{\times} $. Hence $ S_{n}=0=R_{n+2} $ by Lemma \ref{lem:maximal-BONG-odd}(ii). If $ R_{n+2}=1 $, then similarly $ N=N_{\nu}^{n}(\varepsilon\pi) $ for some $ \varepsilon\in \mathcal{O}_{F}^{\times} $. Hence $ S_{n}=1=R_{n+2} $ by Lemma \ref{lem:maximal-BONG-odd}(ii).
			
			Now we have  $ S_{n}=R_{n+2} $ and $ R_{n+3}-R_{n+2}>2e $. So the condition $ R_{n+3}>S_{n}+2e\ge R_{n+2}+2e$ is satisfied. Therefore,
$ W^n_{\nu}(c)=FN=[b_{1},\ldots,b_{n}]$ is represented by $V:=[a_{1},\ldots,a_{n+2}]  $ by Theorem\;\ref{thm:beligeneral}(iv). Note that $ \det V=a_{1,n+2}=(-1)^{(n+1)/2}c=-\det FN$. So $ V $ represents exactly one of $ W_{1}^{n}(c) $ and $ W_{2}^{n}(c) $ by Lemma \ref{lem:spacerep-criterion}. A contradiction is derived and so $ R_{n+3}-R_{n+2}\le 2e $.

			\textbf{(iii)$ \Rightarrow $(i):} For $ 1<i\le n-1 $, $ R_{i+1}-R_{i}\le 2e $ by $ I_{1}^{E}(n-1) $. Since $ \alpha_{n}\le 1 $ by $ I_{2}^{E}(n-1) $, $ R_{n+1}-R_{n}\le 2e $ by Proposition \ref{prop:Ralphaproperty2}(ii). Also, $ R_{n+3}-R_{n+2}\le 2e $ by $ I_{3}^{O}(n) $.	Hence we only need to consider Theorem \ref{thm:beligeneral}(iv) for $ i=n $. Assume that $ S_{n}\ge R_{n+2}>S_{n-1}+2e\ge R_{n+1}+2e $. Since $ M $ satisfies $ I_{i}^{E}(n-1) $ for $ i=1,2,3 $, it is $ (n-1) $-universal by Theorem \ref{thm:even-nuniversaldyadic} and thus represents the integral lattice $ N^{\prime}:=\prec b_{1},\ldots,b_{n-1}\succ $. Since $ R_{n+2}>S_{n-1}+2e=R_{n+1}+2e $, we see that $ [b_{1},\ldots,b_{n-1}]\rep [a_{1},\ldots,a_{n+1}] $ by applying Theorem \ref{thm:beligeneral}(iv) to $ M $ and $ N^{\prime} $.
		\end{proof}

		\section{Proof of the criterion for $ n $-universality}\label{sec:proof-main}
		
		The statements of Theorems \ref{thm:even-nuniversaldyadic} and \ref{cor:odd-nuniversaldyadic-simplify} involve not only the $R$-invariants but also the $\alpha$-invariants.  In this section, we prove Theorem \ref{thm:nuniversaldyadic}, where necessary and sufficient conditions for the $n$-universal property are given without using  the $\alpha$-invariants explicitly.

		As before, let $M$ be an integral $\mathcal{O}_F$-lattice and suppose that $M\cong \prec a_1,\ldots, a_m \succ$ relative to some good BONG.

		\begin{lem}\label{lem:I2E}
			Let $ n\ge 2 $ be an even integer. Suppose that $M$ satisfies $ I_{1}^{E}(n) $.
			
			(i) Theorem \ref{thm:nuniversaldyadic}(ii)(1)(a) holds if and only if $M$ satisfies $ I_{2}^{E}(n) $ when $ R_{n+2}=2-2e $.
			
			(ii) Theorem \ref{thm:nuniversaldyadic}(ii)(1)(b) holds if and only if $M$ satisfies $ I_{2}^{E}(n) $ when $ R_{n+2}\not=2-2e $.
		\end{lem}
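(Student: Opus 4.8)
The plan is to translate the $\alpha$-invariant condition $I_2^E(n)$ of Theorem~\ref{thm:even-nuniversaldyadic} into the $\alpha$-free conditions (a) and (b) of Theorem~\ref{thm:nuniversaldyadic}(ii)(1), using Proposition~\ref{prop:Ralphaproperty2} together with the explicit description of $\alpha_{n+1}$ from Definition~\ref{defn:alpha}. Throughout, $I_1^E(n)$ gives $R_{n+1}=0$ (as $n+1$ is odd) and $R_n=-2e$, so $R_{n+1}-R_n=2e$ and hence $\alpha_n=2e$ by Proposition~\ref{prop:Ralphaproperty2}(ii); moreover $R_{n+2}-R_{n+1}=R_{n+2}$. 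I also note that $I_2^E(n)$ forces $\alpha_{n+1}\in\{0,1\}$, whence $R_{n+2}\in[-2e,0]^E\cup\{1\}$ by Proposition~\ref{prop:Ralphaproperty2}(i),(vi); this matches the leading clause of Theorem~\ref{thm:nuniversaldyadic}(ii)(1), so both conditions are compared over the same range of $R_{n+2}$.

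For part (i) I would first observe that $R_{n+2}=2-2e$ forces $\alpha_{n+1}=1$ by Proposition~\ref{prop:Ralphaproperty2}(iv), so $I_2^E(n)$ reduces to the single equation $d[-a_{n+1,n+2}]=1-R_{n+2}=2e-1$. Unfolding $d[-a_{n+1,n+2}]=\min\{d(-a_{n+1}a_{n+2}),\alpha_n,\alpha_{n+2}\}$ and using $\alpha_n=2e$ together with the lower bound $d[-a_{n+1,n+2}]\ge 2e-1$ from Proposition~\ref{prop:Ralphaproperty2}(vi), each of the three terms is $\ge 2e-1$, so the min equals $2e-1$ exactly when
\[
d(-a_{n+1}a_{n+2})=2e-1 \quad\text{or}\quad \alpha_{n+2}=2e-1.
\]
It then remains to show that $\alpha_{n+2}=2e-1$ if and only if $R_{n+3}\in\{0,1\}$: the forward direction follows from Proposition~\ref{prop:Ralphaproperty2}(ii),(iii) (which give $R_{n+3}-R_{n+2}<2e$, hence $R_{n+3}\le 1$) together with $R_{n+3}\ge 0$ from Proposition~\ref{prop:Ralphaproperty3}(i), while the backward direction is a direct evaluation of $\alpha_{n+2}$ by Proposition~\ref{prop:Ralphaproperty2}(iii),(iv) in the two cases $R_{n+3}-R_{n+2}=2e-2$ and $R_{n+3}-R_{n+2}=2e-1$. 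This is precisely condition (a). (When $m=n+2$ the term $\alpha_{n+2}$ is ignored and both conditions collapse to $d(-a_{n+1}a_{n+2})=2e-1$.)

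For part (ii) I would restrict to the regime $R_{n+2}\in[4-2e,0]^E$ where (b) is non-vacuous, the two remaining admissible values $R_{n+2}\in\{-2e,1\}$ being immediate (in both cases $I_2^E(n)$ holds and (b) is vacuous). Here $R_{n+2}\neq-2e$ gives $\alpha_{n+1}\neq 0$, and Proposition~\ref{prop:Ralphaproperty2}(vii) shows $\alpha_{n+1}=1$ is equivalent to $d[-a_{n+1,n+2}]=1-R_{n+2}$, so $I_2^E(n)$ holds if and only if $\alpha_{n+1}=1$. The crux is then to expand $\alpha_{n+1}=\min\{T_0^{(n+1)},\ldots,T_{m-1}^{(n+1)}\}$. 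For $k\ge n+1$ one has $T_k^{(n+1)}=R_{k+1}+d(-a_ka_{k+1})$, which equals $1$ exactly when $d(-a_ka_{k+1})=1-R_{k+1}$, the very equation of (b). For $T_0^{(n+1)}=R_{n+2}/2+e$ and for $T_k^{(n+1)}$ with $k\le n$, using $I_1^E(n)$, the bound $d(-a_ka_{k+1})\ge 2e$ at odd $k$ coming from \eqref{eq:BONGs}, and $R_{n+2}\ge 4-2e$, every such term is $\ge 2$. Since $\alpha_{n+1}\ge 1$ always, it follows that $\alpha_{n+1}=1$ if and only if some $T_j^{(n+1)}=1$ with $j\ge n+1$, i.e. if and only if $d(-a_ja_{j+1})=1-R_{j+1}$ for some $n+1\le j\le m-1$, which is exactly (b).

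The main obstacle I anticipate is the bookkeeping in part (ii): one must unfold $\alpha_{n+1}$ as a minimum of the $T_k^{(n+1)}$ and verify that the ``early'' terms ($k\le n$, together with $T_0^{(n+1)}$) are bounded away from $1$, since this is what pins the minimizer to the indices $j\ge n+1$ and produces the existence statement of (b) rather than a spurious condition involving $a_1,\ldots,a_n$. A secondary point is simply to be careful that the equivalences are asserted within the ambient constraint $R_{n+2}\in[-2e,0]^E\cup\{1\}$ of Theorem~\ref{thm:nuniversaldyadic}(ii)(1); outside this range $I_2^E(n)$ already fails, so there is nothing to prove.
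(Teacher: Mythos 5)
Your proposal is correct and takes essentially the same route as the paper's proof: in part (i) you reduce $I_2^E(n)$ to $d[-a_{n+1,n+2}]=2e-1$ via Proposition \ref{prop:Ralphaproperty2}(iv), unfold the minimum using $\alpha_n=2e$, and prove the same key claim $\alpha_{n+2}=2e-1\Leftrightarrow R_{n+3}\in\{0,1\}$; in part (ii) you reduce $I_2^E(n)$ to $\alpha_{n+1}=1$ via Proposition \ref{prop:Ralphaproperty2}(vii) and carry out the identical $T_j^{(n+1)}$ bookkeeping, pinning the minimizer to indices $j\ge n+1$. The only (immaterial) differences are citation choices — you get the bound $d(-a_ka_{k+1})\ge 2e$ at odd $k$ directly from \eqref{eq:BONGs} where the paper cites Proposition \ref{prop:Ralphaproperty3}(iii), and you use Proposition \ref{prop:Ralphaproperty2}(vi) where the paper uses Lemma \ref{lem:d[-ai+1ai+2]=1-Ri+2}(i) — plus your slightly more explicit handling of the edge values $R_{n+2}\in\{-2e,1\}$, which the paper treats implicitly or in passing.
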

	
	   \begin{proof}
	   	By $ I_{1}^{E}(n) $, we have $ R_{i}=0$ for $\in [1,n+1]^{O} $ and $ R_{i}=-2e $ for $ i\in [1,n+1]^{E} $.
	   	
	   (i) Assume that $ R_{n+2}=2-2e $, i.e. $ R_{n+2}-R_{n+1}=2-2e $. Then $ \alpha_{n+1}=1 $ by Proposition \ref{prop:Ralphaproperty2}(iv). Hence $ I_{2}^{E}(n) $ holds if and only if $ d[-a_{n+1}a_{n+2}]=1-R_{n+2}=2e-1 $.
	We claim that
	     \begin{equation}\label{claim6.1}
	     	\alpha_{n+2}=2e-1\quad\text{if and only if}\quad R_{n+3}\in \{0,1\}\,.
	     \end{equation}
	       If $ R_{n+3}\in \{0,1\} $, then $ R_{n+3}-R_{n+2}\in \{2e-2,2e-1\}$, so $ \alpha_{n+2}=2e-1 $ by Proposition \ref{prop:Ralphaproperty2}(iv) and (iii). Conversely, if $ \alpha_{n+2}=2e-1 $, then $ R_{n+3} <2e+R_{n+2}=2 $ by Proposition \ref{prop:Ralphaproperty2}(ii). Since $ n+3 $ is odd, $ R_{n+3}\ge 0 $ and thus $ R_{n+3}\in \{0,1\} $. The claim is proved.

	     Since $ R_{n+1}-R_{n}=2e $, $ \alpha_{n}=2e $ by Proposition \ref{prop:Ralphaproperty2}(ii). By the definition of $ d[-a_{n+1}a_{n+2}] $ and Lemma \ref{lem:d[-ai+1ai+2]=1-Ri+2}(i), we have
	     \begin{align*}
	     	\min\{d(-a_{n+1}a_{n+2}),2e,\alpha_{n+2}\}&=	\min\{d(-a_{n+1}a_{n+2}),\alpha_{n},\alpha_{n+2}\}\\
	     	&=d[-a_{n+1}a_{n+2}]\ge 1-R_{n+2}=2e-1\,.
	     \end{align*}
	     Hence, in the present case we obtain
\[
	     \begin{split}
I^E_2(n)&\iff	     	d[-a_{n+1}a_{n+2}]=2e-1\\
&\iff d(-a_{n+1}a_{n+2})=2e-1\quad\text{or}\quad \alpha_{n+2}=2e-1 \\
	     \text{by \eqref{claim6.1}}\;\;&\iff d(-a_{n+1}a_{n+2})=2e-1\quad\text{or}\quad R_{n+3}\in \{0,1\}\\
&\iff \text{Theorem \ref{thm:nuniversaldyadic}(ii)(1)(a)}\;.
	     \end{split}
\]

(ii) Assume that $R_{n+2}\neq 2-2e$. If $ R_{n+2}=1 $, i.e. $ R_{n+2}-R_{n+1}=1$, then $ \alpha_{n+1}=1 $ by Proposition \ref{prop:Ralphaproperty2}(iii). Moreover, $d[-a_{n+1}a_{n+2}]=0=1-R_{n+2}$ since $\ord(a_{n+1}a_{n+2})$ is odd. If $R_{n+2}\in [4-2e,\,0]^E$, then $ R_{n+2}-R_{n+1}\in [4-2e,0]^{E}  $. Note that $ \alpha_{n+1}=1 $ is equivalent to $ d[-a_{n+1}a_{n+2}]=1-R_{n+2} $ by Proposition \ref{prop:Ralphaproperty2}(vii). Hence $ I_{2}^{E}(n) $ holds if and only if $ \alpha_{n+1}=1 $.

       Recall Definition \ref{defn:alpha} and write $ T_{j}=T_{j}^{(n+1)} $ for $ 0\le j\le m-1 $ for short. Then $ \alpha_{n+1}=\min\{T_{0},\ldots,T_{m-1}\} $.
    Since $ R_{n+2}-R_{n+1}=R_{n+2}>-2e $, $ \alpha_{n+1}\ge 1 $ by Proposition \ref{prop:Ralphaproperty2}(i). Hence $ \alpha_{n+1}=1 $ if and only if $ 1\in \{T_{0},\ldots,T_{m-1}\} $.

 Since $ R_{n+2}\ge 4-2e $, we have $ T_{0}=(R_{n+2}-R_{n+1})/2+e\ge (4-2e)/2+e=2>1  $. For $j\in [1,n]^{O} $, $ R_{j}=0 $ and $ d(-a_{j}a_{j+1})\ge 2e$ (by Proposition \ref{prop:Ralphaproperty3}(iii)); for $ j\in [1,n]^{E} $, $ R_{j}=-2e $ and $ d(-a_{j}a_{j+1})\ge 0 $. Hence for all $1\le j\le n$ we have
 \begin{align}\label{Rj+dajaj+1}
 	 -R_{j}+d(-a_{j}a_{j+1})\ge 2e\,,
 \end{align}and so  $ T_{j}= R_{n+2}-R_{j}+d(-a_{j}a_{j+1})\ge (4-2e)+2e=4>1 $. So $ \alpha_{n+1}=1 $ if and only if $ 1\in \{T_{n+1},\ldots,T_{m-1}\} $, i.e. there exists some $j $ with $ n+1\le j\le m-1 $ for which
 \begin{align*}
 	1=T_{j}=R_{j+1}-R_{n+1}+d(-a_{j}a_{j+1})=R_{j+1}+d(-a_{j}a_{j+1})\,,
 \end{align*}
 that is, $ d(-a_{j}a_{j+1})=1-R_{j+1}$. This is just Theorem \ref{thm:nuniversaldyadic}(ii)(1)(b).
\end{proof}
		
\begin{cor}\label{cor:thm1.1equiv}
	If $n\ge 3$ is odd, then Theorem \ref{thm:nuniversaldyadic}(i) and (iii)(1) hold if and only if $I_{1}^{O}(n)$ holds.
\end{cor}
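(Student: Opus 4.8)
The plan is to first observe that condition~(i) of Theorem~\ref{thm:nuniversaldyadic}, namely $R_{i}=0$ for $i\in[1,n]^{O}$ and $R_{i}=-2e$ for $i\in[1,n]^{E}$, is word for word the first part of $I_{1}^{O}(n)$. Hence the corollary reduces to the single claim that, \emph{under these $R$-conditions}, Theorem~\ref{thm:nuniversaldyadic}(iii)(1) holds if and only if $\alpha_{n}\in\{0,1\}$. Since $n$ is odd we have $R_{n}=0$, and since $n+1$ is even, Proposition~\ref{prop:Ralphaproperty3}(i) gives $R_{n+1}\ge -2e$. Throughout I would use that $\alpha_{n}=0$ if and only if $R_{n+1}-R_{n}=R_{n+1}=-2e$ (Proposition~\ref{prop:Ralphaproperty2}(i)), together with the descriptions of the case $\alpha_{n}=1$ in Proposition~\ref{prop:Ralphaproperty2}(vi),(vii) and the note following it.

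First I would dispose of the boundary values of $R_{n+1}$. If $\alpha_{n}\in\{0,1\}$, then Proposition~\ref{prop:Ralphaproperty2}(i),(vi) force $R_{n+1}\in\{-2e\}\cup[2-2e,0]^{E}\cup\{1\}=[-2e,0]^{E}\cup\{1\}$, which is exactly the first requirement of (iii)(1). Conversely, $R_{n+1}=-2e$ gives $\alpha_{n}=0$, while $R_{n+1}\in\{2-2e,1\}$ gives $\alpha_{n}=1$ by the note following Proposition~\ref{prop:Ralphaproperty2}(vii). In each of these three boundary cases one checks directly that $R_{n+1}\notin[4-2e,0]^{E}$, so the second clause of (iii)(1) is vacuous. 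This settles the equivalence except when $R_{n+1}\in[4-2e,0]^{E}$, where (iii)(1) additionally demands that $d(-a_{j}a_{j+1})=1-R_{j+1}$ for some $n\le j\le m-1$.

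The heart of the proof is this remaining case $R_{n+1}\in[4-2e,0]^{E}$, which I would treat by the same minimum computation already used in Lemma~\ref{lem:I2E}(ii), now applied to $\alpha_{n}=\min\{T_{0}^{(n)},\ldots,T_{m-1}^{(n)}\}$. Here $\alpha_{n}\ge 1$ because $R_{n+1}>-2e$ (Proposition~\ref{prop:Ralphaproperty2}(i)), so it suffices to decide when the minimum equals $1$. I would show $T_{0}^{(n)}=R_{n+1}/2+e\ge (4-2e)/2+e=2>1$, and that for $1\le j\le n-1$ the inequality $-R_{j}+d(-a_{j}a_{j+1})\ge 2e$ (proved exactly as in \eqref{Rj+dajaj+1}, splitting on the parity of $j$ and invoking Proposition~\ref{prop:Ralphaproperty3}(iii)) yields $T_{j}^{(n)}=R_{n+1}-R_{j}+d(-a_{j}a_{j+1})\ge R_{n+1}+2e\ge 4>1$. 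Consequently the minimum can only be attained at some index $n\le j\le m-1$, where $T_{j}^{(n)}=R_{j+1}+d(-a_{j}a_{j+1})$ because $R_{n}=0$. Hence $\alpha_{n}=1$ if and only if $T_{j}^{(n)}=1$ for some $n\le j\le m-1$, that is, $d(-a_{j}a_{j+1})=1-R_{j+1}$ for some such $j$; this is precisely the second clause of (iii)(1). Since the first requirement holds automatically here ($[4-2e,0]^{E}\subseteq[-2e,0]^{E}$), the equivalence follows in this case as well.

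I expect the only delicate point to be bookkeeping rather than a real obstacle: one must keep track of the two defining formulas for $T_{j}^{(n)}$ across their overlap at $j=n$, and check the degenerate behavior for small $e$ (for instance when $[4-2e,0]^{E}=\emptyset$, in which case the main case does not arise and only the boundary values of $R_{n+1}$ occur). Because the whole computation runs strictly parallel to the even-$n$ argument of Lemma~\ref{lem:I2E}(ii), no new idea beyond the minimum representation of $\alpha_{n}$ is required.
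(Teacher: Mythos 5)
Your proposal is correct and takes essentially the same route as the paper: the same reduction to showing that Theorem \ref{thm:nuniversaldyadic}(iii)(1) is equivalent to $\alpha_n\in\{0,1\}$ under the $R$-conditions, the same boundary-case analysis of $R_{n+1}\in\{-2e,2-2e,1\}$ via Proposition \ref{prop:Ralphaproperty2}, and the same $T_j$-minimum computation in the remaining case $R_{n+1}\in[4-2e,0]^{E}$. The only cosmetic difference is that the paper handles that last case by converting $\alpha_n=1$ into $d[-a_na_{n+1}]=1-R_{n+1}$ via Proposition \ref{prop:Ralphaproperty2}(vii) and then citing Lemma \ref{lem:I2E}(ii) (with $n-1$ in place of the even parameter), whereas you re-derive that lemma's computation inline, as you yourself note.
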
		
\begin{proof}
We may assume that $R_i=0$ for all $i\in [1,\,n]^O$ and $R_i=-2e$ for all $i\in [1,\,n]^E$. Under this assumption, we need to show that Theorem \ref{thm:nuniversaldyadic}(iii)(1) holds if and only if $\alpha_n=0$ or 1. Since $R_n=0$, we have $R_{n+1}-R_{n}=R_{n+1}$.

By Proposition \ref{prop:Ralphaproperty2}(i), we have $\alpha_{n}=0$ if and only if $R_{n+1}=R_{n+1}-R_{n}=-2e$. If $\alpha_n=1$, then, by Proposition \ref{prop:Ralphaproperty2}(vi), we have $R_{n+1}=R_{n+1}-R_{n}\in [2-2e,0]^{E}\cup\{1\}$. Conversely, if $R_{n+1}-R_{n}=2-2e$ or $1$, then $\alpha_{n}=1$ by Proposition \ref{prop:Ralphaproperty2}(iv) and (iii) respectively.

It remains to consider the case $R_{n+1}=R_{n+1}-R_{n}\in [4-2e,0]^{E}$. In this case, $\alpha_{n}=1$ if and only if $d[-a_{n}a_{n+1}]=1-R_{n+1}$ by Proposition \ref{prop:Ralphaproperty2}(vii). Hence,  by Lemma \ref{lem:I2E}(ii), the conditions $\alpha_{n}=1$ and Theorem \ref{thm:nuniversaldyadic}(iii)(1) are equivalent.
\end{proof}

		\begin{lem}\label{lem:I2O}
			Let $ n\ge 3 $ be an odd integer. Suppose that $M$ satisfies $ I_{1}^{O}(n) $. Then Theorem \ref{thm:nuniversaldyadic}(iii)(2) holds if and only if the second part of $ I_{2}^{O}(n) $ holds.
		\end{lem}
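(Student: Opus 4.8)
The plan is to reduce both conditions to statements about the single invariant $\alpha_{n+2}$, written through its defining terms $T_j:=T_j^{(n+2)}$ (Definition \ref{defn:alpha}), and then to match these term by term. First I would reconcile the two hypotheses. Under $I_1^O(n)$ we have $R_n=0$, so by Proposition \ref{prop:Ralphaproperty2}(i) the case $\alpha_n=0$ is equivalent to $R_{n+1}=-2e$; since $I_1^O(n)$ forces $\alpha_n\in\{0,1\}$ and $R_{n+1}=1$ already excludes $R_{n+1}=-2e$, the hypothesis of the second part of $I_2^O(n)$, namely ``$\alpha_n=1$ and ($R_{n+1}=1$ or $R_{n+2}>1$)'', coincides with the hypothesis ``$R_{n+1}=1$, or $R_{n+1}\neq -2e$ and $R_{n+2}>1$'' of Theorem \ref{thm:nuniversaldyadic}(iii)(2). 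I would then record, via Proposition \ref{prop:Ralphaproperty2}(vi), Remark \ref{re:Rn+1=Rn+2=1} and Lemma \ref{lem:simplifythm}(ii), the basic constraints $R_{n+1}\in [2-2e,0]^{E}\cup\{1\}$ and $x:=R_{n+2}-R_{n+1}\in[0,2e-1]$ (hence $R_{n+2}\ge 1$), and consequently $G_n\in[1,2e-1]$ with $2e-x-1\le G_n$.

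The core computation is to translate $\alpha_{n+2}=\min\{T_0,\dots,T_{m-1}\}\le G_n$ into the disjunction of Theorem \ref{thm:nuniversaldyadic}(iii)(2). Substituting the value of $G_n$ and splitting on the parity of $x$ shows that $T_0=(R_{n+3}-R_{n+2})/2+e\le G_n$ is \emph{exactly} the first alternative $R_{n+3}+R_{n+2}-2R_{n+1}\le 2e-2$ (resp. $\le 2e$) when $x$ is even (resp. odd); and that for each $n+2\le j\le m-1$, the term $T_j=R_{j+1}-R_{n+2}+d(-a_ja_{j+1})\le G_n$ is exactly $d(-a_ja_{j+1})\le 2e+R_{n+1}-R_{j+1}-1$ (resp. $\le 2e+R_{n+1}-R_{j+1}$). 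Thus the conclusion of Theorem \ref{thm:nuniversaldyadic}(iii)(2) is equivalent to ``$T_0\le G_n$, or $T_j\le G_n$ for some $n+2\le j\le m-1$''. Since the conclusion $\alpha_{n+2}\le G_n$ of $I_2^O(n)$ means merely that \emph{some} $T_j\le G_n$, the whole lemma reduces to controlling the remaining terms $T_j$ with $1\le j\le n+1$.

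These ``unlisted'' terms are, I expect, the main obstacle: a priori one of $T_1,\dots,T_{n+1}$ could pull $\alpha_{n+2}$ below $G_n$ without any listed term doing so, which would break the equivalence. I would dispose of them in two steps. For $1\le j\le n-1$, the standing values $R_i=0$ ($i$ odd) and $R_i=-2e$ ($i$ even) together with Proposition \ref{prop:Ralphaproperty3}(iii) give $-R_j+d(-a_ja_{j+1})\ge 2e$, whence $T_j\ge R_{n+3}+2e\ge R_{n+1}+2e$; a short floor estimate using $R_{n+2}\ge 1$ (namely $R_{n+1}+2\lfloor x/2\rfloor\ge R_{n+2}-1\ge 0$) then yields $T_j>G_n$, so these terms never trigger. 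For the two boundary terms $j=n,\,n+1$ I would instead prove the implication $T_j\le G_n\Rightarrow T_0\le G_n$, so that whenever they are small the first alternative already holds. For $j=n$ one uses $d(-a_na_{n+1})\ge d[-a_{n,n+1}]=1-R_{n+1}$ (Lemma \ref{lem:simplifythm}(ii)) to get $R_{n+3}\le G_n-1+R_{n+1}$, and $G_n\ge 2e-1-x$ then forces $T_0\le G_n$. For $j=n+1$ the same target reduces, after cancellation, to $d(-a_{n+1}a_{n+2})\ge 1-(x\bmod 2)$, which holds trivially when $x$ is odd and holds when $x$ is even because then $\ord(a_{n+1}a_{n+2})=R_{n+1}+R_{n+2}$ is even, so $d(-a_{n+1}a_{n+2})\ge 1$ by property (1) of $d$. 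Combining the three cases shows that $\alpha_{n+2}\le G_n$ holds if and only if $T_0\le G_n$ or some $T_j\le G_n$ with $n+2\le j\le m-1$, which is precisely the equivalence asserted by the lemma.
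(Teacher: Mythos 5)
Your proposal is correct and follows essentially the same route as the paper's own proof: both reduce the lemma to the equivalence of $\alpha_{n+2}=\min_{j}T_j^{(n+2)}\le G_n$ with ``$T_0\le G_n$ or $T_j\le G_n$ for some $n+2\le j\le m-1$'', by showing that $T_1,\ldots,T_{n-1}$ are too large to matter and that $T_n\le G_n$ or $T_{n+1}\le G_n$ already forces $T_0\le G_n$ (the paper packages this as the claim $T_j+G_n\ge 2T_0$ for $j=n,n+1$), and then translate the surviving inequalities into exactly the conditions of Theorem~\ref{thm:nuniversaldyadic}(iii)(2). One caveat: you twice invoke Lemma~\ref{lem:simplifythm}(ii), whose hypothesis includes $I_2^{O}(n)$ --- the very condition being characterized --- which would be circular in the direction where Theorem~\ref{thm:nuniversaldyadic}(iii)(2) is assumed and $I_2^{O}(n)$ is to be deduced; this is harmless, however, because at $j=n$ you only need the inequality $d(-a_na_{n+1})\ge 1-R_{n+1}$, which follows from $\alpha_n=1$ via Proposition~\ref{prop:Ralphaproperty2}(vi) (or via \eqref{eq:alpha-defn}) with no appeal to $I_2^{O}(n)$, and the recorded bound $R_{n+2}-R_{n+1}\le 2e-1$ (hence $G_n\ge 1$) is never actually used in your argument, so the fix is a one-line change of citation.
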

	    \begin{proof}Note that $R_n=0$ and $\alpha_n\in\{0,\,1\}$ by $I_{1}^{O}(n)$. Thus $\alpha_n=1$ if and only if $R_{n+1}\neq -2e$.
	    	The second part of $ I_{2}^{O}(n) $ states that if $ R_{n+1}\not=-2e $ (i.e. $ \alpha_{n}=1 $) and either $ R_{n+1}=1 $ or $ R_{n+2}>1 $, then $ \alpha_{n+2}\le G_{n} $ (cf. \eqref{eq5.1new}).   We must show that this statement is equivalent to the inequalities in Theorem \ref{thm:nuniversaldyadic}(iii)(2).

Recall Definition \ref{defn:alpha} and write $ T_{j}=T_{j}^{(n+2)} $ for $ 0\le j\le m-1 $ for brevity. Then $ \alpha_{n+2} =\min\{T_{0},\ldots,T_{m-1}\}$. By Proposition \ref{prop:Ralphaproperty3}(i), $ R_{n+3}\ge R_{n+1}\ge -2e $, and by Remark \ref{re:Rn+1=Rn+2=1}, $ R_{n+2}\ge 1 $. Hence
	    	\begin{align*}
	    		R_{n+2}+R_{n+3}\ge R_{n+2}+R_{n+1}\ge 1+(-2e)>-2e\,,
	    	\end{align*}
	    	which is equivalent to the inequality
	    	\[
	    		R_{n+3}+2e>\dfrac{R_{n+3}-R_{n+2}}{2}+e=T_{0}\,.
	    	\]
	    	 For $ 1\le j\le n-1 $, by $ I_{1}^{O}(n) $ and Proposition \ref{prop:Ralphaproperty3}(iii), we have $ -R_{j}+d(-a_{j}a_{j+1})\ge 2e $ (cf. \eqref{Rj+dajaj+1}) and so
	    	\begin{align*}
	    		T_{j}=R_{n+3}-R_{j}+d(-a_{j}a_{j+1})\ge R_{n+3}+2e>T_{0}\,.
	    	\end{align*}
    		Hence $ \alpha_{n+2}=\min\{T_{0},T_{n}\ldots,T_{m-1}\} $.
    	   For $ j=n,n+1 $, we claim that $ T_{j}+G_{n}\ge 2T_{0} $. By \eqref{eq5.1new}, $t:=2e-R_{n+2}+R_{n+1}-G_n\in \{0,1\}$. We have
	    \begin{align*}
	    	T_{j}+G_{n}&=(R_{n+3}-R_{j}+d(-a_{j}a_{j+1}))+(2e-R_{n+2}+R_{n+1}-t)\\
	    	&=(R_{n+3}-R_{n+2}+2e)+(R_{n+1}-R_{j}+d(-a_{j}a_{j+1})-t)\\
	    	&=2T_{0}+R_{n+1}-R_{j}+d(-a_{j}a_{j+1})-t\,.
	    \end{align*}
	   It suffices to verify that $ R_{n+1}-R_{j}+d(-a_{j}a_{j+1})-t\ge 0 $. If $ j=n $, then $ R_{n+1}-R_{n}+d(-a_{n}a_{n+1})-t\ge \alpha_{n}-t=1-t\ge 0 $ by \eqref{eq:alpha-defn}. Let now $j=n+1$. If $R_{n+2}-R_{n+1}$ is even, then $d(-a_{n+1}a_{n+2})\ge 1=t$. If $R_{n+2}-R_{n+1}$ is odd, then $d(-a_{n+1}a_{n+2})=t=0$. In both cases we have $ R_{n+1}-R_{n+1}+d(-a_{n+1}a_{n+2})-t=d(-a_{n+1}a_{n+2})-t\ge 0 $, as required. Thus the claim is proved.
	
	   We have $ \alpha_{n+2}=\min\{T_{0},T_{n},\ldots,T_{m-1}\}\le G_{n} $ if and only if $ T_{k}\le G_{n} $ for some $k\in \{0,n,\ldots,m-1\} $. But for $ j=n $ or $ n+1 $, if $ T_{j}\le G_{n}$, then, by the claim,  $ 2T_{0}\le T_{j}+G_{n}\le 2G_{n}$, i.e. $ T_{0}\le G_{n} $. Hence $ \alpha_{n+2}\le G_{n} $ if and only if $ T_{k}\le G_{n} $ for some $ k\in \{0,n+2,\ldots,m-1\} $.
	
	   Now, one can verify that
	   \begin{align*}
	   	T_{0}\le G_{n}\quad&\Longleftrightarrow\quad\dfrac{R_{n+3}-R_{n+2}}{2}+e\le 2e-R_{n+2}+R_{n+1}-t\\
	   	&\Longleftrightarrow\quad R_{n+3}+R_{n+2}-2R_{n+1}\le 2e-2t
	   \end{align*}
	   and for $n+2\le j\le m-1$,
	   \begin{align*}
	   	T_{j}\le G_{n}\quad& \Longleftrightarrow\quad R_{j+1}-R_{n+2}+d(-a_{j}a_{j+1})\le 2e-R_{n+2}+R_{n+1}-t\\
	   	& \Longleftrightarrow\quad d(-a_{j}a_{j+1})\le 2e+R_{n+1}-R_{j+1}-t\,.
	   \end{align*} Since $ t=1 $ or $ 0 $ accordingly as $ R_{n+2}-R_{n+1} $ is even or odd, these inequalities coincide with those in Theorem \ref{thm:nuniversaldyadic}(iii)(2).	
	    \end{proof}

		\begin{proof}[Proof of Theorem \ref{thm:nuniversaldyadic}]
			By \cite[Theorem\;2.3]{hhx_indefinite_2021}, $ FM $ is $ n $-universal if and only if either $ m=n+2=4 $ and $ FM\cong \mathbb{H}^{2} $, or $ m\ge n+3 $. Clearly, we may assume that this condition holds.
			
		The case $ m=n+2=4 $ has been treated in Corollary \ref{cor:even-nuniversaldyadic-quaternary}. Now suppose $ m\ge n+3 $. For even $ n\ge 2 $, we have clearly
			\[
			\text{(i) and $ R_{n+1}=0 $}\Longleftrightarrow I_{1}^{E}(n)\,;\quad \text{(ii)(2)}\Longleftrightarrow I_{3}^{E}(n)\,.
			\]Hence we are done by Theorem \ref{thm:even-nuniversaldyadic} and Lemma \ref{lem:I2E}.
			
			Suppose that $ n\ge 3$ is odd. The condition (iii)(4) is the same as  $I_{3}^{O}(n)$. Note that $ \alpha_{n}=0 $ is equivalent to $ R_{n+1}=R_{n+1}-R_{n}=-2e $ by Proposition \ref{prop:Ralphaproperty2}(i). Hence the condition (iii)(3) is equivalent to the first statement of $ I_{2}^{O}(n) $. By Corollary \ref{cor:thm1.1equiv}, the conditions (i) and (iii)(1) are equivalent to $I_{1}^{O}(n)$.
			
			Assume that $I_{1}^{O}(n)$ holds. Then the condition (iii)(2) is equivalent to the second statement of $I_{2}^{O}(n)$ by Lemma \ref{lem:I2O}. Hence we are done by Theorem \ref{thm:odd-nuniversaldyadic-2}.	
		\end{proof}

		\section*{Acknowledgments}
		We are indebted to the referee for very carefully reading the manuscript and for giving many comments and suggestions, which significantly improved the exposition of the paper.		We thank Prof.\,Fei Xu for helpful discussions. This work was supported by a grant from the National Natural Science Foundation of China (No.\,12171223) and the Guangdong Basic and Applied Basic Research Foundation (No.\,2021A1515010396).

			\end{document}